\documentclass{article}
\usepackage{
amsmath,
amsfonts,
latexsym, 
amsrefs,
amssymb
}

\usepackage{bm}

\usepackage{tocloft}

\usepackage{cite}

\usepackage{pstricks}

\usepackage{subcaption}

\usepackage{tikz}
\usepackage{tikz,fullpage}
\usetikzlibrary{arrows,%
                petri,%
                topaths, automata}%

\usepackage{graphicx}
\setlength{\textwidth}{7in}
\setlength{\oddsidemargin}{0in}
\setlength{\evensidemargin}{0in}
\setlength{\topmargin}{-0.2in}
\setlength{\textheight}{9.3in}
\setlength{\footskip}{0.6in}
\setlength{\headsep}{0in}


\usepackage{enumerate}
\usepackage{mathrsfs}
\usepackage{wrapfig}

\flushbottom

\usepackage{color}

\newcommand{\Id}{{\mathrm{Id}}}

\oddsidemargin=0in
\evensidemargin=0in
\textwidth=6.5in

\numberwithin{equation}{section}
\newcommand{\bla}{\bm{\lambda}}
\newcommand{\bmu}{\bm{\mu}}

\newcommand{\rd}{{\rm d}}

\newcommand{\bx}{{\bf{x}}}

\newcommand{\bv}{{\bf{v}}}
\newcommand{\bw}{{\bf{w}}}

\newcommand{\be}{\begin{equation}}
\newcommand{\ee}{\end{equation}}

\newcommand{\e}{{\varepsilon}}

\newcommand{\la}{\lambda}

\newcommand{\rU}{{\rm U}}

\setcounter{tocdepth}{1}

\usepackage{amsmath} 
\usepackage{amssymb}
\usepackage{amsthm}

\setlength{\unitlength}{1cm}




\newcommand{\wt}{\widetilde}

\newcommand{\ii}{\mathrm{i}} 


\renewcommand{\epsilon}{\varepsilon}
\renewcommand{\leq}{\leqslant}
\renewcommand{\geq}{\geqslant}



\renewcommand{\le}{\leq}



\newcommand{\E}{\mathbb{E}}


\newcommand{\pB}[1]{\Bigl({#1}\Bigr)}

\DeclareMathOperator{\var}{Var}

\DeclareMathOperator{\re}{Re}
\DeclareMathOperator{\im}{Im}

\DeclareMathOperator{\OO}{O}
\DeclareMathOperator{\oo}{o}

\theoremstyle{plain} 
\newtheorem{theorem}{Theorem}[section]
\newtheorem*{theorem*}{Theorem}
\newtheorem{lemma}[theorem]{Lemma}
\newtheorem*{lemma*}{Lemma}
\newtheorem{corollary}[theorem]{Corollary}
\newtheorem*{corollary*}{Corollary}
\newtheorem{proposition}[theorem]{Proposition}
\newtheorem*{proposition*}{Proposition}

\newtheorem{definition}[theorem]{Definition}
\newtheorem*{definition*}{Definition}

\newtheorem*{example*}{Example}
\newtheorem{remark}[theorem]{Remark}

\newtheorem*{remark*}{Remark}
\newtheorem*{remarks*}{Remarks}

\makeatletter
\renewcommand{\subsection}{\@startsection
{subsection}
{2}
{0mm}
{-\baselineskip}
{0 \baselineskip}
{\normalfont\bf\itshape}} 
\makeatother



\usepackage{dsfont}
\usepackage{stmaryrd}

\newcommand{\nc}{\normalcolor}


\setcounter{secnumdepth}{5}
\setcounter{tocdepth}{1}





\def\@empty{}

\def\author#1{\par
    {\centering{\authorfont#1}\par\vspace*{0.05in}}
}

\def\titlefont{\fontsize{13}{15}\bfseries\boldmath\selectfont\centering{}}
\def\authorfont{\fontsize{13}{15}}
\def\abstractfont{\fontsize{8}{10}}

\let\affiliationfont\rhfont

\def\address#1{\par
    {\centering{\affiliationfont#1\par}}\par\vspace*{11pt}
}

\def\body{
\setcounter{footnote}{0}
\def\thefootnote{\alph{footnote}}
\def\@makefnmark{{$^{\rm \@thefnmark}$}}
}

\def\title#1{
    \thispagestyle{plain}
    \vspace*{-14pt}
    \vskip 79pt
    {\centering{\titlefont #1\par}}%
    \vskip 1em
}

\setlength\cftparskip{2pt}
\setlength\cftbeforesecskip{4pt}
\setlength\cftaftertoctitleskip{10pt}

\renewenvironment{abstract}{\par%
    \vspace*{6pt}\noindent 
    \abstractfont
    \noindent\leftskip10pt\rightskip10pt
}{%
  \par}

\newcommand{\f}[1]{\boldsymbol{\mathrm{#1}}} 
\newcommand{\for}{\qquad \text{for} \quad}

\usepackage{tocloft}

\makeatletter
\renewcommand{\section}{\@startsection
{section}
{1}
{0mm}
{-2\baselineskip}
{1\baselineskip}
{\normalfont\large\scshape\centering}} 
\makeatother

\begin{document}

~\vspace{-1.4cm}

\title{Extreme gaps between eigenvalues of Wigner matrices}

\vspace{1cm}
\noindent

\begin{minipage}[b]{0.3\textwidth}
\hspace{3cm}
 
 \end{minipage}
 \begin{minipage}[b]{0.3\textwidth}
 \author{P. Bourgade}

\address{Courant Institute\\
   E-mail: bourgade@cims.nyu.edu}
 \end{minipage}

\begin{minipage}[b]{0.3\textwidth}

 \end{minipage}

\begin{abstract}
This paper proves universality of the distribution of the smallest and largest gaps between eigenvalues of  generalized Wigner matrices, under some smoothness assumption for the density of the entries.

The proof relies on the Erd{\H o}s-Schlein-Yau dynamic approach. We exhibit a new observable that satisfies a stochastic advection equation and reduces local relaxation of the Dyson Brownian motion to a maximum principle.
This observable also provides a simple and unified proof of gap universality in the bulk and the edge, which is quantitative.
To illustrate this, we give the first explicit rate of convergence to the Tracy-Widom distribution for generalized Wigner matrices.
\end{abstract}

\vspace{-0.3cm}

\tableofcontents

\vspace{0.2cm}

\section{Introduction}\

\vspace{-0.5cm}

\subsection{Extreme statistics in random matrix theory.}\ 
The study of extreme spacings in random spectra was initially limited to integrable models. Vinson
\cite{Vin2001} showed that the smallest gap between eigenvalues of the $N\times N$ Circular Unitary Ensemble, multiplied by $N^{4/3}$, has limiting density $3x^2e^{-x^3}$, as the size $N$ increases.
In his thesis, similar results for the smallest gap between eigenvalues of a generalization of the Gaussian Unitary Ensemble were obtained.
With a different method Soshnikov \cite{Sos2005} computed the
distribution of the smallest gap for general translation invariant  determinantal point processes in large boxes:
properly rescaled the smallest gap converges, with the same limiting distribution function $e^{-x^3}$.
Vinson also gave heuristics suggesting that the largest gap between eigenvalues in the bulk
should be of order $\sqrt{\log N}/N$, with Poissonian fluctuations around this limit, a problem popularized by Diaconis
\cite{Dia2003}.
Ben Arous and the author addressed this problem concerning the first order asymptotics for the maximum gap, and described the limiting process of small gaps, for CUE and GUE   \cite{BenBou2013}. These results were extended by Figalli and Guionnet to some invariant multimatrix Hermitian ensembles \cite{FigGui2016}. The convergence in distribution of the largest gap was recently solved by Feng and Wei, also for CUE and GUE \cite{FengWei2018II}.
Feng and Wei also investigated the smallest gaps beyond the determinantal case, characterizing their asymptotics for the circular $\beta$ ensembles \cite{FengWei2018I}. For the Gaussian orthogonal ensemble,  together with Tian they proved that the smallest gap rescaled by $N^{3/2}$ converges with limiting density function $2xe^{-x^2}$ \cite{FengWei2018III}.

The intuition for all results above are (i) the Poissonian ansatz, namely the eigenvalues' gaps are asymptotically independent, (ii) weak convergence of the spacings holds with good convergence rate, so that the finite $N$ gap density asymptotics at $0^+$ and $\infty$ are close to the limiting Gaudin density asymptotics.

This paper shows that the above limit theorems and heuristic picture hold beyond invariant ensembles. In particular, the gap universality for Wigner matrices by Erd{\H o}s and Yau \cite{ErdYau2015}
extends to submicroscopic scales. We informally state this optimal separation of eigenvalues as follows
(see Theorem \ref{thm:smallprocess} for details, in particular the smoothness assumption).

\begin{theorem*} Let  $\lambda_1<\dots<\lambda_N$ be the eigenvalues of a symmetric Wigner matrix with entries satisfying some weak smoothness assumption. Then for any small $\kappa>0$ there exists $c>0$ such that for any $x>0$
$$
\lim_{N\to\infty}\mathbb{P}\left(cN^{\frac{3}{2}}\min_{\kappa N\leq i\leq (1-\kappa)N}(\lambda_{i+1}-\lambda_i)>x\right)= e^{-x^2}.
$$
\end{theorem*}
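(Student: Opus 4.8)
The plan is to carry out the Erd\H{o}s--Schlein--Yau three-step dynamical strategy, but at the submicroscopic scale $N^{-3/2}$, and to reduce the limiting law to the Gaussian orthogonal ensemble, for which the analogous statement --- the smallest gap rescaled by $N^{3/2}$ has limiting density $2xe^{-x^2}$, that is, survival function $e^{-x^2}$ --- is known by the work of Feng--Wei--Tian. Since the minimum is taken over bulk indices $i\in[\kappa N,(1-\kappa)N]$, the spectral edge plays no role (there consecutive eigenvalues are separated by order $N^{-2/3}\gg N^{-3/2}$), and the normalization $c=c(\kappa)$ depends on $\kappa$ through the region over which the relevant intensity is integrated. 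As a first step I would record, from the local law, the a priori inputs used throughout: optimal rigidity $|\lambda_i-\gamma_i|\lesssim N^{-1+\varepsilon}$, a local level-repulsion bound of the form $\mathbb{P}(\lambda_{i+1}-\lambda_i\le\delta N^{-3/2})\lesssim\delta^2$ in the bulk (the exponent appropriate to $\beta=1$), and an upper tail on the number of bulk gaps of size $\le xN^{-3/2}$. These make the small-gap point process tight and ensure that the rare events to be matched across ensembles are genuinely rare.

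For the relaxation step, let $\mathbf{x}(t)$ be the Dyson Brownian motion (in Ornstein--Uhlenbeck normalization) started from the Wigner matrix and $\mathbf{y}(t)$ the flow started from an independent GOE, both driven by the same Brownian motion, run up to time $t=N^{-1+\omega}$; since GOE is stationary for this flow, $\mathbf{y}(t)$ is itself a GOE. The heart of the argument is that, after freezing the empirical Stieltjes transform at the semicircle transform $m_{\mathrm{sc}}$ by rigidity, the new observable of this paper satisfies a stochastic advection equation whose characteristics are the Dyson characteristics $\dot z_t=-m_{\mathrm{sc}}(z_t)$; integrating along these characteristics and dominating the martingale forcing by a Burkholder--Davis--Gundy estimate, a maximum principle yields that the laws of the $N^{3/2}$-rescaled small-gap point processes of $\mathbf{x}(t)$ and of $\mathbf{y}(t)$ agree up to an error polynomially small in $N$, uniformly down to separations of order $N^{-1/2}$ in microscopic units. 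I expect this quantitative gain to be the main obstacle: ordinary Dyson Brownian motion relaxation is sharp only at the microscopic scale $N^{-1}$ and carries a merely $o(1)$ error, whereas detecting the scale $N^{-3/2}$ forces one to compare gap statistics with precision finer than the per-gap probability $\sim N^{-1}$ of a submicroscopically small gap --- precisely what the advection observable and its maximum principle are designed to provide.

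Next, using the smoothness assumption on the entry distribution, a Green's function (resolvent) comparison shows that the original matrix and $\mathbf{x}(t)$ have the same submicroscopic gap statistics up to a negligible error; here the a priori level repulsion recorded in the first step is what guarantees that this comparison genuinely controls the rare small-gap events rather than being swamped by them. Combined with the relaxation step, this shows that the $N^{3/2}$-rescaled point process of small bulk gaps of the Wigner matrix has the same limit as that of GOE.

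It then remains to compute this limit for GOE, which is essentially the Feng--Wei--Tian computation. One shows that $\{N^{3/2}(\lambda_{i+1}-\lambda_i):\kappa N\le i\le(1-\kappa)N\}$ converges on compact sets to a Poisson point process: its first intensity converges --- on integrating the small-$s$ asymptotics of the limiting ($\beta=1$, Gaudin) gap density against the bulk quantile density over $[\kappa,1-\kappa]$ --- to $\Lambda(x)=(x/c)^2$ for an explicit $c=c(\kappa)$, while the higher factorial moments factorize because the event of two gaps of size $\lesssim N^{-3/2}$ inside a common microscopic window has probability $o(N^{-1})$ and far-apart local statistics decorrelate. Hence $\mathbb{P}\big(cN^{3/2}\min_i(\lambda_{i+1}-\lambda_i)>x\big)=\mathbb{P}\big(\mathcal{N}(x)=0\big)\to e^{-x^2}$ for GOE, and therefore, by the two preceding steps, also for the symmetric Wigner matrix.
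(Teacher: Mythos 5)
Your overall architecture --- rigidity and level repulsion as a priori inputs, quantitative DBM relaxation via the advection observable and a maximum principle, a comparison step using smoothness, and reduction to the known GOE computation of Feng--Wei--Tian --- matches the paper's strategy. But two linked points in your relaxation and comparison steps would fail as written. The relaxation estimate the observable delivers is $(\lambda_{k+1}(t)-\lambda_k(t))-(\mu_{k+1}(t)-\mu_k(t))=\OO(N^{\e}/(N^2t))$ (Corollary \ref{cor:gaps}). To resolve gaps at scale $N^{-3/2}$ this error must be $\ll N^{-3/2}$, so one must run the dynamics for $t\gg N^{-1/2}$ --- not $t=N^{-1+\omega}$ with small $\omega$. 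With your choice the coupling error is $N^{-1-\omega}$, which swamps the smallest gaps, and your claim that the two rescaled gap processes agree down to separations of order $N^{-1/2}$ in microscopic units is internally inconsistent with the time you allow.

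More seriously, for the smallest-gap theorem the paper does \emph{not} use a Green's function comparison in the density step; it uses the reverse heat flow (Lemma \ref{lem:RevHeat}), which produces an initial matrix whose time-$t$ DBM evolution is within total variation distance $N^{-D}$ of $H$. Total variation control is what allows one to transfer \emph{arbitrary} events, in particular the rare event that some gap is $\lesssim x N^{-3/2}$, whose probability is only $O(N^{-1/2})$ per index. A resolvent/Lindeberg comparison controls smooth resolvent functionals at spectral scales $\eta\gtrsim N^{-1-\e}$ and locates individual eigenvalues only to precision $N^{-1-c}$, far coarser than $N^{-3/2}$; your appeal to level repulsion does not repair this, since the comparison error is an additive analytic error, not itself the probability of a rare event. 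The smoothness hypothesis $\sigma=N^{-1/4+\e}$ is calibrated precisely so that the reverse heat flow is valid up to times $t\le N^{-\e}\sigma^2\gg N^{-1/2}$, matching the relaxation requirement above; this is also why the paper's Green's function comparison (Proposition \ref{t2}) is reserved for the edge and fluctuation results, where the relevant scale is $N^{-2/3}$ or coarser. Finally, a minor point: the characteristics are $\dot z=-(m_{\rm sc}(z)+z/2)=-\tfrac{1}{2}\sqrt{z^2-4}$ in the Ornstein--Uhlenbeck normalization, as in (\ref{eqn:zteq}), rather than $\dot z=-m_{\rm sc}(z)$.
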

\noindent The same result holds for the Hermitian class, with rescaling $N^{4/3}$  and limit
$e^{-x^3}$.  Our work also applies to universality of the largest gaps (see Theorem \ref{thm:largest}), under similar assumptions.\\

\noindent For the proof, we develop a new approach to the analysis of the Dyson Brownian motion  (see Subsection \ref{sub:sketch}). Relaxation of eigenvalues simply follows from a the new observable (\ref{eqn:observable}) which satisfies a stochastic advection equation.\\

\noindent Does the above theorem require our slight smoothness hypothesis (\ref{eqn:smooth}) on the matrix entries? For the largest gaps, which are essentially on the microscopic scale $1/N$, this assumption is  unnecessary as shown  by Landon, Lopatto and Marcinek in the simultaneous work \cite{LanLopMar2018}. The scale of the smallest gaps is  harder to access: the current best lower bound on separation of eigenvalues for Wigner matrices 
with atomic distribution is $N^{-2+\oo(1)}$, by Nguyen, Tao and Vu \cite{NguTaoVu2017} (see also \cite{LopLuh2019} for the case of sparse matrices).\\

Motivations for the extreme eigenvalues' gaps statistics include relaxation time for diagonalization algorithms \cite{DeiTro2017, BenBou2013}, conjectures in analytic number theory (e.g. the extreme gaps between zeros of the Riemann zeta function \cite{BenBou2013,BuiMil2018}), conjectures in algorithmic number theory 
(the Poisson ansatz for large gaps suggests the complexity of an algorithm to detect square free numbers \cite{BooHiaKea2015}), and quantum chaos in the complementary Poissonian regime \cite{BloBouRadRud2017}.

Another motivation for extreme value statistics  in random matrix theory  emerged  after the work of 
Fyodorov, Hiary and  Keating \cite{FyoHiaKea2012}: the maximum of the characteristic polynomial of random matrices 
predicts the scale and fluctuations of the maximum of the Riemann zeta function on typical intervals of the critical line.
Recent progress about their conjecture verified the size of the maximum of the characteristic polynomial, for integrable random matrices \cite{ArgBelBou2017,PaqZei2018,ChhMadNaj2018,LamPaq2018}.
We expect that the observable (\ref{eqn:observable}) will also help understanding universality for such extreme statistics.
Indeed it was an important tool in the recent proof of fluctuations of determinants of Wigner matrices \cite{BouMod2018}.

\subsection{Results on extreme gaps.}\ 
We will use the notation $a_N\sim b_N$ if there exists $C>0$ such that $C^{-1} b_N\leq a_N\leq C b_N$ for all $N$. 
In this work, we consider the following class of random matrices. 

\begin{definition}\label{def:wig}
A generalized Wigner matrix $H=H(N)$ is a  Hermitian or symmetric $N\times N$ matrix whose upper-triangular elements $H_{ij}=\overline{H_{ji}}$, $i\leq j$, are independent random variables with mean zero and variances $\sigma_{ij}^2=\E(|H_{ij}|^2)$  that satisfy  the following two conditions:
\begin{enumerate}[(i)]
\vspace{-0.1cm}
\item Normalization: for any $j\in\llbracket 1,N\rrbracket$, $\sum_{i=1}^N\sigma_{ij}^2=1$.
\vspace{-0.1cm}
\item Non-degeneracy:  $ \sigma_{ij}^2\sim N^{-1}$ for all $i,j\in\llbracket 1,N\rrbracket$. 
\end{enumerate}
In the Hermitian case, we
assume $\var\re(H_{ij})\sim \var\im(H_{ij})$  and independence of  $\re(H_{ij}), \im(H_{ij})$\footnote{ 
Other assumptions would work, such as the law of $H_{ij}$ being isotropic. We consider the independent case for simplicity.}.
\end{definition}

We also suppose for convenience (this could be replaced by a finite large moment assumption) that the matrix entries satisfy a tail  estimate:  there exists  $c>0$  such that for any $i,j,N$ and $x>0$ we have
\begin{equation}\label{eqn:tail}
\mathbb{P}\left(|\sqrt{N}H_{ij}|>x\right)\leq c^{-1}e^{-x^{c}}.
\end{equation}
We denote the limiting spectral density of Wigner matrices
\begin{equation*}
\rho(x)=\frac{1}{2\pi}\sqrt{(4-x^2)_+}.
\end{equation*}

In some of the following results, we additionally assume non-atomicity for the matrix entries. A sequence $(H_N)_N$ of random matrices is said to be smooth on scale $\sigma=\sigma(N)$ if
$\sqrt{N}H_{ij}$ has density $e^{-V}$, where $V=V_{N,i,j}$ satisfies the following condition
uniformly in $N,i,j$.
For any $k\geq 0$ there exists $C>0$ such that
\begin{equation}\label{eqn:smooth}
|V^{(k)}(x)|\leq C\, \sigma^{-k}(1+|x|)^{C},\ x\in\mathbb{R}.
\end{equation}
Finally, we always order the eigenvalues $\lambda_1\leq \dots\leq \lambda_N$ and define the process of small gaps and their position
$$
\chi^{(N)}=\sum_{i=1}^N\delta_{(N^{\frac{\beta+2}{\beta+1}}(\lambda_{i+1}-\lambda_i),\lambda_i)}\mathds{1}_{|\lambda_i|<2-\kappa},
$$
where $\beta=1$ for the generalized Wigner symmetric ensemble and $\beta=2$ for the Hermitian one. The following theorem generalizes (and relies on comparison with) the GUE and GOE cases \cite{BenBou2013,FengWei2018III}\footnote{
Our normalization choice from Definition \ref{def:wig} yields a limiting eigenvalue distribution supported on $[-2,2]$, while \cite{FengWei2018III} gives a support $[-\sqrt{2N},\sqrt{2N}]$. The $\beta=1$ cases in Theorem \ref{thm:smallprocess} and Corollary \ref{cor:smallest} agree with the results from \cite{FengWei2018III} up to this rescaling.}.

\begin{theorem}[Small gaps process]\label{thm:smallprocess}
Let $(H_N)$ be  generalized Wigner matrices  satisfying (\ref{eqn:tail}). Let $\kappa>0$.
\begin{enumerate}[(i)]
\item{\it Symmetric class.}\ 
Assume $(H_N)$ is smooth on scale $\sigma=N^{-1/4+\e}$ for some fixed $\e>0$, in the sense of (\ref{eqn:smooth}).
The point process  $\chi^{(N)}$ converges as $N\to\infty$ to a Poisson point process $\chi$ with intensity given, 
for any measurable sets $A\subset \mathbb{R}_+$ and $I\subset(-2+\kappa,2-\kappa)$, by
$$
\E\chi(A\times I)={\frac{1}{48\pi}}\left(\int_A u\rd u\right)\left(\int_I(4-x^2)^{\frac{3}{2}}\rd x\right).
$$
\item{\it Hermitian class.}
Assume $(H_N)$ is smooth on scale $\sigma=N^{-1/3+\e}$ for some fixed $\e>0$.
The point process  $\chi^{(N)}$ converges  to a Poisson point process $\chi$ with intensity
$$
\E\chi(A\times I)=\frac{1}{48\pi^2}\left(\int_A u^2\rd u\right)\left(\int_I(4-x^2)^2\rd x\right).
$$
\end{enumerate}
\end{theorem}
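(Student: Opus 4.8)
\textbf{Proof strategy for Theorem~\ref{thm:smallprocess}.} The plan is to follow the Erd\H os--Schlein--Yau three-step strategy, but with the bulk of the work shifted onto the analysis of the Dyson Brownian motion via the new observable announced in the introduction. First, I would reduce the statement to the case of the Gaussian ensembles (GOE for $\beta=1$, GUE for $\beta=2$), for which the limiting Poisson point process statements are exactly the results of Ben Arous--Bourgade \cite{BenBou2013} and Feng--Wei--Tian \cite{FengWei2018III} (after the rescaling noted in the footnote). Concretely, the target is a statement about the joint law of a rescaled family of consecutive gaps $N^{(\beta+2)/(\beta+1)}(\lambda_{i+1}-\lambda_i)$ together with their locations $\lambda_i$, which after a Poissonization argument amounts to controlling, for each fixed $k$, the joint density of $k$-tuples of rescaled gaps at macroscopically separated locations. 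Because the gaps live on the scale $N^{-(\beta+2)/(\beta+1)}$, which is \emph{submicroscopic}, one needs gap universality not merely on the microscopic scale $1/N$ but with enough quantitative control to compare the finite-$N$ gap density near $0^+$ with its Gaudin-limit counterpart; this is precisely the ``good convergence rate'' ingredient (ii) flagged in the heuristic discussion.

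Second, the heart of the argument is the relaxation of the Dyson Brownian motion. Starting from a generalized Wigner matrix $H_0$, one runs the matrix-valued DBM $H_t = e^{-t/2}H_0 + (1-e^{-t})^{1/2}\, \mathrm{GOE}/\mathrm{GUE}$ up to a time $t = N^{-1+\delta}$, small enough that local statistics on the relevant submicroscopic scale are not yet disturbed but large enough for local equilibrium. The new observable --- a ratio-type quantity built from differences of eigenvalues of the interpolating and the reference ensemble --- should be shown to satisfy a stochastic advection equation $\partial_t f = \mathcal{B}_t f$ for a suitable (signed) ``velocity'' operator, so that bounding $f_t - f_0$ reduces to a maximum principle for this transport equation along the characteristics of $\mathcal{B}_t$. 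The key estimates feeding into this are the rigidity of eigenvalues and the level-repulsion / homogenization estimates for the DBM on short scales, which control the coefficients of $\mathcal{B}_t$; the smoothness hypothesis \eqref{eqn:smooth} on scale $\sigma = N^{-1/(\beta+1)+\e}$ enters here to give the necessary regularity of the initial data for the advection equation at the submicroscopic resolution (this is the point at which the scales $N^{-1/4+\e}$ and $N^{-1/3+\e}$ in the two cases appear, as $1/(\beta+1)$ equals $1/2$ and $1/3$ respectively --- though I would double-check the precise exponent matching).

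Third, the comparison step: the short-time DBM flow must be undone by a Green's function / resolvent comparison (Lindeberg-type swapping) argument showing that the original matrix $H_0$ and the matrix $H_t$ have the same submicroscopic gap statistics up to negligible error, provided the first few moments match and the densities are smooth on the stated scale. Here the subtlety compared with the microscopic case is that one is comparing observables sensitive to eigenvalue differences on scale $N^{-(\beta+2)/(\beta+1)}$, so the comparison must be carried out with test functions whose derivatives blow up like powers of $N^{(\beta+2)/(\beta+1)}$, and one needs the smoothness assumption to absorb these through integration by parts against the density $e^{-V}$.

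\textbf{Main obstacle.} The delicate point is quantitative control at the submicroscopic scale: the maximum principle for the advection equation must be pushed far enough to yield not just weak convergence of single gaps but a convergence rate strong enough that the density of the rescaled gap process near the origin (which governs the $u\,\rd u$ and $u^2\,\rd u$ intensities) is captured, and simultaneously the comparison step must tolerate test functions concentrated on scale $N^{-(\beta+2)/(\beta+1)}$ rather than $N^{-1}$ --- this is exactly where the smoothness assumption \eqref{eqn:smooth} is forced, and getting the interplay between the scale $\sigma$ in the hypothesis and the gap scale right is the crux of the proof. The Poissonization (showing asymptotic independence of gaps at distinct macroscopic locations, giving the product structure of the intensity measure) is comparatively standard once one has uniform control of $k$-point gap densities, and follows the scheme of \cite{BenBou2013}.
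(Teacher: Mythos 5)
Your overall architecture (three-step strategy, relaxation of the Dyson Brownian motion via the new observable and a maximum principle, comparison with the GOE/GUE results of \cite{BenBou2013,FengWei2018III}) matches the paper, but two concrete choices in your outline would make the proof fail. First, the time scale: you run the DBM to $t=N^{-1+\delta}$, which suffices for microscopic universality but not here. The output of the maximum-principle analysis (Corollary \ref{cor:gaps}) is a coupling of individual gaps with error $N^{\e}/(N^2t)$; with $t\sim N^{-1+\delta}$ this error is $\sim N^{-1-\delta}$, which completely swamps the smallest-gap scale $N^{-3/2}$ (symmetric) or $N^{-4/3}$ (Hermitian). One must take $t\gg N^{-1/2}$, resp.\ $t\gg N^{-2/3}$, so that the gap-by-gap coupling error sits \emph{below} the submicroscopic scale; the Poisson limit is then transported directly from the Gaussian case by this coupling, with no need to re-derive $k$-point gap densities. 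Second, the density step: you propose to undo the flow by a Lindeberg/Green's function comparison, but that method cannot tolerate observables concentrated on scale $N^{-(\beta+2)/(\beta+1)}$ --- the derivatives of the test functions blow up faster than the moment-matching gains, which is exactly the obstruction you flag without resolving. The paper instead uses the reverse heat flow (Lemma \ref{lem:RevHeat}): for a density smooth on scale $\sigma$ one constructs an initial matrix $\wt H$ whose Ornstein--Uhlenbeck evolution at time $t\le N^{-\e'}\sigma^2$ is within total-variation distance $N^{-D}$ of $H$, and total-variation closeness is scale-independent. (The paper does use Green's function comparison, but only for Theorems \ref{thm:rate} and \ref{Gaussian}, where the relevant scale is $N^{-2/3}$.)

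These two corrections also fix your exponent bookkeeping. The smoothness hypothesis plays no role in the advection-equation/maximum-principle analysis, which requires only rigidity; its sole purpose is to make the reverse heat flow reach the time $t$ needed above. Combining $t\lesssim\sigma^2$ with $t\gg N^{-1/2}$ (resp.\ $N^{-2/3}$) forces $\sigma\gg N^{-1/4}$ (resp.\ $\sigma\gg N^{-1/3}$), which is where the hypotheses of the theorem come from; your proposed rule $\sigma=N^{-1/(\beta+1)+\e}$ gives $N^{-1/2}$ in the symmetric case and does not match.
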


As a corollary, the distribution of the smallest gaps  in the bulk of the spectrum is explicit.
For the statement, let $t_1=\min\{\lambda_{i+1}-\lambda_i:\lambda_i\in I\}$ be the smallest gap in some interval $I$,  $t_2=\min\{\lambda_{i+1}-\lambda_i:\lambda_i\in I, \lambda_{i+1}-\lambda_i>t_1\}$ the second smallest gap, and analogously for any $t_k$. 
To quantify the speed of convergence below, we consider the Wasserstein distance on $\mathbb{R}$ ($\Gamma$  is the set of all couplings  of $X$ and $Y$), 
\begin{equation}
\label{eqn:Wass}
{\rm d}_{\rm W}(X,Y)=\int  |\mathbb{P}(X\leq x)-\mathbb{P}(Y\leq x)|\rd x
=
\sup_{\|h\|_{\rm Lip}\leq 1}|\E(h(X)-h(Y))|
=\inf_{\gamma\in\Gamma}\int|x-y|\rd\gamma(x,y).
\end{equation}

\begin{corollary}[Smallest gaps]\label{cor:smallest} Assume $(H_N)_N$ is as in Theorem \ref{thm:smallprocess}, $k$ is fixed, $\kappa>0$ and consider a non-empty interval $I\subset(-2+\kappa,2-\kappa)$. 
\begin{enumerate}[(i)]
\item{\it Symmetric class.}\ 
Let $\tau_k=\left(\int_I(4-x^2)^{3/2}\rd x/(96\pi)\right)^{1/2}N^{3/2} t_k$. Then for any interval $J\subset \mathbb{R}_+$, we have
$$
\lim_{N\to\infty}\mathbb{P}(\tau_k\in J)=\int_J\frac{2}{(k-1)!}x^{2k-1}e^{-x^2}\rd x.
$$
The rate of convergence satisfies
$
{\rm d}_{{\rm W}}(\tau_k(H),\tau_k({\rm GOE}))\leq N^c/(N^{1/2}\sigma^2)
$
for any $c>0$.
\item{\it Hermitian class.}
Let $\tau_k=\left(\int_I(4-x^2)^2\rd x/(144\pi^2)\right)^{1/3}N^{4/3} t_k$. Then for any interval $J\subset \mathbb{R}_+$, we have
$$
\lim_{N\to\infty}\mathbb{P}(\tau_k\in J)=\int_J\frac{3}{(k-1)!}x^{3k-1}e^{-x^3}\rd x.
$$
The rate of convergence satisfies
$
{\rm d}_{{\rm W}}(\tau_k(H),\tau_k({\rm GUE}))\leq  N^c/(N^{2/3}\sigma^2)
$
for any $c>0$.
\end{enumerate}
\end{corollary}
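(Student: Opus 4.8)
The plan is to obtain parts~(i)--(ii) from the Poisson convergence in Theorem~\ref{thm:smallprocess}, and the rates from the quantitative bulk gap universality established in the body of the paper via the observable~(\ref{eqn:observable}).

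\textit{The limiting laws.} Observe that $N^{\frac{\beta+2}{\beta+1}}t_k$ is exactly the $k$-th smallest first coordinate among the atoms of $\chi^{(N)}$ whose second coordinate lies in $I$, so that for every $s>0$ one has $\{N^{\frac{\beta+2}{\beta+1}}t_k>s\}=\{\chi^{(N)}([0,s]\times I)\le k-1\}$. Since $[0,s]\times I$ is a continuity set for the (absolutely continuous) limiting intensity, Theorem~\ref{thm:smallprocess} gives $\mathbb{P}(N^{\frac{\beta+2}{\beta+1}}t_k>s)\to\mathbb{P}(\chi([0,s]\times I)\le k-1)$. The first-coordinate marginal of $\chi$ restricted to $\mathbb{R}_+\times I$ is a Poisson process on $\mathbb{R}_+$ with intensity $c_\beta\,m_I\,u^\beta\,\rd u$, where $c_1=\tfrac1{48\pi}$, $c_2=\tfrac1{48\pi^2}$ and $m_I=\int_I(4-x^2)^{\frac{\beta+2}{2}}\rd x$; its cumulative intensity is $\Lambda(s)=c_\beta m_I s^{\beta+1}/(\beta+1)$. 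Time-changing by $\Lambda$ turns this into a rate-one Poisson process, whose $k$-th point has the Gamma$(k,1)$ law; hence, writing $S_k$ for the $k$-th point, $\Lambda(S_k)$ has density $t^{k-1}e^{-t}/(k-1)!$. As $\tau_k$ is normalized precisely so that $\tau_k^{\beta+1}=\Lambda(S_k)$ in the limit, the substitution $t=x^{\beta+1}$ yields the densities $\tfrac{\beta+1}{(k-1)!}x^{(\beta+1)k-1}e^{-x^{\beta+1}}$ stated in~(i) ($\beta=1$) and~(ii) ($\beta=2$); convergence of all distribution-function values upgrades to convergence of $\mathbb{P}(\tau_k\in J)$ because the limit is absolutely continuous.

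\textit{The rates.} Using the distribution-function form of the Wasserstein distance one writes
\begin{equation*}
{\rm d}_{\rm W}\big(\tau_k(H),\tau_k(\mathrm{GOE})\big)=\int_0^\infty\big|\mathbb{P}(\tau_k(H)>x)-\mathbb{P}(\tau_k(\mathrm{GOE})>x)\big|\,\rd x ,
\end{equation*}
and splits the integral at $x=X:=(\log N)^A$. For $x>X$, rigidity of the eigenvalues of $H$ (the local law) together with a second-moment estimate shows that $\chi^{(N)}([0,cx]\times I)$ concentrates around its mean, of order $x^{\beta+1}$, uniformly for $x\le N^{c_0}$, giving a uniform-in-$N$ bound $\mathbb{P}(\tau_k(H)>x)\le C_k e^{-c\,x^{\beta+1}}$ (and likewise for GOE), so the tail is negligible. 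For $x\le X$ one expands $\mathbb{P}(\tau_k>x)=\sum_{j=0}^{k-1}\mathbb{P}(\chi^{(N)}(B_x)=j)$ with $B_x=[0,cx]\times I$ into an absolutely convergent inclusion--exclusion series in the factorial moments $\mathbb{E}[(\chi^{(N)}(B_x))_m]$, truncated at $m=M\gg X^{\beta+1}$ with super-polynomially small remainder (these factorial moments being dominated, uniformly in $N$, by those of a Poisson variable of mean $O(X^{\beta+1})$, again by the local law). Up to an explicit combinatorial factor and corrections negligible at the submicroscopic scale $N^{-\frac{\beta+2}{\beta+1}}$ (where any two eigenvalues that close are consecutive with overwhelming probability), each factorial moment equals an integral of the $2m$-point correlation function $\varrho^{(2m)}_N$ over configurations of $m$ well-separated pairs of eigenvalues at distance $\le cx\,N^{-\frac{\beta+2}{\beta+1}}$ with midpoints in $I$. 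The comparison of these integrated correlation functions for $H$ and for GOE is precisely what the quantitative bulk universality supplies: running the Dyson Brownian motion for a time $t$, the stochastic advection equation satisfied by the observable~(\ref{eqn:observable}) and the attendant maximum principle control the relaxation error, while the smoothness hypothesis~(\ref{eqn:smooth}) on scale $\sigma$ lets the reverse heat flow match $H$ with the time-$t$ evolution for $t\lesssim\sigma^2$ up to a super-polynomially small error; optimizing in $t$ (which may be taken of size $\sim\sigma^2$) yields a comparison error $N^c/(N^{1/2}\sigma^2)$ in the symmetric case and $N^c/(N^{2/3}\sigma^2)$ in the Hermitian case, the power of $N$ reflecting the submicroscopic gap scale. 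Summing over the $O(1)$-many relevant moments and integrating over $x\le X=N^{o(1)}$ costs only further $N^{o(1)}$ factors, absorbed into $N^c$; the Hermitian statement follows identically from part~(ii) of Theorem~\ref{thm:smallprocess} and the corresponding $\beta=2$ estimates.

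\textit{Main obstacle.} The genuine difficulty — quantitative relaxation of the Dyson Brownian motion down to the submicroscopic scale, combined with the reverse heat flow — is the subject of the main text and is used here essentially as a black box. Within the corollary the delicate points are a uniform-in-$N$, sufficiently fast tail bound for $\mathbb{P}(\tau_k(H)>x)$ so that the $x$-integral converges, and organizing the inclusion--exclusion and correlation-function reduction so that no step loses more than a negligible power of $N$. In particular one must pass from the merely (a.e.-)continuous functional ``$k$-th smallest gap in $I$'' to the smooth test functionals for which quantitative universality is phrased; this is done by mollification, whose cost is again $N^{o(1)}$ because the point counts $\chi^{(N)}(B_x)$ carry no atoms near the relevant thresholds, uniformly in $N$.
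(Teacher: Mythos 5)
Your derivation of the limiting laws is correct and is the standard argument the paper has in mind: identifying $\{N^{\frac{\beta+2}{\beta+1}}t_k>s\}$ with $\{\chi^{(N)}([0,s]\times I)\le k-1\}$, applying Theorem \ref{thm:smallprocess}, and time-changing the limiting Poisson process; the constants check out.

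The rate part, however, takes a route the paper does not take, and as written it has genuine gaps. You propose to compare $\mathbb{P}(\tau_k>x)$ for $H$ and GOE via an inclusion--exclusion expansion in factorial moments of $\chi^{(N)}([0,cx]\times I)$ and a comparison of $2m$-point correlation functions. Two of the ingredients you invoke are not available: (a) the uniform-in-$N$ tail bound $\mathbb{P}(\tau_k(H)>x)\le C_ke^{-cx^{\beta+1}}$ and the claimed Poisson domination of the factorial moments do not follow from ``the local law'' --- rigidity controls eigenvalue positions on scale $\varphi^{1/2}/N$ in the bulk, three orders of magnitude above the gap scale $N^{-3/2}$, and uniform control of high factorial moments of the submicroscopic gap count would require multi-point level-repulsion (Wegner-type) estimates that the paper does not establish for general Wigner matrices; (b) the ``quantitative bulk universality'' you use as a black box is not a correlation-function comparison for indicator observables at scale $N^{-\frac{\beta+2}{\beta+1}}$ --- what the paper proves (Corollary \ref{cor:gaps}) is a pathwise coupling: with probability $1-N^{-D}$, every bulk gap of the time-$t$ evolved matrix $\wt H_t$ agrees with the corresponding GOE/GUE gap up to $N^{\e}/(N^2t)$.

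This coupling makes the rate essentially immediate and bypasses everything delicate in your sketch. On the good event, the order statistics of the coupled gaps, hence $t_k(\wt H_t)$ and $t_k(\mathrm{GOE})$, differ by at most $N^{\e}/(N^2t)$, so $|\tau_k(\wt H_t)-\tau_k(\mathrm{GOE})|\le N^{3/2+\e}/(N^2t)=N^{\e}/(N^{1/2}t)$; the infimum-over-couplings form of ${\rm d}_{\rm W}$ in (\ref{eqn:Wass}) then gives ${\rm d}_{\rm W}(\tau_k(\wt H_t),\tau_k(\mathrm{GOE}))\le N^{\e}/(N^{1/2}t)$, the bad event contributing $N^{-D}$ times the deterministic bound $\tau_k\le CN^{3/2}$. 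The reverse heat flow (Lemma \ref{lem:RevHeat}) replaces $H$ by $\wt H_t$ with $t\le N^{-\e}\sigma^2$ at total-variation cost $N^{-D}$, which converts to a negligible Wasserstein error again using only the deterministic bound on $\tau_k$ --- no tail estimate, factorial-moment bound, or mollification is needed. Taking $t\sim N^{-\e}\sigma^2$ yields $N^{c}/(N^{1/2}\sigma^2)$, and identically $N^{c}/(N^{2/3}\sigma^2)$ in the Hermitian case.
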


There are at least two ways to understand the above scaling  of the smallest spacings, denoted $\ell=N^{-3/2}$ for $\beta=1$,
$\ell=N^{-4/3}$ for $\beta=2$. 
  First, in the Gaussian integrable case, the eigenvalues interaction  $\prod_{i<j}|\la_i-\la_j|^\beta$ suggests  $\mathbb{P}(N(\la_{i+1}-\la_i)<x)\sim x^{\beta+1}$ uniformly in small $x$ and $i$, so that decorrelation of spacings would give
$N (N\ell)^{\beta+1}\sim 1$. 
Second, the resolvent method gives Wegner estimates for Wigner matrices with smooth entries \cite{ErdSchYau2010}. For example,  \cite[Corollary B.2]{BouErdYauYin2016} shows $\mathbb{P}(N(\lambda_{i+1}-\lambda_i)<x)\leq C N^{\e}x^2$ for GOE. A union bound on these level repulsion estimates provides a lower estimate on the smallest gaps, which matches our order.\\

For the largest gaps, Gumbel fluctuations are expected, with heuristics also relying on decoupling, and the asymptotics $e^{-c x^2}$ for the upper tail distribution of  $N(\lambda_{i+1}-\lambda_i)$. However, for the integrable Gaussian ensembles these facts have been  established only for $\beta=2$, thanks to the determinantal structure. We therefore only state the following theorem for the Hermitian class. It proceeds by comparison with the GUE case from \cite{FengWei2018II}. 
May the analogue for GOE be known, the universality would follow. 

As in  \cite{FengWei2018II}, for any interval $I$ we denote $S(I)=\inf_{I}\sqrt{4-x^2}$. 
Let $t_1^*=\max\{\lambda_{i+1}-\lambda_i:\lambda_i\in I\}$ be the largest gap,  $t_2^*=\max\{\lambda_{i+1}-\lambda_i:\lambda_i\in I, \lambda_{i+1}-\lambda_i<t^*_1\}$ the second smallest gap, and analogously for any $t^*_k$.
We rescale the $k$th largest gaps as
$$
\tau_k^*=(2\log N)^{1/2} (N S(I) t^*_{k}-(32\log N)^{1/2})/4+(5/8)\log (2\log N).
$$

\begin{theorem}[Largest gaps in the bulk, Hermitian case]\label{thm:largest}
Let $(H_N)$ be generalized Wigner matrices from the Hermitian class, satisfying (\ref{eqn:tail}) and smooth on scale $\sigma>N^{-\frac{1}{2}+\e}$ for some fixed $\e>0$, in the sense of (\ref{eqn:smooth}). 
Let  $I=[a,b]\subset(-2,2)$. Assume $|a|\leq|b|$ without loss of generality, and define 
$c=(1/12)\log 2+3\zeta'(-1)+(3/2)\log(4-b^2)-\log (4|b|)+(\log 2)\mathds{1}_{a=-b}$. 
For any fixed $k$  and interval $J$, we have
$$
\lim_{N\to\infty}\mathbb{P}(\tau^*_k\in J)=\int_J\frac{e^{k(c-x)}}{(k-1)!}e^{-e^{c-x}}\rd x.
$$
Moreover, the rate of convergence is bounded by
$
{\rm d}_{{\rm W}}(\tau^*_k(H),\tau^*_k({\rm GUE}))\leq N^c/(N\sigma^2)
$
for any $c>0$.
\end{theorem}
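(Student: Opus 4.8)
The plan is to deduce the limit law directly from \cite{FengWei2018II} together with the stated Wasserstein bound, so that all of the work lies in proving the latter. After matching the normalization of Definition~\ref{def:wig}, \cite{FengWei2018II} gives that $\tau^*_k({\rm GUE})$ converges in law to the distribution with density $\frac{e^{k(c-x)}}{(k-1)!}e^{-e^{c-x}}$ — this is where the explicit constant $c$ in the statement comes from, and I simply import it. Since that limit has a bounded continuous density while $N^{c}/(N\sigma^2)\to0$ when $\sigma>N^{-1/2+\e}$ and $c$ is small, convergence in ${\rm d}_{\rm W}$ transfers the limit to $\tau^*_k(H)$ with convergence of distribution functions at every point. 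I would then prove the comparison by the Erd{\H o}s--Schlein--Yau three-step scheme, in the quantitative form made available by the observable (\ref{eqn:observable}).

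\emph{Dyson Brownian motion coupling and relaxation.} Fix a small $\delta\in(0,\e)$ and set $t_*=\sigma^2N^{-\delta}$, so $N^{-1+\e}\le t_*\le\sigma^2$ for $N$ large. Let $(\lambda_i(t))$ be the eigenvalues of the matrix Ornstein--Uhlenbeck flow $\dd H_t=\dd W_t/\sqrt N-\tfrac12 H_t\,\dd t$, with $W$ a GUE matrix Brownian motion, started from a generalized Wigner matrix $H_0$ to be fixed in the last step; the Gaussian part accumulated by time $t_*$ has the constant GUE variance profile, so $H_{t_*}$ is again a generalized Wigner matrix and $(\lambda_i(t))$ solves the standard Dyson Brownian motion. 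Let $(\mu_i(t))$ solve the same equation driven by the same Brownian motions but started from an independent GUE matrix; by stationarity of the matrix OU flow under the GUE measure, $\mu(t_*)\overset{d}{=}{\rm GUE}$ with no error. Combining rigidity and the local law for both configurations with the stochastic advection equation satisfied by the observable (\ref{eqn:observable}) — the local relaxation estimate established earlier in the paper — one obtains, with overwhelming probability and for any fixed compact $I'\subset(-2,2)$ containing $I$ in its interior,
\begin{equation*}
\max_{i:\,\lambda_i(t_*)\in I'}\bigl|(\lambda_{i+1}(t_*)-\lambda_i(t_*))-(\mu_{i+1}(t_*)-\mu_i(t_*))\bigr|\ \le\ \frac{N^{\delta}}{N^{2}t_*}.
\end{equation*}
The decisive feature is the uniformity in $i$: that is exactly what the maximum principle for the observable provides, and it is indispensable since $\tau^*_k$ is a maximum over $\Theta(N)$ gaps, against which a union bound would lose a fatal factor $N$. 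Order statistics are $1$-Lipschitz for the sup-norm, and the index sets $\{\lambda_i\in I\}$, $\{\mu_i\in I\}$ differ only on the $O(N^{\delta})$ eigenvalues within the rigidity scale of $\partial I$, whose gaps are $O(\log N/N)$ and thus negligible against the $\sqrt{\log N}/N$ scale of the $k$ largest gaps; hence the $k$th largest gaps of the two systems in $I$ differ by at most $N^{\delta}/(N^{2}t_*)$ with overwhelming probability. The map $\tau^*_k=A\,t^*_k+B$ is affine with slope $A\asymp N\sqrt{\log N}$, and absorbing the exceptional event (of probability $e^{-N^{c}}$) through a crude polynomial a priori bound on $\tau^*_k$ coming from (\ref{eqn:tail}) yields
\[
{\rm d}_{\rm W}\bigl(\tau^*_k(\text{spec}(H_{t_*})),\tau^*_k({\rm GUE})\bigr)\ \le\ A\cdot\frac{N^{\delta}}{N^{2}t_*}\ \le\ \frac{N^{2\delta}}{Nt_*}\ =\ \frac{N^{3\delta}}{N\sigma^2}.
\]

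\emph{Removing Gaussian divisibility by reverse heat flow.} It remains to choose $H_0=\widetilde H$ so that $\widetilde H_{t_*}\overset{d}{=}H$. Since $\widetilde H_{t_*}=e^{-t_*/2}\widetilde H+\sqrt{1-e^{-t_*}}\,G$ with $G$ GUE, matching variances forces $\widetilde H$ to have variance profile $e^{t_*}\sigma_{ij}^2-(e^{t_*}-1)/N\sim N^{-1}$ with rows still summing to $1$, so $\widetilde H$ is a generalized Wigner matrix; its entries' laws are recovered from those of $H$ by the Erd{\H o}s--Schlein--Yau reverse heat flow applied to the real and imaginary parts, which amounts — after the deterministic rescaling — to deconvolving a Gaussian of standard deviation $\asymp\sqrt{t_*}$. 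Because $\sqrt{t_*}\ll\sigma$, the density $e^{-V}$ of $\sqrt N H_{ij}$, smooth on scale $\sigma$ by (\ref{eqn:smooth}), has a preimage which is again a probability density, smooth on scale $\sigma$ up to constants and with a sub-exponential tail; thus $\widetilde H$ satisfies the hypotheses needed for the local law along its entire Dyson Brownian motion trajectory, as used above. Combining the last two displays with $\text{spec}(\widetilde H_{t_*})\overset{d}{=}\text{spec}(H)$ gives ${\rm d}_{\rm W}(\tau^*_k(H),\tau^*_k({\rm GUE}))\le N^{3\delta}/(N\sigma^2)$, and since $\delta,c>0$ are arbitrary this is the claim.

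\emph{Main obstacle.} The crux is the relaxation estimate: producing an error that is at once uniform over all $\Theta(N)$ bulk gaps \emph{and} of essentially optimal order $1/(N^{2}t_*)$, so that after multiplication by the $\asymp N\sqrt{\log N}$ rescaling factor it still beats $N^{\delta}/(N\sigma^2)$. Supplying such a bound is precisely the role of the new observable and its maximum principle; the remaining ingredients — the reindexing near $\partial I$, the tail control making ${\rm d}_{\rm W}$ insensitive to the exceptional set, and the verification that reverse heat flow preserves enough regularity and decay — are routine by comparison.
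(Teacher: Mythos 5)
Your proposal is correct and follows essentially the same route as the paper: reverse heat flow to realize $H$ (approximately) as a Gaussian-divisible ensemble, the quantitative coupling of bulk gaps with GUE gaps from Corollary \ref{cor:gaps} combined with the coupling characterization of ${\rm d}_{\rm W}$ in (\ref{eqn:Wass}), and the GUE limit law imported from \cite{FengWei2018II}. The only imprecision is your claim that the Gaussian deconvolution yields an exact preimage density; in general it does not, and one must instead use the approximate inverse of Lemma \ref{lem:RevHeat}, which matches $\wt H_{t}$ to $H$ only up to total variation $N^{-D}$ — but that error is easily absorbed into the Wasserstein bound exactly as in the paper.
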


\subsection{Results on quantitative universality and eigenvalues' fluctuations.}\ The previous theorems rely on a quantitative relaxation of the Dyson Brownian motion, explained in subsection \ref{sub:sketch}. As a different application, universality holds with explicit rate of convergence, answering a
recurring question, see e.g. \cite{Open2010}. 

We illustrate this at the edge only to keep technicalities minimal, although the method would also give some explicit rate for gaps in the bulk. A non-quantitative convergence to the Tracy Widom distribution was first proved in \cite{Sos1999,TaoVu2010,ErdYauYin2012} for Wigner and in \cite{BouErdYau2014} for generalized Wigner matrices. We consider the Kolmogorov distance
$$
{\rm d}_{\rm K}(X,Y)=\sup_x |\mathbb{P}(X\leq x)-\mathbb{P}(Y\leq x)|.
$$

\begin{theorem}\label{thm:rate}
Let $(H_N)$ be  generalized Wigner matrices from the symmetric ($\beta=1$) or Hermitian ($\beta=2$) class satisfying (\ref{eqn:tail}). Denoting ${\rm TW}_{\beta}$ the corresponding limiting Tracy-Widom distribution, for any $c>0$, for large enough $N$ we have
$$
{\rm d}_{{\rm K}}(N^{2/3}(\lambda_{N}-2),{\rm TW}_{\beta})\leq N^{-\frac{2}{9}+c}.
$$
\end{theorem}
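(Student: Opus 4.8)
The plan is to run the Erd{\H o}s--Schlein--Yau three-step scheme, making the Dyson Brownian motion step quantitative by means of the observable (\ref{eqn:observable}), and to reduce the Tracy--Widom comparison for $H$ to the already available rate for the Gaussian ensembles. \emph{Reduction.} Fix a small $c>0$ and a mesoscopic scale $\eta$ slightly below $N^{-2/3}$. Writing $\mathbb{P}(\lambda_N\le 2+N^{-2/3}E)=\mathbb{P}\big(\mathds{1}\{\exists\,i:\lambda_i> 2+N^{-2/3}E\}=0\big)$, approximate the step function by a mollification on scale $\eta$; the Helffer--Sj{\"o}strand formula then expresses the resulting expectation through the Stieltjes transform $s_H(z)=N^{-1}\tr(H-z)^{-1}$ on the line $\im z=\eta$ near the edge. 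The mollification error is controlled by the edge level-repulsion (Wegner-type) estimate, which is of order a power of $N^{2/3}\eta$ and costs only an $N^{c}$ factor once $\eta$ is tuned. Hence it suffices to bound $\sup_E |s_H-s_{\mathrm{G}}|$ on $\im z=\eta$ near the edge, $\mathrm{G}$ the Gaussian ($\beta=1,2$) ensemble, and then to invoke the known rate $ {\rm d}_{\rm K}(N^{2/3}(\lambda_N(\mathrm{G})-2),{\rm TW}_\beta)\lesssim N^{-2/3}$ (classical for $\beta=2$ via the determinantal kernel; likewise for $\beta=1$), which is negligible.

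\emph{Local law and rigidity.} For generalized Wigner matrices satisfying (\ref{eqn:tail}) the optimal local semicircle law and edge rigidity hold, so $|\lambda_N-2|\le N^{-2/3+c}$ with very high probability, and this persists along the Dyson Brownian motion flow; this feeds both the reduction above and the relaxation step below. \emph{Quantitative relaxation via the observable.} Let $\lambda(t)$ be the Dyson Brownian motion started at $\spec(H)$ and $\mu(t)$ the analogous flow started at the spectrum of a Gaussian (generalized) Wigner matrix, coupled so that the observable $u_t(z)$ of (\ref{eqn:observable}) built from the two configurations satisfies the stochastic advection equation. Integrating along its characteristics $\dot z_s=-v_s(z_s)$ --- which, launched from $\im z_0\sim 1$, descend to $\im z_t\sim\eta$ at the spectral edge over a time of the appropriate order --- the maximum principle yields, schematically,
\[
\sup_{\im z=\eta}|u_t(z)|\ \lesssim\ \sup_{z_0}|u_0(z_0)|\ +\ \big(\text{martingale accumulated over }[0,t]\big)\ +\ \big(\text{closure error}\big).
\]
The initial term is negligible because at $\im z_0\sim1$ both empirical measures are semicircular up to $o(1)$; the bracket of the martingale term and the closure error are estimated using the local law and edge rigidity of $\lambda(s),\mu(s)$ for $s\le t$. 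This gives $|s_{\lambda(t)}-s_{\mu(t)}|\lesssim N^{c}\,\mathcal{E}_{\rm rel}(t)$ on $\im z=\eta$ near the edge, with $\mathcal{E}_{\rm rel}(t)$ decreasing in $t$ and in the resolution scale $N\eta$.

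\emph{Comparison and optimization.} It remains to transfer this from the Gaussian-divisible matrix back to $H$. Construct $\widehat H_0$ (by moment matching / reverse heat flow) so that $\widehat H_t=\me^{-t/2}\widehat H_0+\sqrt{1-\me^{-t}}\,\mathrm{G}$ agrees with $H$ to third moment with comparable fourth moments; a Green function comparison at the edge, supplemented where needed by the short-time stability of the flow (whose deterministic Ornstein--Uhlenbeck shift of $\lambda_N$ cancels, to leading order, against the second-order level-repulsion shift, leaving a fluctuation of size $\sim\sqrt{t/N}$, i.e. $\sim N^{2/3}\sqrt{t/N}$ after rescaling), bounds $\sup_E|s_H-s_{\widehat H_t}|$ by $N^{c}\,\mathcal{E}_{\rm cont}(t)$, increasing in $t$. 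Combining the reduction, Step 1, the relaxation estimate and this comparison, the total Kolmogorov error is $\lesssim N^{c}\big(\mathcal{E}_{\rm rel}(t)+\mathcal{E}_{\rm cont}(t)\big)+N^{-2/3}$; optimizing the choices of $t$ and $\eta$ so as to balance the relaxation error (which improves as $t$ and $N\eta$ grow), the continuity/comparison error (which worsens as $t$ grows) and the mollification error (which worsens as $N^{2/3}\eta$ grows) produces $N^{-2/9+c}$.

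\emph{Main obstacle.} The crux is the quantitative relaxation: one must follow the characteristics of the stochastic advection equation down to $\im z\sim N^{-2/3-c}$, where they run nearly tangent to the real axis and the local law is most delicate; one must bound the accumulated martingale using the \emph{optimal} edge rigidity, propagated along the entire trajectory rather than at a single fixed time; and one must keep the closure and initial-data errors strictly sub-leading. A secondary technical point, but one that genuinely enters the final exponent, is the interplay of scales in the reduction: passing from control of $s_H$ at scale $N^{-2/3-c}$ to the Kolmogorov distance of $\lambda_N$ requires an edge level-repulsion estimate of the right quantitative strength, and the balance of this mollification loss against $\mathcal{E}_{\rm rel}$ and $\mathcal{E}_{\rm cont}$ is what fixes $\eta$, $t$, and ultimately the exponent $2/9$.
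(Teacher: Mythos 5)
Your overall architecture is the same as the paper's: quantitative edge relaxation of the Dyson Brownian motion via the observable (\ref{eqn:observable}) and its characteristics, a moment-matched Gaussian-divisible matrix $\wt H_t$, a Green function comparison, and an optimization over $t$ and the mollification scale. However, two of your steps do not close as written, and both affect the exponent.

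First, your error budget for the comparison step is wrong. The term you describe as ``short-time stability of the flow,'' of size $N^{2/3}\sqrt{t/N}=N^{1/6}t^{1/2}$ after rescaling, is fatal: balancing it against the relaxation error $N^{2/3}\cdot N^{\e}/(Nt)=N^{-1/3+\e}/t$ forces $t\sim N^{-1/3}$ and yields a total error of order $N^{0}$, not $N^{-2/9}$. No continuity-of-the-flow estimate enters the paper's proof. Instead, the paper's Proposition \ref{t3} exploits that $\wt H_t$ matches $H$ to three moments \emph{exactly} and differs in the fourth moment by $\OO(t)$, giving a comparison error $\varphi^C\big(t/(N\rho)+(N\rho)^{-2}+N^{-1}\big)$ for the smoothed counting function on scale $\rho$. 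Only with this form of the error does the optimization $t=N^{-1/9}$, $\rho=N^{-8/9}$ balance all terms at $N^{-2/9}$. You must verify that no $\sqrt{t/N}$-type term survives; asserting that the optimization ``produces $N^{-2/9+c}$'' without carrying it out hides exactly this point.

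Second, your reduction controls the mollification error by ``an edge level-repulsion (Wegner-type) estimate'' for $H$. Theorem \ref{thm:rate} assumes no smoothness of the entries, and no such anticoncentration estimate for $\lambda_N(H)$ is available a priori — indeed, proving it is essentially equivalent to the theorem. The paper avoids this circularity: it sandwiches $\mathbb{P}(\lambda_N<2+xN^{-2/3})$ between expectations of $F(\Tr f_i(H))$ for two monotone mollifiers shifted by $\rho$, transfers to $\wt H_t$ by Proposition \ref{t3}, then to GOE/GUE by Theorem \ref{thm:edgerelaxation}, and only \emph{then} pays the $\rho$-shift using the boundedness of the ${\rm TW}_1$ density together with the quantitative $N^{-1/2}$ rate for GOE from \cite{JohMa2012}. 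Relatedly, comparing Stieltjes transforms at $\im z=\eta\sim N^{-2/3-c}$ (below the eigenvalue spacing at the edge) is both harder than necessary and insufficient to conclude a Kolmogorov bound without the same missing anticoncentration input; the paper instead derives the eigenvalue coupling $|\lambda_k(t)-\mu_k(t)|\le N^{\e}/(Nt)$ directly from the nonnegative observable $\wt f_t$ evaluated at $\im z=\varphi^C N^{-2/3}$.
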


As another illustration of the method described in subsection \ref{sub:sketch}, we derive new typical eigenvalue fluctuations, close to the edge of the spectrum. In the result below and along the paper, we define the typical location $\gamma_k$ of the $k$-th ordered eigenvalue implicitly through
\begin{equation}
\int_{-\infty}^{\gamma_k}\rd\rho=\frac{k}{N}\label{eqn:gamma}.
\end{equation}
\begin{theorem}[Eigenvalues fluctuations close to the edge]\label{Gaussian}
Let $(H_N)$ be generalized Wigner matrices  satisfying (\ref{eqn:tail}). Consider
$$
X_i=c\ \frac{\lambda_{i}-\gamma_{i}}{(\log i)^{1/2}N^{-2/3}i^{-1/3}},
$$
where $c=(3/2)^{1/3}\pi \beta^{1/2}$, with $\beta=1$ for the symmetric class, $2$ for the Hermitian one. 
Fix    $\delta \in (0, 1)$.  Then for any deterministic sequence $i=i_N\to\infty$, with $i\leq N^\delta$,
we have $X_i\to \mathcal{N}(0,1)$ in distribution.

Let $m\geq 1$ and 
 $k_1<\dots<k_m$ satisfy $k_1\sim N^\delta$,
$k_{i+1}-k_i\sim N^{\vartheta_i}$, $0<\vartheta_i\leq\delta$.
Then $(X_{k_1},\dots,X_{k_m})$ converges to a Gaussian vector with covariance matrix
$\Lambda_{ij}=1-\delta^{-1}\max\{\vartheta_k,i\leq k<j\}$ if $i<j$, $\Lambda_{ii}=1$.
\end{theorem}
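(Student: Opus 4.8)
The strategy is to read off the limiting Gaussian structure from the eigenvalue counting function, to prove the statement for the Gaussian ensembles, and to transfer it to generalized Wigner matrices using the quantitative relaxation of the Dyson Brownian motion from Subsection~\ref{sub:sketch}; since only moments of the entries enter that last step, no smoothness hypothesis is needed. \emph{Reduction to the counting function.} Write $\mathcal N_H(E)=\#\{j:\lambda_j\le E\}$. The elementary identity $\{\lambda_i\le E\}=\{\mathcal N_H(E)\ge i\}$, evaluated at $E=\gamma_i+s$ for $|s|\lesssim(\log N)(\log i)^{1/2}N^{-2/3}i^{-1/3}$, together with $\E\,\mathcal N_H(\gamma_i+s)=i+sN\rho(\gamma_i)+o\big((\log i)^{1/2}\big)$ and $N\rho(\gamma_i)\asymp N^{2/3}i^{1/3}$, reduces the joint convergence of $(X_{k_1},\dots,X_{k_m})$ to the joint asymptotic normality of the centered counting functions $\big(\mathcal N_H(\gamma_{k_a})-\E\,\mathcal N_H(\gamma_{k_a})\big)_{a}$ (the oscillation of $\mathcal N_H$ over windows of the above size being $o\big((\log i)^{1/2}\big)$, negligible at this scale). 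The constant $c$ is the one that matches $\rho(\gamma_i)$ with the limiting variance $v_\beta$ of $(\log i)^{-1/2}\big(\mathcal N_H(\gamma_i)-\E\,\mathcal N_H(\gamma_i)\big)$.

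\emph{The Gaussian case and the covariance.} For $H$ a GOE or GUE matrix, the joint central limit theorem for the counting functions follows from the determinantal (respectively Pfaffian) structure: in the regime $1\ll i\le N^\delta$ the correlation kernel lies in the crossover between the Airy and the sine kernels, so the number of eigenvalues in an interval is a sum of independent Bernoulli variables and the Costin--Lebowitz theorem (with Soshnikov's extension) applies, giving $\var\,\mathcal N(\gamma_i)\sim v_\beta\log i$ and, for $a<b$, $\cov\big(\mathcal N(\gamma_{k_a}),\mathcal N(\gamma_{k_b})\big)\sim v_\beta\log\!\big(k_a/(k_b-k_a)\big)$, the counting functions decorrelating on the scale of the index gap $k_b-k_a\sim N^{\max_{a\le k<b}\vartheta_k}$. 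Since $\log k_a\sim\log k_b\sim\delta\log N$ while $\log(k_b-k_a)\sim(\max_{a\le k<b}\vartheta_k)\log N$, the limiting correlation of $X_{k_a}$ and $X_{k_b}$ is $1-\delta^{-1}\max\{\vartheta_k:a\le k<b\}=\Lambda_{ab}$; the single-index statement $X_i\to\mathcal N(0,1)$ is the near-edge extension (O'Rourke) of Gustavsson's bulk theorem.

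\emph{Transfer to generalized Wigner matrices.} Run the Dyson Brownian motion from $H$ for a mesoscopic time $t=N^{-1+\omega}$, so that $H_t\overset{d}{=}e^{-t/2}H+\sqrt{1-e^{-t}}\,G$ for a matching Gaussian matrix $G$. The observable~(\ref{eqn:observable}), which solves a stochastic advection equation, compares this flow with the one started from an independent GOE (resp.\ GUE) matrix; since both initial data are rigid with the semicircle law as macroscopic density, the maximum principle of Subsection~\ref{sub:sketch} shows that near the spectral edge the normalized positions $N^{2/3}i^{1/3}(\lambda_i(t)-\gamma_i)$ of the two flows agree up to an error which, after the normalization of the first step, is $o\big((\log i)^{1/2}\big)$, hence finer than the fluctuation scale. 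As the Gaussian eigenvalue law is stationary for the Dyson Brownian motion, $\lambda_i(H_t)$ has the same limit in distribution as $\lambda_i$ of a GOE (resp.\ GUE) matrix, i.e.\ the Gaussian limit of the previous step. Finally a Green's function comparison between $H$ and $H_t$, whose first four moments nearly coincide for $t=N^{-1+\omega}$, transfers the conclusion to $\lambda_i(H)$ with error $N^{-c}\ll(\log i)^{1/2}N^{-2/3}i^{-1/3}$; together with the first step this proves the theorem.

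\emph{Main difficulty.} The limiting variance grows only like $\log i$, so the entire chain must be run with additive precision $o\big((\log i)^{1/2}\big)$ on the counting function --- far sharper than for the $\mathcal O(1)$-scale statistics of the previous theorems --- and uniformly over every sequence $i=i_N\to\infty$, however slowly growing. The crux is carrying out the relaxation estimate of the third step in the near-edge window, where the equilibrium density vanishes like a square root so that the coefficients of the advection equation degenerate: the maximum principle must then be run with edge-adapted weights, and one must extract from the observable the refined matching at the fluctuation scale rather than mere rigidity. A secondary point is the use of the Pfaffian rather than determinantal structure for the base case when $\beta=1$.
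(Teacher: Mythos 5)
Your overall architecture matches the paper's: take the Gaussian case as the base (the paper simply cites \cite{Gus2005,Oro2010} rather than re-running Costin--Lebowitz, but that is a matter of taste), use the edge relaxation coming from the observable (\ref{eqn:observable}) to handle the Dyson Brownian motion step, and finish with a Green's function comparison. However, your transfer step has a genuine gap in how the moment matching is set up.

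You propose to compare $H$ directly with $H_t\overset{d}{=}e^{-t/2}H+\sqrt{1-e^{-t}}\,G$, on the grounds that "their first four moments nearly coincide for $t=N^{-1+\omega}$." Two incompatible requirements collide here. On one hand, Theorem \ref{thm:edgerelaxation} gives $|\lambda_k(t)-\mu_k(t)|\le N^{\e}/(Nt)$, and for this to be $o\big((\log k)^{1/2}N^{-2/3}k^{-1/3}\big)$ uniformly up to $k\sim N^{\delta}$ you need $t\gg (k/N)^{1/3}\gtrsim N^{-1/3}$ (the paper takes $t>(k/N)^{1/3}N^{\e/10}$); a time $N^{-1+\omega}$ with small $\omega$ is far too short. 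On the other hand, the OU flow only preserves the first two moments exactly: the third moments of $h_{ij}$ and of the evolved entry differ by $O(t)$, and in the resolvent expansion underlying Proposition \ref{t2} this produces an error of order $N^{2}\cdot N^{-3/2}\cdot t= N^{1/2}t$, which forces $t\ll N^{-1/2}$. Since $N^{-1/3}\gg N^{-1/2}$, no single choice of $t$ makes your direct comparison work. The paper resolves this by reversing the construction: one builds an auxiliary generalized Wigner matrix $\wt H_0$ such that the \emph{evolved} matrix $\wt H_t=e^{-t/2}\wt H_0+(1-e^{-t})^{1/2}U$ matches the first three moments of the target $H$ exactly and the fourth to $N^{-c}$ (this is \cite[Lemma 3.4]{EYYBernoulli}); Proposition \ref{t2} is then applied between $\wt H_t$ and $H$, while the relaxation estimate compares $\wt H_t$ with the GOE/GUE. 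Your proposal is missing this reverse matching, and as written the step fails.

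Two smaller remarks. The difficulty you flag about running the maximum principle with "edge-adapted weights" is precisely what Proposition \ref{prop:estimate} and Corollary \ref{cor:v} accomplish via the characteristics (\ref{eqn:zteq}), so that part of your plan is sound and coincides with the paper's. Your reduction of the Gaussian case to counting functions and Costin--Lebowitz is a more self-contained route than the paper's citation of Gustavsson and O'Rourke, but it is input to the argument rather than a new difficulty, and the paper treats the multidimensional covariance by the same comparison, not by redoing the determinantal computation.
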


These anomalous small Gaussian fluctuations were first shown in \cite{Gus2005} for GUE and \cite{Oro2010} for GOE. Our proof proceeds by comparison with these results.
Fluctuations of eigenvalues around their typical locations are known in the bulk of the spectrum for Wigner matrices \cite{BouMod2018,LanSos2018}. 
Theorem \ref{Gaussian} extends to any $\delta\in(0,1)$ a previous result from \cite{BouErdYau2014} which was limited to $\delta<1/4$, and
therefore completes the proof of eigenvalues' fluctuations anywhere in the spectrum\footnote{
The results of \cite{BouMod2018,LanSos2018} are stated for eigenvalues in $[-2+\kappa,2-\kappa]$, but the proofs immediately extend to 
$[-2+N^{-c},2-N^{-c}]$ for some fixed, small enough $c>0$.
}.

More generally, the proof sketch below explains edge statistics for general observables of eigenvalues with indices in $\llbracket 1, N^{1-\e}\rrbracket$, i.e. almost up to the bulk. As another example, for any fixed $\e>0$ and diverging $i<N^{1-\e}$, $N^{2/3}i^{-1/3}(\lambda_{i+1}-\lambda_i)$ converges to the Gaudin distribution, a result proved in \cite{BouErdYau2014} for $i<N^{1/4}$.

\subsection{Sketch of the proof.}\  \label{sub:sketch}
In this paper we denote  $c$, $C$ generic  small and large constants which do not depend on $N$ but may vary from line to line.
Let $\kappa(z)=\min(|z-2|,|z+2|)$ and 
\begin{equation}\label{eqn:varphi}\varphi=e^{C_0(\log\log N)^2},\end{equation}
a subpolynomial error parameter, for some fixed $C_0>0$. This constant $C_0$ is chosen large enough so that the eigenvalues' rigidity from Lemma \ref{rig} holds.

Finally, we restrict the following outline and the full proof to the symmetric class, the Hermitian one requiring only changes in notations.\\

 As already mentioned, our work
proceeds by interpolation with the integrable models, following the general method from \cite{ErdSchYau2011}.
This dynamic approach requires (i) a priori bounds on the eigenvalues' locations, (ii) local relaxation for the eigenvalues' dynamics after a short time, (iii) a density argument based on the matrix structure, to show that eigenvalues statistics have not changed after short-time dynamics.

In this paper,  (i) is the rigidity estimate from \cite{ErdYauYin2012}.  Concerning the density argument (iii), for theorems \ref{thm:rate} and \ref{Gaussian} we follow the Lindeberg exchange method \cite{TaoVu2011} for Green's functions \cite{ErdYauYin2012Univ}. For theorems \ref{thm:smallprocess} and \ref{thm:largest}, (iii) is obtained through the inverse heat flow from \cite{ErdSchYau2011} (this is where smoothness  is required).

Our contribution is about (ii).
Previous approaches for local convergence to equilibrium included the local relaxation flow based on relative entropy \cite{ErdSchYau2011}.
It identifies eigenvalues statistics after a spatial averaging and therefore does not apply to extrema. Other
methods based either on H\"{o}lder regularity a la Di-Giorgi-Nash-Moser \cite{ErdYau2015} or 
${\rm L}^2$-estimates and a discrete Sobolev inequality \cite{LanSosYau2016} apply to individual eigenvalues but 
give non-explicit error terms. In this paper, we give another approach based on the maximum principle. 
Our main results  are Theorem \ref{thm:edgerelaxation} for relaxation at the edge, and 
Theorem \ref{thm:average} for relaxation in  the bulk. They give the first explicit (and optimal) error estimates for local relaxation of eigenvalues dynamics.\\

The Dyson Brownian motion dynamics are defined as follows.
Let $B$ be a $N\times N$ matrix such that $B_{ij} (i<j)$ and $B_{ii}/\sqrt{2}$ are independent standard Brownian motions, and $B_{ij}=B_{ji}$.
Consider the matrix Ornstein-Uhlenbeck process
$$
\rd H_{t}=\frac{1}{\sqrt{N}} \rd B_t-\frac{1}{2}H_t\rd t.
$$
If  $\lambda_1(0)<\dots<\lambda_N(0)$, the eigenvalues $\bla(t)$ of $H_t$ are given by the strong solution
of the system of stochastic differential equations \cite{Dys1962} (the  $\beta_k$'s are some Brownian motions distributed as the $B_{kk}$'s)
\begin{align*}
\rd\la_k=\frac{\rd \beta_{k}}{\sqrt{N}}+\left(\frac{1}{N}\sum_{\ell\neq k}\frac{1}{\la_k-\la_\ell}-\frac{1}{2}\lambda_k\right)\rd t.
\end{align*}
The coupling method introduced in \cite{BouErdYauYin2016} proceeds as follows. Consider $\bmu(t)$ the solution of the same SDE as above
with another initial condition $\bmu(0)=\{\mu_1(0)<\dots<\mu_N(0)\}$, the spectrum of a GOE matrix. Then the differences $\delta_k(t)=e^{t/2}(\la_k(t)-\mu_k(t))$ satisfy the  long-range parabolic differential equation
$$
\partial_t\delta_k(t)=\sum_{j\neq k}\frac{\delta_j(t)-\delta_k(t)}{N(\la_k(t)-\la_j(t))(\mu_k(t)-\mu_j(t))}.
$$
Smoothing of this equation for indices in the bulk means that for $t\gg 1/N$,
$$
\delta_{k+1}(t)=\delta_k(t)+\oo\left(N^{-1}\right).
$$
Such estimates were proved in \cite{ErdYau2015,LanSosYau2016}, with a weak error term $N^{-1-\e}$ with some non-explicit $\e>0$. We obtain the essentially optimal  estimate (see Corollary \ref{cor:gaps}), up to subpolynomial orders,
\begin{equation}\label{quant}\delta_{k+1}(t)=\delta_k(t)+\OO\left(\frac{\varphi^C}{N^2t}\right).
\end{equation}
With this quantitative relaxation, $(\la_{k+1}-\la_k)-(\mu_{k+1}-\mu_k)$ is below the expected scale of smallest gaps provided $t\gg N^{-1/2}$ for $\beta=1$, $t\gg N^{-2/3}$ for $\beta=2$. This gives the relaxation step (ii) for the smallest gaps. The proof for the large gaps proceeds identically and only requires $t\gg 1/N$.

Our proof of  (\ref{quant}) reduces H\"{o}lder regularity to an elementary maximum principle, and it also applies to edge universality.
In details,  for any $\nu\in[0,1]$, let
\begin{equation}\label{eqn:initial}
x^{(\nu)}_k(0)=\nu \mu_k(0)+(1-\nu) \la_k(0)
\end{equation}
be interpolating between the Wigner and GOE initial conditions, as in \cite{LanSosYau2016}.
Define 
\begin{equation}\label{eqn:dbm}
\rd x^{(\nu)}_k(t)=\frac{\rd \beta_{k}(t)}{\sqrt{N}}+\left(\frac{1}{N}\sum_{\ell\neq k}\frac{1}{x^{(\nu)}_k(t)-x^{(\nu)}_\ell(t)}-\frac{1}{2}x^{(\nu)}_k(t)\right)\rd t.
\end{equation}
Then $\frak{u}^{(\nu)}_k(t)=e^{t/2}\frac{\rd}{\rd \nu}x^{{(\nu)}}_k(t)$ satisfies the non-local parabolic differential equation
\begin{align}\label{eqn:gene}
\frac{\rd}{\rd t}\frak{u}^{(\nu)}_k(t)&=(\mathscr{B}\frak{u}^{(\nu)})(k),
\end{align}
where 
\begin{equation}\label{eqn:coef}
(\mathscr{B} f)(k)=(\mathscr{B}(t) f_t)(k)=\sum_{\ell\neq k} c_{\ell k}(t)(f_t(\ell)-f_t(k)),\ 
 c_{\ell k}(t)=\frac{1}{N(x_k^{(\nu)}(t)-x_\ell^{(\nu)}(t))^2}.
\end{equation}
From now we set $\nu\in(0,1)$ and generally omit it from the notations. Let
\begin{equation}\label{eqn:observable}
f_t(z)=e^{-\frac{t}{2}}\sum_{k=1}^N\frac{\frak{u}_k(t)}{x_k(t)-z}.
\end{equation}
The above function is the main idea in our work. 
Note that for $k$ in the bulk $\frak{u}_k(0)$ is of order $N^{-1}$ so that,
for $\im z$  in the bulk of the spectrum,
$f_0$
is a function of order $1$. From (\ref{eqn:ft}) below, $f_t$ is of the same order.

A key observation is that the quadratic singularities from the denominator in (\ref{eqn:coef}) disappear when combined with the Dyson Brownian motion evolution itself,
so that the time evolution of $f$ has no shocks. This is reminiscent of a similar argument in \cite[Lemma 6.2]{BouYau2017}, for a different observable. More precisely, $f$ follows dynamics close to the advection equation

\begin{wrapfigure}[12]{l}{0.45\textwidth}
\vspace{-0.7cm}
\begin{center}
\includegraphics[width=0.45\textwidth]{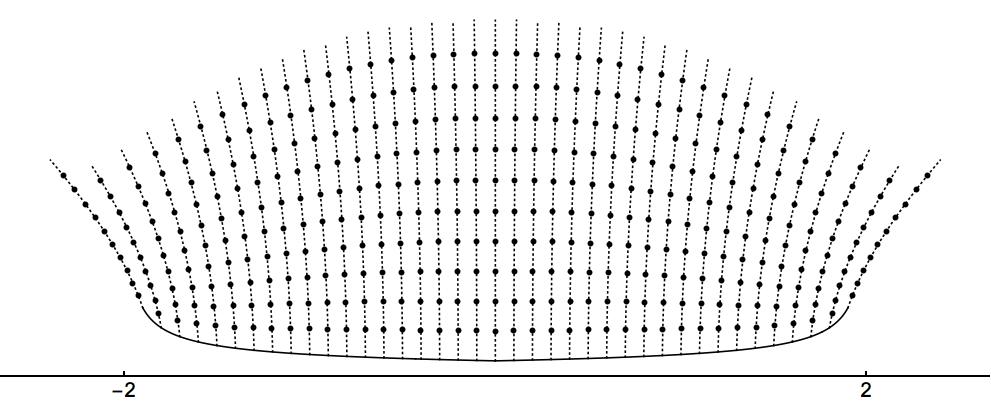}
\caption{The characteristics for the equation (\ref{eqn:adv}), i.e. trajectories $(z_t)_{t\geq 0}$, with $z_0$ on the lower curve $\mathscr{S}$ from $(\ref{eqn:S})$.}\end{center}
\end{wrapfigure}
~~\begin{equation}\label{eqn:adv}
\partial_t h=\frac{\sqrt{z^2-4}}{2}\partial_z h,
\end{equation}
as shown in Lemma \ref{lem:key}.
The charateristics for the above equation are explicit,
\begin{equation}\label{eqn:zteq}
z_t=\frac{e^{t/2}(z+\sqrt{z^2-4})+e^{-t/2}(z-\sqrt{z^2-4})}{2}
\end{equation}
and suggest the approximation
\begin{equation}\label{eqn:ft}
f_t(z)\approx f_0(z_t).
\end{equation}
This estimate holds with a small error term  (see e.g. Proposition \ref{prop:estimatebulk}) because there are no possible shocks between eigenvalues in the equation guiding $f$, contrary to (\ref{eqn:gene}).
The approximation (\ref{eqn:ft}) has two applications.\\

\noindent{\it First application: relaxation at the edge.}\ 
Let ${\frak v}_k={\frak v}_k^{(\nu)}$ solve the same equation as $(\ref{eqn:gene})$ 
($\frac{\rd}{\rd t}{\frak v}_k(t)=(\mathscr{B} {\frak v})(k)$) but with initial condition ${\frak v}_k(0)=|\frak{u}_k(0)|=|\mu_k(0)-\la_k(0)|$.
Similarly to (\ref{eqn:observable}), define 
\begin{equation}\label{eqn:g}
\wt f_t(z)=e^{-\frac{t}{2}}\sum_{k=1}^N\frac{{\frak v}_k(t)}{x_k(t)-z}.
\end{equation}
Edge universality follows from the shape of the characteristics (\ref{eqn:zteq}), 
which take points around the edge further away from the bulk, as shown in Figure 1.
More precisely, we choose $z=z_0=E+\ii \eta$ with $E\in[-2,0]$ and $\eta>0$. 
By a straightforward calculation based on the explicit formula (\ref{eqn:zteq}) and eigenvalues' rigidity, we have
$
\im \wt f_0(z_t)< \varphi^C \frac{\kappa(z_0)^{1/2}}{t}.
$
Together with the estimate (\ref{eqn:ft}) for $\wt f$, we obtain $
\im \wt f_t(z_0)< \varphi^C \frac{\kappa(z_0)^{1/2}}{t}.
$
For $z_0=-2+\ii \varphi^C N^{-2/3}$, as ${\frak v}$ remains nonnegative, this implies
$$
{\frak v}^{(\nu)}_1(t)<\varphi^C  N^{-2/3} \im \wt f_t(z_0)< \frac{\varphi^C}{Nt}.
$$
In particular, integrating the above equation in $0\leq \nu\leq 1$ after using $|{\frak u}_k(t)|\leq{\frak v}_k(t)$ (the linear equation (\ref{eqn:gene}) preserves order of the initial conditions because $c_{\ell k}\geq 0$), we obtain 
$$
\lambda_1(t)-\mu_1(t)=\OO\left(\frac{\varphi^C}{Nt}\right).
$$
Local edge relaxation is therefore proved for any $t> \varphi^C N^{-1/3}$, with an optimal error term.
Such quantitative bounds can be similarly extended to any $\lambda_k(t)-\mu_k(t)$ provided $k\leq N^{1-\e}$ and 
$t> \varphi^C k^{1/3}N^{-1/3}$.
Theorems \ref{thm:rate} and \ref{Gaussian} follow from these relaxation estimates and
a Green function comparison, following \cite{ErdYauYin2012Univ}.\\

\noindent{\it Second application: relaxation in the bulk.}\ 
We now directly work with $f$ instead of $\wt f$. 
Fix some times $u< t$ such that $|u-t|\ll t$, a length scale 
$r\ll t$ and a bulk index $k$. We are 
interested in evaluating ${\frak u}_i(t)$ for $|i-k|\leq N r$.
Assume that for any $s\in[u,t]$ the maximum value of $\frak{u}(s)$ occurs at some index $j=j(s)$ with $|j-k|\leq N r$ (this is generally wrong but the conclusion will remain thanks to a finite speed  of propagation estimate from \cite{BouYau2017}).
We follow the maximum principle as in the analysis of  the eigenvector moment flow from \cite{BouYau2017}: for any 
$\eta>0$ to be chosen, denoting $z=x_j(s)+\ii\eta$, from \ref{eqn:gene}) and the fact that $\frak{u}_j(s)\geq \frak{u}_\ell(s)$ for all $\ell$, we have
$$
\partial_s \frak{u}_j(s)\leq \frac{1}{N}\sum_{\ell\neq j} \frac{\frak{u}_\ell(s)-\frak{u}_j(s)}{(x_\ell(s)-x_j(s))^2+\eta^2}
\leq \frac{c}{\eta} \left(\frac{1}{N}\im f_s(z)-\im \left(\frac{1}{N}\sum_{\ell}\frac{1}{z-x_\ell(s)}\right)\frak{u}_j(s)\right).
$$
In the bulk of the spectrum, (\ref{eqn:ft})  holds with the good error term $\varphi^C/(N\eta)$ (see Proposition \ref{prop:estimatebulk}), so that the previous equation behaves similarly to ($\im \left(\frac{1}{N}\sum_{\ell}\frac{1}{z-x_\ell(s)}\right)\approx\im m_{\rm sc}(z)$ by eigenvalues' rigidity, where $m_{\rm sc}$ is defined in (\ref{eqn:st}))
$$
\partial_s \frak{u}_j(s)\leq \frac{c}{\eta}\left(\frac{1}{N}\im f_0(z_s)-\im m_{\rm sc}(z)\frak{u}_j(s)\right)+\OO\left(\frac{\varphi^C}{N^2\eta^2}\right).
$$
We can successively justify and quantify the approximations 
$z_s=(x_j(s)+\ii\eta)_s\approx (\gamma_j+\ii\eta)_s\approx(\gamma_k+\ii\eta)_s\approx(\gamma_k+\ii\eta)_t$
by rigidity of the eigenvalues, $r\ll s$ and $s\approx t$. 
We therefore can substitute
 $\im f_0(z_s)\approx
\im f_0((\gamma_{k}+\ii\eta)_{t})
$, so that
denoting $m_t=\im f_0((\gamma_{k}+\ii\eta)_{t})/(N \im m_{\rm sc}(\gamma_{k}+\ii 0^+))$,
the above equation implies
$$
\partial_s (\frak{u}_j(s)-m_t)\leq -\frac{c}{\eta}(\frak{u}_j(s)-m_t)+\OO\left(\frac{\varphi^C}{N^2\eta^2}\right).
$$
For any $\eta\ll |t-u|$ we obtain $\max_{|i-k|\leq Nr} (\frak{u}_i(t)-m_t)=\OO(\frac{\varphi^C}{N^2\eta})$.
The same estimate naturally holds for the minimum.
If the time evolution  $|t-u|$ is comparable to $t$, we obtain $\max_{|i-k|\leq Nr} |\frak{u}_i(t)-m_t|=\OO(\frac{\varphi^C}{N^2t})$, and in particular
\begin{equation}\label{eqn:ukfin}
\frak{u}_{k+1}^{(\nu)}(t)=\frak{u}_k^{(\nu)}(t)+\OO\left(\frac{\varphi^C}{N^2t}\right).
\end{equation}
The above argument is rigorous up to some technicalities due to localizing the maximum in the window $|i-k|\leq Nr$.
The actual proof proceeds by induction in different space-time windows. The key to make this maximum principle work is that $f_s(z)$ (possibly highly oscillatory in the space variable $\re z$), actually fluctuates on a
large scale  thanks to (\ref{eqn:ft}), and can be considered constant in windows of size $r\ll t$.

Integrating (\ref{eqn:ukfin}) over $\nu\in(0,1)$, we obtain  (\ref{quant}), which is the main estimate for theorems \ref{thm:smallprocess} and \ref{thm:largest}.\\

To summarize this proof sketch, the observable (\ref{eqn:observable}) and the stochastic advection equation (\ref{eqn:adv}) it satisfies are new ingredients  to quantify relaxation of the Dyson Brownian motion and obtain universality beyond microscopic scales. 

It has been known  since \cite{Pas1972} that a deterministic advection equation allows to derive the semicircle distribution.
More recent works (e.g. \cite{AllBunBou2014,FacVivBir2016,SooWar2018}) have written the stochastic advection equation for the resolvent of a matrix following the Dyson Brownian motion dynamics.

These resolvent dynamics can be used for regularization and universality purpose, as proved first in
\cite{LeeSch2015}, for eigenvalues statistics at the edge of deformed Wigner matrices. For the same model,  \cite{Ben2017,SooWar2019} used stochastic advection equations and characteristics to understand the shape of individual bulk eigenvectors. Moreover,  the stochastic advection equation for the Stieltjes transform extends to general $\beta$-ensembles and allows to prove rigidity of the particles \cite{HuaLan2019,AdhHua2018}, also through regularization along the characteristics.
The Stieltjes transform is a specialization of our observable $f_t$ when
 $\frak{u}_k(t)\equiv \frac{1}{N}$.\\

\noindent{\bf Acknowledgement.} The idea developed in this paper benefited from discussions with students of graduate classes at the Courant Institute in 2015, 2018,  the Saint Flour summer school in 2016 and the IHES summer school in 2017.
The author also thanks the organizers and participants of the workshop \cite{Open2010} where the questions of universality of extreme gaps, and rate of convergence in universality, were raised.
Finally, the author warmly thanks Gaultier Lambert
and
Patrick Lopatto,
whose detailed comments improved the manuscript.
This work is partially supported by the Poincar\'e chair and the NSF grant DMS-1812114.

\section{Stochastic advection equation}
\label{sec:stoch}

\subsection{The observable.}\ 
The Stieltjes transform of the empirical spectral measure and the semicircle law $\rho(x)=\frac{1}{2\pi}\sqrt{(4-x^2)_+}$ are denoted 
\begin{equation}\label{eqn:st}s_t(z)=\frac{1}{N}\sum_{k=1}^N\frac{1}{{x_k(t)}-z},\ 
m(z)=m_{\rm sc}(z)=\int_{\mathbb{R}}\frac{\rd \rho(x)}{x-z}=\frac{-z+\sqrt{z^2-4}}{2},
\end{equation}
where our branch choice will always be ${\rm Im}\sqrt{z^2-4}>0$ for ${\rm Im}(z)>0$, above and in (\ref{eqn:zteq}).

More generally than (\ref{eqn:dbm}), consider $\bx(t)$ the strong solution of
\begin{equation}\label{eqn:beta}
\rd x_k(t)=\frac{\sqrt{2}\rd B_{k}(t)}{\sqrt{\beta N}}+\left(\frac{1}{N}\sum_{\ell\neq k}\frac{1}{x_k(t)-x_\ell(t)}-\frac{1}{2}x_k(t)\right)\rd t,
\end{equation}
where the $B_k$'s are standard Brownian motions, $\bx(0)$ is still given by (\ref{eqn:initial}), and $\beta=1$ (resp. $\beta=2$) corresponds to the spectral dynamics with equilibrium measure GOE (resp. GUE). For any $\beta\geq 1$
and distinct initial points, the stochastic differential equation (\ref{eqn:beta}) admits a unique strong solution.

We still define $\frak{u}^{(\nu)}_k(t)=e^{t/2}\frac{\rd}{\rd \nu}x^{{(\nu)}}_k(t)$. Then the function (\ref{eqn:observable}) satisfies the following dynamics.

\begin{lemma}\label{lem:key}
For any $\im z\neq 0$, we have
\begin{equation}\label{eqn:dynamicsPDE}
\rd f_t=\left(s_t(z)+\frac{z}{2}\right)(\partial_z f_t)\rd t
+\frac{1}{N}\left(\frac{2}{\beta}-1\right)(\partial_{zz}f_t)\rd t
-\frac{e^{-t/2}}{\sqrt{N}}\sqrt{\frac{2}{\beta}}\sum_{k=1}^N\frac{\frak{u}_k(t)}{(z-x_k(t))^2}\rd B_k(t).
\end{equation}
\end{lemma}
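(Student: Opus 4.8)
The plan is to derive the stochastic differential equation for $f_t(z)$ by applying It\^o's formula directly to the definition $f_t(z)=e^{-t/2}\sum_{k=1}^N \frak{u}_k(t)/(x_k(t)-z)$, treating $z$ as a fixed parameter with $\im z\neq 0$ (so that no collision issues arise in the denominators, the $x_k$ being real and a.s.\ distinct). The two time-dependent ingredients are the particles $x_k(t)$, whose dynamics are given by (\ref{eqn:beta}), and the tangent vectors $\frak{u}_k(t)$, which satisfy the linear equation $\frac{\rd}{\rd t}\frak{u}_k=(\mathscr{B}\frak{u})(k)=\sum_{\ell\neq k}c_{\ell k}(\frak{u}_\ell-\frak{u}_k)$ with $c_{\ell k}=\frac1{N(x_k-x_\ell)^2}$, together with the overall prefactor $e^{-t/2}$. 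Since the $\frak{u}_k$ evolve with no martingale part (they are differentiable in $t$), the only stochastic contribution comes from differentiating $1/(x_k(t)-z)$.

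First I would write $\rd\!\left(\frac{1}{x_k-z}\right) = -\frac{\rd x_k}{(x_k-z)^2} + \frac{\rd\langle x_k\rangle}{(x_k-z)^3}$, plug in (\ref{eqn:beta}) with quadratic variation $\rd\langle x_k\rangle = \frac{2}{\beta N}\rd t$, and obtain the martingale term $-\frac{\sqrt 2}{\sqrt{\beta N}}\sum_k \frac{\frak u_k e^{-t/2}}{(x_k-z)^2}\rd B_k$ (matching the last term of (\ref{eqn:dynamicsPDE})), plus a finite-variation part. Then I would assemble the $\rd t$ terms from three sources: (a) the drift of $x_k$ inside $-\frak u_k/(x_k-z)^2$, namely $-\frac1N\sum_{\ell\neq k}\frac{1}{x_k-x_\ell}+\frac12 x_k$; (b) the It\^o second-order term $\frac{2}{\beta N}\cdot\frac{1}{(x_k-z)^3}$; (c) the evolution of $\frak u_k$ via $\mathscr B$ and the factor $-\frac12$ from $\frac{\rd}{\rd t}e^{-t/2}$. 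The key algebraic identities are the standard ``resolvent'' manipulations: $\sum_k \frac{1}{(x_k-z)^2}g_k$ relates to $\partial_z$ of $\sum_k\frac{g_k}{x_k-z}$, the identity $\frac{1}{(x_k-x_\ell)(x_k-z)}=\frac{1}{x_\ell-x_k}\left(\frac{1}{x_\ell-z}-\frac{1}{x_k-z}\right)\cdot(-1)$ used to collapse the two double sums (the one from the $x_k$-drift and the one from $\mathscr B$) after symmetrization in $k\leftrightarrow\ell$, and the fact that $\partial_z f_t = e^{-t/2}\sum_k \frac{\frak u_k}{(x_k-z)^2}$, $\partial_{zz}f_t = 2e^{-t/2}\sum_k\frac{\frak u_k}{(x_k-z)^3}$. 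The term $\frac1N(\frac2\beta-1)\partial_{zz}f_t$ should appear as the combination of the It\^o correction (coefficient $\frac{2}{\beta N}$) and a $-\frac1N$ contribution that emerges from the diagonal/symmetrization of the interacting double sum — this is the delicate bookkeeping point.

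The main obstacle, as usual with these Dyson-type computations, is the careful cancellation of the singular double sums: one gets one double sum $\frac1N\sum_{k}\sum_{\ell\neq k}\frac{\frak u_k}{(x_k-x_\ell)(x_k-z)^2}$ from plugging the drift of $x_k$ into the $1/(x_k-z)^2$ factor, and another, $e^{-t/2}\sum_k\frac{1}{x_k-z}\sum_{\ell\neq k}c_{\ell k}(\frak u_\ell-\frak u_k)$, from the $\mathscr B$-evolution of $\frak u_k$; I would symmetrize the latter in $(k,\ell)$, use partial fractions on $\frac{1}{(x_k-x_\ell)^2(x_k-z)} $ (writing it via $\frac{1}{x_k-z}-\frac{1}{x_\ell-z}$ and a factor $\frac{1}{x_k-x_\ell}$), and check that the genuinely singular pieces (those with $(x_k-x_\ell)^2$ in the denominator and no compensating $z$-factor) cancel identically, leaving only terms expressible through $s_t(z)$, $f_t$, $\partial_z f_t$, $\partial_{zz}f_t$. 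Collecting what survives should produce exactly $\left(s_t(z)+\frac z2\right)\partial_z f_t\,\rd t$ for the transport part — the $s_t(z)\partial_z f_t$ piece coming from the interaction double sums and the $\frac z2\partial_z f_t$ piece from combining $\frac12 x_k = \frac12(x_k-z)+\frac z2$ inside $-\frak u_k/(x_k-z)^2$ (the $\frac12(x_k-z)$ part merging with the $-\frac12$ from $\partial_t e^{-t/2}$ to vanish) — plus the stated $\partial_{zz}$ and martingale terms. Once the cancellations are verified the result follows; I would present the double-sum symmetrization as the one step warranting detailed display, and treat the rest as routine.
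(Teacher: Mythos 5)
Your proposal is correct and follows essentially the same route as the paper: Itô's formula applied to the definition with $z$ fixed, the martingale term read off from $\rd x_k$, the $k\leftrightarrow\ell$ symmetrization and partial-fraction cancellation of the two singular double sums, the diagonal term producing the $-\frac1N\partial_{zz}f_t$ that combines with the Itô correction $\frac{2}{\beta N}$, and the $\frac{z}{2}\partial_z f_t$ plus cancellation of $\frac{f_t}{2}$ against the prefactor derivative. No gaps; carrying out the displayed bookkeeping as you describe yields exactly (\ref{eqn:dynamicsPDE}).
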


\begin{proof} It is a simple application of It{\^ o}'s formula. We omit the time index. First,
\begin{equation}\label{eqn:Ito1}
\rd f = -\frac{f}{2}+e^{-\frac{t}{2}}\sum_{k=1}^N\frac{\rd \frak{u}_k}{x_k-z}+e^{-\frac{t}{2}}\sum_{k=1}^N\frak{u}_k\rd\frac{1}{x_k-z}.
\end{equation}
Applying again the It{\^ o} formula $\rd (x_k-z)^{-1}=-(x_k-z)^{-2}\rd x_k+\frac{2}{\beta N}(x_k-z)^{-3}\rd t$, with (\ref{eqn:beta}) we naturally decompose the second sum above as (I)+[(II)+(III)+(IV)]$\rd t$ where
\begin{align*}
{\rm (I)}&=-\frac{e^{-t/2}}{\sqrt{N}}\sqrt{\frac{2}{\beta}}\sum_{k=1}^N\frac{\frak{u}_k}{(z-x_k)^2}\rd B_k,\\
{\rm (II)}&=\frac{e^{-t/2}}{N}\sum_{\ell\neq k}\frac{\frak{u}_k}{x_\ell-x_k}\frac{1}{(x_k-z)^2},\\
{\rm (III)}&=\frac{e^{-t/2}}{2}\sum_{k=1}^N\frac{\frak{u}_k x_k}{(x_k-z)^2},
=\frac{f}{2}+\frac{z}{2}\partial_z f,
\\
{\rm (IV)}&=\frac{2e^{-t/2}}{\beta N}\sum_{k=1}^N \frac{\frak{u}_k}{(x_k-z)^3}=\frac{1}{N}\left(\frac{2}{\beta}-1\right)
\partial_{zz}f
+\frac{e^{-t/2}}{N}\sum_{k=1}^N \frac{\frak{u}_k}{(x_k-z)^3}.
\end{align*}
Concerning the first sum in (\ref{eqn:Ito1}), by (\ref{eqn:gene}) we have
\begin{align*}
\sum_{k=1}^N\frac{\partial_t \frak{u}_k}{x_k-z}
&=\sum_{\ell\neq k}\frac{\frak{u}_\ell-\frak{u}_k}{N(x_\ell-x_k)^2(x_k-z)}
=\frac{1}{2}\sum_{\ell\neq k}\frac{\frak{u}_\ell-\frak{u}_k}{N(x_\ell-x_k)^2}\left(\frac{1}{x_k-z}-\frac{1}{x_\ell-z}\right)
\\
&=\frac{1}{2N}\sum_{\ell\neq k}\frac{\frak{u}_\ell-\frak{u}_k}{x_\ell-x_k}\frac{1}{(x_k-z)(x_\ell-z)}
=-\frac{1}{N}\sum_{\ell\neq k}\frac{\frak{u}_k}{x_\ell-x_k}\frac{1}{(x_k-z)(x_\ell-z)}.
\end{align*}
Combining with (II), we obtain 
$$
{\rm (II)}+e^{-t/2}\sum_{k=1}^N\frac{\partial_t \frak{u}_k}{x_k-z}=
\frac{e^{-t/2}}{N}\sum_{\ell\neq k}\frac{\frak{u}_k}{x_\ell-x_k}\frac{1}{x_k-z}\left(\frac{1}{x_k-z}-\frac{1}{x_\ell-z}\right)
=
\frac{e^{-t/2}}{N}\sum_{\ell\neq k}\frac{\frak{u}_k}{(x_k-z)^2}\frac{1}{x_\ell-z}.
$$
All singularities have disappeared. We obtained
$
{\rm (II)}+{\rm (IV)}+e^{-t/2}\sum_{k=1}^N\frac{\partial_t \frak{u}_k}{x_k-z}
=s(z)\partial_z f+\frac{1}{N}\left(\frac{2}{\beta}-1\right)
\partial_{zz}f.
$
Summation of the remaining terms (I) and (III) concludes the proof.
\end{proof}

Remember that $\kappa(z)=\min\{|z-2|,|z+2|\}$, and define
$$
a(z)={\rm dist}(z,[-2,2]),\ b(z)={\rm dist}(z,[-2,2]^c).
$$
To estimate $f_t$ or $\wt f_t$ (see (\ref{eqn:g})), we first need some bounds on the characteristics $(z_t)_{t\geq 0}$ from (\ref{eqn:zteq}),  and the initial values $f_0$, $\wt f_0$.
For this, we define the curve
\begin{equation}\label{eqn:S}
\mathscr{S}=
\left\{
z= E+\ii y:
-2+\varphi^2 N^{-2/3}<E<2-\varphi^2 N^{-2/3},
y=\varphi^2/(N\kappa(E)^{1/2})
\right\}
\end{equation}
and the domain $\mathscr{R}=\cup_{0\leq t\leq 1}\{z_t:z\in \mathscr{S}\}$.
See Figure 1 for a representation of these domains.

In the following lemma, we denote $a\sim b$ if there exists $C>0$ such that $C^{-1}b<a<C b$ for all specified parameters
$z,t$. For complex valued functions, $a\sim b$ means $\re a\sim\re b$ and $\im a\sim \im b$.

\begin{lemma}\label{lem:zt}
Uniformly in $0<t<1$ and $z=z_0$ satisfying $\eta=\im z>0$, $|z-2|<1/10$, we have
$$
\re (z_t-z_0)\sim t \frac{a(z)}{\kappa(z)^{1/2}}+t^2,\ \ \ 
\im (z_t-z_0)\sim \frac{b(z)}{\kappa(z)^{1/2}}t.
$$
In particular, if in addition we have $z\in\mathscr{S}$, then
$
z_t-z_0\sim \left(t \frac{\varphi^2}{N\kappa(E)}+t^2\right)+\ii\kappa(E)^{1/2}t.
$

Moreover,  for any $\kappa>0$, uniformly in $0<t<1$ and $z=E+\ii\eta\in[-2+\kappa,2-\kappa]\times[0,\kappa^{-1}]$,
we have $\im (z_t-z_0)\sim t$.
\end{lemma}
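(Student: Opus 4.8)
The plan is to prove Lemma~\ref{lem:zt} by brute-force asymptotic analysis of the explicit characteristic formula (\ref{eqn:zteq}), organized around the branch $w=\sqrt{z^2-4}$. First I would rewrite $z_t-z_0$ in closed form: since $2z_t=e^{t/2}(z+w)+e^{-t/2}(z-w)$, one gets
\begin{equation}\label{eqn:ztminusz0}
z_t-z_0 = (\cosh(t/2)-1)\,z + \sinh(t/2)\,w,\qquad w=\sqrt{z^2-4}.
\end{equation}
Thus the whole lemma reduces to understanding $w$ and the elementary scalars $\cosh(t/2)-1\sim t^2$, $\sinh(t/2)\sim t$ for $0<t<1$. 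The key geometric input is that near $z=2$ we can write $z-2=\zeta$ with $|\zeta|<1/10$, so $w=\sqrt{\zeta}\sqrt{\zeta+4}$, and $\sqrt{\zeta+4}\sim 2$ is essentially constant; hence $w\sim 2\sqrt{\zeta}=2\sqrt{z-2}$. With the branch convention $\im w>0$ for $\im z>0$, and writing $\zeta=\kappa(z)e^{\ii\theta}$ with $\theta\in(0,\pi)$ (since $\im z=\eta>0$ forces $\zeta$ in the upper half plane, and $|z-2|<1/10$ means $\kappa(z)=|z-2|$), we have $\sqrt{\zeta}=\kappa(z)^{1/2}e^{\ii\theta/2}$ with $\theta/2\in(0,\pi/2)$, so $\re\sqrt{\zeta}=\kappa(z)^{1/2}\cos(\theta/2)$ and $\im\sqrt{\zeta}=\kappa(z)^{1/2}\sin(\theta/2)$, both nonnegative.

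Next I would match these to $a(z)=\dist(z,[-2,2])$ and $b(z)=\dist(z,[-2,2]^c)$ in the regime $|z-2|<1/10$. Here $a(z)$ is the distance from $z$ to the segment, which for $z$ near the endpoint $2$ is governed by whether the foot of the perpendicular lands inside $[-2,2]$: if $\re z\le 2$ then $a(z)=\eta$, while if $\re z>2$ then $a(z)=|z-2|=\kappa(z)$; in all cases $a(z)\sim\eta+(\re z-2)_+$ and $a(z)\le\kappa(z)$. Dually $b(z)=\dist(z,[-2,2]^c)$ equals $(2-\re z)_+$ when $\eta$ is small compared to that, more precisely $b(z)\sim\min(\kappa(z),(2-\re z)_+)$. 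A short computation with $\zeta=\kappa(z)e^{\ii\theta}$ shows $\re\zeta=2-\re z$ up to sign bookkeeping and $\im\zeta=\eta$, so that $\cos(\theta/2)$ and $\sin(\theta/2)$ are comparable respectively to $(b(z)/\kappa(z))^{1/2}$ times a bounded factor — actually the cleanest route is the half-angle identities $2\cos^2(\theta/2)=1+\cos\theta=1+\re\zeta/\kappa(z)$ and $2\sin^2(\theta/2)=1-\re\zeta/\kappa(z)$, from which $\kappa(z)^{1/2}\cos(\theta/2)\sim (\kappa(z)+\re\zeta)^{1/2}\kappa(z)^{1/4}/\kappa(z)^{1/4}$; carrying this through gives $\re w\sim \kappa(z)^{1/2}$ when $\re z$ is bounded away from $2$ and more delicate behavior at the endpoint, and $\im w\sim a(z)/\kappa(z)^{1/2}$, $\re w\sim b(z)^{1/2}\cdot(\text{stuff})$. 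I would be careful to instead directly compute $|\im w|$ and $|\re w|$ via $w^2=\zeta(\zeta+4)$: taking real/imaginary parts, $(\re w)^2-(\im w)^2=\re(z^2-4)$ and $2\re w\,\im w=\im(z^2-4)=2\eta\re z\sim 4\eta\sim 4a(z)$ near $z=2$ with $\re z\le 2$. Combined with $|w|^2=|z^2-4|\sim 4\kappa(z)$, one solves the pair $\{(\re w)^2(\im w)^2\sim a(z)^2,\ (\re w)^2+(\im w)^2\sim\kappa(z)\}$ together with the sign/size information that tells us which of $\re w,\im w$ is the larger. Since $a(z)\le\kappa(z)$, the larger one is $\re w\sim\kappa(z)^{1/2}$ (when also $b(z)\sim\kappa(z)$) and then $\im w\sim a(z)/\kappa(z)^{1/2}$; in the remaining range where $b(z)\ll\kappa(z)$ (i.e.\ $\re z$ close to or past $2$) one gets $\re w\sim b(z)^{1/2}\cdot$ correction, and reconciling $\re w\sim b(z)/\kappa(z)^{1/2}$ requires the identity $b(z)\kappa(z)\sim b(z)\cdot\kappa(z)$ — I would double-check this endpoint bookkeeping carefully, as it is where $a,b,\kappa$ can all differ in magnitude.

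Plugging $\re w$ and $\im w$ into (\ref{eqn:ztminusz0}) and using $\cosh(t/2)-1\sim t^2$, $\sinh(t/2)\sim t$, $|z|\sim 2$ finishes the first two displays: $\re(z_t-z_0)\sim t\,\re w + t^2\re z\sim t\,a(z)/\kappa(z)^{1/2}+t^2$ and $\im(z_t-z_0)\sim t\,\im w+t^2\im z\sim t\,b(z)/\kappa(z)^{1/2}$, the $t^2\eta$ term being lower order since $\eta\lesssim b(z)/\kappa(z)^{1/2}$ for small $t$ (using $t<1$ and $\eta\le\kappa(z)^{1/2}$). For the specialization to $z\in\mathscr S$, substitute $\eta=\varphi^2/(N\kappa(E)^{1/2})$ and $a(z)\sim\eta$ (the foot of the perpendicular is inside the segment), $b(z)\sim\kappa(E)$ when $E$ is not too close to $\pm2$, giving $z_t-z_0\sim (t\varphi^2/(N\kappa(E))+t^2)+\ii\kappa(E)^{1/2}t$; the cases $|z\pm2|<1/10$ and the bulk both feed in. For the last sentence, when $z\in[-2+\kappa,2-\kappa]\times[0,\kappa^{-1}]$ everything — $a(z),b(z),\kappa(z),|z|$ — is bounded above and below by constants depending only on $\kappa$, so $\re w,\im w\sim 1$ and $\im(z_t-z_0)\sim t$ directly. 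The main obstacle, as flagged, is the endpoint bookkeeping: making the comparabilities $\re w\sim a/\kappa^{1/2}$... wait, I mean $\im w\sim a/\kappa^{1/2}$ and $\re w\sim b/\kappa^{1/2}$ hold uniformly across the full range $|z-2|<1/10$, $\eta>0$, including the transition $\re z\approx 2$ where the distance functions $a$ and $b$ swap which linear/quadratic behavior they exhibit — this is a finite but fiddly case analysis best handled by always going back to $w^2=(z-2)(z+2)$ and the two real equations it yields, rather than by half-angle formulas.
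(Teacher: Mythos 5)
Your approach is the same as the paper's: write $z_t-z_0=(\cosh(t/2)-1)z+\sinh(t/2)\sqrt{z^2-4}$ and reduce everything to the real and imaginary parts of $\sqrt{z^2-4}\sim\sqrt{z-2}$ near the edge; the paper's entire proof is the two-line display asserting $\re\sqrt{z^2-4}\sim a(z)/\kappa(z)^{1/2}$ and $\im\sqrt{z^2-4}\sim b(z)/\kappa(z)^{1/2}$. Your third paragraph plugs in exactly these comparabilities and correctly handles the $t^2\eta$ remainder, so the skeleton is sound.

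However, the step you yourself flag as ``the main obstacle'' --- deciding which of $\re w,\im w$ goes with $a$ and which with $b$ --- is resolved incorrectly in your text, twice. In paragraph two you assert that when $b(z)\sim\kappa(z)$ the larger of the two is $\re w\sim\kappa^{1/2}$, and in your closing sentence you ``correct'' yourself to $\im w\sim a/\kappa^{1/2}$, $\re w\sim b/\kappa^{1/2}$. Both statements are backwards and contradict your own paragraph three. The correct assignment is $\re w\sim a(z)/\kappa(z)^{1/2}$ and $\im w\sim b(z)/\kappa(z)^{1/2}$: writing $z-2=\kappa e^{\ii\theta}$ with $\theta\in(0,\pi)$, the branch $\im\sqrt{z^2-4}>0$ gives $\sqrt{z-2}=\kappa^{1/2}e^{\ii\theta/2}$ with $\theta/2\in(0,\pi/2)$, and the half-angle identities yield $\cos^2(\theta/2)=\frac{\kappa+(\re z-2)}{2\kappa}$ and $\sin^2(\theta/2)=\frac{\kappa-(\re z-2)}{2\kappa}$. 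For $\re z\le 2$ this gives $\cos(\theta/2)\sim\eta/\kappa=a/\kappa$ (so $\re\sqrt{z-2}\sim a/\kappa^{1/2}$, the \emph{small} one) and $\sin(\theta/2)\sim1$ (so $\im\sqrt{z-2}\sim\kappa^{1/2}=b/\kappa^{1/2}$, the \emph{large} one); for $\re z\ge2$ the roles of the two half-angle factors swap exactly as $a$ and $b$ do. Concretely: for $E<2$ the point $z^2-4$ lies just above the negative real axis, so its square root sits near the positive \emph{imaginary} axis, i.e.\ $\im w$ dominates. A second, smaller slip: your formula $b(z)\sim\min(\kappa(z),(2-\re z)_+)$ fails for $\re z\ge2$, where $b(z)=\eta\neq0$; it is harmless since you do not use it, but it feeds the same confusion. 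With the assignment corrected (and stated consistently), the rest of your argument, including the specialization to $\mathscr S$ and the bulk case, goes through.
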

\begin{proof}
Let $w=z-2$. We have $(z^2-4)^{1/2}\sim w^{1/2}$ so that 
\begin{align}
&\re (z^2-4)^{1/2}\sim \re (w^{1/2})\sim |w|^{1/2}\re((w/|w|)^{1/2})\sim \frac{a(z)}{\kappa(z)^{1/2}},\notag\\
&\im (z^2-4)^{1/2}\sim \im(w^{1/2})\sim |w|^{1/2}\im((w/|w|)^{1/2})\sim \frac{b(z)}{\kappa(z)^{1/2}}\label{eqn:imagM}.
\end{align}
On $\mathscr{S}$, we always have $b(z)\sim \kappa(z)$ and $a(z)\sim \eta$ so the second estimate follows immediately.

The last estimate follows from $\im \sqrt{z^2-4}\sim 1$ uniformly in the defined bulk domain.
\end{proof}

We now define the typical eigenvalues' location and the set of good trajectories such that rigidity holds:
\begin{align}
&\mathscr{A}=
\left\{  |x^{(\nu)}_k(t)-\gamma_k|<\varphi^{1/2} N^{-\frac{2}{3}}(\hat k)^{-\frac{1}{3}}\ {\rm for\, all}\ 0\leq t\leq1,  k\in\llbracket 1,N\rrbracket, 0\leq \nu\leq 1\right\}\label{eqn:A},
\end{align}
where $\hat k=\min(k,N+1-k)$.
The following important a priori estimates were proved in \cite{ErdYauYin2012}, for fixed $t$ and $\nu=0$ or 1.
The extension in these parameter is straightforward, by time discretization in $t$ and $\nu$ first, then by Weyl's inequality
to bound increments in small time intervals, and the fact that $|{\frak u}^{(\nu)}_k(t)|<\|{\frak u}^{(\nu)}(0)\|_\infty$ to bound increments in some small $\nu$-intervals.

\begin{lemma}\label{rig} There exists a fixed $C_0>0$ (remember $\varphi=\varphi(C_0)$) large enough such that the following holds.
For any $D>0$, there exists $N_0(D)$ such that for any $N>N_0$ we have
$$
\mathbb{P}(\mathscr{A})>1-N^{-D}.
$$
\end{lemma}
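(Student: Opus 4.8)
The plan is to bootstrap the fixed‑time rigidity of \cite{ErdYauYin2012}, which is available for $\nu\in\{0,1\}$, to the full event $\mathscr{A}$ by a union bound over a polynomial net in $(t,\nu)$, together with a deterministic control of the oscillation between net points. For $\nu=0$ I would first note that the matrix Ornstein--Uhlenbeck flow has solution $H_t=e^{-t/2}H_0+N^{-1/2}\int_0^t e^{-(t-s)/2}\,\rd B_s\overset{\mathrm d}{=}e^{-t/2}H_0+\sqrt{1-e^{-t}}\,W$ with $W$ an independent GOE matrix; the variances $e^{-t}\sigma_{ij}^2+(1-e^{-t})N^{-1}$ still have unit column sums and remain $\sim N^{-1}$, and the tail bound (\ref{eqn:tail}) is preserved, so $H_t$ stays in the generalized Wigner class at each fixed $t$ and $x^{(0)}(t)=\bla(H_t)$ obeys the rigidity bound with probability $\ge 1-N^{-D'}$. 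The same holds for $x^{(1)}(t)$, which is a GOE Dyson flow (stationary, with each marginal a GOE spectrum), so that $x^{(1)}(t)$ inherits GOE rigidity at each fixed $t$.

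\textbf{Uniformity in $t$.} Fix the net $t_j=jN^{-K}$, $0\le j\le N^{K}$, with $K$ a large constant. Taking $D'=D+K+1$, a union bound gives the rigidity bound at all $(t_j,\nu)$ with $\nu\in\{0,1\}$ with probability $\ge 1-\tfrac12 N^{-D}$. For $t\in[t_j,t_{j+1}]$ the increment $H_t-H_{t_j}$ splits into a Gaussian part of per‑entry variance $\le N^{-K-1}$, hence of operator norm $\lesssim N^{-K/2}$ with overwhelming probability, plus a drift part of norm $\lesssim N^{-K}$; Weyl's inequality then yields $\sup_k|x^{(0)}_k(t)-x^{(0)}_k(t_j)|\lesssim N^{-K/2}$, which for $K$ large is $\ll\varphi^{1/2}N^{-1}\le\varphi^{1/2}N^{-2/3}\hat k^{-1/3}$, so a small increase of the constant hidden in $\varphi$ absorbs it. For $x^{(1)}$ one argues identically, replacing Weyl by a pathwise short‑time continuity estimate for the Dyson flow (the martingale part is controlled by a maximal inequality and the drift over $[t_j,t_{j+1}]$ is controlled using rigidity at the endpoints and short‑time stability of the Stieltjes transform). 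This establishes $\mathscr{A}$ restricted to $\nu\in\{0,1\}$.

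\textbf{General $\nu$.} At $t=0$, $x^{(\nu)}_k(0)=\nu\mu_k(0)+(1-\nu)\lambda_k(0)$ is a convex combination, so its deviation from $\gamma_k$ is at most the larger of the $\nu=0$ and $\nu=1$ deviations. For $t>0$ I would write $x^{(\nu)}_k(t)-x^{(0)}_k(t)=\int_0^\nu e^{-t/2}\frak{u}^{(\nu')}_k(t)\,\rd\nu'$, and observe that since the coefficients $c_{\ell k}$ in (\ref{eqn:gene}) are nonnegative that linear equation obeys the maximum principle, whence $\|\frak{u}^{(\nu')}(t)\|_\infty\le\|\frak{u}^{(\nu')}(0)\|_\infty=\|\bmu(0)-\bla(0)\|_\infty\le 2\varphi^{1/2}N^{-2/3}$ on the event of the previous step. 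This already gives rigidity on the bulk scale $\varphi^{1/2}N^{-2/3}$, uniformly in $\nu$. To recover the edge factor $\hat k^{-1/3}$ I would invoke rigidity of the Dyson flow started from an edge‑regular configuration: since $x^{(\nu)}(0)$ is edge‑rigid, being a convex combination of two such configurations, the flow $t\mapsto x^{(\nu)}(t)$ keeps the rigidity bound with the $\hat k^{-1/3}$ factor throughout $[0,1]$; equivalently one sandwiches $x^{(\nu)}_k(t)$ between the Dyson flows driven by the same Brownian motions and started from $m_k(0)=\min(\mu_k(0),\lambda_k(0))$ and $M_k(0)=\max(\mu_k(0),\lambda_k(0))$, which are themselves increasing and edge‑rigid at $t=0$. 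Combining with the previous step and relabelling $D$ gives $\mathbb{P}(\mathscr{A})\ge 1-N^{-D}$.

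\textbf{Main obstacle.} The only genuinely delicate point is the last one: transferring the \emph{edge‑refined} bound, not merely the bulk one, to all $\nu\in(0,1)$ and $t>0$ at once. The cheap interpolation through $\|\frak{u}\|_\infty$ carries only the bulk scale $\varphi^{1/2}N^{-2/3}$, because $\frak{u}$ can propagate from the bulk out to the edge, so one must lean either on the matrix picture (available only at $\nu\in\{0,1\}$) or on a Dyson‑flow rigidity estimate for edge‑regular initial data applied to the convex combination $x^{(\nu)}(0)$. The $t$‑ and $\nu$‑discretizations, the Weyl increments, and the bulk‑scale bound are otherwise routine.
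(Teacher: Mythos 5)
Your proposal follows the same route as the paper's (very terse) justification of this lemma: the paper simply notes that the fixed-$(t,\nu)$ estimates for $\nu\in\{0,1\}$ are in \cite{ErdYauYin2012}, and that the extension to all $(t,\nu)$ follows by discretization in $t$ and $\nu$, Weyl's inequality for the time increments, and the contraction $|\frak{u}^{(\nu)}_k(t)|\le\|\frak{u}^{(\nu)}(0)\|_\infty$ for the $\nu$-increments. Your treatment of $\nu\in\{0,1\}$ (the Ornstein--Uhlenbeck flow preserves the generalized Wigner class, GOE is stationary), of the $t$-net, and of the Weyl/maximal-inequality control of increments between net points is exactly what is intended and is fine.

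Where you go beyond the paper is in observing that the $L^\infty$ contraction of $\frak{u}$ only transports the uniform scale $\varphi^{1/2}N^{-2/3}$ to intermediate $\nu$, not the $k$-dependent scale $\varphi^{1/2}N^{-2/3}(\hat k)^{-1/3}$ that the event $\mathscr{A}$ demands (and which is genuinely used later: e.g.\ in the proof of Proposition \ref{prop:estimate} one needs $|x_k(u)-\gamma_k|\ll|z-\gamma_k|$ for $z\in\mathscr{S}$ near a bulk $\gamma_k$, where $\im z\sim\varphi^2/N$, so the bulk-scale bound $\varphi^{1/2}/N$ is required). This is a legitimate point that the paper's remark glosses over. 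However, your proposed fix does not quite close it: sandwiching $x^{(\nu)}(t)$ between the flows started from $\min(\mu(0),\lambda(0))$ and $\max(\mu(0),\lambda(0))$ (order preservation under common driving noise is correct) merely replaces one non-Wigner rigid initial configuration by two others, so you still need, as an external input, rigidity of the Dyson Brownian motion started from a general rigid (edge-regular) initial configuration. That input is true and available in the literature (e.g.\ via the local laws for the Dyson flow with rigid initial data in \cite{LanSosYau2016} or \cite{HuaLan2019}), but it is not contained in \cite{ErdYauYin2012} and is the one ingredient you should cite explicitly; with that citation your argument is complete, and indeed more careful than the paper's own two-sentence proof.
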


Moreover, we have the following estimates on the initial condition $f_0,\wt f_0$.

\begin{lemma}\label{f0} 
In the set $\mathscr{A}$, for any $z=E+\ii\eta\in \mathscr{R}$, we have
$
\im \wt f_0(z)\leq C \varphi^{1/2}
$
if $\eta>\max(E-2,-E-2)$, and $
\im \wt f_0(z)\leq C \varphi^{1/2} \frac{\eta}{\kappa(z)} 
$ otherwise.
The same upper bound naturally holds for $|\im f_0|$.
\end{lemma}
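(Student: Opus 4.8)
The plan is to view $f_0$ and $\wt f_0$ as Stieltjes transforms of the discrete measures $\sum_k\mathfrak u_k(0)\,\delta_{x_k(0)}$ and $\mu_0:=\sum_k\mathfrak v_k(0)\,\delta_{x_k(0)}$ on $\mathbb R$, and to exploit that on the event $\mathscr A$ the rigidity of Lemma \ref{rig} forces $\mu_0$ to have density bounded by a constant times $\varphi^{1/2}$; the two claimed bounds for $\im\wt f_0$ then reduce to the elementary estimates $\int_{-2}^{2}\tfrac{\eta\,\mathrm dx}{(x-E)^2+\eta^2}\le\pi$ and $\int_{-2}^{2}\tfrac{\mathrm dx}{(x-E)^2}\sim\kappa(z)^{-1}$. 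First I would record the reduction to $\wt f_0$: at $t=0$ one has $e^{-t/2}=1$, $x_k(0)=x^{(\nu)}_k(0)$ and $\mathfrak v_k(0)=|\mathfrak u_k(0)|=|\mu_k(0)-\lambda_k(0)|$, so for $\eta=\im z>0$
$$|\im f_0(z)|\le\sum_{k=1}^N|\mathfrak u_k(0)|\,\frac{\eta}{(x_k(0)-E)^2+\eta^2}=\im\wt f_0(z),$$
and it is enough to bound the right-hand side.

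Next I would extract the quantitative input of $\mathscr A$. On $\mathscr A$ we have $|\lambda_k(0)-\gamma_k|,|\mu_k(0)-\gamma_k|<\varphi^{1/2}N^{-2/3}\hat k^{-1/3}$ for all $k$, hence $\mathfrak v_k(0)<2\varphi^{1/2}N^{-2/3}\hat k^{-1/3}$ and $|x_k(0)-\gamma_k|<\varphi^{1/2}N^{-2/3}\hat k^{-1/3}$. Combining this with the classical spacing relation $N^{-2/3}\hat k^{-1/3}\sim\gamma_{k+1}-\gamma_k$ (square-root vanishing of $\rho$ at $\pm2$) yields, for any interval $I$, the bound $\mu_0(I)\le C\varphi^{1/2}\,(|I|+\ell_E)$, where $\ell_E$ is the classical eigenvalue spacing near $I$; in particular $\mu_0(\mathbb R)\le C\varphi^{1/2}$ and $\mu_0$ is carried by $[-2,2]+O(\varphi^{1/2}N^{-2/3})$. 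A point I would isolate is that on $\mathscr S$, and hence on $\mathscr R$ since $\im z$ only increases along the characteristics by Lemma \ref{lem:zt}, the scale $\eta$ dominates the spacing: on $\mathscr S$ one has $\eta=\varphi^2/(N\kappa(E)^{1/2})\sim\varphi^2\ell_E$, so the correction $\ell_E$ will always be negligible next to $\eta$ wherever it matters.

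Then I would split according to the two regimes. If $\eta>\max(E-2,-E-2)$, write $\im\wt f_0(z)=\int\frac{\eta\,\mathrm d\mu_0(x)}{(x-E)^2+\eta^2}$ and cut at $|x-E|=\eta$. On $|x-E|>\eta$, monotonicity of $x\mapsto\frac{\eta}{(x-E)^2+\eta^2}$ together with the density bound compares the contribution with $C\varphi^{1/2}\int_{-2}^{2}\frac{\eta\,\mathrm dx}{(x-E)^2+\eta^2}\le C\varphi^{1/2}$; on $|x-E|\le\eta$ the integrand is $\le\eta^{-1}$ while the $\mu_0$-mass there is $\le C\varphi^{1/2}(\eta+\ell_E)\le C\varphi^{1/2}\eta$, so this part is $\le C\varphi^{1/2}$ as well, giving $\im\wt f_0(z)\le C\varphi^{1/2}$. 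If $\eta\le\max(E-2,-E-2)$, then $E\notin(-2,2)$ and $\kappa(z)\sim a(z)\sim\mathrm{dist}(E,\{-2,2\})$; moreover one checks from Lemma \ref{lem:zt} that along $\mathscr R$ this distance is $\gtrsim\varphi^2N^{-2/3}$, which dominates the rigidity displacement $\varphi^{1/2}N^{-2/3}$, so $|x_k(0)-E|\gtrsim\kappa(z)$ for every $k$. Bounding $\frac{\eta}{(x-E)^2+\eta^2}\le\frac{\eta}{(x-E)^2}$ and using that $x\mapsto(x-E)^{-2}$ is monotone on the support of $\mu_0$, together with the density bound, gives $\im\wt f_0(z)\le C\varphi^{1/2}\eta\int_{-2}^{2}\frac{\mathrm dx}{(x-E)^2}\le C\varphi^{1/2}\eta/\kappa(z)$, as required; the bound for $|\im f_0|$ is then immediate from the first display.

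The main obstacle is bookkeeping near the spectral edges: there the classical spacing $N^{-2/3}\hat k^{-1/3}$ varies, and the rigidity window is \emph{larger} than one spacing by the subpolynomial factor $\varphi^{1/2}>1$, so one cannot simply assert $x_k(0)\in[\gamma_k,\gamma_{k+1}]$. I would resolve this by using the ordering of the $x_k(0)$ and the monotonicity of the integrands to pass to an index-shifted Riemann sum (the shift being $O(\varphi^{1/2})$ indices, over which consecutive classical spacings stay comparable), and by the observation of the second paragraph that on $\mathscr S\cup\mathscr R$ the imaginary part $\eta$ always dominates $\varphi^2$ times the local spacing, which keeps all short-scale corrections harmless.
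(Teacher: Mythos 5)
Your argument is correct and is essentially the paper's proof: both use the rigidity of $\mathscr{A}$ to bound $\mathfrak v_k(0)\le 2\varphi^{1/2}N^{-2/3}\hat k^{-1/3}$, pass from the sum to the integral $\varphi^{1/2}\eta\int_{-2}^{2}\frac{\rd x}{(E-x)^2+\eta^2}$ (using that on $\mathscr R$ the scale $\eta$ dominates the eigenvalue fluctuation/spacing scale, which is exactly the paper's closing remark), and then evaluate this integral in the two regimes. Your measure-density packaging and the cut at $|x-E|=\eta$ are only cosmetic differences from the paper's direct comparison with $\int\kappa(x)^{-1/2}\,\rd\rho(x)$.
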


\begin{proof}
The rigidity estimate on $\mathscr{A}$ easily implies that
$$
\im \wt f_0(z)\leq C \eta \sum_{k=1}^N\frac{ \varphi^{1/2} N^{-2/3}(\hat k)^{-1/3}}{(E-\gamma_k)^2+\eta^2}\leq C \varphi^{1/2}\eta\int_{-2}^2
\frac{\kappa(x)^{-1/2}}{(E-x)^2+\eta^2}\rd\rho(x)\leq C \varphi^{1/2}\eta
\int_{-2}^2
\frac{1}{(E-x)^2+\eta^2}\rd x,
$$
and the claimed estimates follow. Note that we used $z\in \mathscr{R}$ to justify approximation of eigenvalues by typical location: in $\mathscr{R}$ the imaginary part of $z$ is always greater than the eigenvalues' fluctuation scale.
\end{proof}

Finally, the following is an elementary calculation. We write
$z_t=r(z,t)$, for $r$ given by the right-hand side of (\ref{eqn:zteq}). 

\begin{lemma}\label{eqn:char}
We have $\partial_t r=\frac{\sqrt{z^2-4}}{2}\partial_z r$.
\end{lemma}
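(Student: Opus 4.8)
The plan is to verify the identity by directly differentiating the closed-form expression (\ref{eqn:zteq}), the only delicate point being the branch of the square root. Throughout, fix $z$ with $\im z\neq 0$, so that $z\notin[-2,2]$ and the branch of $\sqrt{z^2-4}$ fixed in (\ref{eqn:st}) is analytic near $z$; abbreviate $w=\sqrt{z^2-4}$. Since $w^2=z^2-4$, differentiating in $z$ gives $2w\,\partial_z w=2z$, hence $\partial_z w=z/w$. This is the one place where the branch choice intervenes, and it is legitimate on $\mathbb{C}\setminus[-2,2]$, which contains all the points $z$ of interest.

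Next I would rewrite $r(z,t)=z_t$ in hyperbolic form: using $e^{\pm t/2}=\cosh(t/2)\pm\sinh(t/2)$, formula (\ref{eqn:zteq}) becomes
$$
r(z,t)=z\cosh(t/2)+w\sinh(t/2).
$$
From this expression the two partial derivatives are immediate. Differentiating in $t$ gives
$$
\partial_t r=\frac{1}{2}\bigl(z\sinh(t/2)+w\cosh(t/2)\bigr),
$$
and differentiating in $z$, using $\partial_z w=z/w$, gives
$$
\partial_z r=\cosh(t/2)+\frac{z}{w}\sinh(t/2).
$$

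Finally I would multiply the last display by $w/2=\sqrt{z^2-4}/2$, obtaining
$$
\frac{\sqrt{z^2-4}}{2}\,\partial_z r=\frac{1}{2}\bigl(w\cosh(t/2)+z\sinh(t/2)\bigr)=\partial_t r,
$$
which is exactly the claimed identity. There is no real obstacle here: the computation is elementary, and the only subtlety—justifying $\partial_z\sqrt{z^2-4}=z/\sqrt{z^2-4}$—is handled by the branch convention of (\ref{eqn:st}), which is valid away from the cut $[-2,2]$.
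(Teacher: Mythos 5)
Your proof is correct; the paper itself omits the argument, stating only that the lemma ``is an elementary calculation,'' and your direct differentiation of (\ref{eqn:zteq}) (after rewriting $z_t=z\cosh(t/2)+\sqrt{z^2-4}\,\sinh(t/2)$ and using $\partial_z\sqrt{z^2-4}=z/\sqrt{z^2-4}$ off the cut $[-2,2]$) is exactly that calculation.
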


\subsection{Relaxation at the edge.}\ 
For the following important estimate towards edge universality, remember the notation (\ref{eqn:g}).

\begin{proposition}\label{prop:estimate}
Consider the dynamics (\ref{eqn:dynamicsPDE}) for $\beta=1,2$.  For any (large) $D>0$ there exists $N_0(D)$ such that for 
any $N\geq N_0$ we have
$$
\mathbb{P}\left(\im \wt f_t(z)\leq \varphi\frac{\kappa(E)^{1/2}}{\max(\kappa(E)^{1/2},t)}\ {\rm for\ all}\ 0<t<1\ {\rm and}\ z=E+\ii y\in\mathscr{S}\right)>1-N^{-D}.
$$
\end{proposition}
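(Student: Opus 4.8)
The plan is to propagate the observable along the characteristics of the advection equation (\ref{eqn:adv}) and to control the deviation from pure transport. Since ${\frak v}_k$ solves the same equation (\ref{eqn:gene}) as ${\frak u}_k$ and $\wt f_t$ is built from ${\frak v}$ exactly as $f_t$ is from ${\frak u}$, the computation in the proof of Lemma~\ref{lem:key} applies verbatim and shows that $\wt f_t$ also satisfies (\ref{eqn:dynamicsPDE}) with ${\frak u}$ replaced by ${\frak v}$. Fix $z=E+\ii y\in\mathscr S$ and $0<t<1$, and set $w(s)=r(z,t-s)$ for $s\in[0,t]$, so $w(0)=z_t$, $w(t)=z$, and, differentiating (\ref{eqn:zteq}) (equivalently, by Lemma~\ref{eqn:char}), $w'(s)=-\tfrac12\sqrt{w(s)^2-4}=-(m(w(s))+\tfrac12 w(s))$. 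Applying It\^o's formula to $s\mapsto\wt f_s(w(s))$ and using (\ref{eqn:dynamicsPDE}), the transport part of the drift cancels against $w'(s)\,\partial_w\wt f_s$, leaving
\begin{equation*}
\wt f_t(z)-\wt f_0(z_t)=\underbrace{\int_0^t(s_s(w(s))-m(w(s)))\,\partial_w\wt f_s(w(s))\,\rd s}_{\mathcal E_1}+\underbrace{\frac1N\Big(\frac2\beta-1\Big)\int_0^t\partial_{ww}\wt f_s(w(s))\,\rd s}_{\mathcal E_2}+\underbrace{\int_0^t\rd M_s}_{\mathcal E_3},
\end{equation*}
where $\rd M_s=-\frac{e^{-s/2}}{\sqrt N}\sqrt{2/\beta}\sum_{k}\frac{{\frak v}_k(s)}{(w(s)-x_k(s))^2}\,\rd B_k(s)$. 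This is the analogue at the edge of the shock-free transport behind the heuristic (\ref{eqn:ft}); the cancellation is precisely what removes the near-diagonal singularities $\sum_{\ell\neq k}\frac{{\frak v}_\ell-{\frak v}_k}{N(x_\ell-x_k)^2}$ from the evolution, just as in the proof of Lemma~\ref{lem:key}.

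The main term is handled deterministically on $\mathscr A$. Since $z_t\in\mathscr R$, Lemma~\ref{f0} gives $\im\wt f_0(z_t)\leq C\varphi^{1/2}\,\im z_t/\kappa(z_t)$ when $\im z_t<\kappa(z_t)$ (and $\leq C\varphi^{1/2}$ otherwise). Inserting the characteristic asymptotics of Lemma~\ref{lem:zt}, namely $\im z_t\sim y+\kappa(E)^{1/2}t$ and $\re(z_t-z)\sim t\varphi^2/(N\kappa(E))+t^2$ — so that $\kappa(z_t)\gtrsim\kappa(E)$ for $t\lesssim\kappa(E)^{1/2}$ and $\kappa(z_t)\sim t^2$ for $t\gtrsim\kappa(E)^{1/2}$ — a short case analysis gives
\begin{equation*}
\im\wt f_0(z_t)\leq C\varphi^{1/2}\,\frac{\kappa(E)^{1/2}}{\max(\kappa(E)^{1/2},t)}.
\end{equation*}
Since the target allows a factor $\varphi$ rather than $\varphi^{1/2}$, it remains to show that $|\mathcal E_1|+|\mathcal E_2|+|\im\mathcal E_3|$ is at most half of the right-hand side above, with probability $1-N^{-D}$.

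For the error terms we use that ${\frak v}_k\geq0$, that $\mathscr B$ conserves $\sum_k{\frak v}_k$ and does not increase $\max_k{\frak v}_k$ (the elementary maximum principle for (\ref{eqn:gene}), since $c_{\ell k}\geq0$ by (\ref{eqn:coef})), and the identity $e^{-s/2}\sum_k\frac{{\frak v}_k(s)}{|x_k(s)-w|^2}=\im\wt f_s(w)/\im w$, which yields $|\partial_w\wt f_s(w)|\leq\im\wt f_s(w)/\im w$ and $|\partial_{ww}\wt f_s(w)|\leq 2\im\wt f_s(w)/(\im w)^2$. Combined with the bound $|s_s(w)-m(w)|\lesssim\varphi^C/(N\im w)$ that rigidity (Lemma~\ref{rig}) gives on $\mathscr A$ in this range of $\im w$, and with $\sum_k{\frak v}_k(s)\lesssim\varphi^{1/2}$, $\max_k{\frak v}_k(s)\leq\max_k|\mu_k(0)-\lambda_k(0)|\lesssim\varphi^{1/2}N^{-2/3}$ on $\mathscr A$, one obtains
\begin{equation*}
|\mathcal E_1|+|\mathcal E_2|\lesssim\frac{\varphi^C}{N}\int_0^t\frac{\im\wt f_s(w(s))}{(\im w(s))^2}\,\rd s,\qquad\langle\im\mathcal E_3\rangle_t\lesssim\frac{\varphi^{1/2}}{N^{5/3}}\int_0^t\frac{\im\wt f_s(w(s))}{(\im w(s))^3}\,\rd s.
\end{equation*}
Each integrand involves $\im\wt f_s$ at the points $w(s)\in\mathscr R$, whose imaginary parts $\sim y+\kappa(E)^{1/2}(t-s)$ shrink, as $s\uparrow t$, to the sub-microscopic scale $y=\varphi^2/(N\kappa(E)^{1/2})$ of $\mathscr S$, where a crude bound on $\im\wt f_s$ is far too lossy. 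The resolution is a bootstrap: let $\tau$ be the first time at which $\im\wt f_s(w)$ exceeds $2C\varphi\,\kappa(\re w)^{1/2}/\max(\kappa(\re w)^{1/2},\sigma(w))$ at some $w\in\mathscr R$, where, writing $w=z'_\sigma$ with $z'\in\mathscr S$, we set $\sigma(w)=\sigma$. Feeding this hypothesis into the two displays above, using $\im w(s)\sim y+\kappa(E)^{1/2}(t-s)$ and $\sigma(w(s))=t-s$ to carry out the $s$-integrals, and bounding $\im\mathcal E_3$ by Burkholder--Davis--Gundy, one checks that on $[0,\tau]$ the total error is much smaller than $\varphi\,\kappa(E)^{1/2}/\max(\kappa(E)^{1/2},t)$; this strictly improves the bootstrap constant, forcing $\tau>t$ off an event of probability $\leq N^{-D}$. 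A union bound over an $N^{-10}$-net of $(z,t)\in\mathscr S\times(0,1)$, with the elementary Lipschitz continuity of $\wt f_s$ in $z$ and $s$ on $\mathscr R$ to fill the gaps, and Lemma~\ref{rig} for $\mathbb P(\mathscr A)\geq1-N^{-D}$, yield the uniform statement.

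The step I expect to be the crux is this last bootstrap. Because $\mathcal E_1,\mathcal E_2,\mathcal E_3$ are all governed by $\im\wt f_s$ at points that approach the spectrum at scale $\varphi^2/(N\kappa(E)^{1/2})$, the estimate is genuinely self-referential and closes only if the $\kappa(E)$- and $t$-dependence is tracked quantitatively through Lemma~\ref{lem:zt}; in particular the martingale term is what forces one to keep the full $\kappa(E)^{1/2}/\max(\kappa(E)^{1/2},t)$ decay in the bootstrap hypothesis rather than a flat $O(\varphi)$ bound, and to propagate the estimate over the whole curve $\mathscr R$ rather than $\mathscr S$ alone.
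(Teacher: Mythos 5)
Your overall architecture is the paper's: write $\wt f_s(w(s))$ along the reversed characteristic, cancel the transport term via Lemma \ref{eqn:char} against (\ref{eqn:dynamicsPDE}), bound the initial datum with Lemmas \ref{f0} and \ref{lem:zt}, and close a stopping-time bootstrap over a net of $(z,t)$. The drift estimates $\mathcal E_1,\mathcal E_2$ are correct and match (\ref{eqn:estim1})--(\ref{estim2}).

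The gap is in the quadratic variation of $\mathcal E_3$. You bound $\frac1N\sum_k\frac{{\frak v}_k(s)^2}{|w-x_k|^4}$ by pulling out $\max_k{\frak v}_k(s)\leq\max_k{\frak v}_k(0)\lesssim\varphi^{1/2}N^{-2/3}$, which gives $\langle M\rangle_t\lesssim\varphi^{1/2}N^{-5/3}\int_0^t\im\wt f_s(w(s))(\im w(s))^{-3}\rd s$. Feeding in the bootstrap hypothesis and $\im w(s)\sim\eta+(t-s)\kappa(E)^{1/2}$ with $\eta=\varphi^2/(N\kappa(E)^{1/2})$, the integral is dominated by $s$ near $t$ and gives $\langle M\rangle_t\gtrsim\varphi^{-3/2}N^{1/3}\kappa(E)^{1/2}$, whereas closing the bootstrap requires $\langle M\rangle_t\ll\varphi^{2}\kappa(E)/\max(\kappa(E),t^2)$. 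For any $\kappa(E)\gg\varphi^CN^{-2/3}$ (in particular for all bulk energies) your bound is larger by a positive power of $N$, so the argument does not close. The point is that the $s=0$ sup bound on ${\frak v}$ is far too lossy once the characteristic has descended to the sub-microscopic scale of $\mathscr{S}$: one needs ${\frak v}_k(s)\lesssim\varphi^C/(N\max((\hat k/N)^{1/3},s))$, i.e. a bound that improves with $s$. The paper extracts exactly this from the bootstrap hypothesis itself: it partitions the indices into blocks $I_j$ of length $\varphi^2$, bounds $\sum_{k\in I_j}{\frak v}_k(u)$ (hence also $\max_{k\in I_j}{\frak v}_k(u)$) by $\eta_n\im\wt f_u(z^{(n)})$ at an auxiliary grid point $z^{(n)}\in\mathscr{S}$ sitting above the block — which is controlled because the stopping time is a minimum over \emph{all} points of $\mathscr{S}$, not just the one under consideration — and then evaluates the resulting deterministic integral with Lemmas \ref{lem:A1} and \ref{lem:A2}. (This self-improving bound on ${\frak v}_k(u)$ is what later becomes Corollary \ref{cor:v}.) Your proof needs this additional mechanism; as written, the martingale term defeats the bootstrap.
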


\begin{proof}

For any $1\leq \ell,m\leq N^{12}$, we define $t_\ell=\ell N^{-12}$ and $z^{(m)}=E_m+\ii\eta_m=E_m+\ii\frac{\varphi^2}{N\kappa(E_m)^{1/2}}$ where $\int_{-{\infty}}^{E_m}\rd\rho=(m-1/2) N^{-12}$.
We also define the stopping times (with respect to $\mathcal{F}_t=\sigma(B_k(s),0\leq s\leq t,1\leq k\leq N)$)
\begin{align}
\tau_{\ell,m}&=\notag
\inf\left\{0\leq s\leq t_\ell:  \im \wt f_s(z^{(m)}_{t_\ell-s})>\frac{\varphi}{2}\frac{\kappa(E_m)^{1/2}}{\max(\kappa(E_m)^{1/2},t_\ell)}\right\},\\
\tau_0&=\label{eqn:t0}
\inf\left\{0\leq t\leq 1\mid \exists k\in\llbracket 1,N\rrbracket : |x_k(t)-\gamma_k|>\varphi^{1/2} N^{-\frac{2}{3}}(\hat k)^{-\frac{1}{3}}\right\},\\
\tau&=\min\{\tau_0,\tau_{\ell,m}:0\leq \ell,m\leq N^{12}, \kappa(E_m)>\varphi^2 N^{-2/3}\}\notag,
\end{align}
with the convention $\inf\varnothing=1$.
We will prove that for any $D>0$ there exists $\wt N_0(D)$ such that for any $N\geq \wt N_0(D)$, we have 
\begin{equation}\label{eqn:inter1}
\mathbb{P}(\tau=1)>1-N^{-D}.
\end{equation}
We first explain why the above equation implies the expected result by a grid argument in $t$ and $z$.

On the one hand, we have the sets inclusion
\begin{equation}\label{eqn:inter2}
\{\tau=1\}\bigcap_{1\leq \ell,m\leq N^{12},1\leq k\leq N}A_{\ell,m,k}
\subset
\bigcap_{z\in \mathscr{S},0<t<1}\left\{\im \wt f_t(z)\leq \varphi\frac{\kappa(E)^{1/2}}{\max(\kappa(E)^{1/2},t)}\right\}
\end{equation}
where
$$
A_{\ell,m,k}=\left\{
\sup_{t_\ell\leq u\leq t_{\ell+1}}\left|\int_{t_\ell}^u
\frac{e^{-s/2}{\frak v}_k(s)\rd B_k(s)}{(z^{(m)}-x_k(s))^2}
\right|<N^{-4}
\right\}.
$$
Indeed, for any given $z$ and $t$, chose $t_\ell,z^{(m)}$ such that $t_\ell\leq t<t_{\ell+1}$ and $|z-z_m|<N^{-5}$. Then
$
|\wt f_t(z)-\wt f_t(z^{(m)})|<N^{-2},
$
say, as follows directly from the definition of $\wt f_t$ and the crude estimate $|{\frak v}_k(t)|<1$ (obtained by maximum principle). 
Moreover, we can bound the time increments using  (\ref{eqn:dynamicsPDE}): Thanks to 
the trivial estimates 
$|s_t(E+\ii\eta)|\leq \eta^{-1}$, 
$|\partial_z f_t(E+\ii\eta)|
\leq N\|\frak{v}(0)\|_\infty\eta^{-2}\leq N\eta^{-2}$ and  $|\partial_{zz} f_t(E+\ii\eta)|\leq N\eta^{-3}$, under the event $\cap_kA_{\ell,m,k}$ (to bound the martingale term)
we have
$
|\wt f_t(z^{(m)})-\wt f_{t_\ell}(z^{(m)})|<N^{-2}.
$

On the other hand, from \cite[Appendix B.6, equation (18)]{ShoWel2009} with  $c=0$ allowed for continuous martingales,  for any continuous martingale $M$ and any $\lambda,\mu>0$, we have 
\begin{equation}\label{eqn:bracket}
\mathbb{P}\Big(\sup_{0\leq u\leq t}|M_u|\geq \lambda,\ \langle M\rangle_{t}\leq \mu\Big)\leq 2 e^{-\frac{\lambda^2}{2\mu}}.
\end{equation}
For $M_u=\int_{t_\ell}^u
\frac{e^{-s/2}{\frak v}_k(s)\rd B_k(s)}{(z^{(m)}-x_k(s))^2}$, we have the deterministic estimate
$\langle M\rangle_{t_{\ell+1}}\leq N^{-12}(\varphi^2/N)^{-4}\|\frak{v}(0)\|_\infty^2\leq \varphi^{-8}N^{-8}$,
so that (\ref{eqn:bracket}) with $\mu=\varphi^{-8}N^{-8}$ gives
$
\mathbb{P}(A_{\ell,m,k})\geq 1-e^{-c\varphi^{1/5}}
$
and therefore, for any $D>0$, for large enough $N$ we have
\begin{equation}\label{eqn:inter3}
\mathbb{P}\Big(
\bigcap_{1\leq \ell,m\leq N^{10},1\leq k\leq N}A_{\ell,m,k}
\Big)
\geq 
1-N^{-D}.
\end{equation}
Equations (\ref{eqn:inter1}), (\ref{eqn:inter2}), (\ref{eqn:inter3}) conclude the proof of the proposition.

We now prove (\ref{eqn:inter1}). We abbreviate $t=t_\ell$, $z=E+\ii\eta=z^{(m)}$ for some $1\leq \ell,m\leq N^{10}$. Let $g_u(z)=\wt f_{u}(z_{t-u})$. 
From lemmas \ref{lem:zt} and \ref{f0}, 
$\im g_0(z) \leq \frac{\varphi}{10}\frac{\kappa(E_m)^{1/2}}{\max(\kappa(E_m)^{1/2},t)},$
so that we only need to bound the increment of $g$.
Using lemmas \ref{lem:key} and \ref{eqn:char}, It\^o's formula gives\footnote{In this paper, we abbreviate $u\wedge t=\min(u,t)$ when $u$ and $t$ are time variables.}
\begin{equation}\label{eqn:gev}
\rd g_{u\wedge \tau}(z)=\e_{u}(z_{t-u})\rd({u\wedge \tau})-\frac{e^{-u/2}}{\sqrt{N}}\sqrt{\frac{2}{\beta}}\sum_{k=1}^N\frac{{\frak v}_k({u})}{(z_{t-u}-x_k(u))^2}\rd B_k({u\wedge \tau})
\end{equation}
where
$
\e_{u}(z)=(s_u(z)-m(z))\partial_z \wt f_u+\frac{1}{N}\left(\frac{2}{\beta}-1\right)(\partial_{zz}\wt f_u).
$
We  bound $\sup_{0\leq s\leq t}|\int_0^{s}\e_{u}(z_{t-u})\rd({u\wedge \tau})|$ by two terms, the first one being
\begin{multline}
\int_0^{t}\left| (s_u(z_{t-u})-m(z_{t-u})\partial_z \wt f_{u}(z_{t-u})\right|\rd (u\wedge \tau)
\leq
\int_0^{t}\frac{\varphi}{N\im(z_{t-u})}\sum_{k=1}^N\frac{{\frak v}_k(u)}{|z_{t-u}-x_k(u)|^2}\rd (u\wedge \tau)\\
\leq 
\int_0^{t}\frac{\varphi\, {\rm Im} \wt f_u(z_{t-u})}{N(\im(z_{t-u}))^2}\rd (u\wedge \tau)
\leq 
\int_0^{t}\frac{\varphi^2\,\rd u}{N(\eta+(t-u)\frac{b(z)}{\kappa(z)^{\frac{1}{2}}})^2}\frac{\kappa(E)^{\frac{1}{2}}}{\max(\kappa(E)^{\frac{1}{2}},t)}
=\frac{\kappa(E)^{\frac{1}{2}}}{\max(\kappa(E)^{\frac{1}{2}},t)}.\label{eqn:estim1}
\end{multline}
To bound $s_u-m$ above, we have used the strong local semicircle law from \cite[equation (2.19)]{ErdYauYin2012} simultaneously for all $0\leq u\leq t$  (equivalent to Lemma \ref{rig}). We have then used  Lemma \ref{lem:zt} to evaluate $\im(z_{t-u})$,  $u<\tau_{\ell,m}$ to bound 
$\im \wt f_u(z_{t-u})$, and  $\kappa(E)=\kappa(z)=b(z)$ on $\mathscr{S}$ to calculate  the last integral.

We also have 
\begin{equation}\label{estim2}
\sup_{0\leq s\leq t}\left|\int_0^{s} \frac{1}{N}\partial_{zz}\wt f_u(z_{t-u})\rd (u\wedge \tau)\right|
\leq
\int_0^{t}\frac{{\rm Im} \wt f_u(z_{t-u})}{N(\im(z_{t-u}))^2}\rd (u\wedge\tau)\leq \frac{\kappa(E)^{1/2}}{\varphi\max(\kappa(E)^{1/2},t)}.
\end{equation}

Finally, we want to bound $\sup_{0\leq s\leq t}|M_s|$ where
$$
M_s:=\int_0^{s}\frac{e^{-u/2}}{\sqrt{N}}\sum_{k=1}^N\frac{{\frak v}_k({u})}{(z_{t-u}-x_k(u))^2}\rd B_k(u\wedge\tau).
$$
Note that there is an absolute constant $c>0$ such that for all $k$ and $u\leq \tau_0$ we have $|z_{t-u}-x_k(u)|\geq c|z_{t-u}-\gamma_k(u)|$, because for such $u$ we have $|x_k(u)-\gamma_k(u)|\ll |z_{t-u}-\gamma_k(u)|$. With (\ref{eqn:bracket}) we expect
\begin{equation}\label{eqn:error2}
\sup_{0\leq s\leq t}|M_{s}|^2\leq \varphi^{1/10}
\int_0^{t}\frac{1}{N}\sum_k\frac{{\frak v}_k(u)^2}{|z_{t-u}-\gamma_k|^4}\rd (u\wedge\tau)
\end{equation}
with overwhelming probability. More precisely, we will bound the 
above bracket on the right-hand side by a deterministic bound below, and then (\ref{eqn:bracket}) implies the same bound on the right-hand side.

Let $ k_j=\lfloor j \varphi^2\rfloor$ and $I_j=\llbracket k_j ,k_{j+1}\rrbracket\cap\llbracket1,N\rrbracket$, $0\leq j\leq N/\varphi^2$.
Then
\begin{equation}\label{eqn:boundsup}
\frac{1}{N}\sum_k\frac{{\frak v}_k(u)^2}{|z_{t-u}-\gamma_k|^4}
\leq 
\frac{1}{N}\sum_{0\leq j\leq N/\varphi^2}
\left(\max_{k\in I_j}{\frak v}_k(u)\right)\left(\max_{k\in I_j}\frac{1}{|z_{t-u}-\gamma_k|^4}\right)
\left(\sum_{k\in I_j}{\frak v}_k(u)\right).
\end{equation}
For each $0\leq j\leq N/\varphi^2$, pick a $n=n_j$ such that 
$|z^{(n)}-\gamma_{k_j}|<N^{-9}$. 
First, as ${\frak v}_k(u)\geq 0$ for any $k$ and $u$,  we have  
$
\sum_{k\in I_j}{\frak v}_k(u)\leq \eta_n\im \wt f_u(z^{(n)})
$.
To estimate $\im \wt f_u(z^{(n)})$, introduce $\ell$ such that $t_\ell\leq u<t_{\ell+1}$.
On the event $\cap_kA_{\ell,m,k}$ and $u\leq \tau$, we have
$
|\wt f_{u}(z^{(n)})-\wt f_{t_\ell}(z^{(n)})|<N^{-2}
$
as seen easily  from (\ref{eqn:dynamicsPDE}).
We therefore proved 
$$
\sum_{k\in I_j}{\frak v}_k(u)\leq \eta_n\im \wt f_{ t_\ell}(z^{(n)})+N^{-2}\leq \frac{\varphi^3}{N\max(\kappa(\gamma_{E_n})^{1/2},u)},
$$
and in particular the same estimate holds for $\max_{k\in I_j}v_k(u)$. We used $t_\ell\leq u\leq \tau$ for the second inequality.

Lemma \ref{lem:A1} allows us to bound $\max_{k\in I_j}\frac{1}{|z-\gamma_k|^4}$ in (\ref{eqn:boundsup}) by 
$\varphi^{-2}\sum_{I_j}\frac{1}{|z-\gamma_k|^4}$.

All together, with e obtained
$$
\sup_{0\leq s\leq t}|M_{s}|^2\leq \frac{\varphi^{4+\frac{1}{5}}}{N^2}
\int_0^{t}\rd u\int_{-2}^2\frac{\rd \rho(x)}{|z_{t-u}-x|^4\max(\kappa(x),u^2)}
\leq
C\varphi^{\frac{1}{5}}\frac{\kappa(E)}{\max(\kappa(E),t^2)},
$$
where for the last inequality, we evaluate
this deterministic integral in Lemma \ref{lem:A2}.

In conclusion, by a union bound we have proved that for any $D>0$ there exists $N_0$ such that
$$
\mathbb{P}\left(
\sup_{0\leq \ell,m\leq N^{10}, \kappa(E_m)>\varphi^2 N^{-2/3},0\leq s\leq t_\ell}
\im \wt f_{s\wedge\tau}(z^{(m)}_{t_\ell-s\wedge\tau})>\frac{\varphi}{2}\frac{\kappa(E_m)^{1/2}}{\max(\kappa(E_m)^{1/2},t_\ell)}
\right)<N^{-D}.
$$
Together with Lemma \ref{rig}, this implies (\ref{eqn:inter1}) and the result.
\end{proof}

\begin{corollary}\label{cor:v}
For any $D>0$ there exists $N_0$ and such that for any $N>N_0$, we have 
$$
\mathbb{P}\left(
{\frak v}^{(\nu)}_k(t)<\frac{\varphi^{10}}{N}\frac{1}{\max((\hat k/N)^{1/3},t)}\ {\rm for\ all}\ k\in\llbracket 1,N\rrbracket \ {\rm and}\  t\in[0,1]
\right)
>1-N^{-D}.
$$
\end{corollary}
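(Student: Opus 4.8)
The plan is to read off the size of ${\frak v}^{(\nu)}_k(t)$ directly from the bound on $\im \wt f_t$ in Proposition \ref{prop:estimate}, by evaluating $\wt f_t$ at a well-chosen spectral parameter $z=E+\ii\eta\in\mathscr{S}$ sitting just above the $k$-th particle. Everything will be done on the intersection of $\mathscr{A}$ (Lemma \ref{rig}) with the event of Proposition \ref{prop:estimate}; applying both with $D+1$ in place of $D$, this intersection has probability at least $1-N^{-D}$ for large $N$, and since $\mathscr{A}$, that event, and the conclusion sought are all ``for every $k$, every $t$'' statements, it suffices to argue deterministically on it. I will use two elementary facts about ${\frak v}={\frak v}^{(\nu)}$: that ${\frak v}_k(t)\ge 0$ for all $k,t$, since $\mathscr{B}$ has nonnegative off-diagonal coefficients and ${\frak v}_k(0)=|{\frak u}_k(0)|\ge 0$; and that $\kappa(\gamma_k)^{1/2}\sim(\hat k/N)^{1/3}$ uniformly in $k$, a standard consequence of the square-root vanishing of $\rho$ at $\pm 2$.

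The starting observation is that, because ${\frak v}_k(t)\ge 0$, keeping a single term in $\im\wt f_t(z)=e^{-t/2}\sum_j\frac{\eta\,{\frak v}_j(t)}{(x_j(t)-E)^2+\eta^2}$ yields, for every index $k$ and all $E\in\R$, $\eta>0$,
$$
{\frak v}_k(t)\ \le\ \frac{(x_k(t)-E)^2+\eta^2}{\eta}\,e^{t/2}\,\im\wt f_t(E+\ii\eta).
$$
If $E+\ii\eta\in\mathscr{S}$, then Proposition \ref{prop:estimate} controls the last factor (using $e^{t/2}\le 2$) by $2\varphi\,\kappa(E)^{1/2}/\max(\kappa(E)^{1/2},t)$, so the whole problem reduces to choosing a point of $\mathscr{S}$ whose real part $E$ lies within $O(\eta)$, or at worst $O(\kappa(E))$, of $x_k(t)$.

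For ``bulk-type'' indices, meaning $\kappa(\gamma_k)>\varphi^2 N^{-2/3}$, I would take $E=\gamma_k$ and $\eta=\varphi^2/(N\kappa(\gamma_k)^{1/2})$, so that $E+\ii\eta\in\mathscr{S}$ and $\eta\,\kappa(E)^{1/2}=\varphi^2/N$. Rigidity on $\mathscr{A}$ gives $|x_k(t)-\gamma_k|\le\varphi^{1/2}N^{-2/3}\hat k^{-1/3}\lesssim\varphi^{-3/2}\eta\ll\eta$, hence $(x_k(t)-E)^2+\eta^2\le 2\eta^2$, and the displayed inequality produces ${\frak v}_k(t)\le C\varphi^3/(N\max(\kappa(\gamma_k)^{1/2},t))$, which is at most $\varphi^{10}/(N\max((\hat k/N)^{1/3},t))$ for $N$ large. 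For ``edge-type'' indices, meaning $\kappa(\gamma_k)\le\varphi^2 N^{-2/3}$ — which forces $\hat k\le C\varphi^3$, hence $(\hat k/N)^{1/3}\le C\varphi N^{-1/3}$ — the point $\gamma_k$ lies outside the range of real parts swept by $\mathscr{S}$, so instead I would take $E$ with $\kappa(E)=2\varphi^2 N^{-2/3}$ on the side of $[-2,2]$ nearest $\gamma_k$ (so $E+\ii\eta\in\mathscr{S}$), for which $\eta=\varphi^2/(N\kappa(E)^{1/2})\sim\varphi N^{-2/3}<\kappa(E)$. Then $|x_k(t)-E|\le|x_k(t)-\gamma_k|+|\kappa(\gamma_k)-\kappa(E)|\le 2\kappa(E)$ uniformly in $t$, so $(x_k(t)-E)^2+\eta^2\le 5\kappa(E)^2$, and the displayed inequality together with $\eta=\varphi^2/(N\kappa(E)^{1/2})$ gives ${\frak v}_k(t)\le C N\kappa(E)^3/(\varphi\max(\kappa(E)^{1/2},t))\le C\varphi^5/(N\max((\hat k/N)^{1/3},t))\le \varphi^{10}/(N\max((\hat k/N)^{1/3},t))$ for $N$ large, using $\kappa(E)^{1/2}\gtrsim(\hat k/N)^{1/3}$ in the edge-type regime.

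The step I expect to be the real point is the edge-type case: there $\gamma_k$ is not reachable by $\mathscr{S}$, so one evaluates $\wt f_t$ at a boundary point of $\mathscr{S}$, where the Poisson kernel $\eta/((x_k(t)-E)^2+\eta^2)$ is only of order $1/\kappa(E)\sim N^{2/3}/\varphi^2$ rather than the optimal $1/\eta$; this costs a few powers of $\varphi$ but stays well inside the $\varphi^{10}$ budget, and — thanks to rigidity placing every edge-type eigenvalue within $O(\kappa(E))$ of that boundary point — no further input (such as a finite speed of propagation estimate) is required. Once both cases are in hand for every $k\in\llbracket 1,N\rrbracket$ and $t\in[0,1]$, the proof is complete.
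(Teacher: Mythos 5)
Your argument is correct and is essentially the paper's own proof: a single-term Poisson-kernel lower bound on $\im\wt f_t$ at a point of $\mathscr{S}$ located at (or, for edge indices, as close as $\mathscr{S}$ allows to) $\gamma_k$, combined with Proposition \ref{prop:estimate}, rigidity, and nonnegativity of ${\frak v}$. The only cosmetic difference is the handling of the near-edge indices, where the paper evaluates at $\gamma_{k_0}$ with $k_0=\lfloor\varphi^5\rfloor$ while you take the boundary point with $\kappa(E)=2\varphi^2N^{-2/3}$; both choices lose the same few powers of $\varphi$ absorbed by the $\varphi^{10}$ budget.
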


\begin{proof}
Assume first that $\hat k:=\min(k,N+1-k)>\varphi^5$. Then define
$
z=z_0=\gamma_k+\ii\frac{\varphi^2}{N\sqrt{\kappa(\gamma_k)}}\in\mathscr{S}.
$
On $\mathscr{A}$, we have (we use nonnegativity of the ${\frak v}_k$'s)
$$
{\frak v}^{(\nu)}_k(t)
\leq {\frak v}^{(\nu)}_k(t)\frac{(\im z)^2}{|z-x_k|^2}
\leq \im z \im \wt f_t(z)
\leq \frac{\varphi^2}{N\sqrt{\kappa(\gamma_k)}}\im \wt f_t(z).
$$
Note that $\kappa(\gamma_k)^{1/2}\sim (\hat k/N)^{1/3}$. Therefore, by Lemma \ref{rig} and Proposition \ref{prop:estimate},
$$
\mathbb{P}\left(
{\frak v}^{(\nu)}_k(t)<\frac{\varphi^{4}}{N}\frac{1}{\max((k/N)^{1/3},t)}\ {\rm for\ all}\ k\in\llbracket \varphi^5,N-\varphi^5\rrbracket,\ t\in[0,1]
\right)
>1-N^{-D}.
$$
If $\hat k<\varphi^5$, without loss of generality we assume $k<\varphi^5$. The same reasoning with 
$z=z_0=\gamma_{k_0}+\ii\frac{\varphi^2}{N\sqrt{\kappa(\gamma_{k_0})}}\in\mathscr{S}$ (where $k_0=\varphi^5$)
yields to the same estimate up to the deteriorated $\varphi^{10}$ exponent, say. 
\end{proof}

We now state the quantitative relaxation of the dynamics at the edge. Remember that $\bla$ and $\bmu$ satisfy the same equation (\ref{eqn:dbm}), with respective initial conditions a generalized Wigner and GOE spectrum.

\begin{theorem}\label{thm:edgerelaxation}
Consider the dynamics (\ref{eqn:dbm}) (or its Hermitian ensemble counterpart). For any $D>0$ and $\e>0$ there exists $N_0$ and such that for any $N>N_0$,
$$
\mathbb{P}\left(
|\lambda_k(t)-\mu_k(t)|<\frac{N^{\e}}{Nt}\  {\rm for\ all}\ k\in\llbracket 1,N\rrbracket \ {\rm and}\  t\in[0,1]
\right)
>1-N^{-D}.
$$
\end{theorem}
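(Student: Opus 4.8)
The plan is to observe that this statement is essentially a repackaging of Corollary \ref{cor:v}: once $|{\frak u}^{(\nu)}_k(t)|$ is known to be pointwise small, integrating in $\nu$ controls the coupling difference $\lambda_k-\mu_k$ directly. So the argument has three ingredients — an identity, a comparison principle, and the input estimate.

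First, the identity. The interpolating dynamics \eqref{eqn:initial}--\eqref{eqn:dbm} (equivalently \eqref{eqn:beta} with $\beta=1$) has, for $\beta\ge 1$, no particle collisions, so its strong solution depends in a $C^1$ way on the initial configuration, hence on $\nu$ through the affine interpolation \eqref{eqn:initial}. Since $x^{(0)}=\bla$ and $x^{(1)}=\bmu$ (these are the strong solutions with the respective initial data), the fundamental theorem of calculus together with the definition ${\frak u}^{(\nu)}_k=e^{t/2}\frac{\rd}{\rd\nu}x^{(\nu)}_k$ gives, for every $k$ and $t$,
$$
\mu_k(t)-\lambda_k(t)=\int_0^1\frac{\rd}{\rd\nu}x^{(\nu)}_k(t)\,\rd\nu=e^{-t/2}\int_0^1{\frak u}^{(\nu)}_k(t)\,\rd\nu .
$$
In particular $|\lambda_k(t)-\mu_k(t)|\le\int_0^1|{\frak u}^{(\nu)}_k(t)|\,\rd\nu$, so it suffices to bound $|{\frak u}^{(\nu)}_k(t)|$ uniformly in $\nu,k,t$.

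Second, the comparison. Both ${\frak u}^{(\nu)}$ and ${\frak v}^{(\nu)}$ solve the linear system $\frac{\rd}{\rd t}g(k)=(\mathscr{B}(t)g)(k)$ of \eqref{eqn:gene}, whose generator $\mathscr{B}(t)$ has nonnegative off-diagonal coefficients $c_{\ell k}(t)\ge 0$ and zero row sums — it is a time-dependent Markov generator on $N$ sites, finite on the full-measure no-collision event — and such a flow is order-preserving. Applying this to the nonnegative initial data of ${\frak v}^{(\nu)}\pm{\frak u}^{(\nu)}$ (recall ${\frak v}^{(\nu)}_k(0)=|{\frak u}^{(\nu)}_k(0)|=|\mu_k(0)-\lambda_k(0)|$), we get $|{\frak u}^{(\nu)}_k(t)|\le {\frak v}^{(\nu)}_k(t)$ for all $k,t$. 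Inserting Corollary \ref{cor:v} (uniformly in $\nu$, which follows from the $\nu$-uniformity built into $\mathscr{A}$ in \eqref{eqn:A} and into Proposition \ref{prop:estimate}, or from a short additional grid argument using Lipschitz dependence of ${\frak v}^{(\nu)}$ on $\nu$), on an event of probability $>1-N^{-D}$ we obtain for all $k$ and all $t\in[0,1]$
$$
|\lambda_k(t)-\mu_k(t)|\le\int_0^1{\frak v}^{(\nu)}_k(t)\,\rd\nu\le\frac{\varphi^{10}}{N}\,\frac{1}{\max((\hat k/N)^{1/3},t)}\le\frac{\varphi^{10}}{Nt}.
$$
Since $\varphi=e^{C_0(\log\log N)^2}$ is subpolynomial, $\varphi^{10}\le N^{\e}$ for $N\ge N_0(\e)$, which is the claim; the Hermitian case runs identically with $\beta=2$.

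The point is that the genuine obstacle has already been surmounted in Proposition \ref{prop:estimate}/Corollary \ref{cor:v}, i.e.\ the control of the observable $\wt f_t$ transported along the characteristics \eqref{eqn:zteq} by the (near-)advection equation. What remains here is bookkeeping; the only two places deserving a line of justification are the $C^1$ dependence of the flow on the initial data (non-collision for $\beta\ge 1$) and the order-preservation of the linear flow \eqref{eqn:gene} ($c_{\ell k}\ge 0$), both entirely standard.
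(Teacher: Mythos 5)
Your overall architecture is exactly the paper's: the Newton--Leibniz identity $\mu_k(t)-\lambda_k(t)=e^{-t/2}\int_0^1{\frak u}^{(\nu)}_k(t)\,\rd\nu$, the order-preservation of the flow (\ref{eqn:gene}) giving $|{\frak u}^{(\nu)}_k(t)|\le{\frak v}^{(\nu)}_k(t)$, and Corollary \ref{cor:v} as the input. The one place where your argument has a real gap is the step ``Inserting Corollary \ref{cor:v} uniformly in $\nu$.'' Corollary \ref{cor:v} (and Proposition \ref{prop:estimate} behind it) is a statement for each \emph{fixed} $\nu$: the good event it produces depends on $\nu$ through the stopping times and the martingale events $A_{\ell,m,k}$, which are built from the trajectory $\bx^{(\nu)}$. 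To integrate the pointwise bound over the continuum $\nu\in[0,1]$ on a single event you would need simultaneity in $\nu$, and neither of your two proposed justifications delivers it: the $\nu$-uniformity of $\mathscr{A}$ concerns only rigidity, not the bound on ${\frak v}^{(\nu)}$; and a grid argument would require a modulus of continuity of $\nu\mapsto{\frak v}^{(\nu)}_k(t)$ with at most polynomially large constant, which is not established and not obvious --- the coefficients $c_{\ell k}=1/(N(x^{(\nu)}_k-x^{(\nu)}_\ell)^2)$ have no uniform upper bound (rigidity gives no lower bound on individual gaps), so a naive Gr\"onwall estimate for the perturbed linear system produces a non-polynomial constant. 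The positions $x^{(\nu)}_k(t)$ are Lipschitz in $\nu$ with constant $O(1)$, but ${\frak v}^{(\nu)}$ need not be.

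The paper circumvents this precisely because of that difficulty: from the per-$\nu$ high-probability bound (\ref{eqn:ub}) together with the deterministic bound $|{\frak u}^{(\nu)}_k(t)|\le\|{\frak u}^{(\nu)}(0)\|_\infty$ on the complementary event, one gets the moment bound $\E(|{\frak u}^{(\nu)}_k(t)|^{2p})\le(C\varphi^{10}/(Nt))^{2p}+N^{-\wt D}$ uniformly in $\nu$; then Jensen's inequality and Fubini give $\E(|\lambda_k(t)-\mu_k(t)|^{2p})\le\int_0^1\E(|{\frak u}^{(\nu)}_k(t)|^{2p})\,\rd\nu$, and Markov's inequality concludes for fixed $k,t$, with the extension to all $k$ by a union bound and to all $t$ by time-discretization plus Weyl's inequality. (Equivalently, one can apply Markov to the Lebesgue measure of the bad set $\{\nu:|{\frak u}^{(\nu)}_k(t)|>\varphi^{10}/(Nt)\}$, whose expectation is $\le N^{-D}$ by Fubini.) So your proof is correct in outline but needs this moment/Fubini step --- flagged as a ``rigorous justification'' in the paper itself --- substituted for the claimed $\nu$-uniform event; as written, that claim is unsupported.
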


\begin{remark}\label{rem:anyBeta} The above result is stated for $\beta=1,2$.
The same result holds for any choice $\beta\geq 1$ in the equation (\ref{eqn:beta}), provided $\bla$ and $\bmu$ satisfy optimal initial rigidity estimates. The proof only requires notational changes.
\end{remark}

\begin{proof}
Remember that ${\frak v}-{\frak u}$ and ${\frak v}+{\frak u}$ are nonnegative for $t=0$ and satisfies the equation (\ref{eqn:gene}), so they remain nonnegative
and we have $-{\frak v}_k(t)\leq {\frak u}_k(t)\leq {\frak v}_k(t)$ for any $t>0$.  Corollary \ref{cor:v} therefore gives
\begin{equation}\label{eqn:ub}
\mathbb{P}\left(
|{\frak u}^{(\nu)}_k(t)|<\frac{\varphi^{10}}{N} \frac{1}{\max((\hat k/N)^{1/3},t)}\ {\rm for\ all}\ k\in\llbracket 1,N\rrbracket \ {\rm and}\  t\in[0,1]
\right)
>1-N^{-D}
\end{equation}
for all $N>N_0(D)$. Note in particular that $N_0$ does not depend on $\nu\in[0,1]$. 
The above equation easily implies that for any fixed $\wt D$ and $p$, for large enough $N$ we have $\E(|{\frak u}^{(\nu)}_k(t)|^{2p})<((C\varphi^{10})/(Nt))^{2p}+N^{-\wt D}$, so that by H\"{o}lder's inequality  we have
$$
\E(|\la_k(t)-\mu_k(t)|^{2p})= \E\left(\left|\int_{0}^1{\frak u}_k^{(\nu)}(t)\rd\nu\right|^{2p}\right)
\leq
\int_0^1\E(|{\frak u}_k^{(\nu)}|^{2p})\leq \left(\frac{\varphi^{10}}{Nt}\right)^{2p}+N^{-\wt D}.
$$
By  choosing $p=\lfloor10/\e\rfloor$ and $\wt D=D+100p$,  Markov's inequality concludes the proof for fixed $k$ and $t$.

By a simple union bound
the same estimate holds for the event simultaneously over all $k$ for $N\geq N_0(D+1)$. For simultaneity over $t$, a standard argument based on discretization in time and Weyl's inequality
to bound increments in small intervals concludes the proof.
\end{proof}

\subsection{Proof of Theorem \ref{Gaussian}.}\ Let $F$ be a given smooth and bounded test function. We rely on \cite{Gus2005,Oro2010}
so that we only need to prove
\begin{equation}\label{eqn:aim}
\E_{H} F\left(X_k\right)
=
\E_{\rm GOE} F\left(X_k\right)+\oo(1)
\end{equation}
for any diverging $k\in\llbracket 1,N^{1-\e}\rrbracket$.
From Theorem \ref{thm:edgerelaxation}, for 
$t>(k/N)^{1/3}N^{\e/10}$, we have
$$
\E F\left(c\ \frac{\lambda_{k}(t)-\gamma_{k}}{(\log k)^{1/2}N^{-2/3}k^{-1/3}}\right)
=
\E F\left(c\ \frac{\mu_{k}(t)-\gamma_{k}}{(\log k)^{1/2}N^{-2/3}k^{-1/3}}\right)+\oo(1),
$$
so that (\ref{eqn:aim}) holds for
any Gaussian divisible ensemble of type $\wt H_t=e^{-t/2}\wt H_0+(1-e^{-t})^{1/2}\ U$, 
where $\wt H_0$  is any  initial generalized  Wigner matrix and 
$U$ is an independent standard  GOE matrix. 
We now construct a generalized  Wigner matrix $\wt H_0$ such that  
the first three moments of  $\wt H_t$ 
match exactly those of the  target  matrix $H$ and the differences between 
the fourth moments of the two ensembles are less than 
$N^{-c}$ for some positive $c$. This existence of such a initial random variable 
is given for example by \cite[Lemma 3.4]{EYYBernoulli}. By the following 
Proposition \ref {t2}, we have 
$$
\E_{\wt H_t} F\left(X_k\right)
=
\E_{H} F\left(X_k\right)+\oo(1).
$$
The previous two equations conclude our proof of (\ref{eqn:aim}), and therefore  Theorem \ref{Gaussian} (the proof in the multidimensional case is analogue).\\

The following proposition is a slight extension of the Green's function comparison theorem from \cite{ErdYauYin2012Univ},
(see for example \cite[theorem 5.2]{BouYau2017} for an analogue statement for eigenvectors).
Compared to \cite{ErdYauYin2012Univ}, we include the following minor modifications: (1) We state it for energies in the entire spectrum. 
(2) We allow the test function to be $N$-dependent.

Proposition \ref{t2} can be proved exactly as in \cite{ErdYauYin2012Univ}, so we do not repeat
it. Note that at the edge, the 4 moment matching can be replaced by 2 moments \cite{ErdYauYin2012}. For our applications, this improvement is not necessary.

\begin{proposition} \label{t2}
Let $H^{\f v}$ and $H^{\f w}$ be generalized Wigner ensembles satisfying (\ref{eqn:tail}). 
Assume that the first three moments of the entries ($h_{ij}=\sqrt{N}H_{ij}$) are the same, i.e.
$
\E^{\f v}(h_{ij}^k)=  \E^{\f w}(h_{ij}^k)
$
for all $1\leq i\leq j\leq N$ and $1\leq k\leq 3$.
Assume also that there exists $\xi> 0$ such that 
$$
	\Big |  \E^{\f v}(h_{ij}^4)- \E^{\f w}(h_{ij}^4)\Big | \le N^{-\xi} 
\for i \leq j.
$$
Then there is $\e > 0$ depending on $\xi$ such that for any integer $k$,  any choice of indices  $1\le j_1, \ldots, j_k \le N$
and smooth bounded $\Theta:\mathbb{R}^k\to\mathbb{R}$,
$$
 \left(\E^{\f v} - \E^{{\f w}}\right) \Theta \pB{ 
N^{2/3}(\hat j_i)^{1/3}\lambda_{j_1}, \ldots, N^{2/3}(\hat j_k)^{1/3}\lambda_{j_k}} =\OO(N^{-\e}\max_{0\leq m\leq 5}\|\Theta^{(m)}\|_\infty^{10}).
$$
\end{proposition}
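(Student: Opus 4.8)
The plan is to prove Proposition \ref{t2} by the Green's function comparison method of \cite{ErdYauYin2012Univ}, i.e.\ a Lindeberg exchange scheme carried out at the level of resolvents, and to check that the two announced modifications (energies anywhere in the spectrum, and an $N$-dependent test function) are cost-free.

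\emph{Step 1: reduce eigenvalue positions to resolvent traces.} Write $G(z)=(H-z)^{-1}$ and $m_N(z)=\tfrac1N\tr G(z)$. On the overwhelming-probability event of rigidity (Lemma \ref{rig}, equivalently the local semicircle law of \cite{ErdYauYin2012}), the location of $\la_j$ is determined, up to an error $N^{-2/3}(\hat j)^{-1/3}N^{\oo(1)}$, by the counting function $\mathcal N(E)=\#\{k:\la_k\le E\}$, which in turn is well approximated by $\int^E\im m_N(x+\ii\eta)\,\rd x$ for $\eta=N^{-2/3-\de}$ near the edge (and $\eta=N^{-1-\de}$ in the bulk), $\de>0$ small. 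Smoothing the indicator of $\{N^{2/3}(\hat j_i)^{1/3}\la_{j_i}\le s_i\}$ on scale $N^{-\de}$ and using boundedness of $\Theta$, one rewrites
$$
\Theta\!\left(N^{2/3}(\hat j_1)^{1/3}\la_{j_1},\dots,N^{2/3}(\hat j_k)^{1/3}\la_{j_k}\right)=\Psi\!\left(m_N(z_1),\dots,m_N(z_L)\right)+(\text{negligible}),
$$
where the $z_a$ are deterministic spectral parameters with ${\rm dist}(z_a,[-2,2])\ge N^{-2/3-\de}$, $L$ is bounded, and $\Psi$ is a smooth function whose derivatives up to order $5$ are bounded by $N^{C\de}\max_{0\le m\le5}\|\Theta^{(m)}\|_\infty^{10}$. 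Allowing the indices $j_i$ to sit anywhere in the spectrum is the first modification, and only uses the local law uniformly over $E\in[-2,2]$, which \cite{ErdYauYin2012} supplies; tracking the norms $\|\Theta^{(m)}\|_\infty$ through the chain rule is the (harmless) bookkeeping behind the second modification.

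\emph{Step 2: the Lindeberg telescoping.} Order the pairs $(i,j)$, $i\le j$, and interpolate from $H^{\f v}$ to $H^{\f w}$ by replacing the entries one at a time. For a single replacement of $h_{ij}$, Taylor-expand $\Psi$ of the resolvent traces in the rescaled entry $N^{-1/2}h_{ij}$ to fourth order. The contributions of orders $0,1,2,3$ have equal expectation under the two laws by the moment matching, hence cancel. The order-four contribution carries a factor $N^{-2}$ and an expectation of a product of boundedly many entries of $G$ at the points $z_a$ together with $h_{ij}^4$; on the local-law event each such $G$-entry is $\OO(1)$ (or $\OO(\im m(z_a))$ when an $\im$ is available), $h_{ij}^4$ is integrable by (\ref{eqn:tail}), and the difference between the two ensembles is $\le N^{-\xi}$, so after summing over the $\OO(N^2)$ pairs this term is $\OO(N^{-\xi})\,N^{C\de}\max_{0\le m\le5}\|\Theta^{(m)}\|_\infty^{10}$. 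The fifth-order Taylor remainder carries $N^{-5/2}$, hence sums to $\OO(N^{-1/2})$ times the same factor; off the local-law event, boundedness of $\Theta$ and (\ref{eqn:tail}) make the contribution super-polynomially small. Choosing $\de$ small and $\e<\min(\xi,1/2)$ gives the stated bound.

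The hard part is the power counting in Step 2 at the optimal edge scale $\eta\sim N^{-2/3-\de}$: one needs the entrywise and averaged local semicircle laws with the correct $\eta$-dependence of the error, uniformly for $z_a$ at distance $\gtrsim N^{-2/3-\de}$ from $[-2,2]$ (and in the bulk), so that every derivative of $G$ in $h_{ij}$ and every resolvent factor remains $\OO(1)$ after summation over the $\OO(N^2)$ swaps. Since these are exactly the estimates underlying Lemma \ref{rig}, the argument of \cite{ErdYauYin2012Univ} transfers verbatim, which is why it is not reproduced here.
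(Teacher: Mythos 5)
Your proposal is correct and follows exactly the route the paper intends: the paper itself gives no proof of Proposition \ref{t2} beyond the remark that it "can be proved exactly as in \cite{ErdYauYin2012Univ}" with the two minor modifications (arbitrary spectral location, $N$-dependent test function), and your sketch is precisely that Green's function comparison argument, with the correct reduction to resolvent traces at scales $\eta\sim N^{-2/3-\delta}$ (edge) and $N^{-1-\delta}$ (bulk) and the standard fourth-order Lindeberg telescoping. The only imprecision is that at $\eta=N^{-1-\delta}$ in the bulk the resolvent entries are $\OO(N^{C\delta})$ rather than $\OO(1)$ (via the monotonicity $\im G_{ii}(E+\ii\eta)\leq(\eta'/\eta)\im G_{ii}(E+\ii\eta')$), but this is absorbed by your final choice of small $\delta$ and is handled verbatim in the cited reference.
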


\subsection{Average estimate in the bulk.}\ Proposition \ref{prop:estimate} gave bounds on $\wt f_t(z)$, useful for universality at the edge of the spectrum. The following estimate has a similar proof and justifies (\ref{eqn:ft}) in the bulk of the spectrum. Although not used in this paper, it is an important ingredient to study fluctuations of random determinants in \cite{BouMod2018}.

\begin{proposition}\label{prop:estimatebulk}
Let $\kappa>0$ be a fixed (small) constant. Then for any $D>0$ there exists $N_0(D,\kappa)$ such that for 
any $N\geq N_0$ we have
$$
\mathbb{P}\left(|f_t(z)-f_0(z_t)|\leq \frac{\varphi^{30}}{N\eta}\ {\rm for\ all}\ 0<t<1\ {\rm and}\ z=E+\ii \eta, \frac{\varphi^2}{N}<\eta<1,|E|<2-\kappa\right)>1-N^{-D}.
$$
\end{proposition}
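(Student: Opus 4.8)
The plan is to follow the same strategy as in the proof of Proposition \ref{prop:estimate}: set up a grid in $t$ and $z$, define stopping times at which the bound first fails, and close a self-consistent estimate along the characteristics using the dynamics (\ref{eqn:dynamicsPDE}). First I would fix a grid $t_\ell = \ell N^{-12}$ and $z^{(m)} = E_m + \ii\eta_m$ covering $\{z=E+\ii\eta : \varphi^2/N < \eta < 1, |E| < 2-\kappa\}$ to sub-polynomial accuracy, and define, for each pair $(\ell,m)$, the stopping time $\tau_{\ell,m}$ at which $|f_s(z^{(m)}_{t_\ell - s}) - f_0(z^{(m)}_{t_\ell})|$ first exceeds $\tfrac12 \varphi^{30}/(N\eta_m)$, together with the rigidity stopping time $\tau_0$ from (\ref{eqn:t0}), and $\tau = \min\{\tau_0, \tau_{\ell,m}\}$. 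As in the earlier proof, a union bound plus Weyl-type increment estimates in $t$ and Lipschitz-in-$z$ estimates for $f$ reduce matters to showing $\mathbb{P}(\tau=1) \geq 1 - N^{-D}$.

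For the main estimate, I would set $g_u(z) = f_u(z_{t-u})$ and apply Itô's formula using Lemma \ref{lem:key} and Lemma \ref{eqn:char}: the advection term $(s_t(z)+z/2)\partial_z f_t$ exactly cancels the $\partial_t r = \tfrac{\sqrt{z^2-4}}{2}\partial_z r$ contribution \emph{up to} the error term $\varepsilon_u(z) = (s_u(z) - m(z))\partial_z f_u + \tfrac1N(\tfrac2\beta - 1)\partial_{zz} f_u$ plus the martingale term. So $g_{u\wedge\tau}(z) - g_0(z) = \int_0^{u\wedge\tau}\varepsilon_s(z_{t-s})\,\rd s + M_{u\wedge\tau}$, and I must bound each piece by $\varphi^{30}/(N\eta)$ with overwhelming probability. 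For the drift term: on $\mathscr{A}$ the local semicircle law gives $|s_u - m|(w) \lesssim \varphi/(N\im w)$, and $|\partial_z f_u(w)| \leq \im f_u(w)/\im w$ with $\im f_u(w) \lesssim \varphi^{C}$ once $w$ is above the fluctuation scale; combined with Lemma \ref{lem:zt}, which in the bulk gives $\im(z_{t-s}) \sim \eta + (t-s)$, the time integral $\int_0^t \varphi^{C}/(N(\eta+(t-s))^2)\,\rd s$ converges to $O(\varphi^C/(N\eta))$. The $\partial_{zz}$ term is handled the same way. For the martingale term $M_s = -\tfrac{e^{-u/2}}{\sqrt N}\sqrt{2/\beta}\sum_k \tfrac{\frak u_k(u)}{(z_{t-u} - x_k(u))^2}\rd B_k$, I would use (\ref{eqn:bracket}) together with a dyadic decomposition of the bracket exactly as in (\ref{eqn:boundsup})–(\ref{eqn:error2}): split indices into blocks $I_j$ of size $\varphi^2$, bound $\max_{k\in I_j}|\frak u_k(u)|$ and $\sum_{k\in I_j}|\frak u_k(u)|$ via $\im f_u$ at a nearby grid point (controlled by $u\leq\tau$), replace $\max_{k\in I_j}|z-\gamma_k|^{-4}$ by a sum using a sub-division lemma à la Lemma \ref{lem:A1}, and reduce to a deterministic integral $\int_0^t \rd u \int \rho(x)\rd x / |z_{t-u} - x|^4$ of the type evaluated in Lemma \ref{lem:A2}, which gives a bound $\lesssim \varphi^{C}/(N\eta)^2$ for the bracket, hence $\lesssim \varphi^{C}/(N\eta)$ for $\sup_s|M_s|$.

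The two places I expect to require the most care are: (1) the initial value $f_0(z_t)$ — unlike $\wt f_0$, there is no positivity, so I cannot simply invoke Lemma \ref{f0} to get a clean bound; rather I need $|\im f_0(z_t)| \lesssim \varphi^{C}$ uniformly on the relevant portion of $\mathscr{R}$, which Lemma \ref{f0} does supply (its last sentence covers $|\im f_0|$), but I also need the real part and possibly $\partial_z f_0$ to behave, so a small amount of extra bookkeeping on $f_0$ in the bulk region is needed; and (2) keeping track of the precise power of $\varphi$, since the target $\varphi^{30}$ is generous but the dyadic-block argument loses several factors of $\varphi$ at each step (one from the block size, one from Lemma \ref{lem:A1}, one from the martingale exponent in (\ref{eqn:bracket})), so I must verify the accumulated losses stay below $30$. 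The genuinely delicate analytic point — and the main obstacle — is the deterministic integral estimate for the martingale bracket along the characteristics: one must show $\int_0^t \rd u\int_{-2+\kappa}^{2-\kappa} \rd x\, |z_{t-u}-x|^{-4} \lesssim \varphi^{-C}(N\eta)^{-2}\cdot(\text{something integrable})$, exploiting that the characteristic $z_{t-u}$ stays a distance $\gtrsim \eta + (t-u)$ from the real axis (Lemma \ref{lem:zt}); this is the bulk analogue of Lemma \ref{lem:A2} and, modulo its precise statement which is promised later in the paper, it is exactly the computation that makes the self-consistent bound close.
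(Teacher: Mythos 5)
Your overall architecture (grid in $t$ and $z$, stopping times, It\^o along the characteristics, drift plus martingale estimates) is the paper's, but there is a genuine gap in how you propose to close the two self-consistent bounds, and it traces to a single missing ingredient. Both your drift estimate and your martingale estimate recycle positivity tricks from the proof of Proposition \ref{prop:estimate} that are not available for $f$: you write $|\partial_z f_u(w)|\leq \im f_u(w)/\im w$, and in the dyadic block decomposition you bound $\sum_{k\in I_j}|\frak{u}_k(u)|$ by $\eta_n \im f_u(z^{(n)})$ ``controlled by $u\leq\tau$''. Both inequalities are valid only when the coefficients are nonnegative, which is why the paper proves Proposition \ref{prop:estimate} for $\wt f$ (built from $\frak{v}_k=|\frak{u}_k(0)|$ evolved by the positivity-preserving flow) rather than for $f$. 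The $\frak{u}_k$ are signed, $\im f_u$ can be small through cancellation while $\sum_k|\frak{u}_k|$ is large, and the stopping time you define only controls $|f_s(z^{(m)}_{t_\ell-s})-f_0(z^{(m)}_{t_\ell})|$, which gives no lower bound on the mass $\sum|\frak{u}_k|$ in a window. As written, neither the drift bound nor the bracket bound closes.

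The missing idea is that Proposition \ref{prop:estimatebulk} is proved \emph{after} Corollary \ref{cor:v}, and one should import the resulting a priori bound (\ref{eqn:ub}), namely $|\frak{u}_k(u)|\leq \frak{v}_k(u)\leq \varphi^{10}N^{-1}\max((\hat k/N)^{1/3},u)^{-1}$ with overwhelming probability, as an additional stopping time ($\tau_1$ in the paper). Once this pointwise bound is in force, everything you flagged as delicate evaporates: $|\partial_z f_u(w)|\leq\sum_k|\frak{u}_k|/|x_k-w|^2\lesssim \varphi^{C}/\im w$ directly, and the quadratic variation is bounded deterministically by $\int_0^t \varphi^{C}N^{-3}\sum_k|z_{t-u}-\gamma_k|^{-4}\,\rd u\lesssim \varphi^{C}N^{-2}\int_0^t(\eta+t-u)^{-3}\rd u\lesssim \varphi^{C}(N\eta)^{-2}$, with no block decomposition, no analogue of Lemma \ref{lem:A1}, and no bulk version of Lemma \ref{lem:A2} (that lemma is only needed near the edge, where $\kappa(x)$ degenerates). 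So the ``main obstacle'' you identify is not where the difficulty lies; the difficulty is the signedness of $\frak{u}_k$, and its resolution is the prior edge estimate rather than a new integral computation.
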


\begin{proof}
We strictly follow the proof of Proposition \ref{prop:estimate}. Actually, the only differences are (i) the observable, now $f$ instead of $\wt f$ (but the equations are the same), (ii) simplifications, 
as we now know the a priori bound (\ref{eqn:ub}), and some estimates become simpler in the bulk of the spectrum.

More precisely, for any $1\leq \ell,m,p\leq N^{10}$, we define $t_\ell=\ell N^{-10}$ and $z^{(m,p)}=E_m+\ii\eta_p$ where $\int_{-{\infty}}^{E_m}\rd\rho=(m-1/2) N^{-10}$ and $\eta_p=\frac{\varphi^2}{N}+pN^{-10}$.
We also define
\begin{align}
\tau_{\ell,m,p}&=\notag
\inf\left\{0\leq s\leq t_\ell:   |f_s(z^{(m,p)}_{t_\ell-s})-f_0(z^{(m,p)}_{t_\ell})|>\frac{\varphi^{25}}{N\eta_p}\right\},\\
\tau_1&=
\inf\left\{0\leq t\leq 1\mid \exists k\in\llbracket 1,N\rrbracket :|{\frak u}^{(\nu)}_k(t)|>\frac{\varphi^{10}}{N} \frac{1}{\max((\hat k/N)^{1/3},t)}\right\},\label{eqn:t1}\\
\tau&=\min\{\tau_0,\tau_1,\tau_{\ell,m,p}:0\leq \ell,m,p\leq N^{10},|E_m|<2-\kappa\},\notag
\end{align}
where we remind the definition $\tau_0$ from (\ref{eqn:t0}).
By following the argument between (\ref{eqn:inter1}) and (\ref{eqn:inter3}), we just need to prove that 
for any $D>0$ there exists $\wt N_0(\kappa,D)$ such that for any $N\geq \wt N_0$, we have 
\begin{equation}\label{eqn:inter4}
\mathbb{P}(\tau=1)>1-N^{-D},
\end{equation}
with the convention $\inf\varnothing=1$.
Let $t=t_{\ell}$, $z=z^{(m,p)}$ where $0\leq \ell,m,p\leq N^{10},|E_m|<2-\kappa$, and $g_u(z)=f_{u}(z_{t-u})$. 
As in (\ref{eqn:gev}), we have
$$
\rd g_{u\wedge \tau}(z)=\e_{u}(z_{t-u})\rd({u\wedge \tau})-\frac{e^{-u/2}}{\sqrt{N}}\sqrt{\frac{2}{\beta}}\sum_{k=1}^N\frac{{\frak u}_k({u})}{(z_{t-u}-x_k(u))^2}\rd B_k({u\wedge \tau})
$$
where
$
\e_{u}(z)=(s_u(z)-m(z))\partial_z  f_u+\frac{1}{N}\left(\frac{2}{\beta}-1\right)(\partial_{zz} f_u).
$

The first error term can be bounded as in (\ref{eqn:estim1}) and (\ref{estim2}), with the  simplification that
now  $\kappa(z)\sim b(z)\sim 1$, so that the exact same calculation gives $\sup_{0\leq s\leq t}|\int_0^{s}\e_{u}(z_{t-u})\rd({u\wedge \tau})|\leq \frac{\varphi}{N\im z}$.

For the second error term, as in (\ref{eqn:error2}), we need to bound 
the quadratic variation.
This step is simpler than in the proof of Proposition \ref{prop:estimate}, because we now have some a priori bound on ${\frak u}_k(s)$ before time $\tau_1$. Moreover, as $z_0$ is close to the bulk, we do not need 
Lemma \ref{lem:A2} and directly obtain
$$
\int_0^{t}\frac{1}{N}\sum_k\frac{{\frak u}_k(u)^2}{|z_{t-u}-\gamma_k|^4}\rd (u\wedge \tau)\leq \frac{\varphi^{21}}{N^2\eta^2}.
$$
By the previous estimates and a union bound, for any $D>0$ there exists $N_0$ such that for $N>N_0$
$$
\mathbb{P}\left(
\sup_{0\leq \ell,m,p\leq N^{10}, |E_m|<2-\kappa,0\leq s\leq t_\ell}
|f_{s\wedge\tau}(z^{(m)}_{t_\ell-s\wedge\tau})-f_{0}(z^{(m)}_{t_\ell})|>\frac{\varphi^{25}}{N\eta_p}
\right)<N^{-D}.
$$
Together with Lemma \ref{rig} and (\ref{eqn:ub}), this implies (\ref{eqn:inter4}) and the result.
\end{proof}


\section{Relaxation from a maximum principle}

\subsection{Result.}\ 
The main result of this section is the following. Again,  remember that $\bla$ and $\bmu$ satisfy the same equation (\ref{eqn:dbm}), with respective initial conditions a generalized Wigner and GOE spectrum.
Remember the notation (\ref{eqn:zteq}); let $\gamma_k^t=(\gamma_k)_t$ (with the convention $\gamma^t=(\gamma+\ii0^+)_t$ when $z=\gamma\in\mathbb{R}$) and 
\begin{equation}\label{eqn:aver}
\bar {\frak u}_k(t)=\frac{1}{N\im m(\gamma_k^t)}\sum_{j=1}^N\im\left(\frac{1}{\gamma_j-\gamma_k^t}\right)(\la_j(0)-\mu_j(0)).
\end{equation}
The following theorem improves homogenization estimates which appeared first in \cite{BouErdYauYin2016}, both in terms of the scale and the probability bounds.

\begin{theorem} \label{thm:average}Consider the dynamics (\ref{eqn:dbm}) (or its Hermitian ensemble counterpart).
Let $\alpha,\varepsilon>0$ be fixed, arbitrarily small. For any  (large) $D>0$, there exist $C,N_0$ such that for any $N\geq N_0$, $\varphi^C/N<t<1$, and  $k\in\llbracket \alpha N,(1-\alpha)N\rrbracket$ we have
$$
\mathbb{P}\left(\ |(\la_{k}(t)-\mu_k(t)-\bar {\frak u}_k(t)|>\frac{N^{\varepsilon}}{N^2 t}\right)\leq N^{-D}.
$$
\end{theorem}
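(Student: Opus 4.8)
The plan is to run a maximum principle for the non-local parabolic flow (\ref{eqn:gene}) satisfied by $\frak u^{(\nu)}$, using the characteristic approximation $f_s(z)\approx f_0(z_s)$ of Proposition~\ref{prop:estimatebulk} to control the inhomogeneous part, and then to integrate in $\nu$ via $\lambda_k(t)-\mu_k(t)=-e^{-t/2}\int_0^1\frak u^{(\nu)}_k(t)\,\rd\nu$. Everything is carried out on the intersection of the rigidity event $\mathscr A$ (Lemma~\ref{rig}), the a priori event of Corollary~\ref{cor:v}--(\ref{eqn:ub}) (which for bulk indices gives $|\frak u^{(\nu)}_j(s)|\lesssim\varphi^{10}/N$ for all $s\in[0,1]$), and the event of Proposition~\ref{prop:estimatebulk}, each of probability at least $1-N^{-D}$; by a union bound over a polynomially fine grid in $(t,k)$, with increments of $f$ and of the eigenvalues controlled through (\ref{eqn:dbm}) and Weyl's inequality as in the proof of Proposition~\ref{prop:estimate}, it suffices to establish the bound for a single fixed pair $(t,k)$.

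\textbf{Maximum principle.} Fix a bulk index $k$, a time $t\in(\varphi^C/N,1)$, a window $J=\{j:|j-k|\le Nr\}$ with $r\ll t$, and suppose first that $s\mapsto\max_{j\in J}\frak u^{(\nu)}_j(s)$ is attained at an interior index $j=j(s)$ of $J$ on an interval $[u,t]$ with $t-u\sim t$. Since $c_{\ell k}\ge0$, equation (\ref{eqn:gene}) gives $\partial_s\frak u_j(s)\le\frac1N\sum_{\ell\neq j}(\frak u_\ell(s)-\frak u_j(s))/(x_\ell(s)-x_j(s))^2$; regularising at a scale $\eta$ and writing $z=x_j(s)+\ii\eta$ turns this into
\begin{equation*}
\partial_s\frak u_j(s)\le\frac1\eta\Big(\frac{e^{s/2}}{N}\im f_s(z)-\im s_s(z)\,\frak u_j(s)\Big)+\OO\Big(\frac{\varphi^C}{N^2\eta^2}\Big),
\end{equation*}
the error coming from the regularisation and the a priori bound. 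I would then use the local semicircle law $\im s_s(z)=\im m_{\rm sc}(z)+\oo(1)\sim1$, Proposition~\ref{prop:estimatebulk} to replace $\im f_s(z)$ by $\im f_0(z_s)$ with error $\varphi^{30}/(N\eta)$, and rigidity together with $r\ll t$, $s\sim t$ and the Lipschitz property of the characteristic map — noting that the flow inflates imaginary parts to order $s$, so $z_s$ is close to $\gamma_k^s$ uniformly in $j\in J$ — to obtain
\begin{equation*}
\partial_s\big(\frak u_j(s)-\frak r_k(s)\big)\le-\frac c\eta\big(\frak u_j(s)-\frak r_k(s)\big)+\OO\Big(\frac{\varphi^C}{N^2\eta^2}\Big),\qquad \frak r_k(s):=\frac{e^{s/2}\,\im f_0(\gamma_k^s)}{N\,\im m_{\rm sc}(\gamma_k)},
\end{equation*}
with $\frak r_k(s)$ independent of the maximising index (here $\frak r_k$ varies slowly in $s$, self-consistently, because $s\mapsto f_0(\gamma_k^s)$ approximately solves the advection equation that (\ref{eqn:gene}) discretises). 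Integrating this Gronwall inequality on $[u,t]$, bounding the starting value by the a priori estimate and optimising $\eta$ in the admissible range $\varphi^2/N\ll\eta\ll t/\log N$ (nonempty since $t>\varphi^C/N$ and $\varphi\gg\log N$), gives $\max_{j\in J}(\frak u^{(\nu)}_j(t)-\frak r_k(t))=\OO(\varphi^C/(N^2t))$; the symmetric argument for the minimum then yields $\frak u^{(\nu)}_k(t)=\frak r_k(t)+\OO(\varphi^C/(N^2t))$, and $\varphi^C\le N^{\varepsilon}$ for large $N$.

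\textbf{The main obstacle: localising the maximum.} The assumption that the relevant maximum of $\frak u^{(\nu)}$ lies in $J$ is false in general — $\frak u^{(\nu)}(0)$ is supported at all indices — and removing it is the crux of the proof, which I would handle as in the eigenvector moment flow analysis of \cite{BouYau2017}. On $\mathscr A$ the coefficients obey $c_{\ell j}(s)\lesssim N/(\ell-j)^2$, so the fundamental solution of (\ref{eqn:gene}) satisfies a finite-speed-of-propagation estimate: initial data supported at indices $m$ with $|m-k|>M$ contribute to $\frak u^{(\nu)}_j(s)$, $|j-k|\le M/2$, an amount $\ll N^{-D}$ once $M\ge N^{o(1)}Ns$. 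Cutting off $\frak u^{(\nu)}(0)$ outside such an $M$-window therefore perturbs $\frak u^{(\nu)}_j(s)$ near $k$ by far less than $N^{\varepsilon}/(N^2t)$, while the truncated flow does have its extrema confined to a controlled window, to which the previous step applies. Running this through a nested family of $O(\log N)$ space-time boxes — narrowing $M$ and the window while advancing time, and applying the maximum principle at each stage to a cut-off of $\frak u^{(\nu)}$ on the enlarged window — yields the estimate, the accumulated subpolynomial losses being absorbed into $\varphi^C$; this is also where the restriction $t>\varphi^C/N$ is needed, the scales $\eta$ and $Nr$ in the substitutions having to dominate the eigenvalue fluctuation scale.

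\textbf{Integration over $\nu$ and conclusion.} It remains to identify the reference with $\bar{\frak u}_k(t)$. Since $\frak u^{(\nu)}_j(0)=\mu_j(0)-\lambda_j(0)$ does not depend on $\nu$ and rigidity permits replacing the denominators $x^{(\nu)}_j(0)$ in $f^{(\nu)}_0$ by $\gamma_j$ — legitimate because $\im\gamma_k^s\gtrsim s$ lies well above the fluctuation scale, and the local semicircle law for the initial matrices supplies the $\ell^2$-type control on $f_0$ needed for this to hold to precision $N^{\varepsilon}/(N^2t)$ — the quantity $\frak r_k(s)$ is in fact $\nu$-free, and using $m_{\rm sc}(\gamma_k^s)=e^{-s/2}m_{\rm sc}(\gamma_k)$ one checks that it coincides, up to the exponential normalisation conventions built into (\ref{eqn:observable}) and (\ref{eqn:aver}) (which I do not keep track of here), with $\bar{\frak u}_k(s)$. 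Combining this with $\lambda_k(t)-\mu_k(t)=-e^{-t/2}\int_0^1\frak u^{(\nu)}_k(t)\,\rd\nu$ gives $\lambda_k(t)-\mu_k(t)=\bar{\frak u}_k(t)+\OO(N^{\varepsilon}/(N^2t))$ for the fixed pair $(t,k)$, and the grid union bound finishes the proof. The hard part is the finite-speed-of-propagation localisation of the maximum: it is precisely what lets an elementary maximum principle replace the H\"older-regularity arguments of \cite{ErdYau2015,LanSosYau2016} while attaining the quantitative, essentially optimal error.
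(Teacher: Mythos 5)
Your overall architecture — maximum principle for the flow (\ref{eqn:gene}), regularisation at scale $\eta$, substitution of $\im f_s$ by its value along the characteristics, localisation of the maximum, integration over $\nu$ and a grid union bound — is indeed the skeleton of the paper's argument, and your identification of the reference profile with $\bar{\frak u}_k(t)$ is essentially right. But there are two genuine gaps, one of which is fatal to the proof as written.

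First, the finite-speed-of-propagation statement you invoke is false for the full long-range generator $\mathscr{B}$: the coefficients $c_{\ell j}\sim N/(\ell-j)^2$ produce a kernel with only quadratic decay, so initial data at distance $M$ contributes $\sim Ns\,\|\frak u(0)\|_\infty/M=\OO(\varphi^C/N)$ to the window, not $N^{-D}$. Finite speed (Lemma \ref{lem:short}) holds only after cutting the dynamics to a short-range operator $\mathscr{S}$, and bounding $\rU_{\mathscr{B}}-\rU_{\mathscr{S}}$ (Lemma \ref{lem:shortRange}) already requires a good bound on the discrete derivative $\frak u_i-\frak u_j$ — i.e.\ precisely the estimate being proved. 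This circularity is the reason the paper does not run a single pass.

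Second, and more fundamentally, one application of the maximum principle cannot reach $N^{\e}/(N^2t)$, because the requirements on $\eta$ are incompatible. To beat the substitution error $\varphi^{30}/(N\eta)$ of Proposition \ref{prop:estimatebulk} (which enters the Gronwall bound as $\varphi^C/(N^2\eta)$) you need $\eta\gtrsim t$, hence a relaxation window of length $\sim t$. But over a time $\sim t$ the maximum cannot be confined to a window of $\varphi^C$ indices: information propagates over $\sim Nt$ indices, the flattening at the boundary injects an error of order the a priori bound $\varphi^C/N$, the reference $\frak r_k(s)$ itself drifts by $\OO(\varphi/N)$, and the spatial variation of $\bar{\frak u}$ over any window large enough to contain the propagation is $\gg 1/(N^2t)$. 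The paper resolves this tension by the bootstrap $({\rm P}_a)\Rightarrow({\rm P}_{3a/4})$ of Proposition \ref{prop:keyrec}: the dynamics is run only over $|u-t|=(Nt)^{a/4}/N\ll t$ with $\ell=(Nt)^{a/2}$, $Nr=(Nt)^{3a/4}$, and — crucially — Proposition \ref{prop:estimatebulk} is replaced by Lemma \ref{lem:improved}, whose error $\varphi^C\big((Nt)^a/(N^2t\eta)+1/(Nt)\big)$ itself improves with the induction hypothesis, which is what makes $\eta\ll t$ affordable. Each stage only improves the exponent from $a$ to $3a/4$; your ``nested boxes with accumulated subpolynomial losses'' would at best reproduce a De Giorgi--Nash--Moser-type oscillation decay with a non-explicit H\"older exponent, i.e.\ the error $N^{-1-\e_0}$ of \cite{ErdYau2015,LanSosYau2016} that the theorem is designed to surpass. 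The missing idea is the quantitative self-improving iteration together with the $a$-dependent refinement of the characteristic approximation.
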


\begin{remark}\label{rem:anyBeta2}
The same comment as Remark \ref{rem:anyBeta} holds: The above  theorem is true for any $\beta\geq 1$ in (\ref{eqn:beta}), provided $\bla$ and $\bmu$ satisfy optimal initial rigidity estimates.
\end{remark}

\begin{corollary}\label{cor:gaps}
Let $\alpha,\varepsilon>0$ be fixed, arbitrarily small. Then for any (large) $D>0$, there exist $C,N_0$ such that for any $N>N_0$, $\varphi^C/N<t<1$ and $k\in\llbracket \alpha N,(1-\alpha)N\rrbracket$ we have 
$$
\mathbb{P}\left(|(\lambda_{k+1}(t)-\lambda_k(t))-(\mu_{k+1}(t)-\mu_k(t))|>\frac{N^{\varepsilon}}{N^2 t}\right)\leq N^{-D}.
$$
\end{corollary}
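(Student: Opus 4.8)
The plan is to combine Theorem \ref{thm:average}, applied at the two indices $k$ and $k+1$, with a deterministic regularity estimate on the homogenized profile $\bar{\frak u}$. First I would write
$$
(\lambda_{k+1}(t)-\lambda_k(t))-(\mu_{k+1}(t)-\mu_k(t))=(\lambda_{k+1}(t)-\mu_{k+1}(t))-(\lambda_k(t)-\mu_k(t)),
$$
and invoke Theorem \ref{thm:average} with $\alpha$ replaced by $\alpha/2$ and $\varepsilon$ by $\varepsilon/2$ (so that both $k$ and $k+1$ are admissible for $N$ large): outside an event of probability $\le N^{-D}$ one gets $\lambda_j(t)-\mu_j(t)=\bar{\frak u}_j(t)+\OO(N^{\varepsilon/2}/(N^2t))$ for $j\in\{k,k+1\}$. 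Thus the corollary reduces to the bound $|\bar{\frak u}_{k+1}(t)-\bar{\frak u}_k(t)|\lesssim\varphi^{1/2}/(N^2t)$ on the rigidity event $\mathscr A$ of Lemma \ref{rig}; since $\varphi$ is subpolynomial this is $\le N^{\varepsilon}/(2N^2t)$ for $N$ large, and a union bound over $\mathscr A$ and the two instances of Theorem \ref{thm:average} finishes the argument.

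For the regularity of the profile I would write $\bar{\frak u}_k(t)=\im\Phi(\gamma_k^t)\big/\bigl(N\,\im m(\gamma_k^t)\bigr)$, where $\Phi(w)=\sum_{j=1}^N(\lambda_j(0)-\mu_j(0))/(\gamma_j-w)$ is holomorphic off $[-2,2]$, and estimate the numerator and denominator separately. For the denominator, the elementary identity $m(z_t)=e^{-t/2}m(z)$ (a one-line check from (\ref{eqn:st}) and (\ref{eqn:zteq})) gives $\im m(\gamma_k^t)=e^{-t/2}\pi\rho(\gamma_k)\sim 1$ for $k$ in the bulk, and $|\im m(\gamma_{k+1}^t)-\im m(\gamma_k^t)|\lesssim|\rho(\gamma_{k+1})-\rho(\gamma_k)|\lesssim|\gamma_{k+1}-\gamma_k|\lesssim1/N$. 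For the numerator I would integrate along the image of $[\gamma_k,\gamma_{k+1}]$ under $z\mapsto z_t$: the explicit formula gives $|\partial_z z_t|=\OO(1)$ uniformly for $z$ in the bulk and $0<t<1$, so this curve has length $\OO(1/N)$ (in particular $|\gamma_{k+1}^t-\gamma_k^t|\lesssim1/N$), while $\im z_t\sim t$ on it by Lemma \ref{lem:zt}. On $\mathscr A$ one has $|\lambda_j(0)-\mu_j(0)|\lesssim\varphi^{1/2}N^{-2/3}\hat j^{-1/3}$, and a Riemann-sum comparison as in the proof of Lemma \ref{f0} gives $\sum_j\hat j^{-1/3}|\gamma_j-\zeta|^{-2}\lesssim N^{2/3}/\im\zeta$ for any $\zeta$ with $\re\zeta$ in a fixed compact set; hence $|\Phi'(\zeta)|\lesssim\varphi^{1/2}/\im\zeta\lesssim\varphi^{1/2}/t$ along the curve, and similarly $|\im\Phi(\gamma_k^t)|\lesssim\varphi^{1/2}$. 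By holomorphy, $|\im\Phi(\gamma_{k+1}^t)-\im\Phi(\gamma_k^t)|\le\bigl|\int\Phi'(\zeta)\,\rd\zeta\bigr|\lesssim(\varphi^{1/2}/t)(1/N)$, the integral being along that curve. Writing $\tfrac{A_1}{B_1}-\tfrac{A_0}{B_0}=\tfrac{A_1-A_0}{B_1}+A_0\bigl(\tfrac1{B_1}-\tfrac1{B_0}\bigr)$ with $A_j=\im\Phi(\gamma_j^t)$, $B_j=\im m(\gamma_j^t)$, we conclude
$$
|\bar{\frak u}_{k+1}(t)-\bar{\frak u}_k(t)|\le\frac{|\im\Phi(\gamma_{k+1}^t)-\im\Phi(\gamma_k^t)|}{N\,\im m(\gamma_{k+1}^t)}+\frac{|\im\Phi(\gamma_k^t)|}{N}\Bigl|\frac1{\im m(\gamma_{k+1}^t)}-\frac1{\im m(\gamma_k^t)}\Bigr|\lesssim\frac{\varphi^{1/2}}{N^2t},
$$
using $t<1$ in the second term, which is the bound required above.

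The only point needing a little care is that $\re\gamma_k^t=\gamma_k\cosh(t/2)$ need not lie in $[-2,2]$, so one should avoid naive bulk bounds on $\im m$ or $m'$ near $\gamma_k^t$; the identity $m(z_t)=e^{-t/2}m(z)$ sidesteps this, and is really the only non-bookkeeping ingredient, the rest reducing to Lemma \ref{rig} and the summation estimates already used for Lemma \ref{f0}. (Alternatively one may avoid $\bar{\frak u}$ altogether and integrate the pointwise relaxation $\frak u_{k+1}^{(\nu)}(t)=\frak u_k^{(\nu)}(t)+\OO(\varphi^{C}/(N^2t))$ underlying Theorem \ref{thm:average} over $\nu\in(0,1)$, using $\int_0^1\frak u_j^{(\nu)}(t)\,\rd\nu=e^{t/2}(\mu_j(t)-\lambda_j(t))$, which gives the conclusion directly with $\varphi^{C}$ in place of $N^{\varepsilon}$.)
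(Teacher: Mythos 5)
Your proof is correct and follows the paper's argument: the same triangle-inequality decomposition through $\bar{\frak u}_k(t)$ and $\bar{\frak u}_{k+1}(t)$, two applications of Theorem \ref{thm:average}, and the deterministic bound $|\bar{\frak u}_{k+1}(t)-\bar{\frak u}_k(t)|\lesssim \varphi/(N^2t)$ on the rigidity event, which in the paper is exactly the $\ell=k+1$, $s=t$ case of Lemma \ref{lem:init}. Your self-contained derivation of that last bound (holomorphy of $\Phi$, the identity $m(z_t)=e^{-t/2}m(z)$, and integration of $\Phi'$ along the characteristic image of $[\gamma_k,\gamma_{k+1}]$) is a valid, mildly different route to the same estimate, which the paper instead obtains by a direct term-by-term comparison in the proof of Lemma \ref{lem:init}.
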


\begin{proof}
Note that
$$
|(\lambda_{k+1}(t)-\lambda_k(t))-(\mu_{k+1}(t)-\mu_k(t))|
\leq
|(\la_{k}(t)-\mu_{k}(t)-\bar {\frak u}_k(t)|
+
|(\la_{k+1}(t)-\mu_{k+1}(t)-\bar {\frak u}_{k+1}(t)|
+
|\bar {\frak u}_{k+1}(t)-\bar {\frak u}_{k}(t)|.
$$
From Theorem \ref{thm:average}, the first two terms do not exceed $\varphi^C/(N^2 t)$ with  probability $1-N^{-D}$.
The third term is bounded by the following Lemma \ref{lem:init}, an elementary consequence of rigidity and dynamics of the $\gamma$'s. This concludes the proof.
\end{proof}

\begin{lemma}\label{lem:init}
For any  $\alpha>0$, there exists a constant $C>0$ such that for any $(k,\ell)\in\llbracket\alpha N,(1-\alpha)N\rrbracket^2$, $|E|<2-\alpha$ and $s,t,\eta\in[\varphi^2/N,1]$,
in the set $\mathscr{A}$ from (\ref{eqn:A}) we have (here $z=E+\ii\eta$)
\begin{align}
&|\bar{\frak u}_k(t)-\bar{\frak u}_\ell(s)|\leq C\varphi \left(\frac{|k-\ell|}{N^2\min(s,t)}+\frac{|t-s|}{N\min(s, t)}\right)\label{elem1},\\
&\left|\frac{\im f_0(z_t)}{N\im s_0(z_t)}-\bar{\frak u}_\ell(s)\right|\leq C\varphi \left(\frac{|E-\gamma_\ell|}{N\min(s,\eta+t)}+\frac{|\eta+t-s|}{N\min(s,\eta+t)}\right)\label{elem2}.
\end{align}
\end{lemma}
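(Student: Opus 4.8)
The plan is to reduce both inequalities to Lipschitz estimates, carried out along the explicit characteristics, for a single Stieltjes‑type transform. Work throughout on the set $\mathscr{A}$ of $(\ref{eqn:A})$, and write $d_j=\lambda_j(0)-\mu_j(0)$, so that $|d_j|\le 2\varphi^{1/2}N^{-2/3}(\hat j)^{-1/3}$. Set $\Phi(w)=\frac1N\sum_{j=1}^N\frac{d_j}{\gamma_j-w}$, holomorphic off $\mathbb{R}$. Since the $\gamma_j$ are real, $\bar{\frak u}_k(t)=\im\Phi(\gamma_k^t)/\im m(\gamma_k^t)$ \emph{exactly}; and, replacing $x_j(0)$ by $\gamma_j$ (rigidity on $\mathscr{A}$, costing $\ll$ the right sides below since $|x_j(0)-\gamma_j|\ll\im z_t$ for bulk $j$ while $|\gamma_j-z_t|\asymp1$ for near‑edge $j$) and $s_0$ by $m$ (the strong local law of \cite{ErdYauYin2012}, a lower‑order correction), $\im f_0(z_t)/(N\im s_0(z_t))$ equals $\pm\im\Phi(z_t)/\im m(z_t)$ up to such errors. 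So both $(\ref{elem1})$ and $(\ref{elem2})$ come down to bounding $\big|\im\Phi(w_1)/\im m(w_1)-\im\Phi(w_2)/\im m(w_2)\big|$ for the relevant points $w_1,w_2$.

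The first ingredient is size/derivative control of $\Phi$: for $\re w$ bounded and $0<\im w<2$,
$$|\Phi(w)|\le \frac{\varphi}{N},\qquad |\Phi'(w)|\le \frac{\varphi}{N\,\im w}.$$
These follow by splitting the defining sum into a bulk block $\{j:|\gamma_j-\re w|\le 1/2\}$, where $|d_j|\lesssim\varphi^{1/2}/N$ and $\frac1N\sum_j(|\gamma_j-\re w|+\im w)^{-p}$ is a Riemann sum, and a near‑edge block, where $|\gamma_j-w|\gtrsim1$ and $\frac1N\sum_j|d_j|\lesssim \varphi^{1/2}N^{-2/3}\cdot N^{-1}\sum_j(\hat j)^{-1/3}\lesssim\varphi^{1/2}/N$; the sub‑polynomial $\varphi$ absorbs the resulting logarithms (recall $\log N\ll\varphi^{1/2}$). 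Together with the standard facts $\im m(w)\gtrsim_\alpha 1$ and $|m'(w)|\lesssim_\alpha 1$ on all curves used below (valid because whenever the real part of a characteristic point approaches $\pm2$ its imaginary part is already of order one), this gives, for any two such points joined by a rectifiable path,
$$\left|\frac{\im\Phi(w_1)}{\im m(w_1)}-\frac{\im\Phi(w_2)}{\im m(w_2)}\right|\lesssim_\alpha \frac{\varphi}{N}\,|w_1-w_2|+\frac{\varphi}{N}\int\frac{|\dd w|}{\im w},$$
the first term handling the variation of $1/\im m$ (Lipschitz, by the above), the second the variation of $\Phi$ via $\Phi'$.

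The second ingredient is the geometry of the characteristics. From $z_u=z\cosh(u/2)+\sqrt{z^2-4}\,\sinh(u/2)$ and $\frac{\dd}{\dd u}z_u=\tfrac12\sqrt{z_u^2-4}$ (Lemma \ref{eqn:char}), together with Lemma \ref{lem:zt}, for $k,\ell$ in the bulk one has $\gamma_k^u=\gamma_k\cosh(u/2)+\ii\sqrt{4-\gamma_k^2}\,\sinh(u/2)$ with bounded real part, $\im\gamma_k^u\asymp_\alpha u$, and speed $|\tfrac{\dd}{\dd u}\gamma_k^u|\le C$; hence $|\gamma_k^t-\gamma_\ell^t|\lesssim|k-\ell|/N$, $|\gamma_\ell^t-\gamma_\ell^s|\lesssim|t-s|$, and $|z_t-\gamma_\ell^s|\lesssim|E-\gamma_\ell|+|\eta+t-s|$. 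For $(\ref{elem1})$ I split $\bar{\frak u}_k(t)-\bar{\frak u}_\ell(s)=[\bar{\frak u}_k(t)-\bar{\frak u}_\ell(t)]+[\bar{\frak u}_\ell(t)-\bar{\frak u}_\ell(s)]$: in the first bracket I join $\gamma_\ell^t$ to $\gamma_k^t$ by a segment on which $\im\asymp t\ge\min(s,t)$, obtaining $\lesssim\varphi|k-\ell|/(N^2t)$; in the second (say $s\le t$) I integrate $\Phi'$ along the characteristic arc $u\mapsto\gamma_\ell^u$, $u\in[s,t]$, where $\im\asymp u$, obtaining $\lesssim\frac{\varphi}{N}\log(t/s)\le\frac{\varphi}{N}\cdot\frac{|t-s|}{\min(s,t)}$. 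For $(\ref{elem2})$ I likewise join $z_t$ to $\gamma_\ell^s$ by a segment, whose imaginary part stays $\gtrsim\min(\im z_t,\im\gamma_\ell^s)\asymp_\alpha\min(s,\eta+t)$ and whose real part is bounded, and apply the same inequality with $|w_1-w_2|\lesssim|E-\gamma_\ell|+|\eta+t-s|$; adding the rigidity/local‑law corrections from the first paragraph finishes.

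The step I expect to be the real work is making the bounds on $\Phi$ and $\Phi'$ genuinely uniform up to the spectral edge: the indices with $\hat j$ small carry the largest displacements $|d_j|\sim\varphi^{1/2}N^{-2/3}$ but sit at distance $\asymp1$ from the bulk base points, while the bulk indices carry tiny $|d_j|$ yet can lie within $\im w$ of $w$ — balancing the two blocks with exactly the right exponents — and, in tandem, verifying that the real parts of $z_t,\gamma_k^t$, which drift toward (and may cross) $\pm2$ once $t$ is of order one, never break $\im m\gtrsim_\alpha1$, $|m'|\lesssim_\alpha1$, or the validity of the local‑law error bound along the connecting paths. Everything else is the bookkeeping with rigidity and the explicit $\gamma$‑dynamics that the paper advertises as elementary.
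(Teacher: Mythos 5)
Your proof is correct and follows essentially the same route as the paper's: the same preliminaries (compactness of the characteristics giving $\im m(\gamma_k^t)\gtrsim_\alpha 1$ and a Lipschitz bound on $m$, rigidity giving $|\im\Phi|\lesssim\varphi^{1/2}/N$, and the key sum bound $\frac{1}{N}\sum_j N^{-2/3}(\hat j)^{-1/3}|\gamma_j-w|^{-2}\lesssim (N\im w)^{-1}$, which is exactly your $|\Phi'(w)|\lesssim\varphi/(N\im w)$), combined with the same product-rule split into the variation of $1/\im m$ and the variation of $\Phi$. The only cosmetic difference is that you integrate $\Phi'$ along segments and characteristic arcs (getting $\log(t/s)\le |t-s|/\min(s,t)$), where the paper uses the partial-fraction identity $\frac{1}{\gamma_j-w_1}-\frac{1}{\gamma_j-w_2}=\frac{w_1-w_2}{(\gamma_j-w_1)(\gamma_j-w_2)}$ and bounds $\sum_j|d_j|\,|\gamma_j-w_i|^{-2}$ at the two endpoints separately; these are equivalent and yield identical error terms.
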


\begin{proof}
As preliminary elementary estimates, there exists a constant $C>0$ such that in the required range of $k,\ell,s,t$ we have
\begin{equation}\label{ele1}
\im m(\gamma_k^t),\im m(\gamma_\ell^s)>C^{-1},\ |\im m(\gamma_k^t)-\im m(\gamma_\ell^s)|+|\gamma_k^t-\gamma_\ell^s|<C\left(
\frac{|k-\ell|}{N}+|s-t|
\right).
\end{equation}
We detail the proof of the first inequality above. From (\ref{eqn:zteq}) there exists  a compact set $\mathscr{C}=\mathscr{C}(\alpha)$ which does not depend on $N$ and does not intersect $(-\infty,2]\cup[2,\infty)$ such that for any 
$k\in\llbracket\alpha N,(1-\alpha)N\rrbracket$ and $0<t<1$, $\gamma_k^t\in \mathscr{C}$. The required inequality then follows from $\inf_{z\in\mathscr{C}}\im m(z)>0$. The second inequality of (\ref{ele1}) follows from the same argument together with the observation that $m$ is Lipschitz on $\mathscr{C}$.

 Moreover, in $\mathscr{A}$ the rigidity estimates gives 
 $|\la_j(0)-\mu_j(0)|\leq C\varphi^{1/2}N^{-\frac{2}{3}}(\hat j)^{-\frac{1}{3}}$, so that the same proof as Lemma \ref{f0} gives
\begin{equation}\label{ele2}
\left|\frac{1}{N}\im\sum_j\frac{\la_j(0)-\mu_j(0)}{\gamma_j-\gamma_k^t}\right|
\leq C\frac{\varphi^{1/2}}{N}.
\end{equation}
We decompose
\begin{multline*}
|\bar{\frak u}_k(t)-\bar{\frak u}_\ell(s)|\leq \frac{1}{N}
\left|\left(\frac{1}{\im m(\gamma_k^t)}-\frac{1}{\im m(\gamma_\ell^s)}\right)\sum_{j=1}^N\im\left(\frac{\la_j(0)-\mu_j(0)}{\gamma_j-\gamma_k^t}\right)\right|\\
+
\frac{1}{N\im m(\gamma_\ell^s)}\sum_{j=1}^N\left|\im\left(\frac{1}{\gamma_j-\gamma_k^t}\right)-
\im\left(\frac{1}{\gamma_j-\gamma_\ell^s}\right)
\right||\la_j(0)-\mu_j(0))|.
\end{multline*}
From (\ref{ele1}) and (\ref{ele2}), the first line is at most $\varphi^{1/2}\left(
\frac{|k-\ell|}{N^2}+\frac{|s-t|}{N}
\right)$, while the second is bounded in $\mathscr{A}$ by
$$
\frac{C\varphi^{1/2}}{N}\sum_j N^{-\frac{2}{3}}(\hat j)^{-\frac{1}{3}} \left|\frac{\gamma_k^t-\gamma_\ell^s}{(\gamma_j-\gamma_k^t)(\gamma_j-\gamma_\ell^s)}\right|
\leq
C\varphi^{1/2}\left(\frac{|k-\ell|}{N^2}+\frac{|s-t|}{N}\right)\left(
\sum_{j}\frac{N^{-\frac{2}{3}}(\hat j)^{-\frac{1}{3}}}{|\gamma_j-\gamma_k^t|^2}
+
\sum_{j}\frac{N^{-\frac{2}{3}}(\hat j)^{-\frac{1}{3}}}{|\gamma_j-\gamma_\ell^s|^2}
\right).
$$
As $\im \gamma_k^t\sim t, \im \gamma_\ell^s\sim s$, each sum above is $\OO(\min(s,t)^{-1})$. This concludes the proof of (\ref{elem1}). The proof of (\ref{elem2}) is the same.
\end{proof}

\subsection{Proof of Theorem \ref{thm:average} by induction.}\ 
We implement an iterative scheme to reach the optimal error term. Some inspiration from this scheme comes from \cite[Section 3]{PartI}, although the induction there quantifies eigenvectors
delocalization instead of eigenvalues, and many aspects of the proof are different.
Consider the
following property, for a parameter $0<a\leq 1$.\\

\noindent {\bf Property {\boldmath (${\rm P}_a$)}}.
For any fixed (small) $\alpha>0$ and (large) $D>0$, 
 there exist $C$ and $N_0$ such that for any $\nu\in[0,1]$, the following holds with probability at least $1-N^{-D}$. For any $\varphi^C/N<t<1$, $k\in\llbracket \alpha N,(1-\alpha)N\rrbracket$ and $N\geq N_0$,
\begin{equation}\label{Pa}
|{\frak u}^{(\nu)}_k(t)-\bar {\frak u}_k(t)|<\varphi^C\frac{(Nt)^a}{N^2t}.
\end{equation}
Theorem \ref{thm:average} is a consequence of the following two propositions. 

\begin{proposition}\label{prop:init}
$({\rm P}_1)$ holds.
\end{proposition}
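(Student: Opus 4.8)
The goal is to prove Property $({\rm P}_1)$, i.e. the bound $|{\frak u}^{(\nu)}_k(t)-\bar{\frak u}_k(t)|<\varphi^C(Nt)/(N^2t)=\varphi^C/N$ for all bulk indices $k$ and all $\varphi^C/N<t<1$. This is the crude starting estimate of the induction, so the main point is to get \emph{something} of the right shape with a possibly large power of $\varphi$; the subsequent propositions will bootstrap the exponent $a$ down from $1$.

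\textbf{Plan.}\ First, I would recall that by Corollary \ref{cor:v} (or directly the a priori bound (\ref{eqn:ub})) we already have $|{\frak u}^{(\nu)}_k(t)|<\varphi^{10}/(N\max((\hat k/N)^{1/3},t))$ with overwhelming probability, and for $k$ in the bulk $\max((\hat k/N)^{1/3},t)\geq (\alpha)^{1/3}$ is bounded below, so $|{\frak u}^{(\nu)}_k(t)|\leq C\varphi^{10}/N$ uniformly on $\llbracket\alpha N,(1-\alpha)N\rrbracket$. Second, I would bound $\bar{\frak u}_k(t)$ itself: from its definition (\ref{eqn:aver}) and the estimate (\ref{ele2}) inside the proof of Lemma \ref{lem:init} (which is just rigidity plus the computation of Lemma \ref{f0}), together with $\im m(\gamma_k^t)>C^{-1}$ for bulk $k$ and $0<t<1$ (estimate (\ref{ele1})), we get $|\bar{\frak u}_k(t)|\leq C\varphi^{1/2}/N$. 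Combining the two bounds by the triangle inequality yields $|{\frak u}^{(\nu)}_k(t)-\bar{\frak u}_k(t)|\leq C\varphi^{10}/N=\varphi^C (Nt)/(N^2t)$ after absorbing $C$ into $\varphi^C$ (legitimate since $t<1$ forces $Nt<N$, but here the bound $\varphi^C/N$ is exactly $\varphi^C(Nt)^1/(N^2t)$, so no loss is needed). This establishes (${\rm P}_1$) with, say, $C=11$.

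\textbf{Where the real work is, and where it isn't.}\ The content is essentially already packaged in the earlier lemmas: Corollary \ref{cor:v} supplies the size of ${\frak u}_k$, and the proof of Lemma \ref{lem:init} supplies the size of $\bar{\frak u}_k$ via the same mechanism used in Lemma \ref{f0} (Poisson-kernel integration against the rigidity-controlled displacements $|\la_j(0)-\mu_j(0)|\leq C\varphi^{1/2}N^{-2/3}(\hat j)^{-1/3}$, noting $\im\gamma_k^t\sim t$ keeps the denominators away from zero). So Proposition \ref{prop:init} is not where the difficulty lies — it is a soft consequence of the a priori control already in hand, and the only mild point to be careful about is that the high-probability event on which (\ref{eqn:ub}) holds is uniform in $\nu\in[0,1]$ (stated in Theorem \ref{thm:edgerelaxation}) and that $\mathscr{A}$ has probability $\geq 1-N^{-D}$ by Lemma \ref{rig}. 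Intersecting these two events and applying the triangle bound pointwise gives the claim for fixed $k,t$, and the usual union bound over a polynomial grid in $k,t$ plus a Weyl-inequality continuity argument upgrades it to the simultaneous statement. The genuinely hard analysis — running the maximum principle along the characteristics of the advection equation (\ref{eqn:adv}) to improve $a$ — is deferred to the inductive step (the second proposition referenced), not needed here.
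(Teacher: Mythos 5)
Your proof is correct and follows essentially the same route as the paper: the a priori bound (\ref{eqn:ub}) gives $|{\frak u}^{(\nu)}_k(t)|=\OO(\varphi^C/N)$ in the bulk, rigidity together with the Lemma \ref{f0}-type computation gives $|\bar{\frak u}_k(t)|=\OO(\varphi^{C}/N)$, and the triangle inequality yields the required $\varphi^C(Nt)/(N^2t)=\varphi^C/N$. The paper's own proof is exactly this two-line argument, so nothing further is needed.
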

\begin{proof}
From (\ref{eqn:ub}), we know that ${\frak u}_k(t)=\OO(\varphi^C N^{-1})$ with overwhelming probability, uniformly in the required range of parameters. We also have $\bar {\frak u}_k(t)=\OO(\varphi^CN^{-1})$ thanks to 
the definition (\ref{eqn:aver}) and the rigidity estimate
$
|{\frak u}_j(0)|<\varphi^{1/2}N^{-2/3}\hat{j}^{-1/3}
$
(see Lemma \ref{rig}). This concludes the proof.
\end{proof}

\begin{proposition}\label{prop:keyrec}
If $({\rm P}_a)$ holds, so does $({\rm P}_{3a/4})$.
\end{proposition}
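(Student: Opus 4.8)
The plan is to refine the one--step maximum principle sketched in Subsection \ref{sub:sketch} into an induction over a nested family of space--time windows, using $({\rm P}_a)$ simultaneously as the a priori bound on the parabolic boundary of each window and to control the near--diagonal terms. First I would reduce, by a union bound over a polynomially fine grid in $(k,t)$ together with the continuity of $s\mapsto{\frak u}^{(\nu)}_k(s)$ (exactly as between (\ref{eqn:inter1}) and (\ref{eqn:inter3})), to proving (\ref{Pa}) with exponent $3a/4$ for a single bulk index $k$ and a single time $t\in(\varphi^C/N,1)$, on the event $\mathscr A$ intersected with the high--probability event on which $({\rm P}_a)$ holds. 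By Lemma \ref{lem:init} (estimate (\ref{elem2})) and the local law of \cite{ErdYauYin2012} it is then equivalent to control ${\frak u}^{(\nu)}_k(t)-m_t$, where $m_t=\im f_0\big((\gamma_k+\ii\eta)_t\big)/\big(N\im m(\gamma_k+\ii 0^+)\big)$ and $\eta$ is the regularization scale fixed below; the deterministic difference $\bar{\frak u}_k(t)-m_t$ is absorbed by (\ref{elem2}).

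The heart of the argument is a maximum principle on a parabolic box $Q=\{(i,s):|i-k|\le Nr,\ u\le s\le t\}$, with scales $\eta$ (regularization), $r$ (spatial half--width) and $t-u$ (time depth) all lying between $\varphi^2/N$ and $t$ and chosen as suitable powers of $(Nt)^{-a}$ times powers of $\varphi$, together with a short--range/long--range splitting of $\mathscr B$ as in \cite{BouYau2017}. At an interior maximum of $(i,s)\mapsto{\frak u}_i(s)-m_t$ at $(j,s)$, writing $z=x_j(s)+\ii\eta$, the sign ${\frak u}_\ell(s)-{\frak u}_j(s)\le 0$ and the inequality $(x_\ell-x_j)^{-2}\ge((x_\ell-x_j)^2+\eta^2)^{-1}$ turn (\ref{eqn:gene}) into
$$
\partial_s{\frak u}_j(s)\le\frac1N\sum_{\ell}\frac{{\frak u}_\ell(s)-{\frak u}_j(s)}{(x_\ell-x_j)^2+\eta^2}=\frac{1}{N\eta}\Big(\im\big(e^{s/2}f_s(z)\big)-\im s_s(z)\,{\frak u}_j(s)\Big).
$$
On $\mathscr A$ one replaces $\im s_s(z)$ by $\im m(z)$ via the local law, $\im(e^{s/2}f_s(z))$ by $\im f_0(z_s)$ via Proposition \ref{prop:estimatebulk}, and then $z_s$ by $(\gamma_k+\ii\eta)_t$ and $\im m(z)$ by $\im m(\gamma_k+\ii 0^+)$ via rigidity, Lemma \ref{lem:zt} and the Lipschitz bounds of Lemma \ref{lem:init} along the characteristics, obtaining
$$
\partial_s\big({\frak u}_j(s)-m_t\big)\le-\frac{c}{\eta}\big({\frak u}_j(s)-m_t\big)+\OO\left(\frac{\varphi^{30}}{N^2\eta^2}\right)+\text{(freezing errors)},
$$
and symmetrically at a minimum. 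A maximum attained on the spatial boundary $|i-k|=Nr$ is controlled, up to an error $\OO(\varphi^{-D}\varepsilon_a)$ with $\varepsilon_a:=\varphi^C(Nt)^a/(N^2t)$, by the finite speed of propagation estimate of \cite{BouYau2017}; a maximum on the bottom $s=u$ is controlled directly by $({\rm P}_a)$; and the terms with $|x_\ell-x_j|\lesssim\eta$ are controlled by combining $({\rm P}_a)$ with (\ref{elem1}). Integrating the differential inequality over $[u,t]$, the data at time $u$ is damped by $e^{-c(t-u)/\eta}$ (hence negligible), the forcing integrates to $\OO(\varphi^{30}/(N^2\eta))$, and the choice of scales makes the total bound at most $\varphi^C(Nt)^{3a/4}/(N^2t)$, which together with the reduction step is $({\rm P}_{3a/4})$.

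The main obstacle is the simultaneous choice of the scales $\eta,r,t-u$ and of the short--range cutoff: the forcing $\varphi^{30}/(N^2\eta)$ pushes $\eta$ up, while the coefficient--freezing errors, the slow variation (\ref{elem1}) of $\bar{\frak u}$, and the finite speed of propagation estimate all push $r$ and $t-u$ down, and $({\rm P}_a)$ only constrains the parabolic boundary of the box. It is this tension --- together with the need to run the box estimate inductively over a nested hierarchy of space--time scales rather than in a single stroke --- that simultaneously limits the per--step gain to the factor $3/4$ and forms the technical core of the argument; no individual estimate above is difficult, but keeping every error term below the target $\varepsilon_{3a/4}$ is what requires real care.
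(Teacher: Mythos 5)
Your outline reproduces the architecture of the paper's argument --- short-range/long-range splitting of $\mathscr{B}$, finite speed of propagation, flattening of the data outside a window of size $Nr$, a maximum principle at regularization scale $\eta$, and the optimization over $r,\ell,|t-u|,\eta$ --- but there is one genuine gap, and it is exactly where the factor $3/4$ has to come from. You propose to control the forcing term by replacing $\im\big(e^{s/2}f_s(z)\big)$ with $\im f_0(z_s)$ via Proposition \ref{prop:estimatebulk}, whose error $\varphi^C/(N\eta)$ does \emph{not} improve with $a$. After dividing by $N\eta$ in the maximum principle and integrating the differential inequality over the damping time $\sim\eta$, this contributes $\varphi^C/(N^2\eta)$ to $M(t)$. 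On the other hand the freezing errors you list (the $\eta$-dependence in (\ref{elem2}) and Lemma \ref{lem:init}, and the term $\eta/(Nt)$ in any version of the averaging estimate) force $\eta\lesssim (Nt)^{3a/4}/N$, whence $1/(N^2\eta)\gtrsim (Nt)^{1-3a/4}/(N^2t)$. This is below the target $(Nt)^{3a/4}/(N^2t)$ only when $a\geq 2/3$: your iteration closes for the first steps but stalls at $a=2/3$ and never reaches $N^{\e}/(N^2t)$. The paper points this out explicitly after Lemma \ref{lem:improved}: Proposition \ref{prop:estimatebulk} suffices to get $({\rm P}_{3/4})$ from $({\rm P}_1)$, but not for the subsequent steps. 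The missing ingredient is a \emph{self-improving} version of the advection approximation: under $({\rm P}_a)$ one shows
$$
\Big|\im f_t(z)-e^{-t/2}\frac{\im s_t(z)}{\im s_0(z_t)}\im f_0(z_t)\Big|\leq \varphi^C\Big(\frac{(Nt)^a}{N^2t\eta}+\frac{1}{Nt}\Big),
$$
by running the stochastic advection argument for the difference $h_u=f_u(z_{t-u})-\frac{\im f_0(z_t)}{\im s_0(z_t)}e^{-u/2}s_u(z_{t-u})$, whose drift and martingale terms involve ${\frak r}_q={\frak u}_q-\frac{\im f_0(z_t)}{N\im s_0(z_t)}$ and are therefore small \emph{by the induction hypothesis}. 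Fed into the averaging estimate (Lemma \ref{lem:averageMod}), the forcing becomes $\frac{(Nt)^a}{N^2t}\cdot\frac{1}{N\eta}$, and the choice $\eta\sim|t-u|=(Nt)^{a/4}/N$, $r=(Nt)^{3a/4}/N$, $\ell=(Nt)^{a/2}$ balances every term at $(Nt)^{3a/4}/(N^2t)$ for \emph{all} $a\in(0,1]$.

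A secondary point: you apply the maximum principle and the finite speed of propagation to ${\frak u}$ itself under the full dynamics $\mathscr{B}$, which has no finite speed of propagation; the maximum of ${\frak u}_i(s)$ over $|i-k|\leq Nr$ is not preserved by the generator, which couples to all indices. The paper instead runs the maximum principle for the auxiliary solution ${\frak w}$ of the \emph{short-range} dynamics started from the flattened data ${\rm Av}\,{\frak u}(u)$, so that ${\frak w}(\cdot)-\bar{\frak u}_k(t)$ is genuinely compactly supported and a far-away maximum is $\OO(N^{-100})$; the price $|{\frak u}_k(t)-{\frak w}_k(t)|$ is then paid via Duhamel and the long-range bound of Lemma \ref{lem:shortRange} --- which itself uses $({\rm P}_a)$ to get the improved factor $\frac{N}{\ell}\frac{(Nt)^a}{N^2t}$. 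Your sketch gestures at this splitting but does not carry it out, and without it the "boundary case" of your case analysis is not actually controlled.
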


\begin{proof}[Proof of Theorem \ref{thm:average}] Let $\e>0$. By initialization with Proposition \ref{prop:init} and a finite number of iterations of Proposition \ref{prop:keyrec}, 
for any fixed (small) $\alpha>0$ and (large) $D>0$, 
there exist $C$ and $N_0$ such that for any $\nu\in[0,1]$, $N\geq N_0$,
$$
\mathbb{P}
\left(
|{\frak u}^{(\nu)}_k(t)-\bar {\frak u}_k(t)|<\frac{N^\e}{N^2 t}
\ {\rm for\ all}\ k\in\llbracket \alpha N,(1-\alpha)N\rrbracket \ {\rm and}\  t\in[\varphi^C/N,1]
\right)
>
1-N^{-D}.
$$
The same estimate holds after integration over $\nu\in[0,1]$, with rigorous justification given by large moments and Markov's inequality, similarly to the argument  after (\ref{eqn:ub}).
\end{proof}

\noindent The remaining part of this section proves Proposition \ref{prop:keyrec}. It relies on the following three lemmas.

The first lemma is an approximation of our dynamics (\ref{eqn:gene}) with short range dynamics. Such approximations 
for the analysis of the Dyson Brownian motion appeared first in \cite{ErdYau2015}. Our version assumes property $({\rm P}_a)$ and gives a better bound.
Remember we defined $c_{jk}=c_{jk}(s)=1/(N(x_j(s)-x_k(s))^2)$ and  write $\mathscr{B}=\mathscr{S} + \mathscr{L}$,
\begin{align}
&(\mathscr{S} f)(k)  =   \sum_{|j-k| \le \ell }   c_{jk}(s) \left(f(j) - f(k)\right),\label{eqn:shortcut}\\
&(\mathscr{L}f) (k) =    \sum_{|j-k| >  \ell }   c_{jk}(s) \left(f(j) - f(k)\right),\notag
\end{align}
for some parameter $\ell=\ell(N,a)$  chosen later. 
Denote by $ \rU_{\mathscr{S}} ( s,   t)$  
 the  semigroup associated with $\mathscr{S}$ from time $s$ to time $t$,  i.e.
$
\partial_{t} \rU_{\mathscr{S}} (s,t) = \mathscr{S}(t)  \rU_{\mathscr{S}} (s,t)
$
 and $\rU_{\mathscr{S}} (s,s)=\Id$.
The notation $\rU_{\mathscr{B}}(s,t)$ is analogous.

\begin{lemma}[Short range approximation]\label{lem:shortRange}
Assume $({\rm P}_a)$.
For any fixed (small) $\alpha>0$ and (large) $D>0$, 
 there exist $C,N_0$ (depending on $\alpha,a,D$) such that
 the following holds with probability at least $1-N^{-D}$.
For any $N>N_0$, $\varphi^C/N<t<1$, $u<v$ in $[t/2,t]$, $\ell>\varphi$ and $k\in\llbracket\alpha N,(1-\alpha)N\rrbracket$, 
\begin{equation}\label{eqn:shortRange}
\left|\left(\left( \rU_{\mathscr{B}} (u,v) - \rU_{\mathscr{S}} (u,v)\right)\frak{u}(u)\right)(k)\right|\leq \varphi^C
|u-v|
\left(
\frac{N}{\ell}\frac{(Nt)^a}{N^2t}
+
\frac{1}{Nt}
\right).
\end{equation}
\end{lemma}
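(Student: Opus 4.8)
The plan is to expand around the short-range semigroup via Duhamel, to estimate the long-range generator applied to $\frak{u}(s)$ pointwise using property $({\rm P}_a)$, Lemma \ref{lem:init} and the a priori bound (\ref{eqn:ub}), and finally to absorb the short-range transport with a finite speed of propagation estimate. Since $\frak{u}$ solves $\partial_s\frak{u}(s)=\mathscr{B}(s)\frak{u}(s)$ we have $\rU_{\mathscr{B}}(u,s)\frak{u}(u)=\frak{u}(s)$, and writing $\mathscr{B}=\mathscr{S}+\mathscr{L}$ together with $\partial_s\rU_{\mathscr{S}}(s,v)=-\rU_{\mathscr{S}}(s,v)\mathscr{S}(s)$ gives $\partial_s\big(\rU_{\mathscr{S}}(s,v)\rU_{\mathscr{B}}(u,s)\big)=\rU_{\mathscr{S}}(s,v)\mathscr{L}(s)\rU_{\mathscr{B}}(u,s)$, hence after integrating from $u$ to $v$,
\begin{equation}\label{eqn:duh}
\big(\rU_{\mathscr{B}}(u,v)-\rU_{\mathscr{S}}(u,v)\big)\frak{u}(u)=\int_u^v\rU_{\mathscr{S}}(s,v)\,\mathscr{L}(s)\,\frak{u}(s)\,\rd s .
\end{equation}
Because $\mathscr{S}(s)$ generates a (time-inhomogeneous) Markov jump process, each $\rU_{\mathscr{S}}(s,v)$ is a stochastic matrix; so I will first bound $(\mathscr{L}(s)\frak{u}(s))(k')$ for $k'$ in a slightly enlarged bulk window, then transport that bound through $\rU_{\mathscr{S}}(s,v)$.

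Fix $\alpha'=\alpha/2$ and work on the intersection of $\mathscr{A}$ (Lemma \ref{rig}), the event of (\ref{eqn:ub}) and the event of $({\rm P}_a)$, which has probability $\ge 1-N^{-D}$ for any $D$. For $k'\in\llbracket\alpha' N,(1-\alpha')N\rrbracket$ and $s\in[t/2,t]$ with $\varphi^C/N<t<1$, split $(\mathscr{L}(s)\frak{u}(s))(k')=\sum_{|j-k'|>\ell}c_{jk'}(s)(\frak{u}_j(s)-\frak{u}_{k'}(s))$ according to whether $j$ lies in the bulk $\llbracket(\alpha'/2)N,(1-\alpha'/2)N\rrbracket$ or not. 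In the bulk range decompose $\frak{u}_j-\frak{u}_{k'}=(\frak{u}_j-\bar{\frak u}_j)+(\bar{\frak u}_j-\bar{\frak u}_{k'})+(\bar{\frak u}_{k'}-\frak{u}_{k'})$: by $({\rm P}_a)$ and $s\sim t$ the two outer terms are $\OO(\varphi^C(Nt)^a/(N^2t))$, by Lemma \ref{lem:init} the middle term is $\OO(\varphi|j-k'|/(N^2t))$, and by rigidity on $\mathscr{A}$ one has $c_{jk'}\le\varphi^C N/|j-k'|^2$; summing over $|j-k'|>\ell$ produces $\OO\big(\varphi^C\tfrac{N}{\ell}\tfrac{(Nt)^a}{N^2t}\big)$ from the first type and $\OO\big(\varphi^C\tfrac{\log N}{Nt}\big)$ from the second, the $\log N$ being absorbed into $\varphi^C$. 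For $j$ outside that window one has $|j-k'|\gtrsim N$ and $|x_j(s)-x_{k'}(s)|\gtrsim 1$ on $\mathscr{A}$, so $c_{jk'}\lesssim 1/N$; combining with (\ref{eqn:ub}), $\sum_{\hat j\le(\alpha'/4)N}|\frak{u}_j(s)|\lesssim \varphi^{10}N^{-2/3}\sum_{\hat j}\hat j^{-1/3}\lesssim\varphi^{10}$ and $|\frak{u}_{k'}(s)|\lesssim\varphi^{10}/N$, so this part is $\OO(\varphi^{10}/N)\le\OO(\varphi^{10}/(Nt))$. Altogether, on this event,
\begin{equation}\label{eqn:Lbound}
\big|(\mathscr{L}(s)\frak{u}(s))(k')\big|\le\varphi^C\left(\frac{N}{\ell}\frac{(Nt)^a}{N^2t}+\frac{1}{Nt}\right)
\end{equation}
uniformly over $s\in[t/2,t]$ and $k'\in\llbracket\alpha'N,(1-\alpha')N\rrbracket$.

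It remains to pass from (\ref{eqn:Lbound}) to a bound on $(\rU_{\mathscr{S}}(s,v)\mathscr{L}(s)\frak{u}(s))(k)$ for $k\in\llbracket\alpha N,(1-\alpha)N\rrbracket$ and $u\le s\le v$ (so $v-s\le t/2$). For this I invoke the finite speed of propagation estimate for the short-range flow (as in \cite{BouYau2017}): for the choice $\ell=\ell(N,a)$ to be made in the proof of Proposition \ref{prop:keyrec} (a small power of $N$, in particular $\ell>\varphi$), the kernel $\rU_{\mathscr{S}}(s,v)(k,\cdot)$ puts total mass at most $N^{-D}$ outside $\llbracket\alpha'N,(1-\alpha')N\rrbracket$. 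Since $\rU_{\mathscr{S}}(s,v)$ is stochastic and $\mathscr{L}(s)\frak{u}(s)$ obeys a crude deterministic bound on $\mathscr{A}$ (polynomial in $N$, using that $|j-k'|>\ell$ is a genuine power of $N$), we obtain $|(\rU_{\mathscr{S}}(s,v)\mathscr{L}(s)\frak{u}(s))(k)|\le\sup_{k'\in\llbracket\alpha'N,(1-\alpha')N\rrbracket}|(\mathscr{L}(s)\frak{u}(s))(k')|+N^{-D}$, which by (\ref{eqn:Lbound}) is the right-hand side of (\ref{eqn:Lbound}) up to a negligible term. Feeding this into (\ref{eqn:duh}) and bounding the $\rd s$-integral by $|u-v|$ times the supremum gives (\ref{eqn:shortRange}).

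The step I expect to be the main obstacle is the finite speed of propagation / containment: one must guarantee that the short-range process started from a bulk index genuinely stays inside $\llbracket\alpha'N,(1-\alpha')N\rrbracket$ up to time $t$, which is what constrains the admissible range of $\ell$ and has to be matched when $\ell$ is fixed later; a secondary technical nuisance is that rigidity alone does not lower-bound consecutive gaps, so the crude deterministic control of $\mathscr{L}(s)\frak{u}(s)$ used only for the leaked mass must be justified through the separation $|j-k'|>\ell\gg 1$.
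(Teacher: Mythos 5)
Your proposal is correct and follows essentially the same route as the paper: Duhamel's expansion, a pointwise bound on $\mathscr{L}(s)\frak{u}(s)$ in the bulk via the decomposition through $\bar{\frak u}$ using $({\rm P}_a)$, Lemma \ref{lem:init}, rigidity and (\ref{eqn:ub}) for edge indices, and then the $L^\infty$-contraction of $\rU_{\mathscr{S}}$ combined with finite speed of propagation to localize to the bulk window. The technical points you flag (polynomial a priori control of the leaked mass, justified by $|i-j|>\ell>\varphi$ and rigidity) are handled in the paper exactly as you anticipate.
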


The second lemma is a finite speed of propagation  for the dynamics defined by (\ref{eqn:shortcut}). Such estimates  appeared first in \cite{ErdYau2015}, here we state the version from \cite[Lemma 6.2]{BouYau2017}, optimal in terms of distance and probability bound. The version below is simpler than \cite[Lemma 6.2]{BouYau2017} as it corresponds to the one-particle case, and we change the condition $|i-j|>N^\e\ell $ into $|i-j|>\varphi\ell $ for convenience, the proof being unchanged.

\begin{lemma}[Finite speed of propagation]\label{lem:short}
For any fixed (small) $\alpha>0$ and (large) $D>0$, 
 there exists $N_0$ (depending on $\alpha,D$) such that the following holds with probability at least $1-N^{-D}$.  For any $N>N_0$,
$0<u<v<1$, $\ell\geq N|u-v|$,
$k\in\llbracket\alpha N,(1-\alpha)N\rrbracket$ and $j\in\llbracket1,N\rrbracket$ such that $|k-j|>\varphi \ell$, we have
\begin{equation}\label{eqn:short}
(\rU_{\mathscr{S}} (u,v)\delta_k)(j)<N^{-D}.
\end{equation}
\end{lemma}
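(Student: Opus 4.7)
This is a classical finite speed of propagation estimate for a short-range Markov generator, and I would prove it via the exponential moment / Grönwall method. The intuition is that on the rigidity event, $\mathscr{S}$ behaves like a discrete diffusion with effective step size $\ell$ and effective rate $N/\ell$, so the typical displacement of the associated random walk over a time interval of length $|v-u|$ is of order $\sqrt{N(v-u)\ell}\le\ell$ under the standing hypothesis $\ell\ge N|v-u|$. Gaussian-type concentration then renders a $\varphi$-fold deviation smaller than any negative power of $N$.

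\textbf{Step 1: reduction to rigid trajectories.} I would first condition on the event $\mathscr{A}$ from (\ref{eqn:A}), which by Lemma \ref{rig} costs only $N^{-D'}$ in probability. On $\mathscr{A}$ the standard estimate $|x_j(s)-x_i(s)|\ge c|j-i|/N$ (valid in the bulk) yields the deterministic pointwise bound $c_{ij}(s)\le CN/(i-j)^2$ uniformly for $s\in[u,v]$ and for all $i,j$ within a fixed distance of $\llbracket\alpha N,(1-\alpha)N\rrbracket$. The regime $\varphi\ell\ge N/2$ makes the hypothesis $|k-j|>\varphi\ell$ vacuous; otherwise the relevant indices lie in a slightly enlarged bulk $\llbracket\alpha N/2,(1-\alpha/2)N\rrbracket$.

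\textbf{Step 2: exponential moment and Grönwall.} Let $u_s=\rU_\mathscr{S}(u,s)\delta_k$ and, for $\lambda\in(0,1]$ to be chosen, introduce the test function $\phi(j)=e^{\lambda(j-k)/\ell}$. Since $c_{ij}=c_{ji}$, the operator $\mathscr{S}$ is self-adjoint with respect to counting measure, so
\begin{equation*}
\partial_s\sum_j \phi(j)u_s(j)=\sum_j (\mathscr{S}\phi)(j)\,u_s(j).
\end{equation*}
Regrouping the action of $\mathscr{S}$ on $\phi$ as a sum over $m=i-j\in(0,\ell]$, the symmetric piece produces the factor $e^{\lambda m/\ell}+e^{-\lambda m/\ell}-2=O(\lambda^2 m^2/\ell^2)$, which combined with $c_{j,j\pm m}\le CN/m^2$ yields a contribution $O(\lambda^2 N/\ell)\phi(j)$ after summing in $m$. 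The asymmetric piece involves $c_{j,j+m}-c_{j,j-m}$; a Taylor expansion of the classical locations $\gamma_{j\pm m}$, using $\gamma_{j+m}+\gamma_{j-m}-2\gamma_j=O(m^2/N^2)$ in the bulk, gives $|c_{j,j+m}-c_{j,j-m}|\le C/m$, hence this piece contributes at most $O(\lambda)\phi(j)$. All together,
\begin{equation*}
(\mathscr{S}\phi)(j)\le \Bigl(C\lambda^2\tfrac{N}{\ell}+C\lambda\Bigr)\phi(j).
\end{equation*}
Since $u_s\ge 0$ (Markov semigroup applied to $\delta_k$), Grönwall plus the hypothesis $N(v-u)\le\ell$ yield $\sum_j \phi(j) u_v(j)\le \phi(k)\exp(C\lambda^2+C\lambda)$.

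\textbf{Step 3: optimization and discretization.} For any $j$ with $j-k>\varphi\ell$, bounding a single term by the whole sum gives $u_v(j)\le \exp(C\lambda^2+C\lambda-\lambda\varphi)$. Choosing $\lambda=\varphi/(2C)$ produces $u_v(j)\le e^{-c\varphi^2}$, which by the definition (\ref{eqn:varphi}) of $\varphi$ is smaller than $N^{-D}$ for every $D>0$ and every $N$ large enough. The case $k-j>\varphi\ell$ is symmetric via $\phi(j)=e^{-\lambda(j-k)/\ell}$. To upgrade this per-$(u,v,\ell,k,j)$ bound to a simultaneous one, I would discretize $u,v,\ell$ on a grid of spacing $N^{-C}$, use the crude operator norm bound $\|\mathscr{S}\|_\infty\le CN$ to control variations of $u_\cdot(\cdot)$ between grid points, and conclude by a union bound with Lemma \ref{rig}.

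\textbf{Main obstacle.} The delicate point is extracting the quadratic-in-$\lambda$ bound $C\lambda^2 N/\ell\cdot\phi(j)$ as the dominant term. A naive estimate $|\phi(i)-\phi(j)|\le C\lambda m/\ell\cdot \phi(j)$ would give only linear-in-$\lambda$ decay and thus $e^{-c\varphi}$ after optimization, which is not smaller than $N^{-D}$ for arbitrary $D$ when $\varphi$ is only subpolynomial. Recovering the quadratic factor relies on the near-symmetry of the jump rates coming from local flatness of the classical density; everything else in the argument is bookkeeping.
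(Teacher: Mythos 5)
The paper does not actually prove this lemma: it quotes it from \cite[Lemma 6.2]{BouYau2017} with the remark that ``the proof \big[is\big] unchanged'', so your attempt must be measured against that proof. Your overall strategy (exponential test function plus Gr\"onwall on the rigidity event) is the right family of argument, but Step 1 contains an error that is fatal for Steps 2--3. The bound $c_{ij}(s)\leq CN/(i-j)^2$ does \emph{not} follow from rigidity. Rigidity gives $|x_i(s)-\gamma_i|\leq\varphi^{1/2}N^{-2/3}(\hat i)^{-1/3}$, which yields $|x_i(s)-x_j(s)|\geq c|i-j|/N$ only when $|i-j|\gtrsim\varphi^{1/2}$; it gives no lower bound at all on individual gaps. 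For nearest neighbours the gap is typically as small as $N^{-3/2}$ (quantifying precisely this is the point of the paper), so $c_{j,j\pm1}(s)=\frac{1}{N(x_{j\pm1}(s)-x_j(s))^2}$ can be of order $N^2$, and over a whole time interval it is arbitrarily larger on any event of probability $1-N^{-D}$. Consequently neither your symmetric bound $O(\lambda^2N/\ell)\phi(j)$ nor your antisymmetric bound $O(\lambda)\phi(j)$ for $(\mathscr{S}\phi)(j)$ holds, and the ``crude operator norm bound $\|\mathscr{S}\|_\infty\leq CN$'' used in your discretization fails for the same reason. The cited proof avoids this by taking the cutoff to be a Lipschitz function of the particle \emph{positions}, $\phi_i(s)=\exp(\lambda\psi(x_i(s)))$ with $\|\psi'\|_\infty\leq N/\ell$, and by running Gr\"onwall on the $L^2$ quantity $\sum_j\phi_j^2u_s(j)^2$: the identity $(u_i-u_j)(\phi_i^2u_i-\phi_j^2u_j)=(\phi_iu_i-\phi_ju_j)^2-u_iu_j(\phi_i-\phi_j)^2$ produces the factor $(\phi_i-\phi_j)^2\leq C\lambda^2N^2(x_i-x_j)^2\ell^{-2}\phi_i\phi_j$, which cancels the singular $(x_i-x_j)^{-2}$ in $c_{ij}$ exactly and leaves $\sum_{0<|i-j|\leq\ell}\lambda^2N\ell^{-2}=O(\lambda^2N/\ell)$; the price is an extra term coming from $\partial_s\phi_i(s)$ along the flow, which is then controlled by rigidity and the SDE. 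Your first-moment functional $\sum_j\phi(j)u_s(j)$ does not produce the square of $\phi_i-\phi_j$ for free, which is exactly why you are forced into the symmetric/antisymmetric splitting that the random positions do not support.

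Two secondary points. First, the choice $\lambda=\varphi/(2C)$ in Step 3 is incompatible with the Taylor bounds of Step 2, which require $\lambda m/\ell=O(1)$ for all $m\leq\ell$, i.e.\ $\lambda=O(1)$; with $\lambda\sim\varphi$ the symmetric term is of size $e^{\lambda}$ rather than $\lambda^2m^2/\ell^2$ and the Gr\"onwall exponent explodes. Second, the ``main obstacle'' you describe is not one: since $\varphi=e^{C_0(\log\log N)^2}$ exceeds $(\log N)^A$ for every fixed $A$ and large $N$, already $e^{-c\varphi}\leq e^{-c(\log N)^2}=N^{-c\log N}\ll N^{-D}$ for any fixed $D$. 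So a linear-in-$\lambda$ gain with $\lambda=O(1)$ suffices, and the entire quadratic-in-$\lambda$ bookkeeping is unnecessary --- the real difficulty is the singular coefficients, not the strength of the exponential decay.
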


For the third lemma, we consider (\ref{eqn:shortcut}) with a well-chosen initial condition, similarly to
\cite[Section 7.2]{BouYau2017}.
We fix some initial and final times $u<t$, the short range dynamics parameter $\ell$, the space window scale $r$ and always assume
\begin{equation}\label{eqn:parameters}
\varphi^{30}|u-t|<\varphi^{20}\frac{\ell}{N}<\varphi^{10}r<t.
\end{equation}
We also consider a fixed index $k$. Given this, we define 
$$
({\rm Flat}_b h)(j)=
\left\{
\begin{array}{l}
 h(j)\ {\rm if}\ |j-k|\leq b\\
 \bar{\frak u}_k(t)\ {\rm if}\ |j-k|>b\\
\end{array}
\right.,\ \ \
({\rm Av}\, h)(j)=\frac{1}{|\llbracket Nr,2Nr\rrbracket |}\sum_{Nr\leq b\leq 2Nr}({\rm Flat}_b h)(j).
$$
This averaging operator can also be written as a linear combination in terms of a Lipschitz function $a$:
\begin{equation}\label{eqn:coefprime}
({\rm Av} h)(j)=a_j h(j)+(1-a_j)\bar{\frak u}_k(t)\ \ 
{\rm where}\  |a_i-a_j|\leq \frac{|i-j|}{Nr}.
\end{equation}
The function ${\rm Av}$, introduced in \cite{BouYau2017}, allows to flatten the initial condition outside a large box, and keep the actual observable in a smaller box. For the purpose of further estimates, this interpolation with a constant at $\infty$ needs to be regular enough; a linear interpolation in the window 
$\llbracket Nr, 2Nr\rrbracket$ is sufficient for our purpose.

Finally, let ${\frak w}$ be the solution of
\begin{align*}
\frac{\rd}{\rd s}\frak{w}_j(s)&=(\mathscr{S}(s){\frak w})(j),\ u<s<t,\\ 
\frak{w}(u)&={\rm Av}\, \frak{u}(u).
\end{align*}
The following lemma provides good estimates on averages of the $\frak{w}_j$'s. The stochastic advection equation satisfied by $f_t$ will be essential for its proof (see Lemma \ref{lem:improved}).

\begin{lemma}[Average of the modified dynamics]\label{lem:averageMod} Assume $({\rm P}_a)$.
For any fixed (small) $\alpha>0$ and (large) $D>0$, 
 there exist $N_0,C$ (depending on $\alpha,a,D$) such that
the following holds with probability at least $1-N^{-D}$.
For any $N>N_0$,  $\varphi^C/N<\eta,t<1$, 
$u<s$ in $[t/2,t]$, $\ell>\varphi$, $j,k\in\llbracket \alpha N,(1-\alpha) N\rrbracket$ such as $|\gamma_j-\gamma_k|<10r$, $z=\gamma_j+\ii\eta$, we have (remember ${\frak w}$ depends on $k$ and $u$)
\begin{multline}\label{eqn:averageMod}
\frac{1}{N}\im \sum_{|i-j|<\ell}\frac{{\frak w}_i(s)}{x_i(s)-z}-
\left(\frac{1}{N}\im \sum_{|i-j|<\ell}\frac{1}{x_i(s)-z}\right)\bar{\frak u}_k(s)\\
=\OO(\varphi^C)\left(
\frac{r}{Nt}+\frac{\eta}{Nt}+\frac{(Nt)^a}{N^2t}\left(\frac{\ell}{Nr}+\frac{N\eta}{\ell}+\frac{N|u-t|}{\ell}+\frac{1}{N\eta}\right)
\right).
\end{multline}
\end{lemma}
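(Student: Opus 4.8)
The plan is to run the estimate in three stages: first replace the modified dynamics $\frak w$ by the true dynamics $\frak u$ on the window around $k$; then split $\frak u(s)=\bar{\frak u}(s)+\frak e(s)$ into its homogenized profile and its fluctuation $\frak e=\frak u-\bar{\frak u}$, using that $\bar{\frak u}$ is nearly constant across the window (Lemma \ref{lem:init}\,(\ref{elem1})) so that it reproduces the subtracted term $\big(\tfrac1N\im\sum_{|i-j|<\ell}(x_i(s)-z)^{-1}\big)\bar{\frak u}_k(s)$; and finally show that $\frak e$ contributes only the advertised error by feeding it into the observable $f$ and invoking the stochastic advection equation through Lemma \ref{lem:improved}.

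First I would work on the intersection of $\mathscr A$ (Lemma \ref{rig}) with the events in $({\rm P}_a)$, in (\ref{eqn:ub}) and in Lemma \ref{lem:improved}, which has probability $\geq 1-N^{-D}$ and on which everything below is deterministic. Expanding ${\rm Av}$ via (\ref{eqn:coefprime}) and writing $\frak u(u)=\bar{\frak u}(u)+\frak e(u)$ gives the exact decomposition $\frak w(s)=\bar{\frak u}_k(t)\,\mathbf 1+\rU_{\mathscr S}(u,s)\frak g^{(2)}(u)+\rU_{\mathscr S}(u,s)\frak g^{(1)}(u)$, with $\frak g^{(1)}_j(u)=a_j(\frak u_j(u)-\bar{\frak u}_j(u))$ and $\frak g^{(2)}_j(u)=a_j(\bar{\frak u}_j(u)-\bar{\frak u}_k(t))$ supported in $|j-k|\leq 2Nr$, and, by $({\rm P}_a)$ and Lemma \ref{lem:init}\,(\ref{elem1}) with $|u-t|<\varphi^{-20}r$ from (\ref{eqn:parameters}), $\|\frak g^{(1)}(u)\|_\infty\lesssim\varphi^C(Nt)^a/(N^2t)$ and $\|\frak g^{(2)}(u)\|_\infty\lesssim\varphi\,r/(Nt)$. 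Since $\rU_{\mathscr S}$ preserves constants, $\mathbf 1$ contributes exactly $\bar{\frak u}_k(t)\big(\tfrac1N\im\sum_{|i-j|<\ell}(x_i(s)-z)^{-1}\big)$, which matches the subtracted term in (\ref{eqn:averageMod}) up to $|\bar{\frak u}_k(t)-\bar{\frak u}_k(s)|\cdot\OO(1)=\OO(\varphi|u-t|/(Nt))$, absorbed into $\varphi^C r/(Nt)$; and, by the maximum principle for $\rU_{\mathscr S}$ and rigidity, the $\frak g^{(2)}$ term is $\OO(\varphi\,r/(Nt))$. One must also split according to whether the window $\{|i-j|<\ell\}$ sits inside the identity region $\{|j-k|\leq Nr\}$ of ${\rm Av}$ or meets its flattened region; in the latter case $\frak w_i(s)\approx\bar{\frak u}_k(t)$ for the relevant $i$ by finite speed of propagation (Lemma \ref{lem:short}), and the bound is immediate, so I take the former, main case below.

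There, Lemma \ref{lem:shortRange} (its hypothesis $\ell> N|u-s|$ following from (\ref{eqn:parameters})) bounds $\|(\rU_{\mathscr B}-\rU_{\mathscr S})(u,s)\frak u(u)\|_\infty$, while Lemma \ref{lem:short} annihilates $\rU_{\mathscr S}(u,s)({\rm Av}\,\frak u(u)-\frak u(u))$ on the window since that vector is supported outside the identity region; hence $\frak w_i(s)=\frak u_i(s)+\OO\big(\varphi^C|u-t|(\tfrac N\ell\tfrac{(Nt)^a}{N^2t}+\tfrac1{Nt})\big)$ there, and the first term of (\ref{eqn:averageMod}) becomes $\tfrac1N\im\sum_{|i-j|<\ell}\frak u_i(s)/(x_i(s)-z)$ up to errors of the required form. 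Writing $\frak u_i(s)-\bar{\frak u}_k(s)=\frak e_i(s)+(\bar{\frak u}_i(s)-\bar{\frak u}_k(s))$ and bounding the last bracket by $\varphi\,r/(Nt)$ on the window (Lemma \ref{lem:init}\,(\ref{elem1})) reduces the problem to $\tfrac1N\im\sum_{|i-j|<\ell}\frak e_i(s)/(x_i(s)-z)$; since $\|\frak e(s)\|_\infty\lesssim\varphi^C(Nt)^a/(N^2t)$ in the bulk by $({\rm P}_a)$ and $\lesssim\varphi^C/N$ near the edges by (\ref{eqn:ub}) (where the weight is bounded), replacing this truncated sum by the full sum costs only $\OO\big(\varphi^C\tfrac{(Nt)^a}{N^2t}\tfrac{N\eta}\ell+\varphi^C\tfrac\eta{Nt}\big)$, and $\tfrac1N\sum_i\frak e_i(s)/(x_i(s)-z)=\tfrac{e^{s/2}}N f_s(z)-\tfrac1N\sum_i\bar{\frak u}_i(s)/(x_i(s)-z)$.

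The last stage is the comparison of the observable $f_s$ with the homogenized profile, and this is exactly where the stochastic advection equation (\ref{eqn:dynamicsPDE}) enters, through Lemma \ref{lem:improved}: its proof runs the parametrix/maximum-principle scheme of Proposition \ref{prop:estimatebulk}, but on the difference between $f$ and the homogenized initial observable transported along the characteristics, which under $({\rm P}_a)$ lives at scale $(Nt)^a/(N^2t)$ rather than $1/N$, and thus produces precisely the $\OO\big(\varphi^C\tfrac{(Nt)^a}{N^2t}\tfrac1{N\eta}\big)$ term; one then uses Lemma \ref{lem:init}\,(\ref{elem2}) and the identity $e^{s/2}m(z_s)=m(z)$ — which follows from (\ref{eqn:zteq}) and $m^2+zm+1=0$, since $z+\sqrt{z^2-4}=-2/m$, whence $z_s=-(e^{s/2}m^{-1}+e^{-s/2}m)$ and $m(z_s)=e^{-s/2}m(z)$ — to line up the initial value of $f$ along the characteristic through $z$ with $\bar{\frak u}_k(s)$. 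I expect the bookkeeping to be the main obstacle: one must check term by term that every error generated in the three stages — the $|u-t|$-errors and the rigidity/local-law errors of schematic sizes $\varphi^C/(N^2s)$, $\varphi^C\eta/\ell$, etc. — is dominated by the explicit sum $\tfrac r{Nt}+\tfrac\eta{Nt}+\tfrac{(Nt)^a}{N^2t}\big(\tfrac\ell{Nr}+\tfrac{N\eta}\ell+\tfrac{N|u-t|}\ell+\tfrac1{N\eta}\big)$, which forces systematic use of the hierarchy (\ref{eqn:parameters}) (e.g. $Nr>\varphi^{10}\ell>\varphi^{11}$, $N|u-t|<\varphi^{-10}\ell$, $\ell<Nr$) and of dyadic splittings of the sums at the scales $N\eta$, $Nr$, $\ell$ and near the spectral edges; in particular one cannot afford to pass through the semicircle law $s_s(z)\approx m(z)$ for the full $\bar{\frak u}$-sum — that would cost $\varphi^C/(N^2\eta)$, which exceeds the target — so the comparison must keep the discrete structure of $f_s$ throughout.
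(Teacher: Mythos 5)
Your overall architecture coincides with the paper's: commute ${\rm Av}$ past the semigroup using finite speed of propagation, replace $\rU_{\mathscr S}$ by $\rU_{\mathscr B}$ via Lemma \ref{lem:shortRange}, and reduce the truncated sum to the full-sum identity $\frac1N\sum_i\im\frac{{\frak u}_i(s)-\bar{\frak u}_k(s)}{x_i-z}=\frac{e^{s/2}}{N}\im f_s(z)-\im s_s(z)\bar{\frak u}_k(s)$, controlled by the advection-equation input of Lemma \ref{lem:improved}. Your remarks on keeping the discrete $s_s(z)$ rather than passing to $m(z)$, and the characteristic identity $m(z_s)=e^{-s/2}m(z)$, are correct and are exactly what (\ref{elem2}) and (\ref{eqn:iii1}) encode.

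The gap is your dispatch of the case where the window $\{i: |i-j|<\ell\}$ is not contained in the identity region of ${\rm Av}$. The hypothesis is only $|\gamma_j-\gamma_k|<10r$, so $j$ (and in the application, the argmax of ${\frak w}-\bar{\frak u}_k(t)$) can lie in the interpolation zone $Nr\le|i-k|\le 2Nr$ where $a_i\in(0,1)$. There neither of your two approximations applies: ${\rm Av}\,{\frak u}(u)-{\frak u}(u)$ is no longer supported away from the window, so you cannot conclude ${\frak w}_i(s)={\frak u}_i(s)+\text{error}$; and the "immediate" bound ${\frak w}_i(s)-\bar{\frak u}_k(t)=\bigl(\rU_{\mathscr S}(u,s)({\frak g}^{(1)}+{\frak g}^{(2)})\bigr)(i)=\OO\bigl(\varphi^C(Nt)^a/(N^2t)\bigr)$ obtained from the $L^\infty$ contraction is exactly the size that $({\rm P}_a)$ already gives — with the parameters (\ref{eqn:para}) it exceeds the target, which carries the extra small factors $\ell/(Nr)$, $N\eta/\ell$, $1/(N\eta)$, and would destroy the gain from $a$ to $3a/4$. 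The missing step — which is what the paper's decomposition (\ref{eqn:iii}) supplies, and which is in fact latent in your own writing ${\frak g}^{(1)}_m=a_m({\frak u}_m(u)-\bar{\frak u}_m(u))$ — is to carry the interpolation coefficient through to the full-sum identity: by finite speed only indices with $|m-i|\le\varphi\ell$ contribute, where $|a_m-a_j|\le C\varphi\ell/(Nr)$, so one may freeze the coefficient at $a_j$ at an admissible cost $\OO\bigl(\frac{\ell}{Nr}\frac{(Nt)^a}{N^2t}\bigr)$, after which the leading term is $a_j\bigl(\frac{e^{s/2}}{N}\im f_s(z)-\im s_s(z)\bar{\frak u}_k(s)\bigr)$, small by Lemma \ref{lem:improved} uniformly in $a_j\in[0,1]$, while the leftover $(1-a_i)(\bar{\frak u}_k(t)-\bar{\frak u}_k(s))$ is $\OO(\varphi|u-t|/(Nt))$. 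Without this device the interpolation region is not covered.
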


Based on the previous lemmas, we can now complete the proof of Proposition \ref{prop:keyrec}.
Until the end of this proof, we fix $\alpha,D>0$ and find $N_0$ such that the conclusion of the three lemmas above hold with probability at least $1-N^{-D}$ for $N>N_0$, together with the rigidity estimate from Lemma \ref{rig}. We work on this good event, i.e. we assume that we are in $\mathscr{A}$ from (\ref{eqn:A}), and that (\ref{eqn:shortRange}), (\ref{eqn:short}) and (\ref{eqn:averageMod}) hold.

We fix some index $k\in\llbracket 2\alpha N,(1-2\alpha)N\rrbracket$. We have
$$
|{\frak u}_k(t)-{\frak w}_k(t)|\leq 
|\left(\left( \rU_{\mathscr{B}} (u,t) - \rU_{\mathscr{S}} (u,t)\right)\frak{u}(u)\right)(k)|
+
|\left( \rU_{\mathscr{S}} (u,t)(\frak{u}(u)-\frak{w}(u))\right)(k)|.
$$
We can bound the first term on right-hand side with  (\ref{eqn:shortRange}). Moreover,
note that $\frak{u}(u)-\frak{w}(u)$ is supported on $\{j:|j-k|>Nr\}$ because of $\frak{w}(u)={\rm Av}\, \frak{u}(u)$ and the averaging operator does not change functions in $\{j:|j-k|\leq Nr\}$. Hence by (\ref{eqn:short}) and the choice of parameters (\ref{eqn:parameters}) the second term above 
is $\OO(N^{-100})$.
We therefore obtained
\begin{equation}\label{eqn:uks}
|{\frak u}_k(t)-{\frak w}_k(t)|\leq 
\varphi^C|u-t|
\left(
\frac{N}{\ell}\frac{(Nt)^a}{N^2t}
+
\frac{1}{Nt}
\right)
+N^{-100}.
\end{equation}
We now evaluate ${\frak{w}}_k(t)$, by considering two cases.

Assume first that there exist an index $j$ and a time $s\in[u,t]$ such that
${\frak{w}}_j(s)-\bar {\frak{u}}_k(t)=
M(s):=\max_{1\leq i\leq N}({\frak{w}}_i(s)-\bar{\frak{u}}_k(t))$ and $|j-k|>3Nr$. 
As $i\mapsto{\frak w}_i(u)-\bar{\frak u}_k(t)$ is compactly supported on $\{i:|i-k|<2Nr\}$, by the finite speed of propagation estimate  Lemma \ref{lem:short} we have ${\frak w}_j(s)-\bar{\frak u}_k(t)\leq N^{-100}$. By the parabolic maximum principle,  $M$ decreases, which implies 
\begin{equation}\label{eqn:trivialcase}
M(t)<N^{-100}.
\end{equation}

Secondly, assume that for any $s$, for all $j$ such that ${\frak{w}}_j(s)-{\frak{u}}_k(t)=
M(s)$ we have $|j-k|<3Nr$.
For any such $j$ and $\eta>0$, we have
\begin{multline*}
\frac{\rd}{\rd s}({\frak{w}}_j(s)-\bar{\frak{u}}_k(t)))
=
\sum_{|i-j|\leq\ell} \frac{{\frak{w}}_i(s)-{\frak{w}}_j(s)}{N(x_j(s)-x_i(s))^2}
\leq\frac{1}{N}
\sum_{|i-j|\leq\ell} \frac{{\frak{w}}_i(s)-{\frak{w}}_j(s)}{(x_j(s)-x_i(s))^2+\eta^2}\\
=
\frac{1}{N\eta}\left(\im\sum_{|i-j|\leq \ell}\frac{{\frak{w}}_i(s)}{x_i(s)-z_j}\right)
-\frac{1}{N\eta}\left(\im\sum_{|i-j|\leq \ell}\frac{1}{x_i(s)-z_j}\right){\frak{w}}_j(s)
\end{multline*}
where $z_j=x_j(s)+\ii\eta$. By Lemma \ref{lem:averageMod} and the observation $|\bar {\frak u}_k(s)-\bar {\frak u}_k(t)|<C\varphi \frac{|u-t|}{Nt}$ from Lemma \ref{lem:init}, the first parenthesis above can be evaluated so that, if we denote $f'(x^+)$ the right derivative\footnote{$M$ is the maximum of $N$ smooth curves, so its right derivative exists and is bounded by the max of all individual derivatives where the maximum occurs.} of a function $f$ at $x$, we have
$$
\frac{\rd}{\rd s}M(s^+)\leq 
-\frac{c}{\eta} M(s)+\frac{\varphi^C}{\eta}\left(
\frac{r}{Nt}+\frac{\eta}{Nt}+\frac{(Nt)^a}{N^2t}\left(\frac{\ell}{Nr}+\frac{N\eta}{\ell}+\frac{N|u-t|}{\ell}+\frac{1}{N\eta}\right)\right).
$$
Note that the error term due to $|\bar {\frak u}_k(s)-\bar {\frak u}_k(t)|<C\varphi \frac{|u-t|}{Nt}$ has been absorbed above in $\frac{r}{Nt}$ because $|u-t|\ll r$.
If we choose $\eta=|t-u|/\varphi$, the above equation implies
$$
M(t)\leq \varphi^C\left(
\frac{r}{Nt}+\frac{(Nt)^a}{N^2t}\left(\frac{\ell}{Nr}+\frac{N|t-u|}{\ell}+\frac{1}{N|t-u|}\right)
\right).
$$
With the optimal choice
\begin{equation}\label{eqn:para}
r=\frac{(Nt)^{\frac{3a}{4}}}{N},\ \ell=(Nt)^{\frac{a}{2}},\ |u-t|=\frac{(Nt)^{\frac{a}{4}}}{N},
\end{equation}
we obtain
$
M(t)\leq \varphi^{C}(Nt)^{\frac{3a}{4}}/(N^2t).
$
This inequality is also true in the case (\ref{eqn:trivialcase}).

By the same reasoning we obtain the same bound holds for $-\min_{1\leq i\leq N}({\frak{w}}_i(t)-{\frak{u}}_k(t))$, so that in particular
$$|{\frak{w}}_k(t)-\bar{\frak{u}}_k(t)|\leq  \varphi^{C}\frac{(Nt)^{\frac{3a}{4}}}{N^2t}.$$
Together with the estimate (\ref{eqn:uks}) with parameters (\ref{eqn:para}), this shows that
$
|{\frak{u}}_k(t)-\bar{\frak{u}}_k(t)|\leq  \varphi^{C}\frac{(Nt)^{\frac{3a}{4}}}{N^2t}
$
for all index $k\in\llbracket 2\alpha N,(1-2\alpha)N\rrbracket$. As $\alpha$ is arbitrary, this concludes the proof of Proposition \ref{prop:keyrec}.

\subsection{Proof of Lemma \ref{lem:shortRange}}.
We fix $\alpha,D>0$ and find $N_0$ such that the conclusion of Lemma \ref{rig},  Lemma \ref{lem:short} and Property {(${\rm P}_a$)} hold for $N>N_0$, with probability at least $1-N^{-D}$. We work on this good event, i.e. we assume that we are in $\mathscr{A}$ from (\ref{eqn:A}), and that (\ref{Pa}) and (\ref{eqn:short}) hold.

By Duhamel's formula, we have
$$
\left(\left( \rU_{\mathscr{B}} (u,v) - \rU_{\mathscr{S}} (u,v)\right)\frak{u}(u)\right)(k)
=
\int_u^v \left(\rU_{\mathscr{S}} (s,v)\mathscr{L}(s){\frak u}(s)\right)(k)\rd s.
$$
By the finite speed of propagation (\ref{eqn:short}), for any $k\in\llbracket 3\alpha N,(1-3\alpha)N\rrbracket$ we have 
$$
\left(\rU_{\mathscr{S}} (s,v)\mathscr{L}(s){\frak u}(s)\right)(k)=
\left(\rU_{\mathscr{S}} (s,v)(\mathscr{L}(s){\frak u}(s))\mathds{1}_{\llbracket 2\alpha N,(1-2\alpha)N\rrbracket}\right)(k)
+\OO(N^{-D}).
$$
The above equations together with $\rU_{\mathscr{S}}$ being a contraction for $L^\infty$, this implies that
$$
\left|\left(\left( \rU_{\mathscr{B}} (u,v) - \rU_{\mathscr{S}} (u,v)\right)\frak{u}(u)\right)(k)\right|
\leq
|u-v|\sup_{j\in\llbracket 2\alpha N,(1-2\alpha)N\rrbracket,u<s<v}\left|\mathscr{L}(s){\frak u}(s)(j)\right|+\OO(N^{-D}).
$$
Finally, from (\ref{Pa}) and Lemma \ref{lem:init}, for any $s$ in $[t/2,t]$, for any $i\in\llbracket \alpha N,(1-\alpha)N\rrbracket$ we have 
$|\frak{u}_i(s)-\frak{u}_j(s)|\leq 
\varphi^C(
\frac{(Nt)^a}{N^2t}
+
\frac{|i-j|}{N^2 t})
$,
and for $i\not\in\llbracket \alpha N,(1-\alpha)N\rrbracket$, (\ref{eqn:ub}) implies $|\frak{u}_i(s)-\frak{u}_j(s)|\leq \varphi^C N^{-2/3}(\hat i)^{-1/3}$.
This implies 
\begin{multline*}
\mathscr{L}(s){\frak u}(s)(j)=
\sum_{|i-j|>\ell}\frac{\frak{u}_i(s)-\frak{u}_j(s)}{N(x_i-x_j)^2}
=\OO\left(N\varphi^C\right)
\sum_{|i-j|\geq \ell}\frac{\frac{(Nt)^a}{N^2t}
+
\frac{|i-j|}{N^2 t}}{(i-j)^2}
+\OO\left(\varphi^C/N\right)\sum_{1\leq i\leq N}N^{-2/3}(\hat i)^{-1/3}
\\
=
\OO(\varphi^C)
\left(
\frac{N}{\ell}\frac{(Nt)^a}{N^2t}
+
\frac{1}{Nt}
\right),
\end{multline*}
where we also used $|x_i(s)-x_j(s)|>c|i-j|/N$, by rigidity together with $\ell>\varphi$. We therefore obtained (\ref{eqn:shortRange}) for $k\in\llbracket 3\alpha N,(1-3\alpha)N\rrbracket$. As $\alpha$ is arbitrary, this concludes the proof.

\subsection{Proof of Lemma \ref{lem:averageMod}.}\ 
We start with the following key improvement on local averages. Remember the notations (\ref{eqn:observable}) and (\ref{eqn:st}).

\begin{lemma}[Improved estimate on the local average]\label{lem:improved}
Assume $({\rm P}_a)$.
For any fixed (small) $\kappa>0$ and (large) $D>0$, 
 there exist $C$ and $N_0$ (depending on $a,\alpha,D$) such that
 the following holds with probability at least $1-N^{-D}$.
 For any  $t$ and $z=E+\ii\eta$,
satisfying $0<t<1$, $\varphi^CN^{-1}<\eta<1$, $|E|<2-\kappa$, we have
\begin{equation}\label{eqn:lemres}
\left| \im f_t(z)-e^{-t/2}\frac{\im s_{t}(z)}{\im s_0(z_t)}\im  f_0(z_t)\right|\leq \varphi^C\left(\frac{(Nt)^a}{N^2t\eta}
+\frac{1}{Nt}
\right).
\end{equation}
\end{lemma}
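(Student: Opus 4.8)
The plan is to run the proof of Proposition~\ref{prop:estimatebulk} essentially verbatim, transporting the observable along the characteristics $(z_s)$ of the advection equation (\ref{eqn:adv}); the only change is that the a priori input $({\rm P}_a)$ is used wherever Proposition~\ref{prop:estimatebulk} used the crude bound $\|\frak u(s)\|_\infty\lesssim\varphi^{C}/N$, and this is what sharpens the error $\varphi^{30}/(N\eta)$ to the one in (\ref{eqn:lemres}). The factor $\im s_t(z)/\im s_0(z_t)$ appears because the Stieltjes transform is the special case $\frak u_k\equiv 1/N$ of the observable, hence satisfies the same transport (Lemma~\ref{lem:key}) up to the explicit Ornstein--Uhlenbeck growth $e^{t/2}$; since the characteristics carry $m$ exactly, $m(z_t)=e^{-t/2}m(z)$ (a one-line computation from (\ref{eqn:zteq})--(\ref{eqn:st})), it is natural to transport the normalized combination $f_u(z_{t-u})\big/\bigl(e^{-u/2}s_u(z_{t-u})\bigr)$, whose value at $u=0$ is $f_0(z_t)/s_0(z_t)$ and at $u=t$ is $e^{t/2}f_t(z)/s_t(z)$, and in whose It\^{o} differential the $e^{u/2}$ and the growth term of $s$ cancel.

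First I would set up the stopping time $\tau$ exactly as in Proposition~\ref{prop:estimatebulk}: the minimum of $\tau_0$ from (\ref{eqn:t0}), $\tau_1$ from (\ref{eqn:t1}), the first time (\ref{eqn:lemres}) fails on a fine $(t_\ell,z^{(m,p)})$-grid, and the first time $|\frak u_k(s)-\bar{\frak u}_k(s)|$ exceeds $\varphi^{C}(Ns)^a/(N^2s)$ for some bulk index $k$ (i.e.\ $({\rm P}_a)$ breaks); a grid/martingale-increment argument then reduces the lemma to $\mathbb P(\tau=1)\ge 1-N^{-D}$. The genuinely new input is the estimate, on $\{u\le\tau\}$, of the error term $\e_u(z)=(s_u(z)-m(z))\partial_z f_u(z)+\tfrac1N(\tfrac2\beta-1)\partial_{zz}f_u(z)$ at $z=z_{t-u}$, and here $({\rm P}_a)$ enters. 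I would split $f_u=f_u^{\mathrm{hom}}+f_u^{\mathrm{fl}}$ with $f_u^{\mathrm{hom}}(z)=e^{-u/2}\sum_i\bar{\frak u}_i(u)/(x_i(u)-z)$: the Lipschitz bound of Lemma~\ref{lem:init} and the size estimate (\ref{ele2}) make $f_u^{\mathrm{hom}}$ behave like the Stieltjes transform, $|\partial_z f_u^{\mathrm{hom}}(z_{t-u})|\lesssim\varphi^{C}$ and $|\partial_{zz}f_u^{\mathrm{hom}}(z_{t-u})|\lesssim\varphi^{C}/\im(z_{t-u})$ in the bulk, while the fluctuation part is bounded crudely from $|\frak u_i(u)-\bar{\frak u}_i(u)|\le\varphi^{C}(Nu)^a/(N^2u)$ for bulk $i$ (that is $({\rm P}_a)$) and the much weaker $|\frak u_i(u)-\bar{\frak u}_i(u)|\lesssim\varphi^{C}N^{-2/3}$ for $i$ near the spectral edge (from (\ref{eqn:ub}) and (\ref{ele2}); this suffices because $|E|<2-\kappa$ keeps these indices at distance $\gtrsim 1$ from $z_{t-u}$), giving $|\partial_z f_u^{\mathrm{fl}}(z_{t-u})|\lesssim\varphi^{C}(Nu)^a/(Nu\,\im(z_{t-u}))$ and one more factor $1/\im(z_{t-u})$ for the second derivative.

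Feeding in the strong local law $|s_u-m|\lesssim\varphi/(N\im(z_{t-u}))$ (from \cite{ErdYauYin2012}, valid simultaneously for $u\le\tau$ by Lemma~\ref{rig}) and $\im(z_{t-u})\sim\eta+(t-u)$ (Lemma~\ref{lem:zt}), integration in $u$ gives $\int_0^t|\e_u(z_{t-u})|\,\mathrm d u\lesssim\varphi^{C}\bigl(\tfrac1{Nt}+\tfrac{(Nt)^a}{N^2t\eta}\bigr)$, the two terms coming from $\int_0^t\mathrm d u/(N(\eta+t-u))$ (at worst logarithmic) and from $\int_0^t\mathrm d u/(N(\eta+t-u))^2$ after the $({\rm P}_a)$ gain; the range $u\lesssim\varphi^{C}/N$ is harmless since there $\im(z_{t-u})\sim t$. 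The martingale of $f$ is handled through (\ref{eqn:bracket}) exactly as the term $M_s$ in the proof of Proposition~\ref{prop:estimate}: its bracket $\int_0^t\tfrac1N\sum_k|\frak u_k(u)|^2/|z_{t-u}-x_k(u)|^4\,\mathrm d u$, estimated with (\ref{eqn:ub}) and the same homogenized/fluctuation split, is $\lesssim\bigl(\varphi^{C}(\tfrac1{Nt}+\tfrac{(Nt)^a}{N^2t\eta})\bigr)^2$. The companion error term for $s$ involves only $|s_u-m|$, $\partial_z s_u$ and $\partial_{zz}s_u$, all controlled by the local law alone; using $|s_u(z_{t-u})|\gtrsim 1$ and $|f_u(z_{t-u})|\lesssim\varphi^{1/2}$ in the bulk, It\^{o}'s quotient rule then yields $e^{t/2}f_t(z)/s_t(z)=f_0(z_t)/s_0(z_t)+\OO\bigl(\varphi^{C}(\tfrac1{Nt}+\tfrac{(Nt)^a}{N^2t\eta})\bigr)$ on $\{\tau=1\}$. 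Extracting imaginary parts --- using that the eigenvalues are real, so $\overline{f_t(z)}=f_t(\bar z)$ and $\overline{s_t(z)}=s_t(\bar z)$ transport along the conjugate characteristic as well, together with $e^{-t/2}\im m(z)/\im m(z_t)=1$ and the bulk bounds $\im s_t(z),\im s_0(z_t)\sim 1$, $|\im f_0(z_t)|\lesssim\varphi^{1/2}$ --- converts this into (\ref{eqn:lemres}).

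The step I expect to be the main obstacle is the one in the second paragraph together with the first half of the third: squeezing the $(Nt)^a$ gain out of $({\rm P}_a)$ uniformly along the whole characteristic $0\le u\le t$, in particular taming the near-edge indices where $({\rm P}_a)$ says nothing and one must fall back on (\ref{eqn:ub}) and the distance $\gtrsim 1$ to the bulk point $z_{t-u}$, and then carrying that gain through the martingale bracket and through the It\^{o} quotient without paying the $\varphi^{1/2}$ that multiplies the $s$-error --- which is precisely why one transports the normalized combination $f/s$ rather than $f$ alone and only adds a local-law correction for $s$. The final extraction of imaginary parts, and the reduction of the whole statement to $\mathbb P(\tau=1)\ge 1-N^{-D}$ by the grid/martingale-increment argument, are routine adaptations of Proposition~\ref{prop:estimatebulk}.
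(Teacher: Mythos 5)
Your overall skeleton --- transport along the characteristics, the stopping-time/grid reduction, using $({\rm P}_a)$ to upgrade the coefficient bounds in the drift and in the martingale bracket, and falling back on (\ref{eqn:ub}) for the near-edge indices --- is exactly the paper's, and most of your individual estimates match. The genuine gap is the choice of transported quantity: the quotient $f_u(z_{t-u})/\bigl(e^{-u/2}s_u(z_{t-u})\bigr)$ does not deliver (\ref{eqn:lemres}). The paper transports the \emph{linear} combination
$h_u=f_u(z_{t-u})-\frac{\im f_0(z_t)}{\im s_0(z_t)}\,e^{-u/2}s_u(z_{t-u})$
with a \emph{fixed real} constant. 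This is what makes the argument close: (a) $h$ satisfies a linear SDE of the same form as (\ref{eqn:dynamicsPDE}) whose drift and martingale coefficients involve only ${\frak r}_q(u)={\frak u}_q(u)-\frac{\im f_0(z_t)}{N\im s_0(z_t)}$, a real quantity made small by $({\rm P}_a)$ together with (\ref{elem1})--(\ref{elem2}), with no It\^{o} quotient corrections; and (b) $\im h_0=0$ while $\im h_t$ is verbatim the left-hand side of (\ref{eqn:lemres}).

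Concretely, two steps of your route fail. First, the final ``extraction of imaginary parts'': your quotient identity is the complex relation $f_t(z)=e^{-t/2}\frac{s_t(z)}{s_0(z_t)}f_0(z_t)+\OO(\varepsilon)$, and passing from $\im\bigl(\frac{s_t(z)}{s_0(z_t)}f_0(z_t)\bigr)$ to $\frac{\im s_t(z)}{\im s_0(z_t)}\im f_0(z_t)$ costs at least $\bigl|\im\frac{s_t(z)}{s_0(z_t)}\bigr|\,|\re f_0(z_t)|$. The ratio $s_t(z)/s_0(z_t)$ is real only up to the local-law error $\OO(\varphi/(N\eta))$, and $|f_0(z_t)|$ is only $\OO(\varphi)$, so this conversion alone produces an error $\OO(\varphi^2/(N\eta))$, which exceeds the allowed $\varphi^C\bigl((Nt)^a/(N^2t\eta)+1/(Nt)\bigr)$ as soon as $\eta\ll t$ and $Nt$ is polynomially large --- precisely the regime used in Lemma \ref{lem:averageMod}, where $\eta=|t-u|/\varphi\ll t$. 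The conjugate characteristic gives only the complex conjugate of the same relation, hence no additional equation. Second, the It\^{o} quotient rule introduces the covariations $\rd\langle f,e^{-u/2}s\rangle$ and $\rd\langle e^{-u/2}s\rangle$, whose coefficients $N^{-1}\sum_k{\frak u}_k(z-x_k)^{-4}$ are not centered by anything; integrating along the characteristic gives $\OO(\varphi^C/(N^2t\eta^2))$, again larger than the target when $\eta<(Nt)^{-a}$. Both problems disappear once the quotient is replaced by the linear combination $h_u$ above, after which the rest of your argument goes through as in the paper.
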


\noindent Note that for the initial iteration of Proposition \ref{prop:keyrec}, we have $a=1$ so the above estimate was already proved: an upper bound $\varphi^C/(N\eta)$ is  known by Proposition \ref{prop:estimatebulk}.
Hence, the above lemma is not necessary to obtain $({\rm P}_{3/4})$ and therefore relaxation of the Dyson Brownian motion. We only use it for optimal error bounds.

\begin{proof} For fixed $t$, consider the function
$$
h_{u}(z)=h_{u}^{(t)}(z)=f_u(z_{t-u})-\frac{\im f_0(z_t)}{\im s_0(z_t)}e^{-u/2}s_{u}(z_{t-u}),\ 0\leq u\leq t.
$$
Note that both $f$ and $e^{-u/2}s_u$ satisfies the stochastic advection equation (\ref{eqn:dynamicsPDE}), with  ${\frak u}_k(u)$ replaced by $1/N$ in the simpler case of $s$. By linearity, this implies that $h$ satisfies  the equation 
\begin{multline}
\rd h_u=
(s_u(z_{t-u})-m(z_{t-u}))\left(\partial_z f_u(z_{t-u})-\frac{\im f_0(z_t)}{\im s_0(z_t)}e^{-u/2}\partial_z s_u(z_{t-u})\right)\rd u\\
+\frac{1}{N}\left(\frac{2}{\beta}-1\right)\left(\partial_{zz}h_u(z_{t-u})-\frac{\im f_0(z_t)}{\im s_0(z_t)}e^{-u/2}\partial_{zz} s_u(z_{t-u})\right)\rd u-\frac{e^{-u/2}}{\sqrt{N}}\sqrt{\frac{2}{\beta}}\sum_{q=1}^N\frac{{\frak r}_q({u})\rd B_q(u)}{(z_{t-u}-x_q(u))^2},\label{eqn:heq}
\end{multline}
where
$
\frak{r}_q(u)={\frak u}_q(u)-\frac{\im f_0(z_t)}{N\im s_0(z_t)}.
$
We will use this equation to bound $\im (h_t-h_0)$ (i.e. the left-hand side of (\ref{eqn:lemres})) in a way similar to the proof of Proposition \ref{prop:estimatebulk}, with the novelty that our estimate on $\frak{r}_q(u)$ depends 
on the hypothesis (${\rm P}_a$) and  improves with small $a$.

As in the proof of Proposition \ref{prop:estimatebulk}, we define   $t_\ell=\ell N^{-10}$ and $z^{(m,p)}=E_m+\ii\eta_p$ where $\int_{-{\infty}}^{E_m}\rd\rho=(m-1/2) N^{-10}$ and $\eta_p=\frac{\varphi^2}{N}+pN^{-10}$. We pick $\alpha$ such that $\lfloor \alpha N\rfloor={\rm argmin}_q|\gamma_q-(-2+\kappa/10)|$.
Let 
\begin{align*}
\tau_{\ell,m,p}&=
\inf\left\{0\leq u\leq t_\ell:   |\im h_u^{(t_\ell)}(z^{(m,p)})|>\varphi^C\left(\frac{(Nt_\ell)^a}{N^2t_\ell\eta_p}+\frac{1}{Nt_\ell}\right)\right\},\\
\tau_2&=
\inf\left\{\frac{\varphi^C}{N}\leq u\leq 1\mid \exists q\in\llbracket \alpha N,(1-\alpha)N\rrbracket :|{\frak u}_q(u)-\bar {\frak u}_q(u)|>\varphi^C\frac{(Nu)^a}{N^2 u}\right\},\\
\tau&=\min\{\tau_0,\tau_1,\tau_2,\tau_{\ell,m,p}:0\leq \ell,m,p\leq N^{10},|E_m|<2-\kappa\},
\end{align*}
where $\tau_0,\tau_1$ are defined in (\ref{eqn:t0}) and (\ref{eqn:t1}), and our convention is $\inf\varnothing=1$.
By the same argument as  in between (\ref{eqn:inter1}) and (\ref{eqn:inter3}), we just need to prove that 
for any $D>0$ there exists $\wt N_0(\kappa,D)$ such that for any $N\geq \wt N_0$, we have 
\begin{equation}\label{eqn:inter44}
\mathbb{P}(\tau=1)>1-N^{-D}.
\end{equation}
Let $t=t_{\ell}$, $z=z^{(m,p)}=E+\ii\eta$ where $0\leq \ell,p\leq N^{10},|E_m|<2-\kappa$. 
We  now divide the proof in two steps.\\

\noindent{\it First step: a priori estimate on ${\frak r}_q$}. We claim that for any $\alpha>0$ there exists $C>0$ such that for any $\frac{\varphi^C}{N}\leq u\leq \tau$ and $q\in\llbracket \alpha N,(1-\alpha)N\rrbracket$  we have (remember that ${\frak r}_q(u)$ depends on $z$ and $t$)
\begin{equation}\label{eqn:rkbound}
|{\frak r}_q(u)|\leq \varphi^C\left(\frac{|\gamma_q-E|+|u-t|}{N u}+\frac{(Nu)^a}{N^2u}+\frac{\eta}{Nt}\right)=:
\varphi^C\left(\frac{|\gamma_q-E|}{Nu}+g(a,N,\eta,u,t)\right).
\end{equation}
For the proof, we choose $j$ such that $|\gamma_j-E|\leq \varphi/N$ and  write 
 $
 |{\frak r}_q(u)|\leq 
 |{\frak u}_q(u)-\bar{\frak u}_q(u)|
+
 |\bar{\frak u}_q(u)-\bar{\frak u}_j(t)| 
+
\left|\bar{\frak u}_j(t)-\frac{\im f_0(z_t)}{N\im s_0(z_t)}\right|.
$
We use $u\leq \tau_2$ to bound the first term,  (\ref{elem1}) for the second and (\ref{elem2}) for the third. This gives (\ref{eqn:rkbound}).

Note that we also have the more elementary estimate (useful for small $u$ or $q$ close to the edge)
\begin{equation}\label{eqn:rkbound2}
 |{\frak r}_q(u)|\leq C\varphi^{10}N^{-\frac{2}{3}}(\hat q)^{-\frac{1}{3}}.
\end{equation}
This is obtained by combining two estimates. First, we have $u\leq \tau_1$ so that $|{\frak u}_q(u)|\leq \varphi^{10}N^{-\frac{2}{3}}(\hat q)^{-\frac{1}{3}}$. Second, uniformly in  $E$ is in the bulk of the spectrum and $t<1$ we have
$\im s_0(z_t)>c>0$, which together with Lemma \ref{f0}  gives  $\frac{\im f_0(z_t)}{N\im s_0(z_t)}\leq C\varphi^{1/2}/N\leq C\varphi^{10}N^{-\frac{2}{3}}(\hat q)^{-\frac{1}{3}}$.\\

\noindent{\it Second step: bound on the increments}. The error term for $\sup_{0\leq s\leq t}|\im (h_s-h_0)|$ corresponding to the first line of (\ref{eqn:heq}) above can be bounded similarly to (\ref{eqn:estim1}), giving
\begin{equation}\label{eqn:intt1}
\int_0^{t\wedge\tau}|s_u(z_{t-u})-m(z_{t-u})|\left|\partial_z f_u(z_{t-u})-\frac{\im f_0(z_t)}{\im s_0(z_t)}\partial_z s_u(z_{t-u})\right|\rd u
\leq 
\int_0^{t\wedge \tau}\frac{\varphi}{N\im(z_{t-u})}\sum_{q=1}^N\frac{|{\frak r}_q(u)|}{|z_{t-u}-\gamma_q|^2}\rd u.
\end{equation}
In the above right-hand side, the terms  $\hat q\leq\alpha N$ are bounded with
(\ref{eqn:rkbound2}) and give a contribution
$$
C \int_0^{t\wedge \tau}\frac{\varphi}{N\im(z_{t-u})}\sum_{q=1}^N
\varphi^{10}N^{-\frac{2}{3}}(\hat q)^{-\frac{1}{3}}\rd u
\leq 
\int_0^{t\wedge \tau}\frac{\varphi}{N\im(z_{t-u})}\rd u\leq\frac{\varphi\log N}{N}.
$$
For the contribution from the bulk indices in the right-hand side of (\ref{eqn:intt1}), for $u\leq \tau$ we have (we abbreviate $g$ for $g(a,N,\eta,u,t)$)
$$
\sum_{q=1}^N\frac{|{\frak r}_q(u)|}{|z_{t-u}-\gamma_q|^2}\leq
\varphi^Cg\sum_{q=1}^N\frac{1}{|z_{t-u}-\gamma_q|^2}
+\frac{\varphi^C}{Nu}\sum_{q=1}^N\frac{|\gamma_q-E|}{|z_{t-u}-\gamma_q|^2}
\leq 
\frac{\varphi^Cg N}{\eta+(t-u)}
+\frac{\varphi^C}{u},
$$
so that
\begin{multline*}
\int_{\varphi^C/N}^{t\wedge \tau}\frac{\varphi}{N\im(z_{t-u})}\sum_{\hat q>\alpha N}^N\frac{|{\frak r}_q(u)|}{|z_{t-u}-\gamma_q|^2}\rd u\leq
\int_{\varphi^C/N}^{t}\frac{\varphi^C\rd u}{\eta+t-u}\left(\left(\frac{(Nu)^a}{N^2u}+\frac{\eta}{Nt}+\frac{t-u}{Nu}\right)\frac{1}{\eta+(t-u)}
+\frac{1}{Nu}\right)\\
\leq \varphi^C\left(\frac{(Nt)^a}{N^2t\eta}+\frac{1}{Nt}\right).
\end{multline*}
Finally, with (\ref{eqn:rkbound2}),
\begin{multline*}
\int_0^{\varphi^C/N}\frac{\varphi}{N\im(z_{t-u})}\sum_{\hat q>\alpha N}\frac{|{\frak r}_q(u)|}{|z_{t-u}-\gamma_q|^2}\rd u\leq
\int_0^{\varphi^C/N}\frac{\varphi^C}{N^2\im(z_{t-u})}\sum_{q=1}^N\frac{1}{|z_{t-u}-\gamma_q|^2}\rd u\\
\leq 
\int_0^{\varphi^C/N}\frac{\varphi^C}{N(\im(z_{t-u}))^2}\rd u\leq \frac{\varphi^C}{N^2(\eta+t)^2}.
\end{multline*}
The previous estimates together prove
\begin{equation}\label{eqn:interm11}
\int_0^{t\wedge\tau}|s_u(z_{t-u})-m(z_{t-u})|\left|\partial_z f_u(z_{t-u})-\frac{\im f_0(z_t)}{\im s_0(z_t)}\partial_z s_u(z_{t-u})\right|\rd u\leq
\varphi^C\left(\frac{(Nt)^a}{N^2t\eta}+\frac{1}{Nt}\right).
\end{equation}
Similarly we obtain 
\begin{equation}\label{eqn:interm2}
\int_0^{t\wedge\tau}\frac{1}{N}\left|\partial_{zz} f_u(z_{t-u})-\frac{\im f_0(z_t)}{\im s_0(z_t)}\partial_{zz} s_u(z_{t-u})\right|\rd u
\leq
\varphi^C\left(\frac{(Nt)^a}{N^2t\eta}+\frac{1}{Nt}\right).
\end{equation}
We now bound the bracket of the stochastic integral in (\ref{eqn:heq}): 
$$
\Big\langle
\int_0^{\cdot}\frac{e^{-u/2}}{\sqrt{N}}\sum_{q=1}^N\frac{{\frak r}_q({u})}{(z_{t-u}-x_q(u))^2}\rd B_q(u)
\Big\rangle_{t\wedge\tau}=\int_0^{t\wedge\tau}\frac{1}{N}\sum_{\hat q\leq \alpha N}\frac{|{\frak r}_q(u)|^2}{|z_{t-u}-\gamma_q|^4}\rd u
+
\int_0^{t\wedge\tau}\frac{1}{N}\sum_{\hat q> \alpha N}\frac{|{\frak r}_q(u)|^2}{|z_{t-u}-\gamma_q|^4}\rd u.
$$
For the contribution of the edge indices, we have
$$
\int_0^{t\wedge\tau}\frac{1}{N}\sum_{\hat q\leq \alpha N}\frac{|{\frak r}_q(u)|^2}{|z_{t-u}-\gamma_q|^4}\rd u\leq
\varphi^C \int_0^{t\wedge\tau}\frac{1}{N}\sum_{q=1}^N(N^{-\frac{2}{3}}(\hat q)^{-\frac{1}{3}})^2\rd u\leq \varphi^C \frac{t}{N^2}.
$$
For the bulk indices, we use
(\ref{eqn:rkbound}) for small $u$ and both (\ref{eqn:rkbound}) and (\ref{eqn:rkbound2}) for large $u$: 
\begin{multline}
\int_0^{t\wedge\tau}\frac{1}{N}\sum_{\hat q> \alpha N}\frac{|{\frak r}_q(u)|^2}{|z_{t-u}-\gamma_q|^4}\rd u.\leq
\varphi^C\int_0^{t\wedge\eta}\frac{1}{N}\sum_q\frac{N^{-2}}{|z_{t-u}-\gamma_q|^4}\rd u\\
+
\varphi^C\int_{t\wedge\eta}^{t\wedge\tau}\frac{1}{N}\sum_q\frac{1}{|z_{t-u}-\gamma_q|^4}
\min\left(\frac{|\gamma_q-E|^2}{N^2u^2}+g^2,\frac{1}{N^2}\right)
\rd u.\label{eqn:lililolo}
\end{multline}
The first integrand on the right-hand side above is $\OO(N^{-2}/(\eta+t-u)^3)$, so that the corresponding integral is $\OO(1/(Nt)^2)$.
For the second integral, we can assume $\eta<t$ and use $\min(a+b,c)\leq \min(a,c)+\min(b,c)$ for positive $a,b,c$, We first bound the contribution from $g$:
\begin{equation}\label{eqn:estg}
\int_{\eta}^{t}\frac{\rd u}{N}\left(\sum_q\frac{1}{|z_{t-u}-\gamma_q|^4}\right)\min\left(g^2,\frac{1}{N^2}\right)
\leq
\int_{\eta}^{t}\frac{\rd u}{(\eta+t-u)^3}\left(
\min\left(\frac{|u-t|^2}{N^2u^2},\frac{1}{N^2}\right)
+\frac{(Nu)^{2a}}{N^4u^2}
+\frac{\eta^2}{N^2t^2}
\right).
\end{equation}
We bound the term involving $\min\left(\frac{|u-t|^2}{N^2u^2},\frac{1}{N^2}\right)$ with
$$
\int_{\eta}^{t}\frac{\rd u}{(\eta+t-u)^3}\frac{|u-t|^2}{N^2u^2}\mathds{1}_{|u-t|<u}
\leq\frac{1}{N^2t^2}\int_\eta^t\frac{(u-t)^2}{(\eta+t-u)^3}\rd u\leq \frac{\log N}{N^2t^2},\ \  \ \ \  \ \ 
\int_{\eta}^{t}\frac{\rd u}{(\eta+t-u)^3}\frac{1}{N^2}\mathds{1}_{|u-t|>u}
\leq \frac{1}{N^2t^2}.$$
For the remaining terms from (\ref{eqn:estg}), we calculate
$$
\int_{\eta}^{t}\frac{\rd u}{(\eta+t-u)^3}\frac{(Nu)^{2a}}{N^4u^2}\leq\frac{(Nt)^{2a}}{N^4t^2\eta^2},\  \ \ \ \ \ \ \ 
\int_{\eta}^{t}\frac{\rd u}{(\eta+t-u)^3}
\frac{\eta^2}{N^2t^2}\leq\frac{1}{N^2t^2}.
$$
Finally, the contribution from $\min\left(\frac{|\gamma_q-E|^2}{N^2u^2},\frac{1}{N^2}\right)$ in (\ref{eqn:lililolo}) is bounded by
\begin{align*}
\int_{\eta}^{t}\frac{\rd u}{N}\sum_{q:|\gamma_q-E|<u}\frac{1}{|z_{t-u}-\gamma_q|^4}
\frac{|\gamma_q-E|^2}{N^2u^2}
&\leq \int_\eta^t\frac{\rd u}{N^2u^2}\int_{|x|<u}\frac{x^2}{x^4+(\eta+t-u)^4}\\
&\leq
\int_\eta^t\frac{\rd u}{N^2u^2}\left(
\frac{u^3}{t^4}\mathds{1}_{u<t/10}+
\frac{1}{\eta+t-u}\mathds{1}_{u>t/10}
\right)\leq\frac{1}{N^2t^2},\\
&\leq \int_{\eta}^{t}\frac{\rd u}{N^3}\left(
\frac{N}{t^3}\mathds{1}_{u<t/10}+
\frac{N}{u^3}\mathds{1}_{u>t/10}
\right)
\leq \frac{1}{N^2t^2}.
\end{align*}
The above estimates together prove
$$
\Big\langle
\int_0^{\cdot}\frac{e^{-u/2}}{\sqrt{N}}\sum_{q=1}^N\frac{{\frak r}_q({u})}{(z_{t-u}-x_q(u))^2}\rd B_q(u)
\Big\rangle_{t\wedge\tau}
\leq 
\varphi^C\left(\frac{(Nt)^{2a}}{N^4t^2\eta^2}
+\frac{1}{N^2t^2}
\right)
$$
for some $C$ independent of our choice of $\ell,m,p$.
By (\ref{eqn:bracket}) and a union bound we conclude that for any $D>0$ there exists $C>0$ such that 
$$
\mathbb{P}
\left(
\sup_{\ell,m,p,0\leq s\leq t\wedge\tau,|E_m|<2-\kappa}\left|
\int_0^{s}\frac{e^{-u/2}}{\sqrt{N}}\sum_{q=1}^N\frac{{\frak r}_q({u})}{(z_{t-u}-x_q(u))^2}\rd B_q(u)\right|
\leq 
\varphi^C\left(\frac{(Nt)^{a}}{N^2t\eta}
+\frac{1}{Nt}
\right)\right)
\geq 1-N^{-D}.
$$
Together with (\ref{eqn:interm11}) and  (\ref{eqn:interm2}), this concludes the proof that
$$
\mathbb{P}
\left(
\sup_{\ell,m,p,0\leq s\leq t\wedge\tau,|E_m|<2-\kappa}\left|
h_{s}(z)
\right|
\leq 
\varphi^C\left(\frac{(Nt)^{a}}{N^2t\eta}
+\frac{1}{Nt}
\right)\right)
\geq 1-N^{-D}.
$$
Remember that $\mathbb{P}(\min(\tau_0,\tau_1,\tau_2)=1)>1-N^{-D}$ by Lemma \ref{rig}, (\ref{eqn:ub}) and assumption (${\rm P}_a$). Together with the above equation, this implies (\ref{eqn:inter44}) and concludes the proof.
\end{proof}

We now can complete the proof of Lemma \ref{lem:averageMod}. 
As previously,  we fix $\alpha,D>0$ and find $N_0$ such that the conclusion of lemmas \ref{lem:shortRange},  \ref{lem:short} and \ref{lem:improved} hold with probability at least $1-N^{-D}$ for $N>N_0$, together with the rigidity estimate from Lemma \ref{rig}. We work on this good event, i.e. we assume that we are in $\mathscr{A}$ from (\ref{eqn:A}), and that (\ref{eqn:shortRange}), (\ref{eqn:short}) and (\ref{eqn:lemres}) hold.
We prove Lemma \ref{lem:averageMod} for $j,k\in\llbracket 2\alpha N,(1-2\alpha)N\rrbracket$, without loss of generality up to changing our initial choice of $\alpha$ into $\alpha/2$.

We rewrite the left-hand side of (\ref{eqn:averageMod}) as (i)+(ii)+(iii) and bound independently these terms defined as
\begin{align*}
(\rm i)&=\frac{1}{N}\im\sum_{|i-j|\leq \ell}\frac{({\rm U}_{\mathscr{S}}(u,s){\rm Av}{\frak u}(u)-{\rm Av}{\rm U}_{\mathscr{S}}(u,s){\frak u}(u))(i)}{x_i-z},\\
(\rm ii)&=\frac{1}{N}\im\sum_{|i-j|\leq \ell}\frac{({\rm Av}{\rm U}_{\mathscr{S}}(u,s){\frak u}(u)-{\rm Av}{\rm U}_{\mathscr{B}}(u,s){\frak u}(u))(i)}{x_i-z},\\
(\rm iii)&=\frac{1}{N}\im\sum_{|i-j|\leq \ell}\frac{({\rm Av}{\rm U}_{\mathscr{B}}(u,s){\frak u}(u))(i)-\bar{\frak u}_k(s)}{x_i-z}.
\end{align*}

We first estimate the numerator in (i),
$$
({\rm U}_{\mathscr{S}}(u,s){\rm Av}{\frak u}(u)-{\rm Av}{\rm U}_{\mathscr{S}}(u,s){\frak u}(u))(i)
=
\frac{1}{|\llbracket Nr,2Nr\rrbracket |}\sum_{Nr\leq b\leq 2Nr}\left(
{\rm U}_{\mathscr{S}}(u,s){\rm Flat}_b{\frak u}(u)
-
{\rm Flat}_b{\rm U}_{\mathscr{S}}(u,s){\frak u}(u)
\right)(i).
$$
If $|i-k|<b-\varphi\ell$, then
$
({\rm Flat}_b{\rm U}_{\mathscr{S}}(u,s){\frak u}(u))(i)=({\rm U}_{\mathscr{S}}(u,s){\frak u}(u))(i)
$
and by (\ref{eqn:short}) we have $({\rm U}_{\mathscr{S}}(u,s){\rm Flat}_b{\frak u}(u))(i)=({\rm U}_{\mathscr{S}}(u,s){\frak u}(u))(i)+\OO(N^{-D})$, so that in this case
\begin{equation}\label{eqn:neg}
\left(
{\rm U}_{\mathscr{S}}(u,s){\rm Flat}_b{\frak u}(u)
-
{\rm Flat}_b{\rm U}_{\mathscr{S}}(u,s){\frak u}(u)
\right)(i)
=
\OO(N^{-D}).
\end{equation}

If $|i-k|>b+\varphi\ell$, then
$
({\rm Flat}_b{\rm U}_{\mathscr{S}}(u,s){\frak u}(u))(i)=\bar{\frak u}_k(t)
$
and, again with (\ref{eqn:short}),  $({\rm U}_{\mathscr{S}}(u,s){\rm Flat}_b{\frak u}(u))(i)=\bar{\frak u}_k(t)+\OO(N^{-D})$, so that (\ref{eqn:neg}) also holds in this case.

Assume now $|i-k|\in[b-\varphi\ell,b+\varphi\ell]$. By using (\ref{eqn:short}) first and then (${\rm P}_a$) and Lemma \ref{lem:init}, we have
\begin{multline*}
|({\rm U}_{\mathscr{S}}(u,s){\rm Flat}_b{\frak u}(u)-{\rm Flat}_b{\rm U}_{\mathscr{S}}(u,s){\frak u}(u))(i)|\leq \max_{m:||m-k|-b|\leq 2\varphi \ell}|{\frak u}_m(s)-\bar{\frak u}_k(t)|+\OO(N^{-D})\\
\leq 
\max_{||m-k|-b|\leq 2\varphi \ell}|{\frak u}_m(s)-\bar{\frak u}_m(s)|+
\max_{||m-k|-b|\leq 2\varphi \ell}|\bar{\frak u}_m(s)-\bar{\frak u}_k(t)|+
\OO(N^{-D})
\leq \varphi^C\left(\frac{(Nt)^a}{N^2t}+\frac{r+|t-u|}{Nt}\right).
\end{multline*}
We conclude that
\begin{equation}\label{eqn:i}
({\rm i})=\OO(\varphi^C)\frac{\ell}{Nr}\left(\frac{(Nt)^a}{N^2t}+\frac{r+|t-u|}{Nt}\right).
\end{equation}

We now estimate  (ii). As $|i-j|\leq \ell$, we have $i\in\llbracket \alpha N,(1-\alpha)N\rrbracket$ and  (\ref{eqn:shortRange}) applies: we obtain
\begin{multline*}
|({\rm Av}{\rm U}_{\mathscr{S}}(u,s){\frak u}(u)-{\rm Av}{\rm U}_{\mathscr{B}}(u,s){\frak u}(u))(i)|\leq 
|({\rm U}_{\mathscr{S}}(u,s){\frak u}(u)-{\rm U}_{\mathscr{B}}(u,s){\frak u}(u))(i)|\leq
\varphi^C|u-t|
\left(
\frac{N}{\ell}\frac{(Nt)^a}{N^2t}
+
\frac{1}{Nt}
\right),
\end{multline*}
where the first inequality follow from (\ref{eqn:coefprime}).
The same bound for an average over $i$ gives
\begin{equation}\label{eqn:ii}
({\rm ii})=\OO(\varphi^C)|u-t|
\left(
\frac{N}{\ell}\frac{(Nt)^a}{N^2t}
+
\frac{1}{Nt}
\right).
\end{equation}
Finally, to estimate (iii), we  use (\ref{eqn:coefprime}) to first decompose
\begin{multline}\label{eqn:iii}
({\rm iii})=
a_j\sum_{i=1}^N\frac{1}{N}\im\frac{{\frak u}_i(s)-\bar{\frak u}_k(s)}{x_i-z}
-
\frac{a_j}{N}\im\sum_{|i-j|> \ell}\frac{{\frak u}_i(s)-\bar{\frak u}_k(s)}{x_i-z}
+
\frac{1}{N}\im\sum_{|i-j|\leq \ell}\frac{(a_i-a_j)({\frak u}_i(s)-\bar{\frak u}_k(s))}{x_i-z}\\
+
\frac{1}{N}\im\sum_{|i-j|\leq \ell}\frac{(1-a_i)(\bar{\frak u}_k(t)-\bar{\frak u}_k(s))}{x_i-z}.
\end{multline}
The first sum is also (we use (\ref{elem2}) for the first equality and the main estimate (\ref{eqn:lemres}) for the second equality below)
\begin{equation}\label{eqn:iii1}
\frac{e^{s/2}}{N}\im f_s(z)-\im s_s(z)\bar {\frak u}_k(s)
=\frac{e^{s/2}}{N}\im f_s(z)-\im s_s(z)\frac{\im f_0(z_s)}{N\im s_0(z_s)}+
\OO(\varphi)\frac{\eta+r}{Nt}
=
\OO(\varphi^C)\left(\frac{(Nt)^a}{N^3t\eta}
+\frac{\eta+r}{Nt}
\right).
\end{equation}
To bound the second  sum in (\ref{eqn:iii}), for $i$ in the bulk we write
\begin{equation}\label{eqn:estu}
|{\frak u}_i(s)-\bar{\frak u}_k(s)|
\leq
|{\frak u}_i(s)-\bar{\frak u}_i(s)|
+
|\bar{\frak u}_i(s)-\bar{\frak u}_k(s)|
\leq
\frac{(Nt)^a}{N^2t}+\frac{|i-k|}{N^2t}
\end{equation} (for $i$ at the edge we can use (\ref{eqn:ub}) which gives a negligible contribution), and obtain the estimate
\begin{multline}
\frac{1}{N}\sum_{|i-j|>\ell}\frac{\eta}{\eta^2+(\gamma_i-\gamma_j)^2}\frac{(Nt)^a}{N^2t}\label{eqn:iii2}
+
\frac{1}{N}\sum_{|i-j|>\ell}\frac{\eta}{\eta^2+(\gamma_i-\gamma_j)^2}\frac{|i-j|}{N^2t}
+
\frac{1}{N}\sum_{|i-j|>\ell}\frac{\eta}{\eta^2+(\gamma_i-\gamma_j)^2}\frac{Nr}{N^2t}\\
\leq
\frac{(Nt)^a}{N^2t}\frac{N\eta}{\ell}+\frac{r}{Nt}+\frac{\eta}{Nt}. 
\end{multline}
The third sum in (\ref{eqn:iii}), we use (\ref{eqn:coefprime}) and (\ref{eqn:estu}) to obtain
\begin{equation}\label{eqn:iii3}
\frac{1}{N}\im\sum_{|i-j|\leq \ell}\frac{(a_i-a_j)({\frak u}_i(s)-\bar{\frak u}_k(s))}{x_i-z}=\OO\left(\frac{\ell}{Nr}\right)\left(\frac{(Nt)^a}{N^2t}+\frac{r}{Nt}\right).
\end{equation}
The fourth sum in  (\ref{eqn:iii}) is bounded by (\ref{elem1}), which added to the error estimates (\ref{eqn:i}), (\ref{eqn:ii}), (\ref{eqn:iii1}), (\ref{eqn:iii2}), (\ref{eqn:iii3}) concludes the proof.

\section{Extreme gaps}

\subsection{Reverse heat flow.}\ 
We first state a quantitative analogue of \cite[Proposition 4.1]{ErdSchYau2011}.
This reverse heat flow argument first appeared in \cite{ErdPecRamSchYau2010}.
Its proof is essentially the same as in \cite{ErdSchYau2011}. In the following $\rd \gamma$ denotes the standard Gaussian measure which is reversible for the Ornstein-Uhlenbeck dynamics with generator $A=\frac{1}{2}\partial_{xx}-\frac{x}{2}\partial_x$.

\begin{lemma}\label{lem:RevHeat}
Let $0<2a<b<1$. Assume $e^{-V}$ is a centered probability density, with $V$ smooth on scale $\sigma=N^{-a}$ in the sense of (\ref{eqn:smooth}) and $\int_{[-x,x]^c} e^{-V(y)}\rd y\leq {\theta^{-1}} e^{-x^\theta}$ for some $\theta>0$. 
Denote $u=\rd e^{-V}/\rd\gamma$.

Let $t=N^{-b}$. Then for any $D>0$ there exists  $C>0$ and a probability density $g_t$ w.r.t. $\gamma$ such that

\begin{enumerate}
\item $
\int|e^{tA}g_t-u|\rd \gamma\leq C N^{-D},
$
\item $g_t\rd \gamma$ is centered, has same variance as $u\rd\gamma$, and satisfies  $\int_{[-x,x]^c} g_t\rd\gamma\leq \theta^{-1} e^{-x^\theta}$ for some $\theta>0$.
\end{enumerate}
\end{lemma}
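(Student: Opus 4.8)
\ The plan is to realise $g_t$ as a truncated Neumann series for the inverse heat flow, following \cite{ErdSchYau2011} but keeping track of the $\sigma$-dependence. Fix a large integer $n$ and set
$$
g_t^{(0)}=\sum_{k=0}^{n}\frac{(-t)^k}{k!}\,A^k u ,
$$
an approximate ``$e^{-tA}u$''. Since $A$ is self-adjoint on $L^2(\gamma)$ with $A\mathbf 1=0$, $Ax=-x/2$ and $A(x^2-1)=-(x^2-1)$ (Hermite eigenfunctions), one gets for free that $\int g_t^{(0)}\rd\gamma=\int u\rd\gamma=1$ and, because $e^{-V}$ is centered, $\int x\,g_t^{(0)}\rd\gamma=\bigl(\sum_{k\le n}(t/2)^k/k!\bigr)\int x\,u\rd\gamma=0$; the second moment of $g_t^{(0)}\rd\gamma$ equals that of $u\rd\gamma$ up to an explicit Hermite factor, which vanishes exactly in the unit-variance case relevant to the application and is $O(t)$ in general.

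First I would establish the key bound $\int|e^{tA}g_t^{(0)}-u|\rd\gamma\le CN^{-D}$. Writing $\phi(s)=e^{sA}u$, one has $\phi^{(k)}(t)=A^k e^{tA}u$ and $e^{tA}g_t^{(0)}=\sum_{k\le n}\tfrac{(-t)^k}{k!}\phi^{(k)}(t)$, so Taylor's formula with integral remainder (expanding $\phi$ at $s=t$, evaluated at $s=0$) gives
$$
e^{tA}g_t^{(0)}-u=\frac{(-1)^n}{n!}\int_0^t s^n\,A^{n+1}e^{sA}u\,\rd s=\frac{(-1)^n}{n!}\int_0^t s^n\,e^{sA}A^{n+1}u\,\rd s .
$$
Since $e^{sA}$ is a Markov semigroup with invariant measure $\gamma$, it is an $L^1(\gamma)$-contraction, hence
$$
\bigl\|e^{tA}g_t^{(0)}-u\bigr\|_{L^1(\gamma)}\le\frac{t^{n+1}}{(n+1)!}\,\bigl\|A^{n+1}u\bigr\|_{L^1(\gamma)} .
$$
The main work is then the estimate $\|A^{n+1}u\|_{L^1(\gamma)}\le C_n\,\sigma^{-2(n+1)}$. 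Writing $u(x)=\sqrt{2\pi}\,e^{x^2/2-V(x)}$, an induction shows $A^{n+1}u=Q_{n+1}\cdot u$ with $Q_{n+1}$ a universal polynomial in $x$ and in $V',\dots,V^{(2n+2)}$, each of its monomials of total order at most $2(n+1)$ in the derivatives of $V$ — crucially, the Gaussian weight in $\rd\gamma$ cancels the factor $e^{x^2/2}$, so no super-Gaussian growth survives. Plugging in (\ref{eqn:smooth}) gives $|Q_{n+1}(x)|\le C_n\sigma^{-2(n+1)}(1+|x|)^{C_n}$, whence $\|A^{n+1}u\|_{L^1(\gamma)}\le C_n\sigma^{-2(n+1)}\int(1+|x|)^{C_n}e^{-V}\rd x\le C_n'\sigma^{-2(n+1)}$, the last inequality a routine consequence of the tail hypothesis $\int_{[-x,x]^c}e^{-V}\le\theta^{-1}e^{-x^\theta}$. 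Combining, $\|e^{tA}g_t^{(0)}-u\|_{L^1(\gamma)}\le\frac{C_n'}{(n+1)!}(t/\sigma^2)^{n+1}=\frac{C_n'}{(n+1)!}N^{-(b-2a)(n+1)}$, and since $b-2a>0$ it suffices to take $n$ with $(n+1)(b-2a)>D$.

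Finally I would upgrade $g_t^{(0)}$ to an honest probability density satisfying the side conditions. From the monomial bounds, $|g_t^{(0)}-u|\le C(t/\sigma^{2})(1+|x|)^{C}u$, so $g_t^{(0)}>0$ on $\{|x|\le R\}$ with $R=(c\,\sigma^2/t)^{1/C}=c'N^{(b-2a)/C}$, and $\int_{\{|x|>R\}}|g_t^{(0)}|\rd\gamma\le C\int_{\{|x|>R\}}(1+|x|)^Ce^{-V}\le e^{-cR^\theta}$, which beats every power of $N$. Setting $g_t=(g_t^{(0)})_+/\int(g_t^{(0)})_+\rd\gamma$ perturbs $g_t^{(0)}$ by at most $2\int(g_t^{(0)})_-\rd\gamma\le e^{-cR^\theta}$ in $L^1(\gamma)$, so $g_t$ is a probability density w.r.t. $\gamma$, $e^{tA}g_t$ stays within $N^{-D}$ of $u$ by contractivity, the bound $\int_{[-x,x]^c}g_t\rd\gamma\le\theta^{-1}e^{-x^\theta}$ follows from the one for $e^{-V}\rd x$ after shrinking $\theta$, and centering and variance are restored by a harmless correction (automatic in the unit-variance case, otherwise a compactly supported term of size $O(t)$ followed by a negligible mass adjustment). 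The main obstacle is the bookkeeping behind $\|A^{n+1}u\|_{L^1(\gamma)}\lesssim_n\sigma^{-2(n+1)}$: showing that iterating $A$ against $u$ produces only polynomial-in-$x$ prefactors (tamed by the tail of $e^{-V}$) times products of at most $2(n+1)$ derivatives of $V$, so that each extra $A$ costs exactly $\sigma^{-2}$; once this is in place the positivity and moment clean-up are routine.
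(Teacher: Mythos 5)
Your proposal is correct and follows essentially the same route as the paper: both build $g_t$ from the truncated series $\sum_{k\le n}\frac{(-t)^k}{k!}A^k u$ and rest on the same key estimate that each application of $A$ to $u=\rd e^{-V}/\rd\gamma$ costs a factor $\sigma^{-2}$ times a polynomial weight, so the error is of order $(t\sigma^{-2})^{n+1}=N^{-(b-2a)(n+1)}$, killed by taking $n$ large since $b>2a$. The only implementation differences are that the paper enforces positivity by multiplying the correction $\xi_t$ by a spatial cutoff $\theta(\alpha x)$ rather than taking the positive part at the end, and imports the remainder bound from \cite[Equation (4.4)]{ErdSchYau2011} rather than deriving it from Taylor's formula and the $L^1(\gamma)$-contractivity of $e^{sA}$; both variants are valid.
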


\begin{proof} Let $\alpha=\alpha(N)>0$ to be chosen, $\theta_0$ is a smooth cutoff function equal to $1$ on $[-1,1]$ and 
$0$ on $[-2,2]^c$, and
$\theta(x)=\theta_0(\alpha x)$. We define 
$$
h_t=u+\theta \xi_t,\ {\rm with}\ \xi_t=\left(
-t A+\frac{1}{2}t^2A^2+\dots+(-1)^{k-1}\frac{t^{k-1}}{(k-1)!}A^{k-1}
\right)u.
$$
Using (\ref{eqn:smooth}), for any $k>0$ there exists $C>0$ such that
\begin{equation}\label{eqn:thetaxi}
|\theta\xi_t|\leq 
C_k\sum_{\ell=1}^{k-1}t^\ell\sigma^{-2\ell}\alpha^{-C_k}u.
\end{equation}
The function $h_t$ is therefore positive if $\alpha=N^{-\e}$ with $0<\e<(b-2a)/C_k$.

Moreover, from \cite[Equation (4.4)]{ErdSchYau2011}, we have
$$
\int|e^{tA}h_t-u|\rd\gamma\leq C_k\int_0^t\left(t^k\int|A^k u|\rd\gamma+
\int|A(\theta-1)\xi_s|\rd\gamma+
\int|(\theta-1)\partial_s\xi_s|\rd\gamma
\right)\rd s.
$$
Still using (\ref{eqn:smooth}), we easily have 
$
t^k\int|A^k u|\rd\gamma\leq C_k t^k\sigma^{-2k}
$
and
$$
\int_0^t|A(\theta-1)\xi_s|\rd\gamma\rd s
+
\int_0^t|(\theta-1)\partial_s\xi_s|\rd\gamma\rd s
\leq
C t \sigma^{-2k} \int_{[-\alpha^{-1},\alpha^{-1}]^c}(1+|x|)^{C_k}u\rd\gamma\leq 
C t \sigma^{-2k}e^{-\alpha^{-\wt c}}
$$
for some $\wt c>0$, where we used the tail assumption $\int_{[-x,x]^c} e^{-V}\leq c e^{-x^c}$.

All together, for $k$ large enough (depending on $D$) and $0<\e<(b-2a)/C_k$, we obtain 
$$
\int|e^{tA}h_t-u|\rd \gamma\leq C N^{-D}.
$$
Moreover, from (\ref{eqn:thetaxi}) and our choice of parameters we have $c_t:=\int h_t\rd\gamma=1+\OO(N^{-D})$, so that $g_t:=h_t/c_t$ (now a probability density) also satisfies 
$\int|e^{tA}g_t-u|\rd \gamma\leq C N^{-D}$. Similarly, by a dilation with factor $1+\OO(N^{-D})$, $g_t$ can be dilated into a probability with variance 1. Finally, $(ii)$ easily follows from $(\ref{eqn:thetaxi})$ and the hypothesis $\int_{[-x,x]^c} e^{-V}\leq {\theta^{-1}} e^{-x^\theta}$.
\end{proof}

\subsection{Proof of theorems \ref{thm:smallprocess} and \ref{thm:largest}.}\ 
We illustrate this classical reasoning with  Theorem \ref{thm:largest}, Theorem \ref{thm:smallprocess} being proved similarly based on Corollary \ref{cor:gaps} and Lemma \ref{lem:RevHeat}.

We assume $H$ is smooth on scale $\sigma$. From Lemma \ref{lem:RevHeat}, there exists a generalized Wigner matrix $\wt H$ such that if $\wt H_t$ denotes  its evolution under the Dyson Brownian Motion dynamics with initial condition $\wt H$, the total variation distance between $\wt H_t$ and
$H$ is of order $N^{-D}$ for any $D$, provided $t\leq N^{-\e}\sigma^{2}$.
In particular,  the total variation distance between their spectra is also at most $N^{-D}$, and
$
{\rm d}_{\rm TV}\left(\tau_k^*(\wt H_t),\tau_k^*(H)\right)\leq N^{-D},
$
so that for large enough $N$ we have
\begin{multline*}
{\rm d}_{\rm W}\left(\tau_k^*(\wt H_t),\tau_k^*(H)\right)\leq\int_{-N^5}^{N^5}\rd x|\mathbb{P}(\tau_k^*(\wt H_t)\leq x)-\mathbb{P}(\tau_k^*(H)\leq x)|\\
+\mathbb{E}(\tau_k^*(\wt H_t)\mathds{1}_{|\tau_k^*(\wt H_t)|>N^5})
+\mathbb{E}(\tau_k^*(H)\mathds{1}_{|\tau_k^*(H)|>N^5})\leq C N^{-D+5}.
\end{multline*}

On the other hand,  for such $t$, from Corollary \ref{cor:gaps}  the gaps between bulk eigenvalues
of $\wt H_t$ can all be coupled with some GUE gaps with some error $N^\e/(N^2t)$. With the third characterization of the Wasserstein distance in (\ref{eqn:Wass}), we obtain
$${\rm d}_{\rm W}(\tau_k^*(\wt H_t),\tau_k^*({\rm GUE}))\leq \frac{N^{\varepsilon}}{N t}.$$
The two equations above conclude the proof.

\begin{remark}
From the above proof, it is clear that if uniform (in $N$) boundedness of the density of $\tau_k(GOE)$ or $\tau_k^*(GOE)$ was known, then the rates of convergence in Corollary \ref{cor:smallest} and Theorem \ref{thm:largest} would also hold for the Kolmogorov-Smirnov distance. It is not obvious that the methods in \cite{BenBou2013,FengWei2018I,FengWei2018II} give this boundedness, as they rely on moments calculations. 
\end{remark}

\section{Rate of convergence to Tracy-Widom}

\subsection{Proof of Theorem \ref{thm:rate}.}\ 
This rate of convergence relies on a main result of this paper, Theorem \ref{thm:edgerelaxation}, and the following Proposition \ref{t3}, 
a quantitative version of the Green's function comparison theorem from \cite{ErdYauYin2012Univ}. It is proved exactly in the same way, after carefully keeping track of all error terms.
For completeness, we give the proof in the next subsection.

For the statement, we consider a scale $\rho=\rho(N)\in[N^{-1},N^{-\frac{2}{3}}]$, and  a function $f=f(N):\mathbb{R}\to\mathbb{R}$ satisfying
$$
\|f^{(k)}\|_\infty\leq C_k \rho^{-k}, \ 0\leq k\leq 2.
$$
Assume also that $f$ is non-decreasing, $f(x)\equiv 0$ for $x<E$, $f(x)\equiv 1$ for $x>E+\rho$, with $|E-2|<\varphi N^{-2/3}$.
Moreover, let $F$ be a fixed smooth non-increasing function  such that $F(x)=1$ for $x\leq 0$, $F(x)=0$ for $x\geq 1$.

\begin{proposition} \label{t3}
There exists $C>0$ such  that the following holds. Let $H^{\f v}$ and $H^{\f w}$ be generalized Wigner ensembles satisfying (\ref{eqn:tail}). 
Assume that the first three moments of the entries ($h_{ij}=\sqrt{N}H_{ij}$) are the same, i.e.
$
\E^{\f v}(h_{ij}^k)=  \E^{\f w}(h_{ij}^k)
$
for all $1\leq i\leq j\leq N$ and $1\leq k\leq 3$.
Assume also that for some parameter $t=t(N)$ we have
$$
	\Big |  \E^{\f v}(h_{ij}^4)- \E^{\f w}(h_{ij}^4)\Big | \le t
\for i \leq j.
$$
With the above notations for  the test functions $f,F$, we have
$$
 \left|\left(\E^{\f v} - \E^{{\f w}}\right) F\left(
 {\rm Tr}f(H)
 \right)\right| \leq \varphi^C\left(\frac{t}{N\rho}+\frac{1}{(N\rho)^2}+\frac{1}{N}\right).
$$
\end{proposition}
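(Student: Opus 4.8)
The plan is to follow the standard Lindeberg/Green's function comparison scheme from \cite{ErdYauYin2012Univ}, but carefully tracking every error term so that the bound is explicit in $t$, $\rho$, and $N$. The starting point is to express the test function ${\rm Tr}f(H)$ in terms of the resolvent $G(z)=(H-z)^{-1}$ via the Helffer--Sj\"ostrand formula or, more simply, via the identity
$$
{\rm Tr}f(H)=\frac{1}{\pi}\int f(E')\,\im\,{\rm Tr}\,G(E'+\ii\eta_0)\,\rd E'+(\text{small error})
$$
for a suitable cutoff scale $\eta_0$ slightly below $\rho$ (say $\eta_0=\varphi^{-C}\rho$ or $\eta_0=N^{-1}\varphi^C$). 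The point of the smoothing is that $F({\rm Tr}f(H))$ becomes a smooth function of finitely many resolvent entries of $H$ at spectral parameters with imaginary part $\eta_0$, so the Lindeberg exchange argument applies. The a priori inputs are the local semicircle law and rigidity near the edge from \cite{ErdYauYin2012}, which control $\|G(z)\|$, $\im\,{\rm Tr}\,G(z)$, and in particular give the bound $\im\,{\rm Tr}\,G(E'+\ii\eta)=\OO(\varphi^C(N\eta)^{?})$ type estimates one needs to see that replacing ${\rm Tr}f$ by its smoothed version costs at most $\varphi^C/(N\rho)$, and that the contribution of $\rho$-scale smearing near the edge is $\OO(1)$ in the relevant variable.

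Next I would run the telescoping sum: order the $\le N(N+1)/2$ independent entries, and replace them one at a time from the $\f v$-distribution to the $\f w$-distribution. At each step, write $\Theta:=F\circ({\rm smoothed\ Tr}f)$ as a function of the single entry $h_{ij}$ being swapped, Taylor expand to fourth order, and use that the first three moments agree. The zeroth through third order terms cancel exactly; the fourth order term is bounded by $|\E^{\f v}(h_{ij}^4)-\E^{\f w}(h_{ij}^4)|\cdot\|\partial^4_{h_{ij}}\Theta\|$, which by hypothesis is $\le t\cdot\|\partial^4_{h_{ij}}\Theta\|$; the fifth order remainder is controlled by the tail estimate (\ref{eqn:tail}) and the self-improving bound on $\|\partial^5_{h_{ij}}\Theta\|$. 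The fourth-order derivative of $\Theta$ in a single entry $h_{ij}$ produces a sum of resolvent products; each resolvent entry at imaginary part $\eta_0$ is $\OO(\varphi^C)$ by the local law, each derivative of $f$ brings a factor $\rho^{-1}$, and the key combinatorial gain — as in \cite{ErdYauYin2012Univ} — is that after summing over $E'$ against $f(E')$ and using $\im\,{\rm Tr}\,G\sim N\rho$ near an edge energy, one extra factor $N^{-1/2}$ per off-diagonal swap, together with the $N^2$ swaps, organizes into $\varphi^C(\frac{1}{N\rho}+\frac{1}{(N\rho)^2})$. Multiplying the per-entry fourth-order bound by $t$ and summing over all $\le N^2$ entries gives the $t/(N\rho)$ term; the fifth-order remainder and the diagonal entries give the $1/(N\rho)^2$ and $1/N$ terms. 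Because all these are already present in \cite{ErdYauYin2012Univ} with an unspecified $N^{C\delta}$ in place of $\varphi^C$, and with $t$ taken to be zero, the only genuinely new bookkeeping is (a) carrying the explicit $t$ through the fourth-order term, and (b) replacing $N^\delta$ factors by subpolynomial $\varphi^C=e^{C(\log\log N)^2}$ factors wherever a $\log$-scale slack suffices.

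The main obstacle I expect is getting the stated power $(N\rho)^{-2}$ rather than merely $(N\rho)^{-1}$ in the entry-comparison error, independently of $t$; this requires the ``second-order'' improvement in the Lindeberg argument, i.e. using that after summing over the smoothing energy $E'$ the leading fourth-order resolvent expression is itself small — one must exploit cancellations in $\sum_{E'} f'(E')(\cdots)$ coming from $\partial_E\im\,{\rm Tr}\,G$ telescoping, together with the fact that, near the edge and at scale $\eta_0\lesssim\rho$, the relevant resolvent traces are of size $N\rho$ and their $E$-derivatives of size $N$. Concretely, one writes the fourth derivative as $\partial_E$ of a lower-order object plus a genuinely fourth-order-in-$G$ remainder, integrates by parts in $E'$ against $f$, and checks the boundary terms vanish since $f'$ is supported in $[E,E+\rho]$; this is where the local law with optimal error $\varphi^C/(N\eta)$ must be invoked uniformly. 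Once this is in place, optimizing nothing further, one reads off
$$
\big|(\E^{\f v}-\E^{\f w})F({\rm Tr}f(H))\big|\le\varphi^C\Big(\frac{t}{N\rho}+\frac{1}{(N\rho)^2}+\frac{1}{N}\Big),
$$
which is the claim. Finally, to deduce Theorem \ref{thm:rate} itself one combines this proposition — applied between $H$ and a Gaussian-divisible matrix $\wt H_{t'}$ with matching first three moments and fourth-moment discrepancy $\OO(N^{-c})$, existence guaranteed by \cite{EYYBernoulli} — with the edge relaxation estimate Theorem \ref{thm:edgerelaxation} (giving $|\lambda_N(\wt H_{t'})-\lambda_N({\rm GOE})|\le N^\e/(Nt')$ for $t'\gg N^{-1/3}$), chooses $\rho\sim N^{-2/3}$ and $t'\sim N^{-1/3+c'}$, converts the $F$-of-$\mathrm{Tr}f$ statement into a statement about $\mathbb{P}(N^{2/3}(\lambda_N-2)\le x)$ by the usual sandwiching of indicator functions between smooth $F$'s at scale $\rho$, and balances the resulting errors $N^{-1/3}/(Nt')\sim N^{-2/3-c'}$, $t'/(N\rho)\sim N^{-2/3+c'}$ against the smoothing error $\sim N^{2/3}\rho\sim 1$ — wait, that last must instead be tracked at the level of Kolmogorov distance, where the $\rho$-scale smoothing of the indicator costs $\sup_x|\mathbb{P}(|N^{2/3}(\lambda_N-2)-x|\le N^{2/3}\rho)|$, bounded by edge level-repulsion/rigidity, yielding the final exponent $-2/9+c$ after optimizing $\rho$ and $t'$.
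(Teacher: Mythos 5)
Your overall architecture coincides with the paper's: a Helffer--Sj\"ostrand representation of $\mathrm{Tr}f(H)$, a telescoping Lindeberg swap over the $N(N+1)/2$ independent entries, a fourth-order Taylor expansion exploiting the three matching moments, and the edge local law as the a priori input. The problem is that the quantitative content of the proposition --- which is its entire point, the qualitative statement being already in \cite{ErdYauYin2012Univ} --- is precisely the part you leave unresolved. You flag obtaining $(N\rho)^{-2}$ rather than $(N\rho)^{-1}$ as ``the main obstacle I expect'' and propose to overcome it by rewriting the fourth $h_{ij}$-derivative as an $E$-derivative of a lower-order object and integrating by parts against $f'$; this cancellation mechanism is neither executed nor needed, and as written the bound is asserted rather than derived.

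In the paper the three error terms have concrete and more elementary sources. First, truncating the Helffer--Sj\"ostrand integral to $|y|>N^{-1}$ costs $\OO(\varphi^C(N\rho)^{-2})$ once and for all, by rigidity; this step requires the genuine Helffer--Sj\"ostrand formula with the compactly supported cutoff $\chi(y)$ --- your ``simpler'' identity $\frac{1}{\pi}\int f(E')\,\im\mathrm{Tr}\,G(E'+\ii\eta_0)\,\rd E'$ is Poisson smoothing with heavy tails in $E'$, and summing the resulting $\OO(\eta_0/|E-\lambda_i|)$ errors over all $N$ eigenvalues gives $\OO(N\eta_0)$, which is not affordable. Second, the per-step increment of the regularized linear statistic satisfies $|\Xi^{H_\gamma}-\Xi^{Q}|=\OO(\varphi^C N^{-3/2}\rho^{-1})$ with overwhelming probability, because the off-diagonal resolvent entries are $\OO(\varphi^C/(Ny))$ for $N^{-1}<y<N^{-2/3}$ near the edge; hence the fourth Taylor term of $F$ contributes $\OO(\varphi^C(N\rho)^{-4})\le\varphi^C(N\rho)^{-2}$ after summing over the $\OO(N^2)$ swaps --- no integration by parts, just the fourth power of a small increment. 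Third, the fifth-order resolvent remainder contributes $\OO(\varphi^C N^{-3/2}\rho^{-1})$ in total, and $N^{-3/2}\rho^{-1}=N^{-1/2}(N\rho)^{-1}\le\tfrac12\bigl((N\rho)^{-2}+N^{-1}\bigr)$: this is where the $1/N$ term actually comes from, not from ``the diagonal entries,'' whose contribution is negligible. (Your heuristic $\im\mathrm{Tr}\,G\sim N\rho$ is also off at the edge, where rigidity gives $\im\mathrm{Tr}\,G(E+\ii y)=\OO(\varphi^C)$ for $y\le N^{-2/3}$.) Until these computations, or a working substitute, are supplied, the proof has a hole exactly where the explicit rates are supposed to be established.
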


We now can complete the proof of Theorem \ref{thm:rate}.
Let $x\in\mathbb{R}$. If $|x|>\varphi$, then for any $D>0$ we have $\mathbb{P}_H(N^{2/3}(\lambda_N-2)\leq x)=\mathbb{P}({\rm TW}_1\leq x)+\OO(N^{-D})$ for large enough $N$. So we now assume 
$|x|<\varphi$.

Define a non-decreasing $f_1$ such that
$f_1(x)=1$ for $x>2+xN^{-2/3}$, $f_1(x)=0$  for $x<2+xN^{-2/3}-\rho$. We also denote $f_2(x)=f_1(x-\rho)$.
We then  have
\begin{equation}\label{eqn:bound1}
\E_HF\left(\sum_{i=1}^N f_1(\lambda_i)\right)\leq\mathbb{P}_{H}\left(\lambda_N<2+xN^{-2/3}\right)\leq \E_H F\left( \sum_{i=1}^N f_2(\lambda_i)\right).
\end{equation}
To understand the above right-hand side, if $\lambda_N<2+xN^{-2/3}$ then
$\sum_{i=1}^N f_2(\lambda_i)=0$ so that $F\left( \sum_{i=1}^N f_2(\lambda_i)\right)=1$; the inequality on the left follows by a similar argument.

Moreover, as is classical and mentioned in the proof of Theorem \ref{Gaussian}, we can find
 a generalized  Wigner matrix  $\wt H_0$ 
such that
the
Gaussian divisible ensemble $\wt H_t:=e^{-t/2}\wt H_0+(1-e^{-t})^{1/2}\ U$, 
($U$ is an independent standard  GOE matrix)  has its first
three moments which
match exactly those of the matrix $H$ and the differences between 
the fourth moments of the two ensembles is
$\OO(t)$ (see  for example by \cite[Lemma 3.4]{EYYBernoulli}). By applying Proposition  \ref{t3}, the bound (\ref{eqn:bound1}) becomes
\begin{multline*}
\E_{\wt H_t}F\left(\sum_{i=1}^N f_1(\lambda_i)\right)-\varphi^C\left(\frac{t}{N\rho}+\frac{1}{(N\rho)^2}+\frac{1}{N}\right)\leq\mathbb{P}_{H}\left(\lambda_N<2+xN^{-2/3}\right)\\
\leq \E_{\wt H_t} F\left(\sum_{i=1}^N f_2(\lambda_i)\right)+\varphi^C\left(\frac{t}{N\rho}+\frac{1}{(N\rho)^2}+\frac{1}{N}\right).
\end{multline*}
Using again  (\ref{eqn:bound1}) but now for the ensemble $\wt H_t$ and  for $f_1,f_2$ shifted by $\pm\rho$, the previous equation gives
\begin{multline*}
\mathbb{P}_{\wt H_t}\left(\lambda_N<2+xN^{-2/3}-\rho\right)-\varphi^C\left(\frac{t}{N\rho}+\frac{1}{(N\rho)^2}+\frac{1}{N}\right)\leq\mathbb{P}_{H}\left(\lambda_N<2+xN^{-2/3}\right)\\
\leq \mathbb{P}_{\wt H_t}\left(\lambda_N<2+xN^{-2/3}+\rho\right)+\varphi^C\left(\frac{t}{N\rho}+\frac{1}{(N\rho)^2}+\frac{1}{N}\right).
\end{multline*}
When combined with the edge relaxation  Theorem \ref{thm:edgerelaxation}, this estimate gives
\begin{multline}\label{eqn:bound2}
\mathbb{P}_{\rm GOE}\left(N^{\frac{2}{3}}(\lambda_N-2)<x-N^{\frac{2}{3}} \rho-\frac{N^\e}{N^{\frac{1}{3}}t}\right)-\varphi^C\left(\frac{t}{N\rho}+\frac{1}{(N\rho)^2}+\frac{1}{N}\right)\leq\mathbb{P}_{H}\left(N^{\frac{2}{3}}(\lambda_N-2)<x\right)\\
\leq \mathbb{P}_{\rm GOE}\left(N^{\frac{2}{3}}(\lambda_N-2)<x+N^{\frac{2}{3}}\rho+\frac{N^\e}{N^{\frac{1}{3}}t}\right)+\varphi^C\left(\frac{t}{N\rho}+\frac{1}{(N\rho)^2}+\frac{1}{N}\right).
\end{multline}
Moreover, from \cite{JohMa2012} uniformly in $|x|<\varphi$ we have 
$$
\mathbb{P}_{\rm GOE}\left(N^{\frac{2}{3}}(\lambda_N-2)<x\right)=\mathbb{P}\left({\rm TW}_1<x\right)+\OO(N^{-1/2})
$$
(more precisely the main result of  \cite{JohMa2012}  gives the better error of order $N^{-2/3}$, but only for $x>-C$, and a straightforward adaptation of the proof shows the above bound).
By using this GOE result and boundedness of the density of ${\rm TW}_1$ in (\ref{eqn:bound2}),
we obtain
$$
\mathbb{P}_{H}\left(N^{\frac{2}{3}}(\lambda_N-2)<x\right)-\mathbb{P}\left({\rm TW}_1<x\right)=\OO(N^\e)
\left(N^{\frac{2}{3}}\rho+\frac{1}{N^{\frac{1}{3}}t}+\frac{t}{N\rho}+\frac{1}{(N\rho)^2}+\frac{1}{\sqrt{N}}\right).
$$
The optimal bound $N^{-2/9+\epsilon}$ is obtained for $t=N^{-1/9}$ and $\rho=N^{-8/9}$. This concludes the proof.

\subsection{Proof of Proposition \ref{t3}.}\ We closely follow the notations and reasoning from \cite[Theorem 17.4]{ErdYau2017}. 
We first fix a bijective ordering map of the index set of the independent matrix entries, $\phi:\{(i,j):1\leq i\leq j\leq N\}\to\llbracket1,\gamma(N)\rrbracket$, with $\gamma(N)=N(N+1)/2$.
Then let $H_\gamma$ be the generalized Wigner matrix whose matrix elements $h_{ij}$ follow the ${\f v}$-distribution for $\phi(i,j)\leq \gamma$, and the ${\f w}$-distribution otherwise, so that
$H^{\f v}=H_0$ and $H^{\f w}=H_{\gamma(N)}$.
By summation, it is sufficient to prove that uniformly in $\gamma\in\llbracket 1,\gamma(N)\rrbracket$ we have
\begin{equation}\label{eqn:sufficient1}
 \left|\E F\left(
 {\rm Tr}f(H_\gamma)
 \right) - \E F\left(
 {\rm Tr}f(H_{\gamma-1})\right) \right| \leq \varphi^CN^{-2}\left(\frac{t}{N\rho}+\frac{1}{(N\rho)^2}+\frac{1}{N}\right).
\end{equation}
 
Let $\chi$ be a smooth, symmetric function such that $\chi(y)=1$ if $|y|<N^{-2/3}$, $\chi(y)=0$ if $|y|>2N^{-2/3}$, $\|\chi'\|_\infty<N^{2/3}$. 
With the Helffer-Sj{\H o}strand formula, if the $\lambda_i$'s are the eigenvalues of a  matrix $H$, we have
$$
\sum f(\lambda_i)=\int_{\mathbb{C}}g(z){\rm Tr}\frac{1}{H-z}\rd^2 z,\ g(z):=\frac{1}{\pi}(\ii y f''(x)\chi(y)+\ii(f(x)+\ii y f'(x))\chi'(y)),\ z=x+\ii y,
$$
where $\rd^2 z$ is the Lebesgue measure on $\mathbb{C}$. We define
$
\Xi^H=\int_{|y|> N^{-1}}g(z){\rm Tr}(H-z)^{-1}\rd^2 z,
$
and first bound 
$$
\left|\sum f(\lambda_i)-\Xi^H\right|
\leq
\iint_{|y|<\frac{1}{N},E<x<E+\rho}|f''(x)|\sum_{i}\frac{y^2}{|\lambda_i-(x+\ii y)|^2}\rd x\rd y
\leq \int_{E<x<E+\rho}\frac{1}{\rho^2N^3}\sum_{i}\frac{\rd x}{|\lambda_i-(x+\frac{\ii}{N})|^2},
$$
where for the last inequality we used $y^2|\lambda-(x+\ii y)|^{-2}\leq N^{-2}|\lambda-(x+\ii/N)|^{-2}$.

If $i\geq N-\varphi^C$, we simply bound $\int_{\mathbb{R}} |\lambda_i-(x+\ii/N)|^{-2}\rd x\leq CN$.
If $i\leq N-\varphi^C$, with overwhelming probability we have 
$\int_{E<x<E+\rho} |\lambda_i-(x+\ii/N)|^{-2}\rd x\leq \rho|E-\gamma_i|^{-2}$. We therefore have
\begin{equation}\label{eqn:app}
\left|\sum f(\lambda_i)-\Xi^H\right|\leq \frac{\varphi^C}{(N\rho)^2} +\frac{\varphi^C}{\rho^2N^3}\sum_{i\leq N-\varphi^C} \frac{\varphi^C\rho}{|E-\gamma_i|^2}\leq \frac{\varphi^C}{(N\rho)^2}\left(1+\frac{\rho}{N}\sum_{k\geq 1}\frac{1}{(k/N)^{4/3}}\right)
=\OO\left(\frac{\varphi^C}{(N\rho)^2}\right)
\end{equation}
with overwhelming probability. 

As (\ref{eqn:app}) holds,  (\ref{eqn:sufficient1}) will be true provided that  uniformly in  $\gamma\in\llbracket 1,\gamma(N)\rrbracket$, we have
\begin{equation}\label{eqn:sufficient2}
 \left|\E F\left(
\Xi^{H_\gamma}
 \right) - \E F\left(
 \Xi^{H_{\gamma-1}}\right) \right| \leq \varphi^CN^{-2}\left(\frac{t}{N\rho}+\frac{1}{(N\rho)^2}+\frac{1}{N}\right).
\end{equation}
For this fixed $\gamma$ corresponding to $(i,j)$ ($\phi(i,j)=\gamma$), we can write
$$
H_{\gamma-1}=Q+\frac{1}{\sqrt{N}}V,\ H_{\gamma}=Q+\frac{1}{\sqrt{N}}W,\ 
$$
where $Q$ coincides with $H_{\gamma-1}$ and $H_{\gamma}$  except on the entries $(i,j)$ and $(j,i)$, where it is $0$.
We abbreviate
$$
R=\frac{1}{Q-z},\ S=\frac{1}{H_{\gamma}-z},\ \hat R=\frac{1}{N}{\rm Tr}R,\ \hat R^{(m)}_{\bv}=\frac{(-1)^m}{N}{\rm Tr}(RV)^mR,\ \Omega_{\bv}=-\frac{1}{N}{\rm Tr}(RV)^5S.
$$
Then the resolvent expansion at fifth order gives
$$
\frac{1}{N}{\rm Tr}S=\hat R+\xi_\bv,\ {\rm with}\ \xi_\bv=\sum_{m=1}^4 N^{-\frac{m}{2}}\hat R^{(m)}_\bv+N^{-\frac{5}{2}}\Omega_\bv.
$$
By Taylor expansion, we have
\begin{multline}
\E F\left(
\Xi^{H_\gamma}
 \right) - \E F\left(
 \Xi^{H_{\gamma-1}}\right)
=
\sum_{\ell=1}^3 \E \frac{F^{(\ell)}(\Xi^Q)}{\ell!}\left(( \Xi^{H_{\gamma}}-\Xi^Q)^\ell-( \Xi^{H_{\gamma-1}}-\Xi^Q)^\ell\right)\\
+
\OO(\|F^{(4)}\|_\infty)\left(
\E\left(( \Xi^{H_{\gamma}}-\Xi^Q)^4+( \Xi^{H_{\gamma-1}}-\Xi^Q)^4\right)
\right)\label{eqn:Taylor}.
\end{multline}
We first bound the above fourth order error term. For a matrix $M$ we denote
$\|M\|_\infty=\max_{i,j}|M_{ij}|$,
$\|M\|_\infty^{\rm off}=\max_{i\neq j}|M_{ij}|$ and
$\|M\|_\infty^{\rm diag}=\max_{i}|M_{ii}|$.
 
   By the first order resolvent expansion, we have (all integration domains are $|y|>N^{-1},|x|<3$, i.e. we omit the contribution from $x>3$, clearly negligible)
\begin{multline*}
|\Xi^{H_{\gamma}}-\Xi^Q|\leq N^{-1/2}\int|g(z)|\, \left|{\rm Tr}S(z)VR(z)\right|\rd^2 z\\
\leq \varphi^C N^{1/2}\int|g(z)|\,\|S(z)\|_{\infty}^{\rm off}\,\|R(z)\|_\infty\rd^2 z+
\varphi^C N^{-1/2}\int|g(z)|\,\|S(z)\|_{\infty}^{\rm diag}\,\|R(z)\|_\infty\rd^2 z,
\end{multline*}
with overwhelming probability, where we used the fact that $V$ has only two non-zero entries, of order 1.
The local law for Wigner matrices from \cite{ErdYauYin2012} states that
uniformly in any $z$ in a compact set, for any $D>0$,
\begin{equation}\label{eqn:local}
\mathbb{P}\left(\|S(z)-m(z){\rm Id}\|_\infty>\varphi^C\Psi(z)\right)\leq N^{-D},\ \ \Psi(z)=\frac{1}{Ny}+\sqrt{\frac{\im m(z)}{Ny}}.
\end{equation}
and the same estimate holds for $R(z)$.
From (\ref{eqn:imagM}) we note that $\Psi(y)<\varphi^C/(Ny)$ when $0<y<N^{-2/3}, |E-2|<\varphi^C N^{-2/3}$.
We conclude that
for any $D>0$  we have (the contribution of diagonal resolvent entries is negligible, and we can also a priori omit the domain $|x-2|>\varphi^{C}N^{-2/3}$ by rigidity)
$$
\Xi^{H_{\gamma}}-\Xi^Q=\OO(\varphi^C)N^{1/2}\int_{N^{-1}<|y|<N^{-2/3},|x-2|<\varphi^CN^{-\frac{2}{3}}}\frac{|g(z)|}{(Ny)^2}\rd^2 z=\OO\left(\frac{\varphi^C}{N^{3/2}\rho}\right)
$$
with probability at least $1-N^{-D}$ for $N>N_0(D)$. As a consequence, 
$\E\left(( \Xi^{H_{\gamma}}-\Xi^Q)^4\right)=\OO(\varphi^C/(N^6\rho^4))$ and the same result holds for $\E\left(( \Xi^{H_{\gamma-1}}-\Xi^Q)^4\right)$. The fourth order term in 
(\ref{eqn:Taylor}) can therefore be bounded with $\varphi^C N^{-2}(N\rho)^{-4}\leq \varphi^C N^{-2}(N\rho)^{-2}$ for $\rho\geq N^{-1}$.

Consider now the linear $\ell=1$ term in  (\ref{eqn:Taylor}).
We have
\begin{equation}\label{eqn:error1}
\E F'(\Xi^Q)\left(\Xi^{H_{\gamma}}- \Xi^{H_{\gamma-1}}\right)
=
\E F'(\Xi^Q)\int g(z)
\left(
\sum_{m=1}^4 N^{-\frac{m}{2}+1}(\hat R^{(m)}_\bv-\hat R^{(m)}_\bw)
+N^{-\frac{3}{2}}(\Omega_\bv-\Omega_\bw)\right)\rd^2 z.
\end{equation}
The first three moments associated to $\bv$ and $\bw$ match, so the cases $m=1,2,3$ in the above formula gives null contribution.

For $m=4$, as the fourth moments for $\bv$ and $\bw$ differ by $t$, we have ($\E$ below just refers to the expectation on $V$, $W$)
\begin{equation}\label{eqn:interm1}
\E N\left(\hat R^{(4)}_\bv-\hat R^{(4)}_\bw\right)
=
\E {\rm Tr}\left(
(RV)^4R-(RW)^4R
\right)
=\OO(Nt)(\max_{i\neq j}|R_{ij}|)^2(\max_i|R_{ii}|)^3,
\end{equation}
where we have used that in the expansion $${\rm Tr}
(RV)^4R=\sum_k\sum_{\{a_p,b_p\}=\{i,j\}}R_{ka_1}v_{a_1b_1}R_{b_1a_2}v_{a_2b_2}R_{b_2a_3}v_{a_3b_3}
R_{b_3a_4}v_{a_4b_4}R_{b_4k},
$$
typically we have $a_1\neq k$, $b_4\neq k$, but we may have  $b_1=a_2$, $b_2=a_3$, $b_3=a_4$. More precisely the contribution of $k$'s which are either $i$ or $j$ is  combinatorially negligible: we  omit this case here and in the following. 

As mentioned after (\ref{eqn:local}), in the domain $N^{-1}<y<2N^{-2/3},|x-2|<\varphi^CN^{-\frac{2}{3}}$ we have
$\max_{i\neq j}|R_{ij}|<\frac{\varphi^C}{Ny}$
and $\max_i|R_{ii}|<\varphi^C$, so that
$$
\E F'(\Xi^Q)\int g(z)
N^{-\frac{4}{2}+1}(\hat R^{(m)}_\bv-\hat R^{(m)}_\bw)
\rd^2 z
=
\OO\left(\varphi^C\frac{t}{N^2}\right)\int\frac{|g(z)|}{Ny^2}\rd^2 z=\OO\left(\frac{\varphi^C}{N^2}\frac{t}{N\rho}\right),$$
where all integration domains are $N^{-1}<|y|<N^{-2/3},|x-2|<\varphi^CN^{-\frac{2}{3}}$.

For the term $\Omega_{\bv}-\Omega_{\bw}$ in (\ref{eqn:error1}), we don't use any cancellation between $\bv$ and $\bw$.
As for (\ref{eqn:interm1}), an expansion and the local law give
$
\Omega_{\bv}=\OO(\varphi^C(Ny)^{-2})$,
so that
$$
\E F'(\Xi^Q)\int g
N^{-\frac{3}{2}}\Omega_\bv
\rd^2 z
=
\OO(\varphi^CN^{-\frac{3}{2}})\int\frac{|g(z)|}{(Ny)^2}\rd^2 z=
\OO\left(\frac{\varphi^C}{N^2}\frac{1}{N^{3/2}\rho}\right)
=
\OO\left(\frac{\varphi^C}{N^2}\right)\left(\frac{1}{(N\rho)^2}+\frac{1}{N}\right),
$$
where we integrate on $N^{-1}<|y|<N^{-2/3},|x-2|<\varphi^CN^{-\frac{2}{3}}$ again.

In sum, with the above two equations we proved that the $\ell=1$ term in (\ref{eqn:Taylor}) is bounded by the 
right-hand side of (\ref{eqn:sufficient2}). Similar perturbative expansions show that the $\ell=2,3$ contributions are of smaller order, similarly to the proof of \cite[Theorem 17.4]{ErdYau2017}. The detail are left to the reader. This concludes the proof of (\ref{eqn:sufficient2}) and of Proposition \ref{t3}.

\nc
\setcounter{equation}{0}
\setcounter{theorem}{0}
\renewcommand{\theequation}{A.\arabic{equation}}
\renewcommand{\thetheorem}{A.\arabic{theorem}}
\appendix
\setcounter{secnumdepth}{0}
\section[Appendix]{Appendix}

\begin{lemma}\label{lem:A1}
There is a universal constant $c$ such that for any $z=z_{t-s}$ as in (\ref{eqn:boundsup}), any $j$ and $k_1,k_2\in I_j$, we have
$$
c|z-{\gamma_{k_2}}|\leq |z-\gamma_{k_1}|\leq c^{-1}|z-\gamma_{k_2}|.
$$
\end{lemma}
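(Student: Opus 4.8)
The plan is to prove a two-sided comparison of $|z-\gamma_{k_1}|$ and $|z-\gamma_{k_2}|$ for two indices lying in the same block $I_j=\llbracket k_j,k_{j+1}\rrbracket$, where $k_j=\lfloor j\varphi^2\rfloor$, so that consecutive classical locations in a block are separated by at most $O(\varphi^2/N)$ in a bulk-scale sense. First I would recall two basic facts about the classical locations $\gamma_k$ defined by $(\ref{eqn:gamma})$: (a) the local spacing satisfies $\gamma_{k+1}-\gamma_k \sim N^{-2/3}(\hat k)^{-1/3}$ (this follows from the square-root vanishing of $\rho$ at $\pm 2$ and boundedness away from zero in the bulk), so that for $k_1,k_2\in I_j$ we have $|\gamma_{k_1}-\gamma_{k_2}|\leq \varphi^2 \max_{k\in I_j}(\gamma_{k+1}-\gamma_k)\leq C\varphi^2 N^{-2/3}(\hat k_j)^{-1/3}$; and (b) for $z=z_{t-s}$ with $z_0\in\mathscr{S}$, Lemma \ref{lem:zt} gives $\im z \geq \im z_0 \gtrsim \varphi^2/(N\kappa(E_0)^{1/2}) \sim \varphi^2 N^{-2/3}(\hat k)^{-1/3}$ where $E_0=\re z_0$ is comparable to $\gamma_{k_j}$ (and in fact $\im(z_{t-s}-z_0)\gtrsim \kappa(E_0)^{1/2}(t-s)\geq 0$ only helps). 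The point is that the \emph{imaginary part} of $z$ dominates the spacing between two $\gamma$'s in the same block, up to the $\varphi^2$ factor, which is where the universal constant will come from after I absorb $\varphi^2$ — wait, that is exactly the subtlety.

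The key step is therefore the following chain of inequalities. Write $|z-\gamma_{k_1}| \leq |z-\gamma_{k_2}| + |\gamma_{k_1}-\gamma_{k_2}|$. By (a) the second term is at most $C\varphi^2 N^{-2/3}(\hat k_j)^{-1/3}$. I need this to be $\leq C'|z-\gamma_{k_2}|$. Since $|z-\gamma_{k_2}|\geq \im z$, it suffices that $\im z \gtrsim \varphi^2 N^{-2/3}(\hat k_j)^{-1/3}$. For $z=z_0\in\mathscr{S}$ this is exactly the definition $y=\varphi^2/(N\kappa(E)^{1/2})$ together with $\kappa(E)^{1/2}\sim (\hat k/N)^{1/3}$ and the fact that for $k_2\in I_j$, $\kappa(\gamma_{k_2})\sim\kappa(E_0)$ (consecutive bulk-or-edge indices in a $\varphi^2$-block have comparable $\kappa$, since $\kappa$ varies on scale $1$ away from the edge and the block has width $\ll N^{2/3}$ in the relevant units). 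For $z=z_{t-s}$ with $s\le t$, Lemma \ref{lem:zt} shows $\im z_{t-s}\geq \im z_0$ (the imaginary part only increases along characteristics run backward from $\mathscr{S}$, as $\im(z_{t-s}-z_0)\sim \kappa(z_0)^{1/2}(t-s)\ge 0$), so the same lower bound persists. Hence $\im z \geq c^{-1}\varphi^2 N^{-2/3}(\hat k_j)^{-1/3}\geq c^{-1}|\gamma_{k_1}-\gamma_{k_2}|$ for a universal $c$, giving $|\gamma_{k_1}-\gamma_{k_2}|\leq c\,\im z\leq c|z-\gamma_{k_2}|$ and therefore $|z-\gamma_{k_1}|\leq (1+c)|z-\gamma_{k_2}|$. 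By symmetry in $k_1\leftrightarrow k_2$ the reverse inequality holds, and rescaling the constant yields the stated $c|z-\gamma_{k_2}|\leq|z-\gamma_{k_1}|\leq c^{-1}|z-\gamma_{k_2}|$.

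The main obstacle — really the only one requiring care — is making precise the claim that $\kappa(\gamma_k)$ and $\kappa(E)$ (equivalently the local scale $N^{-2/3}(\hat k)^{-1/3}$) are comparable across a whole block $I_j$, \emph{including} blocks near the edge where the block may straddle indices with very different $\hat k$. Here one uses that $\mathscr{S}$ is truncated at $|E|<2-\varphi^2 N^{-2/3}$, so $\hat k_j \gtrsim \varphi^2$ for every relevant block; for such blocks $\hat k_{j+1}/\hat k_j = 1+O(\varphi^2/\hat k_j)=1+O(1)$, so $(\hat k)^{-1/3}$ is constant up to a universal factor on $I_j$, and $\kappa(\gamma_k)\sim(\hat k/N)^{2/3}$ inherits the same comparability. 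One should also note the complementary regime where $z$ is in the fixed bulk $[-2+\kappa,2-\kappa]\times[0,\kappa^{-1}]$ (the last case of Lemma \ref{lem:zt}), where everything is even easier since $\kappa(z)\sim 1$ and spacings are $\sim N^{-1}\ll\varphi^2 N^{-1}\lesssim \im z$. Putting these together, the bound $|\gamma_{k_1}-\gamma_{k_2}|\lesssim \im z$ holds uniformly over all $z=z_{t-s}$ with $z_0$ on the relevant part of $\mathscr{S}$, which is precisely what $(\ref{eqn:boundsup})$ requires, completing the proof.
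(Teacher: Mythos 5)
Your overall strategy --- bound the width of a block $I_j$ by the local eigenvalue scale, show that $z$ is at least that far from the block, and conclude by the triangle inequality --- is the same as the paper's. The gap is in the specific lower bound you use for $|z-\gamma_{k_2}|$, namely $\im z \geq \im z_0 = \varphi^2/(N\kappa(E_0)^{1/2})$ with $E_0=\re z_0$, combined with the assertions that ``$E_0$ is comparable to $\gamma_{k_j}$'' and ``$\kappa(\gamma_{k_2})\sim\kappa(E_0)$.'' Neither is given: in (\ref{eqn:boundsup}) the point $z^{(m)}$ is fixed while $j$ ranges over \emph{all} blocks $0\leq j\leq N/\varphi^2$, so the block may sit at a spectral edge while $E_0$ lies deep in the bulk. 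The justification you offer (comparability of $\kappa$ across a single block) only yields $\kappa(\gamma_{k_1})\sim\kappa(\gamma_{k_2})$, not $\kappa(\gamma_{k_2})\sim\kappa(E_0)$. The failure is quantitative, not cosmetic: $\mathscr{S}$ dips to height $\sim\varphi^2/N$ over the bulk, whereas a block with $\hat k_j\lesssim\varphi^2$ has width $\sim(\varphi^2/N)^{2/3}=\varphi^{4/3}N^{-2/3}$; since $\kappa(E_0)>\varphi^2N^{-2/3}$ everywhere on $\mathscr{S}$, one has $\im z_0\leq\varphi N^{-2/3}<\varphi^{4/3}N^{-2/3}$ for \emph{every} admissible $z_0$, so the vertical component alone can never control the extreme edge blocks (and the lemma must hold at $s=t$, where $z=z_0$).

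To close the gap you need the horizontal separation in that regime: when $\kappa(\gamma_{k_2})\ll\kappa(E_0)$ one has $|\re z_0-\gamma_{k_2}|\geq\kappa(E_0)-\kappa(\gamma_{k_2})\gtrsim\varphi^2N^{-2/3}$, which dominates the edge-block width $\varphi^{4/3}N^{-2/3}$; one must also account for the drift of $\re z_{t-s}$ away from $E_0$, which by Lemma \ref{lem:zt} is accompanied by a comparable growth of $\im z_{t-s}$. The paper sidesteps this case analysis by bounding the block width by $C\,{\rm dist}(\gamma_{k_1},\mathscr{S})$ --- the full two-dimensional distance from $\gamma_{k_1}$ to the curve --- and then using (implicitly) that every $z\in\mathscr{R}$ satisfies $|z-\gamma_{k_1}|\geq c\,{\rm dist}(\gamma_{k_1},\mathscr{S})$, since the characteristics move away from $[-2,2]$. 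That distance is governed by the height of $\mathscr{S}$ \emph{above the block} (not above $E_0$), which is exactly the quantity that controls the block width uniformly in the location of the block relative to $z$.
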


\begin{proof}
For any $k\leq \frac{3}{4}N$ we have $\gamma_{k+\lfloor\varphi^2\rfloor}-\gamma_k\leq C \frac{\varphi^2}{N \kappa(E)^{1/2}}\leq C {\rm dist}(\gamma_k,\mathscr{S})$ (indeed $\gamma_{k+\lfloor \varphi^2\rfloor}-\gamma_k\leq C (N^{-1}\varphi^2)^{2/3}$ if $k\leq \varphi^2$
and $\gamma_{k+\lfloor \varphi^2\rfloor}-\gamma_k\leq C (N^{-2/3}\varphi^2)k^{-\frac{1}{3}}$ if $\varphi^2\leq k\leq \frac{3}{4}N$). 
This implies in particular that $|\gamma_{k_1}-\gamma_{k_2}|\leq C {\rm dist}(\gamma_{k_1},\mathscr{S})$, which concludes the proof.
\end{proof}

\begin{lemma}\label{lem:A2}
For any $z\in\mathscr{S}$, we have
$$\frac{\varphi^{4}}{N^2}
\int_0^{t}\rd s\int_{-2}^2\frac{\rd \rho(x)}{|z_{t-s}-x|^4\max(\kappa(x),s^2)}
\leq
C\frac{\kappa(E)}{\max(\kappa(E),t^2)}.
$$
\end{lemma}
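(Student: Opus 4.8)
The plan is to integrate out the variable $x$ first, reducing the claim to a one-dimensional integral in $s$, and then to estimate that integral after splitting $[0,t]$ into a few ranges dictated by the behaviour of the characteristic $z_{t-s}$. Write $w=w(s)=z_{t-s}$ and recall that on $\mathscr{S}$ one has $\kappa(E)>\varphi^2N^{-2/3}$, $a(z)=\im z=\varphi^2/(N\kappa(E)^{1/2})$ and $b(z)\sim\kappa(E)$. Starting from $z_\tau=z\cosh(\tau/2)+\sqrt{z^2-4}\,\sinh(\tau/2)$ (which is $(\ref{eqn:zteq})$) together with $(\ref{eqn:imagM})$ and Lemma \ref{lem:zt}, I would first record the two facts
$$
\im z_{t-s}\sim\eta_s:=\frac{\varphi^2}{N\kappa(E)^{1/2}}+\kappa(E)^{1/2}(t-s),
\qquad
\kappa(\re z_{t-s})\sim\Bigl|\kappa(E)-\tfrac14(t-s)^2\Bigr|+\frac{(t-s)\varphi^2}{N\kappa(E)},
$$
so that $\re z_{t-s}$ stays inside $[-2,2]$ while $t-s\lesssim\kappa(E)^{1/2}$ and then crosses the edge, ending up at distance $\sim(t-s)^2$ from $\{-2,2\}$ once $t-s\gg\kappa(E)^{1/2}$.

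For the inner integral I would use $|w-x|^{-4}\le(\im w)^{-3}\,\im(x-w)^{-1}$, the bound $\rd\rho(x)\lesssim\sqrt{\kappa(x)}\,\rd x$, and the fact that the Poisson kernel $\im(x-w)^{-1}$ concentrates on $\{\,|x-\re w|\lesssim\im w\,\}$, on which $\kappa(x)\sim\kappa(\re w)+\im w$. This turns $\int_{-2}^2\rd\rho(x)\,|z_{t-s}-x|^{-4}\max(\kappa(x),s^2)^{-1}$ into an explicit one-dimensional integral in the edge variable $\kappa_x$, whose value is $\lesssim\sqrt{\kappa(E)}\,(\im z_{t-s})^{-3}\max(\kappa(E),s^2)^{-1}$ when $z_{t-s}$ is still within $\OO(\kappa(E))$ of $E$, and which is smaller once $z_{t-s}$ has left the spectrum, because there $|z_{t-s}-x|^4\gtrsim(t-s)^8$ and, crucially, $\max(\kappa_x,s^2)=s^2$ on the bulk of the $\kappa_x$-range.

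It then remains to bound $\frac{\varphi^4}{N^2}$ times the $s$-integral of the above by $\kappa(E)/\max(\kappa(E),t^2)$. If $t^2\le\kappa(E)$ then $\re z_{t-s}$ never approaches the edge, $\max(\kappa(x),s^2)=\kappa(E)$ throughout, the right-hand side is $\sim1$, and after $\tau=t-s$ the left-hand side is $\le\frac{\varphi^4}{N^2\kappa(E)^{1/2}}\int_0^t(c_0+c_1\tau)^{-3}\rd\tau$ with $c_0=\varphi^2/(N\kappa(E)^{1/2})$, $c_1=\kappa(E)^{1/2}$, which is at most $\frac{\varphi^4}{N^2\kappa(E)^{1/2}}\cdot\frac{1}{2c_1c_0^2}=\tfrac12$. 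If $t^2>\kappa(E)$, so the target is the small quantity $\kappa(E)/t^2$, I would split $[0,t]$ into $\{t-s\lesssim\kappa(E)^{1/2}\}$ (there $\re z_{t-s}$ is still near $E$, $\max(\kappa(x),s^2)=s^2\sim t^2$, and the $\tau$-integral is $\lesssim\frac{\varphi^4}{N^2}\cdot\frac{\sqrt{\kappa(E)}}{t^2}\cdot\frac{1}{c_1c_0^2}=\OO(\kappa(E)/t^2)$), the transition window $\{t-s\sim2\kappa(E)^{1/2}\}$ (where $\im z_{t-s}\sim\kappa(E)$, $\kappa(\re z_{t-s})$ vanishes linearly and $\int\sqrt{\kappa(\re z_{t-s})}\,\rd\tau\lesssim\kappa(E)$), and $\{t-s\gg\kappa(E)^{1/2}\}$ (where $|z_{t-s}-x|^4\gtrsim(t-s)^8$ and the $\kappa_x$-integral is $\lesssim(t-s)^3/s^2$ or $\lesssim t-s$ according to whether $s\le t-s$ or not). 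In every sub-range the contribution comes out as $\OO\bigl(\frac{\varphi^4}{N^2\kappa(E)^3}\bigr)\cdot\frac{\kappa(E)}{t^2}$, and $\frac{\varphi^4}{N^2\kappa(E)^3}\le1$ thanks to $\kappa(E)>\varphi^2N^{-2/3}$.

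The hard part is precisely the window where the characteristic $\re z_{t-s}$ passes through and beyond the spectral edges $\pm2$: there $\kappa(\re z_{t-s})$, which governs the local eigenvalue density seen by $z_{t-s}$, degenerates, so one cannot afford the crude bound $|z_{t-s}-x|^{-4}\max(\kappa(x),s^2)^{-1}\lesssim(\im z_{t-s})^{-4}$ and must instead keep both the floor $\max(\kappa(x),s^2)\ge s^2$ and the lower bound $\kappa(E)>\varphi^2N^{-2/3}$ built into the definition of $\mathscr{S}$. Everywhere else the estimate collapses to the elementary moment bound $\int_0^t(c_0+c_1\tau)^{-3}\rd\tau\lesssim(c_1c_0^2)^{-1}$.
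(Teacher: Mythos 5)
Your overall strategy is the same as the paper's: bound the inner $x$-integral by a Stieltjes-transform/Poisson-kernel estimate, then split the $s$-integral according to whether the characteristic $z_{t-s}$ is still near $E$, crossing the edge, or far beyond it, and close the estimate using $\kappa(E)>\varphi^2N^{-2/3}$ from the definition of $\mathscr{S}$. The case $t^2\leq\kappa(E)$ and the range $t-s\lesssim\kappa(E)^{1/2}$ are handled correctly and reproduce the paper's computation.

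There is, however, a genuine gap in the range $t-s\gg\kappa(E)^{1/2}$ at the endpoint $s\to0$. Your stated bound for the $\kappa_x$-integral when $s\leq t-s$, namely $(t-s)^3/s^2$, comes from applying the floor $\max(\kappa(x),s^2)\geq s^2$ over the whole relevant range $\kappa_x\lesssim(t-s)^2$; combined with $|z_{t-s}-x|^4\gtrsim(t-s)^8$ this gives an integrand of order $(t-s)^{-5}s^{-2}$, and $\int_0 s^{-2}\,\rd s$ diverges, so the claimed conclusion $\OO\bigl(\varphi^4N^{-2}\kappa(E)^{-3}\bigr)\cdot\kappa(E)/t^2$ does not follow for that sub-range. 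Near $s=0$ the floor $s^2$ is vacuous except for the $x$'s within $s^2$ of an edge, and it is the other floor, $\max(\kappa(x),s^2)\geq\kappa(x)$, that rescues the integral: since $\rd\rho(x)/\kappa(x)\sim\kappa(x)^{-1/2}\rd x$ is integrable, one gets $\int_{-2}^2\rd\rho(x)/(|w-x|^4\kappa(x))\lesssim a(w)^{-2}\eta(w)^{-1}\kappa(w)^{-1/2}$ with no $s$-dependence at all. This is precisely what the paper does on $s\in[0,\kappa(E)^{1/2}]$ (the first term of its decomposition, estimated via its second displayed inequality). Your closing paragraph makes the same mistake in prose by asserting that in the hard window one must keep the floor $\max(\kappa(x),s^2)\geq s^2$; for the portion with $s\lesssim\kappa(E)^{1/2}$ the floor you must keep is $\kappa(x)$. (As a secondary point, your case split is reversed even where it converges: for $s\leq t-s$ the correct bound, using $\kappa_x$ on $\kappa_x\geq s^2$, is $\lesssim t-s$, whereas $(t-s)^3/s^2$ is the bound appropriate to $s>t-s$.)
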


\begin{proof}
We denote $z=E+\ii y$ and abbreviate $\eta(w)=\im(w)$.

Assume first that $t>2\kappa(E)^{1/2}$. 
We decompose the above integral into
\begin{equation}\label{eqn:split}
\int_0^{\kappa(E)^{1/2}}\rd s\int_{-2}^2\frac{\rd \rho(x)}{|z_{t-s}-x|^4\kappa(x)}
+
\int_{{\kappa(E)^{1/2}}}^{t-\kappa(E)^{1/2}}\rd s\int_{-2}^2\frac{\rd \rho(x)}{|z_{t-s}-x|^4 s^2}
+
\int_{{t-\kappa(E)^{1/2}}}^t\rd s\int_{-2}^2\frac{\rd \rho(x)}{|z_{t-s}-x|^4 s^2}.
\end{equation}
To evaluate the above terms, we can restrict our attention to $w$ such that assume $\re w,\im w>0$ and note that (remember $a(w)=d(w,[-2,2])$)
\begin{align*}
&\int \frac{\rd \rho(x)}{|w-x|^4}\leq \frac{1}{a(w)^2\eta(w)}\im m_{\rm sc}(w)\sim\frac{1}{a(w)^2\eta(w)}\left(\kappa(w)^{1/2}\mathds{1}_{\re w<2}+\frac{\eta(w)}{\kappa(w)^{1/2}}\mathds{1}_{\re w>2}\right),\\
&\int_{-2}^2\frac{\rd \rho(x)}{|w-x|^4\kappa(x)}\sim\frac{1}{a(w)^2\eta(w)}\im \int_{-2}^2\frac{\rd x}{(w-x)\kappa(x)^{1/2}}\leq\frac{C}{a(w)^2\eta(w)\kappa(w)^{1/2}},
\end{align*}
where in the last line we used $\int_0^\infty\frac{\rd x}{(w-x)\sqrt{x}}=\pi w/(-w)^{3/2}$.

The first term in (\ref{eqn:split}) is of order at most (we use Lemma \ref{lem:zt} to estimate $a(z_t)$, $\eta(z_t)$ and $\kappa(z_t)$)
\begin{align*}
&\kappa(E)^{1/2}\int_{-2}^2\frac{\rd \rho(x)}{|z_t-x|^4\kappa(x)}\leq 
\kappa(E)^{1/2}\frac{1}{a(z_t)^2\eta(z_t)\kappa(z_t)^{1/2}}
\leq
\frac{\kappa(E)}{yt^6}.
\end{align*}
The is negligible if $\varphi^4\kappa/(yN^2 t^6)<C \kappa/t^2$, which is true for $t>2 \kappa^{1/2}$ and $\kappa>\varphi^2 N^{-2/3}$.

The second term in (\ref{eqn:split}) is bounded by
$$
\int_{{\kappa(E)^{1/2}}}^{t-\kappa(E)^{1/2}}\frac{\rd s}{\kappa(z_{t-s})^{5/2}s^2}
\leq \int_{{\kappa(E)^{1/2}}}^{t-\kappa(E)^{1/2}}\frac{\rd s}{(t-s)^{5}s^2}\leq \frac{C}{t^2\kappa^2},
$$
negligible provided $\varphi/(N t\kappa)\leq C\kappa^{1/2}/t$, true as $\kappa>\varphi^2 N^{-2/3}$.

Finally, the last term is at most
$$
\int_{t-\kappa(E)^{1/2}}^t\frac{\kappa(z_{t-s})^{1/2}}{\eta(z_{t-s})^3s^2}\rd s 
\leq \int_{t-\kappa(E)^{1/2}}^t\frac{\kappa(E)^{1/2}}{(y+\kappa(E)^{1/2}(t-s))^{3}s^2}\leq \frac{1}{t^2}\int_0^{\kappa(E)^{1/2}}\frac{\kappa(E)^{1/2}\rd s}{(y+\kappa(E)^{1/2}s)^{3}}\leq \frac{1}{t^2 y^2},
$$
which clearly is negligible provided $\frac{\varphi}{Nyt}\leq C\kappa^{1/2}/t$, which holds as $y=\varphi^2/(N\kappa^{1/2})$. This concludes the case $t>2\kappa(E)^{1/2}$.

If $t<2\kappa(E)^{1/2}$, our integral is bounded by
$$
\int_{0}^t\rd s\int_{-2}^2\frac{\rd \rho(x)}{|z_{t-s}-x|^4 \kappa(x)}\leq 
\int_{0}^t\frac{C}{\eta(z_{t-s})^3\kappa(z_{t-s})^{1/2}}\rd s 
\leq
\int_{0}^t\frac{C}{(y+s\kappa(E)^{1/2})^3\kappa(E)^{1/2}}\rd s 
\leq \frac{C}{y^2\kappa(E)}.
$$
This term is negligible because $y=\varphi^2/(N\kappa(E)^{1/2})$.
\end{proof}

\begin{bibdiv}
\begin{biblist}

\bib{AdhHua2018}{article}{
   author={Adhikari, A.},
   author={Huang, J.},
   title={Dyson Brownian Motion for General $\beta$ and Potential at the Edge},
   journal={prepublication arXiv:1810.08308},
   date={2018}
}

\bib{AllBunBou2014}{article}{
   author={Allez, R.},
   author={Bun, J.},
   author={Bouchaud, J.-P.},
   title={},
   journal={prepublication, arXiv:1412.7108},
   date={2014}
}

\bib{ArgBelBou2017}{article}{
   author={Arguin, L.-P.},
   author={Belius, D.},
   author={Bourgade, P.},
   title={Maximum of the characteristic polynomial of random unitary
   matrices},
   journal={Comm. Math. Phys.},
   volume={349},
   date={2017},
   number={2},
   pages={703--751}
}

\bib{BenBou2013}{article}{
   author={Ben Arous, G.},
   author={Bourgade, P.},
   title={Extreme gaps between eigenvalues of random matrices},
   journal={Ann. Probab.},
   volume={41},
   date={2013},
   number={4},
   pages={2648--2681}
}

\bib{Ben2017}{article}{
   author={Benigni, L.},
   title={Eigenvectors distribution and quantum unique ergodicity for deformed Wigner matrices},
   journal={to appear in Ann. Inst. Henri Poincar\'e},
   date={2017}
}

\bib{BloBouRadRud2017}{article}{
   author={Blomer, V.},
   author={Bourgain, J.},
   author={Radziwi\l \l , M.},
   author={Rudnick, Z.},
   title={Small gaps in the spectrum of the rectangular billiard},
   language={English, with English and French summaries},
   journal={Ann. Sci. \'{E}c. Norm. Sup\'{e}r. (4)},
   volume={50},
   date={2017},
   number={5},
   pages={1283--1300}
}

\bib{BooHiaKea2015}{article}{
   author={Booker, A. R.},
   author={Hiary, G. A.},
   author={Keating, J. P.},
   title={Detecting squarefree numbers},
   journal={Duke Math. J.},
   volume={164},
   date={2015},
   number={2},
   pages={235--275}
}

\bib{BouErdYau2014}{article}{
   author={Bourgade, P.},
   author={Erd\"os, L.},
   author={Yau, H.-T.},
   title={Edge universality of beta ensembles},
   journal={Comm. Math. Phys.},
   volume={332},
   date={2014},
   number={1},
   pages={261--353}
}

\bib{BouErdYauYin2016}{article}{
   author={Bourgade, P.},
   author={Erd\H os, L.},
   author={Yau, H.-T.},
   author={Yin, J.},
   title={Fixed energy universality for generalized Wigner matrices},
   journal={Comm. Pure Appl. Math.},
   volume={69},
   date={2016},
   number={10},
   pages={1815--1881}
}

\bib{BouMod2018}{article}{
   author={Bourgade, P.},
   author={Mody, K.},
   title={The determinant of Wigner Matrices: central limit theorem},
   journal={preprint},
   date={2018}
}

\bib{BouYau2017}{article}{
   author={Bourgade, P.},
   author={Yau, H.-T.},
   title={The Eigenvector Moment Flow and local Quantum Unique Ergodicity},
   journal={Commun. Math. Phys.},
   pages={231--278},
   number={350}
   date={2017}
}

\bib{PartI}{article}{
      author={Bourgade, P.},
      author={Yau, H.-T.},
      author={Yin, J.},
       title={Random band matrices in the delocalized phase, {I}: Quantum
  unique ergodicity and universality},
        date={2018},
     journal={prepublication},
}

\bib{BuiMil2018}{article}{
   author={Bui, H. M.},
   author={Milinovich, M. B.},
   title={Gaps between zeros of the Riemann zeta-function},
   journal={Q. J. Math.},
   volume={69},
   date={2018},
   number={2},
   pages={403--423}
}

\bib{ChhMadNaj2018}{article}{
   author={Chhaibi, R.},
   author={Madaule, T.},
   author={Najnudel, J.},
   title={On the maximum of the ${\rm C}\beta {\rm E}$ field},
   journal={Duke Math. J.},
   volume={167},
   date={2018},
   number={12},
   pages={2243--2345}
}

\bib{DeiTro2017}{article}{
   author={Deift, P.},
   author={Trogdon, T.},
   title={Universality in numerical computation with random data. Case studies, analytic results and some speculations},
   journal={Abelsymposium 2016: Computation and Combinatorics in Dynamics, Stochastics and Control},
   date={2019},
   pages={221--231}
}

\bib{Dia2003}{article}{
   author={Diaconis, P.},
   title={Patterns in eigenvalues: the 70th Josiah Willard Gibbs lecture},
   journal={Bull. Amer. Math. Soc. (N.S.)},
   volume={40},
   date={2003},
   number={2},
   pages={155--178}
}

\bib{Dys1962}{article}{
   author={Dyson, F. J.},
   title={A Brownian-motion model for the eigenvalues of a random matrix},
   journal={J. Mathematical Phys.},
   volume={3},
   date={1962},
   pages={1191--1198}
}

\bib{ErdPecRamSchYau2010}{article}{
   author={Erd\H os, L.},
   author={P\'ech\'e, S.},
   author={Ram\'irez, A.},
   author={Schlein, B.},
   author={Yau, H.-T.},
   title={Bulk universality for Wigner matrices},
   journal={CPAM},
   volume={63},
   date={2010},
   number={7}
}

\bib{ErdSchYau2011}{article}{
   author={Erd\H os, L.},
   author={Schlein, B.},
   author={Yau, H.-T.},
   title={Universality of random matrices and local relaxation flow},
   journal={Invent. Math.},
   volume={185},
   date={2011},
   number={1},
   pages={75--119}
}

\bib{ErdSchYau2010}{article}{
   author={Erd\H os, L.},
   author={Schlein, B.},
   author={Yau, H.-T.},
   title={Wegner estimate and level repulsion for Wigner random matrices},
   journal={Int. Math. Res. Not. IMRN},
   date={2010},
   number={3},
   pages={436--479}
}

\bib{ErdYau2015}{article}{
      author={Erd{\H{o}}s, L.},
      author={Yau, H.-T.},
       title={Gap universality of generalized Wigner and beta ensembles},
        date={2015},
       journal={J. Eur. Math. Soc. },
       volume={17}, 
       pages={1927\ndash 2036}
}

\bib{ErdYau2017}{book}{
   author={Erd{\H{o}}s, L.},
   author={Yau, H.-T.},
   title={A dynamical approach to random matrix theory},
   series={Courant Lecture Notes in Mathematics},
   volume={28},
   publisher={Courant Institute of Mathematical Sciences, New York; American
   Mathematical Society, Providence, RI},
   date={2017},
   pages={ix+226}
}

\bib{EYYBernoulli}{article}{
      author={Erd{\H{o}}s, L.},
      author={Yau, H.-T.},
      author={Yin, J.},
       title={Universality for generalized {W}igner matrices with {B}ernoulli
  distribution},
        date={2011},
     journal={J. Comb.},
      volume={2},
      number={1},
       pages={15\ndash 81},
}

\bib{ErdYauYin2012Univ}{article}{
      author={Erd{\H{o}}s, L.},
      author={Yau, H.-T.},
      author={Yin, J.},
       title={Bulk universality for generalized {W}igner matrices},
        date={2012},
     journal={Probab. Theory Related Fields},
      volume={154},
      number={1-2},
       pages={341\ndash 407},
}

\bib{ErdYauYin2012}{article}{
   author={Erd\H os, L.},
   author={Yau, H.-T.},
   author={Yin, J.},
   title={Rigidity of eigenvalues of generalized Wigner matrices},
   journal={Adv. Math.},
   volume={229},
   date={2012},
   number={3},
   pages={1435--1515}
}

\bib{FacVivBir2016}{article}{
   author={Facoetti, D.},
   author={Vivo, P.},
   author={Biroli, G.},
   title={From non-ergodic eigenvectors to local resolvent statistics and back: A random matrix perspective},
   journal={Europhysics Letters},
   volume={115},
   date={2016}
}

\bib{FengWei2018I}{article}{
   author={Feng, R.},
   author={Wei, D.},
   title={Small gaps of circular $\beta$-ensemble},
   journal={prepublication},
   date={2018}
}

\bib{FengWei2018II}{article}{
   author={Feng, R.},
   author={Wei, D.},
   title={Large gaps of CUE and GUE},
   journal={prepublication},
   date={2018}
}

\bib{FengWei2018III}{article}{
   author={Feng, R.},
   author={Tian, G.},
   author={Wei, D.},
   title={Small gaps of GOE},
   journal={prepublication},
   date={2018}
}

\bib{FigGui2016}{article}{
   author={Figalli, A.},
   author={Guionnet, A.},
   title={Universality in several-matrix models via approximate transport
   maps},
   journal={Acta Math.},
   volume={217},
   date={2016},
   number={1},
   pages={81--176}
}

\bib{FyoHiaKea2012}{article}{
   author={Fyodorov, Y.},
   author={Hiary, G.},
   author={Keating, J.},
   title={Freezing Transition, Characteristic Polynomials of Random Matrices, and the Riemann Zeta Function},
   journal={Physical Review Letters},
   volume={108},
   date={2012}
}

\bib{Gus2005}{article}{
   author={Gustavsson, J.},
   title={Gaussian fluctuations of eigenvalues in the GUE},
   language={English, with English and French summaries},
   journal={Ann. Inst. H. Poincar\'{e} Probab. Statist.},
   volume={41},
   date={2005},
   number={2},
   pages={151--178}
}

\bib{HuaLan2019}{article}{
   author={Huang, J.},
   author={Landon, B.},
   title={Rigidity and a mesoscopic central limit theorem for Dyson Brownian motion for general $\beta$ and potentials},
   journal={Probab. Theory Relat. Fields},
   volume={175},
   date={2019}
}

\bib{JohMa2012}{article}{
   author={Johnstone, I. M.},
   author={Ma, Z.},
   title={Fast approach to the Tracy-Widom law at the edge of GOE and GUE},
   journal={Ann. Appl. Probab.},
   volume={22},
   date={2012},
   number={5},
   pages={1962--1988}
}

\bib{LamPaq2018}{article}{
   author={Lambert, G.},
   author={Paquette, E.},
   title={The law of large numbers for the maximum of almost Gaussian log-correlated random fields coming from random matrices},
   journal={Probab. Theory Relat. Fields},
   date={2019},
   vol={173},
   pages={157--209}
}

\bib{LanLopMar2018}{article}{
   author={Landon, B.},
   author={Lopatto, P.},
   author={Marcinek, J.},
   title={A comparison theorem for some extremal eigenvalue statistics},
   journal={prepublication},
   date={2018}
}

\bib{LanSos2018}{article}{
   author={Landon, B.},
   author={Sosoe, P.},
   title={Applications of mesoscopic CLTs in random matrix theory},
   journal={prepublication},
   date={2018}
}

\bib{LanSosYau2016}{article}{
   author={Landon, B.},
   author={Sosoe, P.},
   author={Yau, H.-T.},
   title={Fixed energy universality for Dyson Brownian motion},
   journal={Advances in Mathematics},
   date={2019},
   volume={346},
   pages={1137--1332}
}

\bib{LeeSch2015}{article}{
   author={Lee, J.-O},
   author={Schnelli, K.},
   title={Edge universality for deformed Wigner matrices},
   journal={Rev. Math. Phys},
   date={2015},
   volume={27}
}

\bib{LopLuh2019}{article}{
   author={Lopatto, P.},
   author={Luh, K.},
   title={Tail bounds for gaps between eigenvalues of sparse random matrices},
   journal={prepublication},
   date={2019}
}

\bib{NguTaoVu2017}{article}{
   author={Nguyen, H.},
   author={Tao, T.},
   author={Vu, V.},
   title={Random matrices: tail bounds for gaps between eigenvalues},
   journal={Probab. Theory Related Fields},
   volume={167},
   date={2017},
   number={3-4},
   pages={777--816}
}

\bib{Oro2010}{article}{
   author={O'Rourke, Sean},
   title={Gaussian fluctuations of eigenvalues in Wigner random matrices},
   journal={J. Stat. Phys.},
   volume={138},
   date={2010},
   number={6},
   pages={1045--1066}
}

\bib{PaqZei2018}{article}{
   author={Paquette, E.},
   author={Zeitouni, O.},
   title={The maximum of the CUE field},
   journal={Int. Math. Res. Not. IMRN},
   date={2018},
   number={16},
   pages={5028--5119}
}

\bib{Pas1972}{article}{
   author={Pastur, L.},
   title={On the spectrum of random matrices},
   journal={Theoretical and Mathematical Physics volume },
   date={1972},
   number={10}
}

\bib{ShoWel2009}{book}{
   author={Shorack, G. R.},
   author={Wellner, J. A.},
   title={Empirical processes with applications to statistics},
   series={Classics in Applied Mathematics},
   volume={59},
   publisher={Society for Industrial and Applied Mathematics (SIAM),
   Philadelphia, PA},
   date={2009}
}

\bib{SooWar2018}{article}{
   author={von Soosten, P.},
   author={Warzel, S.},
   title={The Phase Transition in the Ultrametric Ensemble and Local Stability of Dyson Brownian Motion},
   journal={Electron. J. Probab.}
   volume={23},
   date={2018}
}

\bib{SooWar2019}{article}{
   author={von Soosten, P.},
   author={Warzel, S.},
   title={Non-ergodic delocalization in the Rosenzweig–Porter model},
   journal={Letters in Mathematical Physics}
   volume={109},
   date={2019}
}

\bib{Sos1999}{article}{
   author={Soshnikov, A.},
   title={Universality at the Edge of the Spectrum in Wigner Random Matrices},
   journal={Comm. Math. Phys.},
   volume={207},
   date={1999},
   pages={697--733}
}

\bib{Sos2005}{article}{
   author={Soshnikov, A.},
   title={Statistics of extreme spacing in determinantal random point
   processes},
   language={English, with English and Russian summaries},
   journal={Mosc. Math. J.},
   volume={5},
   date={2005},
   number={3}
}

\bib{TaoVu2010}{article}{
   author={Tao, T.},
   author={Vu, V.},
   title={Random Matrices: Universality of Local Eigenvalue Statistics up to the Edge},
   journal={Comm. Math. Phys.},
   volume={298},
   date={2010},
   pages={549--572}
}

\bib{TaoVu2011}{article}{
   author={Tao, T.},
   author={Vu, V.},
   title={Random matrices: universality of local eigenvalue statistics},
   journal={Acta Math.},
   volume={206},
   date={2011},
   number={1},
   pages={127--204}
}

\bib{Vin2001}{article}{
   author={Vinson, J.},
   title={Closest Spacing of Eigenvalues},
   journal={Ph.D. thesis, Princeton University, arXiv:1111.2743 },
   date={2001}
}

\bib{Open2010}{article}{
   title={Open problems: AIM workshop on random matrices},
   journal={http://www.aimath.org/WWN/randommatrices/randommatrices.pdf},
   date={2010}
}

\end{biblist}
\end{bibdiv}

\end{document}